\newcounter{generalnumbering}   \numberwithin{generalnumbering}{section}
\theoremstyle{plain}    \newtheorem{theorem}[generalnumbering]{Theorem}
\theoremstyle{plain}    \newtheorem{corollary}[generalnumbering]{Corollary}
\theoremstyle{definition}   \newtheorem{definition}[generalnumbering]{Definition}
\theoremstyle{definition}   \newtheorem{example}[generalnumbering]{Example}
\theoremstyle{plain}    \newtheorem{proposition}[generalnumbering]{Proposition}
\theoremstyle{plain}    \newtheorem{lemma}[generalnumbering]{Lemma}
\newcommand{\namefordifferentenvironment}{}
\theoremstyle{plain}    \newtheorem{plainstyle}[generalnumbering]{\namefordifferentenvironment}
\theoremstyle{plain}    \newtheorem*{plainstyle*}{\namefordifferentenvironment}
\theoremstyle{definition}    \newtheorem{definitionstyle}[generalnumbering]{\namefordifferentenvironment}
\theoremstyle{definition}    \newtheorem*{definitionstyle*}{\namefordifferentenvironment}
\newenvironment{penv*}[1]{\renewcommand{\namefordifferentenvironment}{#1}\begin{plainstyle*}}{\end{plainstyle*}}
\newenvironment{denv*}[1]{\renewcommand{\namefordifferentenvironment}{#1}\begin{definitionstyle*}}{\end{definitionstyle*}}
\newenvironment{remark}{\begin{denv*}{Remark}}{\end{denv*}}
\newcommand{\ntag}{\tag{\thegeneralnumbering}\stepcounter{generalnumbering}}
\DeclareMathOperator{\so}{\mathfrak{s}}
\DeclareMathOperator{\ra}{\mathfrak{r}}
\DeclareMathOperator{\id}{id}
\DeclareMathOperator{\dom}{dom}
\DeclareMathOperator{\ran}{ran}
\DeclareMathOperator{\cod}{cod}
\newcommand*{\defeq}{\mathrel{\vcenter{\baselineskip0.5ex \lineskiplimit0pt\hbox{\scriptsize.}\hbox{\scriptsize.}}}=}
\newcommand*{\eqdef}{=\mathrel{\vcenter{\baselineskip0.5ex \lineskiplimit0pt\hbox{\scriptsize.}\hbox{\scriptsize.}}}}
\DeclareMathAlphabet{\mathfancy}{LS1}{stixscr}{m}{n}
\newcommand{\cat}[1]{\normalfont{\textsc{\textbf{#1}}}}
\DeclareMathOperator{\IG}{\mathfancy{I\mkern-2mu G}\mkern-1mu}
\def\G{\@ifnextchar[{\@Gwithbrak}{\@Gwithoutbrak}}
\def\@Gwithbrak[#1]{\mathcal{G}^{(#1)}}
\def\@Gwithoutbrak{\mathcal{G}}
\def\H{\@ifnextchar[{\@Hwithbrak}{\@Hwithoutbrak}}
\def\@Hwithbrak[#1]{\mathcal{H}^{(#1)}}
\def\@Hwithoutbrak{\mathcal{H}}
\title{Étale inverse semigroupoids - the fundamentals}
\author{Luiz Gustavo Cordeiro
}
\thanks{The author was supported by the ANR project GAMME (ANR-14-CE25-0004)}
\address{UMPA, UMR 5669 CNRS -- École Normale Supérieure de Lyon\\
46 allée d'Italie, 69364 Lyon Cedex 07, France}
\email{luizgc6@gmail.com, luis-gustavo.cordeiro@ens-lyon.fr}				
\subjclass[2010]
{Primary 18B40; 
Secondary 06F05,  
08A55, 
20M30  
}
\begin{document}


\begin{abstract}
    In this article we will study semigroupoids, and more specifically inverse semigroupoids. These are a common generalization to both inverse semigroups and groupoids, and provide a natural language on which several types of dynamical structures may be described. Moreover, this theory allows us to precisely compare and simultaneously generalize aspects of both the theories of inverse semigroups and groupoids.
    
    We begin by comparing and settling the differences between two notions of semigroupoids which appear in the literature (one by Tilson and another by Exel). We specialize this study to inverse semigroupoids, and in particular an analogue of the Vagner-Preston Theorem is obtained. This representation theorem leads to natural notions of actions, and more generally $\land$-preactions and partial actions, of étale inverse semigroupoids, which generalize topological actions of inverse semigroups and continuous actions of étale groupoids. Many constructions which are commonplace in the theories of inverse semigroups and groupoids are also generalized, and their categorical properties made explicit. We finish this paper with a version of non-commutative Stone duality for ample inverse semigroupoids, which utilizes several of the aforementioned constructions.
\end{abstract}

\maketitle

\section{Introduction}

Groups, semigroups and categories are ubiquitous in Mathematics. The dynamical systems associated with such structures are usually assumed to satisfy some form of associativity, roughly stating that ``performing certain related operations in any order will always yield the same result''. This is clearly the case, for example, when considering group actions. Moreover, different components of a given system play different roles: Given a left action of a group $G$ on a set $X$, the elements of $G$ can be composed both with elements of $G$ (on either side) and with elements of $X$ on their right (via the action), however elements of $X$ can only be composed with elements of $G$ on their left. This leads us to consider partially defined operations.

Semigroupoids are an algebraic abstraction of these concepts: they are sets with partially defined binary operations with are associative in a precise way. In particular, the two most prominent examples of semigroupoids are categories and semigroups.

Whereas the most common categories one first encounters consist of unrelated objects of a given signature, such as the categories of rings, topological spaces, posets, etc\ldots and certain structure-preserving maps between them, we consider a more algebraic-geometric approach: categories (and semigroupoids) ought to be regarded as collections of transformations between subobjects of a single object. The classical version of this approach is well-established, where one seeks to understand a given object by looking at its automorphism group. This is the first motivation for this work.

This approach -- to study an object by analyzing its ``partial automorphisms'' -- has been popularized in the contexts of Topological Dynamics and Operator Algebras mainly after the introduction of \emph{partial actions} of groups by Exel on his study of the structure of C*-algebras endowed with circle actions, \cite{MR1276163}. This turned out to be a rich an fruitful research direction, since it allows one to consider algebras and topological dynamical systems which are not induced (in any natural fashion) by a group action, but rather from transformations among its subsets (see \cite{arxiv1804.00396,MR3789176,MR2645883}, for example).

A variation on the ``partial transformation'' approach was taken by Sieben in \cite{MR1671944,MR1456588}, who instead considered \emph{actions of inverse semigroups} on topological spaces and algebras. Inverse semigroups have been thoroughly studied in the last century, which provides solid foundation for this approach.

Another approach to Topological Dynamics and Operator Algebras, and which in fact predates the works above, is the usage of \emph{groupoids}, which have played a central role in the theory of $C^*$-algebras since Renault's seminal work \cite{MR584266}, since they provide a geometric counterpart to a large class of (non-commutative) $C^*$-algebras. In simple terms, and as an interpretation of \cite[Theorem 5.9]{MR2460017}, groupoids are the ``non-commutative spectra'' of ``non-commutative, dynamical $C^*$-algebras''.

Groupoids, partial actions of groups, and actions of inverse semigroups -- or more generally \emph{partial actions of inverse semigroups} -- are related via the constructions of ``groupoids of germs'' and ``crossed products''. We refer to \cite{MR2045419,arxiv1804.00396,MR3743184,MR3851326,MR2799098,MR1724106,MR1671944,MR2565546}. In fact, groupoids and inverse semigroups may be seen as dual to each other, via the \emph{non-commutative Stone duality} of Lawson-Lenz, \cite{MR3077869}. We also refer to \cite{MR2969047,MR2644910,MR2304314} for related studies.

Actions of groupoids have also appeared throughout the literature, in the context of Lie theory (\cite{MR2012261}), algebraic topology (\cite{MR2273730}), operator algebras and topological dynamics (\cite{MR2969047,MR2941279,MR2982887}). A generalization of groupoids and inverse monoids, called \emph{inverse categories}, were initially introduced in \cite{MR0506554} and have been considered in recent work, e.g.\ in the study of \emph{tilings} (\cite{MR1736698}), logic and recursion theory (\cite{MR1871071,MR3605681}) and crossed product algebras (\cite{MR2959793}).

The main goal of this article is to study the structure of \emph{inverse semigroupoids}, which are a common generalization of both inverse semigroups and groupoids. First we begin by comparing the notions of semigroupoid used in \cite{MR3597709,MR915990} to that of \cite{MR2754831}, which is more general. After that, we introduce inverse semigroupoids and prove some basic facts about their structure (some which had already been proven in \cite{MR3597709}). We follow with a representation theorem, which is an analogue of Cayley's Theorem for groups and the Vagner-Preston theorem for inverse semigroups. It motivates a notion of action, and more generally $\land$-preactions and partial actions of inverse semigroupoids on semigroupoids, in a manner which covers previously used notions for inverse semigroups and for groupoids. These are used to construct ``semidirect products'' which generalize transformation groupoids of partial group actions, semidirect products of inverse semigroups, semidirect products of groupoids, among others.

In the third section we look at \emph{topological semigroupoids}, specializing to the \emph{étale inverse semigroupoid} case, and generalize some results known for topological groupoids to this setting. The fourth section generalizes some constructions from the theory of inverse semigroups, and we describe their categorical properties explicitly. The fifth and last section contains a generalization of non-commutative Stone duality to the context of inverse semigroupoids.

\subsection*{Notation and running conventions}

The \emph{domain} and \emph{codomain} of a map $f$ are denoted by $\dom(f)$ and $\cod(f)$, respectively. The image of a subset $A\subseteq\dom(f)$ is denoted as $f(A)$, and the range of $f$ is $\ran(f)\defeq f(\dom(f))$. Note that $\cod(f)=\ran(f)$ if and only if $f$ is surjective.

Given maps $f_i\colon X_i\to Y_i$ ($i=1,2$), we define $f_1\times f_2\colon X_1\times X_2\to Y_2\times Y_2$ as $(f_1\times f_2)(x_1,x_2)=(f_1(x_1),f_2(x_2))$. If $X_1=X_2\eqdef X$, we define $(f_1,f_2)\colon X\to Y_1\times Y_2$ as $(f_1,f_2)(x)=(f_1(x),f_2(x))$.
The identity map of a set $X$ is denoted as $\id_X$, and more generally $\id_{\mathcal{C}}$ stands for the identity functor of a category $\mathcal{C}$.

We denote as $\mathbb{N}=\left\{0,1,2,\ldots,\right\}$ the set of non-negative integers, and as $\mathbb{N}_{\geq 1}=\mathbb{N}\setminus\left\{0\right\}$ the set of positive integers.

We assume familiarity with inverse semigroups and étale groupoids. The main references are \cite{MR1694900,MR1724106} (see also the author's PhD thesis \cite{cordeirothesis}). A \emph{semilattice} is a poset $(P,\leq)$ admiting binary meets (infima), and is always regarded as an inverse semigroup under meets: $ab=a\land b=\inf\left\{a,b\right\}$ for all $a,b\in P$.

\subsection{Setoids, bundles and partitions}
    \begin{denv*}{Setoids}
A \emph{setoid} is a pair $(X,R)$, where $X$ is a set and $R$ is an equivalence relation on $X$. Equivalently, a setoid is a principal groupoid. A morphism of setoids $(X,R_X)$ and $(Y,R_Y)$ is simply a relational morphism, that is, a map $f\colon X\to Y$ such that $(f\times f)(R_X)\subseteq R_Y$. Equivalently, a morphism of setoids, seen as principal groupoids, is simply a groupoid homomorphism.
\end{denv*}

\begin{denv*}{Bundles/Fibrations}
A \emph{bundle} or \emph{fibration} is a function $\pi\colon X^{(1)}\to X^{(0)}$. $X^{(0)}$ is called the \emph{base space}, and we say that $X^{(1)}$ is \emph{fibred over $X^{(0)}$}, or that $\pi$ is a \emph{bundle over $X^{(0)}$}. Although of most insterest are the surjective bundles, we do not make such an assumption.

A morphism between bundles $(X^{(0)},X^{(1)},\pi_X)$ and $(Y^{(0)},Y^{(1)},\pi_Y)$ is a pair $f=(f^{(0)},f^{(1)})$ of maps $f^{(i)}\colon X^{(i)}\to Y^{(i)}$ ($i=0,1$) such that $f^{(0)}\circ\pi_X=\pi_Y\circ f^{(1)}$. In other words, bundles and their morphisms are simply the category of arrows of $\cat{Set}$, the category of sets and functions (see \cite[p.\ 40]{MR1712872}).
\end{denv*}

\begin{denv*}{Partitions}
A \emph{partition} is a pair $(X,\mathscr{P})$, where $X$ is a set and $\mathscr{P}$ is a partition of $X$ (i.e., a collection of nonempty, pairwise disjoint subsets of $X$ such that $\bigcup\mathscr{P}=X$). A \emph{morphism} between partitions $(X,\mathscr{P}_X)$ and $(Y,\mathscr{P}_Y)$ is a map $f\colon X\to Y$ such that for every $A\in\mathscr{P}_X$, there exists $B\in\mathscr{P}(Y)$ such that $f(A)\subseteq B$.
\end{denv*}

Setoids, surjective bundles and partitions are equivalent concepts. More precisely, the categories $\cat{Std}$, $\cat{Bdl}_{\mathrm{sur}}$ and $\cat{Part}$ which they respectively define are equivalent:
\begin{enumerate}
    \item Given a setoid $(X,R)$, we construct the bundle $(X/R,X,\pi_X)$, where $X/R$ is the quotient space and $\pi_X\colon X\to X/R$ is the quotient map. Given a morphism of setoids $f\colon (X,R_X)\to (Y,R_Y)$, there exists a unique map $f^{(0)}\colon X/R_X\to Y/R_Y$ such that $f^{(0)}\circ\pi_X=\pi_Y\circ f$. Then $(f^{(0)},f)$ is a morphism of bundles.
    
    \item Given a bundle $(X^{(0)},X^{(1)},\pi)$, we consider the partition $\mathscr{P}_X=\left\{\pi^{-1}(x):x\in\pi(X^{(1)})\right\}$ of $X^{(1)}$. Given a bundle morphism $f=(f^{(0)},f^{(1)})\colon(X^{(0)},X^{(1)},\pi_X)\to (Y^{(0)},Y^{(1)},\pi_Y)$, the map $f^{(1)}$ is a morphism of partitions $f^{(1)}\colon (X^{(1)},\mathscr{P}_{X})\to (Y^{(1)},\mathscr{P}_{Y})$.
    
    \item Any partition $(X,\mathscr{P})$ induces an equivalence relation $R_{\mathscr{P}}$ on $X$ as
    \[R_{\mathscr{P}}=\left\{(x,y)\in X\times X:\text{there exists }A\in\mathscr{P}\text{ such that }x,y\in A\right\}\]
    Given a morphism of partitions $f\colon (X,\mathscr{P}_X)\to (Y,\mathscr{P}_Y)$, the same map $f\colon X\to Y$ is also a morphism of setoids.
\end{enumerate}

All of these constructions are functorial. If we denote $F\colon \cat{Std}\to\cat{Bdl}_{\mathrm{sur}}$, $G\colon \cat{Bdl}_{\mathrm{sur}}\to\cat{Part}$ and $H\colon\cat{Part}\to\cat{Std}$ the functors described above, then $H\circ G\circ F=\id_{\cat{Std}}$, $G\circ F\circ H$ is equivalent to $\id_{\cat{Bdl}_{\mathrm{sur}}}$, and $F\circ H\circ G=\id_{\cat{Part}}$. In particular, these three categories are equivalent ($\cat{Std}$ and $\cat{Part}$ being in fact isomorphic). We will, therefore, not make any meaningful distinction between these concepts.

Bundles are generally easier to describe in the topological setting: A \emph{continuous} or \emph{topological bundle} is a continuous map $\pi\colon X^{(1)}\to X^{(0)}$ between topological spaces. In this  setting, we also consider only bundle morphisms $f=(f^{(1)},f^{(0)})$ such that $f^{(0)}$ and $f^{(1)}$ are continuous. Equivalently, continuous bundles and their morphisms form the category of arrows of $\cat{Top}$, the category of topological spaces and continuous maps.

Given bundles (fibrations) $\pi_i\colon X_i\to X^{(0)}$ ($i=1,2,$) over the same base space $X^{(0)}$, the \emph{fibred product} of $\pi_1$ and $\pi_2$ is
\[X_1\tensor[_{\pi_1}]{\ast}{_{\pi_2}} X_2=\left\{(x_1,x_2)\in X_1\times X_2:\pi_1(x_1)=\pi_2(x_2)\right\}.\]

\subsection{Graphs}
    Graphs will be used in the description of semigroupoids, and provide a geometric picture which will be useful throughout this paper. The graphs we consider are sometimes called \emph{directed multigraphs}, since all edges come with a direction and we allow multiple edges between points.

\begin{definition}
A \emph{graph} is a tuple $G=(G^{(0)},G^{(1)},\so,\ra)$, where $G^{(0)}$ and $G^{(1)}$ are classes of \emph{vertices} and \emph{arrows}, respectively, and $\so,\ra\colon G^{(1)}\to G^{(0)}$ are functions, called the \emph{source} and \emph{range} maps.
\end{definition}

Alternative terminology is sometimes employed. Elements of $G^{(0)}$ are also called \emph{objects} or \emph{units}; elements of $G^{(1)}$ are called \emph{edges}; The source map is also called the \emph{domain} map, and the range map the \emph{target} or \emph{codomain} map. We may alternate between these terminologies depending on the context. If necessary, we will use subscripts to specify the graph $G$, as in writing $\so_G$ and $\ra_G$.

We usually write simply $G$ in place of $G^{(1)}$, so that an inclusion of the form $g\in G$ means that $g$ is an arrow of $G$.

Note that the source and range maps give fibred structures on $G^{(1)}$ over $G^{(0)}$. Moreover, we will generally assume that $G=\so(G)\cup\ra(G)$, in the same manner that surjective bundles are the ones of interest.

A \emph{graph morphism} $\phi\colon G\to H$ between graphs $G$ and $H$ is a pair $\phi=(\phi^{(0)},\phi^{(1)})$ of maps $\phi^{(0)}\colon G^{(0)}\to H^{(0)}$ and $\phi^{(1)}\colon G^{(1)}\to H^{(1)}$ such that $\so_H\circ\phi^{(1)}=\phi^{(0)}\circ\so_G$ and $\ra_H\circ\phi^{(1)}=\phi^{(0)}\circ\ra_G$. In other words, it is a simultaneous fibred morphism from $G$ to $H$ over their respective source and range maps. A \emph{graph isomorphism} is a graph morphism $\phi$ such that both $\phi^{(0)}$ and $\phi^{(1)}$ are bijective, and in this case $\phi^{-1}=((\phi^{(0)})^{-1},(\phi^{(1)})^{-1})$ is also a graph morphism.
\[\begin{tikzpicture}
\node (G1) at (0,1) {$G^{(1)}$};
\node (G2) at ([shift={+(4,0)}]G1) {$G^{(1)}$};
\foreach \i in {1,2}
{\node (G\i2) at ([shift={+(1.5,0)}]G\i) {$H^{(1)}$};
\node (G\i3) at ([shift={+(0,-1.5)}]G\i) {$G^{(0)}$};
\node (G\i4) at ([shift={+(1.5,0)}]G\i3) {$H^{(0)}$};
\draw[->] (G\i)--(G\i2) node[above,midway] {$\phi^{(1)}$};
\draw[->] (G\i)--(G\i3) node[left,midway] {${\ifthenelse{\i=1}{\so}{\ra}}_G$};
\draw[->] (G\i3)--(G\i4) node[below,midway] {$\phi^{(0)}$};
\draw[->] (G\i2)--(G\i4) node[right,midway] {${\ifthenelse{\i=1}{\so}{\ra}}_H$};}
\node (G3) at ([shift={+(4,0)}]G2) {$\bullet$};
\node (G32) at ([shift={+(3,0)}]G3) {$\bullet$};
\node (G33) at ([shift={+(0,-1.5)}]G3) {$\bullet$};
\node (G34) at ([shift={+(0,-1.5)}]G32) {$\bullet$};
\draw[->] (G3) to[out=15,in=165] node[midway] (M1) {} (G32);
\draw[->] (G33) to[out=-15,in=195] node[midway] (M2) {} (G34);
\draw[->,dashed,thin] (M1)--(M2) node[midway,left] {$\phi^{(1)}$};
\draw[->,dashed,thin] (G3)--(G33) node[midway,left] {$\phi^{(0)}$};
\draw[->,dashed,thin] (G32)--(G34) node[midway,right] {$\phi^{(0)}$};
\node at ([shift={+(1,0)}]G32) {$G$};
\node at ([shift={+(1,0)}]G34) {$H$};
\end{tikzpicture}\]

A \emph{sink} in a graph is a vertex $v\in G^{(0)}$ such that $\so^{-1}(v)=\varnothing$. A \emph{source} in a graph is a vertex $v\in G^{(0)}$ such that $\ra^{-1}(v)=\varnothing$.
\[
    \begin{tikzpicture}
    \node (N) at (0,0) {$\bullet$};
    \node at (-0.5,-1) {A sink};
    \foreach \x in {150,180,210}
    {\draw[->] (N)+(\x:1)--(N);}
    \node (B) at (4,0) {$\bullet$};
    \node at (4.5,-1) {A source};
    \foreach \x in {0,30,330}
    {\draw[->] (B)--+(\x:1);}
    \end{tikzpicture}
\]
We will interpret the arrows of a graph as functions, and thus they will be composed from right to left, as functions are. Given $k\in\mathbb{N}_{\geq 1}$, we consider the set of \emph{$k$-paths} of a graph $G$ as
\[G^{(k)}=\left\{(g_1,\ldots,g_k)\in (G^{(1)})^ k:\so(g_i)=\ra(g_{i+1})\text{ for all }i\right\}\]
Naturally, edges of $G$ are identified with $1$-paths, so the notation $G^{(1)}$ is unambiguous in this manner. We regard $G^ {(k)}$ as a graph itself, with vertex set $G^{(0)}$ and the source and range maps given by
\[\so(g_1,\ldots,g_k)=\so(g_k)\qquad\text{and}\qquad\ra(g_1,\ldots,g_k)=\ra(g_1).\]

We also regard $G^{(0)}$ as a trivial graph with vertex set $G^{(0)}$, and the source and range maps the identity function: $\so,\ra=\id_{G^{(0)}}$.

If $G$ and $H$ are graphs over the same vertex set $G^{(0)}=H^{(0)}$, we make the fibred product $G\tensor[_{\so}]{\ast}{_{\ra}} H$ into a graph over that same vertex set, with source and range maps $\ra(g,h)=\ra(g)$ and $\so(g,h)=\so(h)$.  The construction of graphs of paths obey the ``rules of exponentiation'', where the fibred product $\tensor[_{\so}]{\ast}{_{\ra}}$ takes the role of the product: given $k,p\in\mathbb{N}_{\geq 0}$, we have natural isomorphisms
\[(G^{(k)})^{(p)}\cong G^{(pk)}\qquad\text{and}\qquad G^{(k)}\tensor[_{\so}]{\ast}{_{\ra}}G^{(p)}\cong G^{(k+p)}.\]
    
\section{Inverse semigroupoids}

    \subsection{Exel and graphed semigroupoids}
    We will now define semigroupoids, which are generalizations of both semigroups and categories. Every category $\mathcal{C}$ comes with an underlying graph structure, where vertices and arrows correspond respectively to objects and morphisms of $\mathcal{C}$. However, the vertex set of $\mathcal{C}$ can always be recovered from the arrow set, by identifying each object to its corresponding identity morphism, and so categories may be defined purely in terms of their arrow space. This becomes an issue in the case of semigroupoids, where we do not have identity elements (or something similar) anymore. We have two working definitions of semigroupoids: One purely algebraic, introduced by Exel in \cite{MR2754831}, and one where we assume an underlying graph structure, introduced by Tilson in \cite{MR915990}. It should be noted that every semigroupoid in the sense of Tilson is a semigroupoid in the sense of Exel (Proposition \ref{prop:graphedsemigroupoidsaresemigroupoids}).

To avoid any confusion, we will use capital greek letters $\Lambda,\Gamma,\ldots$ to denote semigroupoids in the sense of Exel, without a priori underlying graph structures. These are called simply \emph{semigroupoids}, or \emph{Exel semigroupoids} whenever such precision is warranted. Capital caligraphic latin letters $\mathcal{S},\mathcal{T},\ldots$ will be used to denote semigroupoids in the sense of Tilson, with an underlying graph structure, and these will always be called \emph{graphed semigroupoids}.

\begin{definition}[{\cite[Definition 2.1]{MR2754831}}]\label{def:semigroupoid}
An \emph{Exel semigroupoid} or simply \emph{semigroupoid} is a set $\Lambda$ equipped with a subset $\Lambda^{[2]}\subseteq\Lambda\times\Lambda$ and a \emph{product} map $\mu\colon\Lambda^{[2]}\to\Lambda$, denoted by concatenation, $\mu(f,g)=fg$, which is associative in the following sense: For all $f,g,h\in\Lambda$, the statements
\begin{enumerate}[label=(\roman*)]
    \item\label{def:semigroupoiditem1} $(f,g)\in\Lambda^{[2]}$ and $(g,h)\in\Lambda^{[2]}$;
    \item\label{def:semigroupoiditem2} $(f,g)\in\Lambda^{[2]}$ and $(fg,h)\in\Lambda^{[2]}$;
    \item\label{def:semigroupoiditem3} $(g,h)\in\Lambda^{[2]}$ and $(f,gh)\in\Lambda^{[2]}$;
\end{enumerate}
are equivalent and in case any (all) of them holds, we have $(fg)h=f(gh)$. If necessary for precision, we will say instead that the triple $(\Lambda,\Lambda^{[2]},\mu)$ is a semigroupoid.
\end{definition}

We use square brackets when denoting the subset $\Lambda^{[2]}$ of $\Lambda\times\Lambda$ to stress the fact that $\Lambda$ has no graph structure. Moreover, instead of saying that a pair $(f,g)$ belongs to $\Lambda^{[2]}$ we may simply state that ``the product $fg$ is (well-)defined''.

The product of subsets of a semigroupoid $\Lambda$ is regarded in the standard manner: If $A,B\subseteq\Lambda$, the set $AB$ consists of all products $ab$ which are defined, where $a\in A$ and $b\in B$ - that is, $AB=\mu((A\times B)\cap\Lambda^{[2]})$. If $a\in\Lambda$, the products $aA$ and $Aa$ are defined similarly.

The associativity condition on the product may be regarded as follows: if any of the products $(fg)h$ or $f(gh)$ are defined, then the other product is also defined and they are equal, so we simply denote it $fgh$. Also, if $fg$ and $gh$ are both defined, $fgh$ is also defined. This allows us to move parentheses at will during computations.

\begin{example}
Let $\theta$ be a left action of a group $G$ on a set $X$. Consider the disjoint union $\Lambda\defeq G\sqcup X$. We make $\Lambda$ into a semigroupoid by extending the product of $G$ to a partial product on $\Lambda$ with the action, viz.\ $gx=\theta_g(x)$ for all $g\in G$ and $x\in X$.

This is an alternative to the more useful construction of a \emph{transformation groupoid}, which is a particular case of a semidirect product of semigroupoids (see Subection \ref{sec:dualprehomomorphisms}).
\end{example}

Let us look at some counter-examples for the associativity condition, where some of items \ref{def:semigroupoiditem1}-\ref{def:semigroupoiditem3} are valid but the others are not.

\begin{example}
Let $\Lambda=\left\{f,g,h\right\}$, and define $fg=f$ and $gh=h$. Then $fg$ and $gh$ are defined, but neither $(fg)h$ nor $f(gh)$ are defined.
\end{example}

\begin{example}
Let $\Lambda=\left\{f,g,h\right\}$, and define $fg=g$, $gh=h$. Then $fg$, $gh$ and $(fg)h$ are defined, but $f(gh)$ is not. A similar example may be constructed, where $f(gh)$ is defined but $(fg)h$ is not.
\end{example}

\begin{example}
Let $\Lambda=\left\{f,g,h\right\}$ with product $fg=hh=h$. Then $(fg)h$ is defined, but $gh$ is not. A similar example may be constructed where $f(gh)$ is defined but $fg$ is not.
\end{example}

\begin{example}
If $\Lambda$ is any set endowed with a non-associative binary operation, then \ref{def:semigroupoiditem1}-\ref{def:semigroupoiditem3} of Definition \ref{def:semigroupoid} are always valid, and in particular equivalent, but $\Lambda$ is not a semigroupoid. For example, take $\Lambda=\left\{a,b\right\}$, $aa=ab=b$, $bb=ba=a$. Then $(aa)a=a$ but $a(aa)=b$.
\end{example}

Sub-semigroupoids and semigroupoid homomorphisms are defined in the natural manner.

\begin{definition}
A \emph{sub-semigroupoid} of a semigroupoid $\Lambda$ is a subset $\Delta\subseteq\Lambda$ such that $\Delta\Delta\subseteq\Delta$.
\end{definition}

\begin{definition}
A \emph{homomorphism} between semigroupoids $\Lambda$ and $\Gamma$ is a map $\phi\colon\Lambda\to\Gamma$ such that $(\phi\times\phi)(\Lambda^{[2]})\subseteq\Gamma^{[2]}$ and $\phi(ab)=\phi(a)\phi(b)$ for all $(a,b)\in\Lambda^{[2]}$.
\end{definition}

It is immediate to verify that if $\phi\colon \Lambda\to\Gamma$ is a homomorphism of semigroupoids and $\Delta$ is a sub-semigroupoid of $\Gamma$, then $\phi^{-1}(\Delta)$ is a sub-semigroupoid of $\Lambda$. However, contrary to the cases of semigroups and groupoids, the image of a homomorphism $\phi\colon\Lambda\to\Gamma$ between semigroupoids is not necessarily a sub-semigroupoid of $\Gamma$.

\begin{example}\label{ex:imageofhomomorphismisnotasubsemigroupoid}
    Let $\Lambda=\left\{e,f\right\}$, with operations $ee=e$ and $ff=f$ and $\Gamma=\left\{e,f,g\right\}$ the semigroup with product $ee=e$, $ff=f$, and all other products $xy=g$. Then the inclusion $\iota\colon\Lambda\hookrightarrow\Gamma$ is a semigroupoid homomorphism, but the image $\iota(\Lambda)=\left\{e,f\right\}$ is not sub-semigroupoid of $\Gamma$.
\end{example}

We will now consider semigroupoids in the sense of Tilson.

\begin{definition}[{\cite[p.\ 194]{MR915990}}]\label{def:graphedsemigroupoid}
A \emph{graphed semigroupoid} is a tuple $(\mathcal{S}^{(0)},\mathcal{S},\so,\ra,\mu)$, where $(\mathcal{S}^{(0)},\mathcal{S},\so,\ra)$ is a graph and $\mu\colon\mathcal{S}^{(2)}\to\mathcal{S}$ is a \emph{product map}, denoted by concatenation, $\mu(a,b)=ab$, satisfying:
\begin{enumerate}[label=(\roman*)]
    \item\label{def:graphedsemigroupoiditem1} for all $(a,b)\in \mathcal{S}^{(2)}$, $\so(ab)=\so(b)$ and $\ra(ab)=\ra(a)$ -- i.e., $(\id_{\mathcal{S}^{(0)}},\mu)$ is a graph morphism;
    \item\label{def:graphedsemigroupoiditem2} for all $(a,b,c)\in \mathcal{S}^{(3)}$, $(ab)c=a(bc)$.
\end{enumerate}
We will always assume that a graphed semigroupoid $\mathcal{S}$ satisfies $\mathcal{S}^{(0)}=\so(\mathcal{S})\cup\ra(\mathcal{S})$. Otherwise, restricting the vertex set to $\so(\mathcal{S})\cup\ra(\mathcal{S})$ yields another graphed semigroupoid with the same underlying arrow set and product.
\end{definition}

The next proposition shows that graphed semigroupoids are, in particular, Exel semigroupoids.

\begin{proposition}\label{prop:graphedsemigroupoidsaresemigroupoids}
Let $(\mathcal{S}^{(0)},\mathcal{S},\so,\ra)$ be a graph and $\mu\colon\mathcal{S}^{(2)}\to \mathcal{S}$, $\mu(a,b)=ab$, a fixed function. Consider the following assertions:
\begin{enumerate}[label=(\arabic*)]
    \item\label{prop:graphedsemigroupoidsaresemigroupoidsitem1} $(\mathcal{S}^{(0)},\mathcal{S},\so,\ra,\mu)$ is a graphed semigroupoid;
    \item\label{prop:graphedsemigroupoidsaresemigroupoidsitem2} $(\mathcal{S},\mathcal{S}^{(2)},\mu)$ is an Exel semigroupoid.
    \item\label{prop:graphedsemigroupoidsaresemigroupoidsitem3} for all $(a,b)\in \mathcal{S}^{(2)}$, $\so(ab)=\so(b)$ and $\ra(ab)=\ra(b)$.
\end{enumerate}
Then \ref{prop:graphedsemigroupoidsaresemigroupoidsitem1} is equivalent to \ref{prop:graphedsemigroupoidsaresemigroupoidsitem2}+\ref{prop:graphedsemigroupoidsaresemigroupoidsitem3} (that is, their logical conjunction).

If $\mathcal{S}$ has no sources nor sinks, then \ref{prop:graphedsemigroupoidsaresemigroupoidsitem2} implies \ref{prop:graphedsemigroupoidsaresemigroupoidsitem3}. (Thus \ref{prop:graphedsemigroupoidsaresemigroupoidsitem1} is equivalent to \ref{prop:graphedsemigroupoidsaresemigroupoidsitem2} in this case.)
\end{proposition}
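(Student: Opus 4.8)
The plan is to treat the two parts separately, noting first that assertion \ref{prop:graphedsemigroupoidsaresemigroupoidsitem3} is exactly the graph-morphism condition \ref{def:graphedsemigroupoiditem1} from the definition of a graphed semigroupoid, and that the associativity clause \ref{def:graphedsemigroupoiditem2} is precisely the ``and in case any of them holds, $(fg)h=f(gh)$'' part of the Exel axiom. Thus the first equivalence amounts to checking that, \emph{once the source/range identities \ref{prop:graphedsemigroupoidsaresemigroupoidsitem3} are assumed}, the three conditions \ref{def:semigroupoiditem1}--\ref{def:semigroupoiditem3} of Definition \ref{def:semigroupoid} collapse to a single condition, so that the only genuine content of the Exel axiom that survives is associativity on triples.

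Concretely, for the direction \ref{prop:graphedsemigroupoidsaresemigroupoidsitem2}+\ref{prop:graphedsemigroupoidsaresemigroupoidsitem3} $\Rightarrow$ \ref{prop:graphedsemigroupoidsaresemigroupoidsitem1}, I would fix a triple $(a,b,c)$ and spell out what \ref{def:semigroupoiditem1}--\ref{def:semigroupoiditem3} say in terms of $\so$ and $\ra$, recalling that $(x,y)\in\mathcal{S}^{(2)}$ means $\so(x)=\ra(y)$. Condition \ref{def:semigroupoiditem2} reads $\so(a)=\ra(b)$ and $\so(ab)=\ra(c)$; substituting $\so(ab)=\so(b)$ from \ref{prop:graphedsemigroupoidsaresemigroupoidsitem3} turns it into $\so(a)=\ra(b)$ and $\so(b)=\ra(c)$, which is exactly \ref{def:semigroupoiditem1}, i.e.\ $(a,b,c)\in\mathcal{S}^{(3)}$. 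Symmetrically, substituting $\ra(bc)=\ra(b)$ into \ref{def:semigroupoiditem3} recovers \ref{def:semigroupoiditem1}. Hence \ref{def:semigroupoiditem1}--\ref{def:semigroupoiditem3} are all equivalent to membership in $\mathcal{S}^{(3)}$, and the surviving ``$(ab)c=a(bc)$'' clause of \ref{prop:graphedsemigroupoidsaresemigroupoidsitem2} is precisely \ref{def:graphedsemigroupoiditem2}; together with \ref{prop:graphedsemigroupoidsaresemigroupoidsitem3}${}={}$\ref{def:graphedsemigroupoiditem1} this gives \ref{prop:graphedsemigroupoidsaresemigroupoidsitem1}. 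The converse runs the same computation backwards: \ref{prop:graphedsemigroupoidsaresemigroupoidsitem3} is immediate from \ref{def:graphedsemigroupoiditem1}, the same substitutions show \ref{def:semigroupoiditem1}--\ref{def:semigroupoiditem3} are equivalent, and \ref{def:graphedsemigroupoiditem2} supplies the required equality.

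For the second part, I would assume \ref{prop:graphedsemigroupoidsaresemigroupoidsitem2} together with the absence of sources and sinks, and derive \ref{prop:graphedsemigroupoidsaresemigroupoidsitem3} by manufacturing auxiliary composable arrows. Given $(a,b)\in\mathcal{S}^{(2)}$: to prove $\so(ab)=\so(b)$, use that $\mathcal{S}$ has no sources to find an arrow $h$ with $\ra(h)=\so(b)$, so that $(b,h)\in\mathcal{S}^{(2)}$; then $(a,b),(b,h)\in\mathcal{S}^{(2)}$ is condition \ref{def:semigroupoiditem1} for $(a,b,h)$, so \ref{def:semigroupoiditem2} gives $(ab,h)\in\mathcal{S}^{(2)}$, i.e.\ $\so(ab)=\ra(h)=\so(b)$. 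Dually, to prove $\ra(ab)=\ra(a)$, use that $\mathcal{S}$ has no sinks to find $f$ with $\so(f)=\ra(a)$, so $(f,a)\in\mathcal{S}^{(2)}$; then \ref{def:semigroupoiditem1} for $(f,a,b)$ yields \ref{def:semigroupoiditem3}, whence $(f,ab)\in\mathcal{S}^{(2)}$ and $\ra(ab)=\so(f)=\ra(a)$.

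The computations here are routine, and the only real subtleties are bookkeeping ones. The point to get right in the first part is that \ref{prop:graphedsemigroupoidsaresemigroupoidsitem3} is exactly what makes the three a priori distinct hypotheses of the Exel axiom coincide, so that the axiom then carries no more information than plain associativity on $\mathcal{S}^{(3)}$. In the second part the delicate step is choosing the correct side on which to attach the auxiliary arrow and matching it to the correct hypothesis -- ``no sources'' supplies an arrow into $\so(b)$ to feed condition \ref{def:semigroupoiditem2}, while ``no sinks'' supplies an arrow out of $\ra(a)$ to feed condition \ref{def:semigroupoiditem3} -- and observing that without such arrows the value $\so(ab)$ (resp.\ $\ra(ab)$) is genuinely unconstrained by the Exel axiom alone, which is exactly why the hypothesis cannot be dropped.
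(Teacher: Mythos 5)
Your proposal is correct and follows essentially the same route as the paper: the substitution $\so(ab)=\so(b)$, $\ra(ab)=\ra(a)$ to collapse the three Exel conditions into membership in $\mathcal{S}^{(3)}$, and the auxiliary composable arrows supplied by ``no sources'' and ``no sinks'' for the final implication, matching the paper's proof step for step (including silently reading item (3) with $\ra(ab)=\ra(a)$, correcting the paper's typo, exactly as the paper's own proof does).
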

\begin{proof}
    The implication \ref{prop:graphedsemigroupoidsaresemigroupoidsitem2}+\ref{prop:graphedsemigroupoidsaresemigroupoidsitem3}$\Rightarrow$\ref{prop:graphedsemigroupoidsaresemigroupoidsitem1} is trivial.
    
    The only nontrivial part of the implication \ref{prop:graphedsemigroupoidsaresemigroupoidsitem1}$\Rightarrow$\ref{prop:graphedsemigroupoidsaresemigroupoidsitem2}+\ref{prop:graphedsemigroupoidsaresemigroupoidsitem3} is the verification that items \ref{def:semigroupoiditem1}-\ref{def:semigroupoiditem3} of Definition \ref{def:semigroupoid} are equivalent: In other words, to compare when products $ab$, $bc$, $a(bc)$ and $(ab)c$ are defined.
    
    For example, assume that \ref{def:semigroupoid}\ref{def:semigroupoiditem1} is valid, i.e., $(a,b)\in\mathcal{S
    }^{(2)}$ and $(b,c)\in\mathcal{S
    }^{(2)}$. Then $\so(ab)=\so(b)=\ra(c)$, so $(ab,c)\in\mathcal{S
    }^{(2)}$, which means that \ref{def:semigroupoid}\ref{def:semigroupoiditem2} is valid. The other implications are proven similarly. Therefore \ref{prop:graphedsemigroupoidsaresemigroupoidsitem1} implies \ref{prop:graphedsemigroupoidsaresemigroupoidsitem2}+\ref{prop:graphedsemigroupoidsaresemigroupoidsitem3}.
    
    For the last part, assume that $(\mathcal{S
    },\mathcal{S
    }^{(2)},\mu)$ is an Exel semigroupoid and that the graph $\mathcal{S
    }$ has no sources nor sinks, and let us verify \ref{prop:graphedsemigroupoidsaresemigroupoidsitem3}. Let $(a,b)\in\mathcal{S
    }^{(2)}$. As the vertex $\so(b)$ is not a source of $\mathcal{S
    }$, choose $z\in\mathcal{S
    }$ such that $(b,z)\in\mathcal{S
    }^{(2)}$. Then $(ab,z)\in\mathcal{S
    }^{(2)}$, because $(\mathcal{S
    },\mathcal{S
    }^{(2)},\mu)$ is an Exel semigroupoid, which means that $\so(ab)=\ra(z)=\so(b)$. Similarly, $\mathcal{S
    }$ not having sinks implies that $\ra(ab)=\ra(a)$. This is precisely property \ref{prop:graphedsemigroupoidsaresemigroupoidsitem3}.\qedhere
\end{proof}

\begin{example}
Every semigroup $S$ may be regarded as a graphed semigroupoid with a singleton vertex set, $S^{(0)}=\left\{\ast\right\}$. Conversely, every graphed semigroupoid (or rather its arrow set) with singleton vertex set is a semigroup.
\end{example}

We will now characterize Exel semigroupoids which admit a compatible graph structure (Proposition \ref{prop:equivalencegraphableandcategoricalsemigroupoids}). These were already considered in \cite{MR2419901}. We will moreover classify all compatible graph structures to such an Exel semigroupoid (Proposition \ref{prop:classificationofgraphsoncategorical}).

\begin{definition}[{\cite[Definition 19.1]{MR2419901}}]
Let $\Lambda$ be an Exel semigroupoid. For every $a\in\Lambda$, we define
\[\Lambda^a=\left\{b\in\Lambda:(a,b)\in\Lambda^{[2]}\right\},\qquad\Lambda_a=\left\{b\in\Lambda:(b,a)\in\Lambda^{[2]}\right\}.\]
We say that $\Lambda$ is \emph{categorical} if for all $a,b\in\Lambda$, the sets $\Lambda^a$ and $\Lambda^b$ are either disjoint or equal.
\end{definition}

Note that $a\in\Lambda^b$ if and only if $b\in\Lambda_a$. Moreover, $\Lambda^{ab}=\Lambda^b$ and $\Lambda_{ab}=\Lambda_a$ for all $(a,b)\in\Lambda^{[2]}$.

\begin{proposition}\label{prop:equivalencecategorical}
The following are equivalent for an Exel semigroupoid $\Lambda$:
\begin{enumerate}[label=(\arabic*)]
  \item\label{prop:equivalencecategoricalitem1} $\Lambda$ is categorical.
  \item\label{prop:equivalencecategoricalitem2} For all $a,b\in\Lambda$, $\Lambda_a$ and $\Lambda_b$ are either disjoint or equal.
\end{enumerate}
\end{proposition}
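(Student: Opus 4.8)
The plan is to exploit the symmetry between the two statements. Reversing the product turns $\Lambda$ into its opposite semigroupoid $\Lambda^{\mathrm{op}}$, under which the roles of $\Lambda^a$ and $\Lambda_a$ interchange; so conditions \ref{prop:equivalencecategoricalitem1} and \ref{prop:equivalencecategoricalitem2} for $\Lambda$ are, informally, the statements that $\Lambda$ and $\Lambda^{\mathrm{op}}$ respectively are categorical. It therefore suffices to establish a single implication, say \ref{prop:equivalencecategoricalitem1}$\Rightarrow$\ref{prop:equivalencecategoricalitem2}, and then obtain \ref{prop:equivalencecategoricalitem2}$\Rightarrow$\ref{prop:equivalencecategoricalitem1} by repeating the argument verbatim with the roles of superscripts and subscripts exchanged (this avoids having to verify formally that $\Lambda^{\mathrm{op}}$ is again a semigroupoid). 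The only tool I would use throughout is the translation recorded just before the statement: $a\in\Lambda^b$ if and only if $b\in\Lambda_a$.

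For \ref{prop:equivalencecategoricalitem1}$\Rightarrow$\ref{prop:equivalencecategoricalitem2}, assume $\Lambda$ is categorical and fix $a,b$ with $\Lambda_a\cap\Lambda_b\neq\varnothing$; I must show $\Lambda_a=\Lambda_b$. First I would pick $c\in\Lambda_a\cap\Lambda_b$ and translate this, via the correspondence, into $a,b\in\Lambda^c$. Now take an arbitrary $d\in\Lambda_a$, i.e.\ $a\in\Lambda^d$. Then $\Lambda^c$ and $\Lambda^d$ both contain $a$, so they are not disjoint, and categoricity forces $\Lambda^c=\Lambda^d$; since $b\in\Lambda^c=\Lambda^d$, translating back yields $d\in\Lambda_b$. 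This proves $\Lambda_a\subseteq\Lambda_b$, and the symmetric argument (swapping $a$ and $b$) gives the reverse inclusion, hence $\Lambda_a=\Lambda_b$.

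The proof is essentially mechanical once the correct move is spotted, so I do not expect a genuine obstacle; the only things to get right are the bookkeeping of the superscript/subscript correspondence and the observation that one never needs to invoke associativity — only the equivalence $a\in\Lambda^b\Leftrightarrow b\in\Lambda_a$ together with the categoricity hypothesis. The conceptual point worth highlighting is that the proposition simply records that being categorical is a self-dual property, which is precisely why the opposite-semigroupoid remark halves the work and why no asymmetry between the two conditions can arise.
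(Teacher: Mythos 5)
Your proof is correct and is essentially the same as the paper's: the paper also proves \ref{prop:equivalencecategoricalitem1}$\Rightarrow$\ref{prop:equivalencecategoricalitem2} by picking a common element $p\in\Lambda_a\cap\Lambda_b$, taking an arbitrary $r\in\Lambda_a$, and using the translation $a\in\Lambda^b\Leftrightarrow b\in\Lambda_a$ together with categoricity applied to $\Lambda^p$ and $\Lambda^r$, then declares the converse ``completely analogous'' --- which is exactly your superscript/subscript swap. Your framing of the swap as a self-duality of the notion is a pleasant gloss but does not change the argument.
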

\begin{proof}
  Assume that $\Lambda$ is categorical and let $a,b\in\Lambda$. We need to prove that if $\Lambda_a$ and $\Lambda_b$ are not disjoint, then they are equal. Suppose $p\in\Lambda_a\cap\Lambda_b$. By symmetry, it is sufficient to prove that $\Lambda_a\subseteq\Lambda_b$.
  
  Given $r\in\Lambda_a$, we have $a\in\Lambda^p\cap\Lambda^r$. Therefore, $\Lambda^p=\Lambda^r$, as $\Lambda$ is categorical. As $b\in\Lambda^p=\Lambda^r$, then $r\in\Lambda_b$. Therefore \ref{prop:equivalencecategoricalitem2} is valid.
  
  The implication \ref{prop:equivalencecategoricalitem2}$\Rightarrow$\ref{prop:equivalencecategoricalitem1} is completely analogous.\qedhere
\end{proof}

We will now prove that categorical semigroupoids are precisely those with a compatible graph structure. All that is needed is to define an appropriate vertex set and source and range maps. The main idea is that if $\mathcal{S}$ is a graphed semigroupoid and $(a,z)\in\mathcal{S
    }^{(2)}$, then for any other arrow $c\in\mathcal{S
    }$, we have $\so(c)=\so(a)$ if and only if $c\in\mathcal{S
    }_z$. This suggests us to identify $\so(a)$ with $\mathcal{S
    }_z$, and this is where the categorical property comes into play. If $\mathcal{S
    }^a=\varnothing$ (i.e., there is no $z$ such that $(a,z)\in\mathcal{S
    }^{(2)}$), then we add a ``dummy'' vertex as the source of $a$.

\begin{theorem}\label{prop:equivalencegraphableandcategoricalsemigroupoids}
Every graphed semigroupoid is categorical. Conversely, every categorical Exel semigroupoid $(\Lambda,\Lambda^{[2]},\mu)$ can be graphed, i.e., we may construct a graphed semigroupoid $(\Lambda^{(0)},\Lambda,\so,\ra,\mu)$, in such a way that $\Lambda^{[2]}=\Lambda^{(2)}$.
\end{theorem}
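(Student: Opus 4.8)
The plan is to treat the two implications separately, the forward one being essentially immediate and the converse requiring an explicit construction of the vertex set out of the sets $\Lambda^a$ and $\Lambda_a$.

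For the forward direction, suppose $\mathcal{S}$ is a graphed semigroupoid; by Proposition \ref{prop:graphedsemigroupoidsaresemigroupoids} it is an Exel semigroupoid with $\mathcal{S}^{[2]}=\mathcal{S}^{(2)}$, so for every $a$ one has $\mathcal{S}^a=\{c:\so(a)=\ra(c)\}=\ra^{-1}(\so(a))$. Thus each set $\mathcal{S}^a$ is the $\ra$-fibre over the single vertex $\so(a)$, and two such fibres $\ra^{-1}(\so(a))$, $\ra^{-1}(\so(b))$ are equal if $\so(a)=\so(b)$ and disjoint otherwise, being preimages of distinct points. Hence $\mathcal{S}$ is categorical, with the dual statement for $\mathcal{S}_a$ following likewise (or from Proposition \ref{prop:equivalencecategorical}).

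For the converse the idea, following the heuristic preceding the statement, is that $\so(a)$ should remember both the fibre of arrows ending at it (namely $\Lambda^a$) and the fibre of arrows starting at it, and dually for $\ra$. Concretely I would take two disjoint copies of $\Lambda$, writing $\dot a$ and $\ddot a$ for the two copies of $a\in\Lambda$, and define a relation $\sim$ on $\Lambda\sqcup\Lambda$ by $\dot a\sim\dot b\iff\Lambda^a=\Lambda^b$, $\ddot a\sim\ddot b\iff\Lambda_a=\Lambda_b$, and $\dot a\sim\ddot b\iff(a,b)\in\Lambda^{[2]}$ (extended symmetrically). One then sets $\Lambda^{(0)}=(\Lambda\sqcup\Lambda)/{\sim}$, $\so(a)=[\dot a]$ and $\ra(a)=[\ddot a]$. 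Granting that $\sim$ is an equivalence relation, $\so(a)=\ra(b)$ holds exactly when $\dot a\sim\ddot b$, i.e.\ exactly when $(a,b)\in\Lambda^{[2]}$, which is the required identity $\Lambda^{(2)}=\Lambda^{[2]}$; axiom \ref{def:graphedsemigroupoiditem1} then follows from the identities $\Lambda^{ab}=\Lambda^b$ and $\Lambda_{ab}=\Lambda_a$, and axiom \ref{def:graphedsemigroupoiditem2} directly from Exel associativity applied to any $(a,b,c)\in\Lambda^{(3)}$.

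The real obstacle — and the only place categoricity is used — is checking that $\sim$ is already transitive, so that the quotient introduces no identifications beyond $\sim$ itself (otherwise $[\dot a]=[\ddot b]$ could hold through a chain without $\dot a\sim\ddot b$, breaking the correspondence above). Reflexivity and symmetry are built in, so the work is transitivity across the token types: the pure cases $\dot a\sim\dot b\sim\dot c$ and $\ddot a\sim\ddot b\sim\ddot c$ are just transitivity of equality of the sets $\Lambda^\bullet$ and $\Lambda_\bullet$, while the mixed cases are exactly where the hypothesis enters. For instance $\dot a\sim\ddot b\sim\dot{a'}$ gives $b\in\Lambda^a\cap\Lambda^{a'}$, whence $\Lambda^a=\Lambda^{a'}$ since $\Lambda$ is categorical, so $\dot a\sim\dot{a'}$; dually $\ddot b\sim\dot a\sim\ddot{b'}$ yields $a\in\Lambda_b\cap\Lambda_{b'}$ and $\Lambda_b=\Lambda_{b'}$ by Proposition \ref{prop:equivalencecategorical}. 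I would enumerate the remaining mixed patterns, each reducing to one of these two observations. Finally I would note that this quotient needs no ad hoc ``dummy vertex'' bookkeeping: the arrows $a$ with $\Lambda^a=\varnothing$ are automatically collected into a single source-only vertex (related to no range token), and dually for $\Lambda_a=\varnothing$, so the sources and sinks of the resulting graph are produced uniformly.
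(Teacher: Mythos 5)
Your proposal is correct, and the forward direction is essentially the paper's argument (the fibres $\ra^{-1}(\so(a))$ over distinct vertices are disjoint). For the converse your construction differs in form from the paper's, though the mathematical content runs parallel. The paper takes as vertices the actual subsets $\Lambda_a$ (when $\Lambda_a\neq\varnothing$), defines $\so(a)=\Lambda_z$ for any $z\in\Lambda^a$, and handles arrows with $\Lambda^a=\varnothing$ or $\Lambda_a=\varnothing$ by adjoining formal dummy vertices $v_0(a)$, $v_1(a)$ quotiented by a \emph{choice} of equivalence relations $R_0,R_1$ subject to a compatibility condition; categoricity enters there to show $\so$ is well defined (if $z_1,z_2\in\Lambda^a$ then $a\in\Lambda_{z_1}\cap\Lambda_{z_2}$, forcing $\Lambda_{z_1}=\Lambda_{z_2}$). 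You instead quotient two formal copies of $\Lambda$ by an explicit relation $\sim$, and categoricity enters to show $\sim$ is transitive — this is the same use of the hypothesis in different packaging: your mixed chains $\dot a\sim\ddot b\sim\dot{a'}$ and $\ddot b\sim\dot a\sim\ddot{b'}$ are exactly the paper's well-definedness checks. What your version buys: it is uniform — no case split on emptiness of $\Lambda^a$ or $\Lambda_a$, no dummy symbols, no auxiliary choices — and the identity $\Lambda^{(2)}=\Lambda^{[2]}$ holds essentially by construction. What the paper's version buys: the parametrization by $R_0,R_1$ is precisely what feeds the subsequent Proposition \ref{prop:classificationofgraphsoncategorical}, which classifies \emph{all} compatible graph structures; your quotient recovers exactly the instance where $R_0$ and $R_1$ are the full relations, i.e.\ all source-only dummy vertices collapsed to a single point, and dually for sinks. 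One small inaccuracy that does not affect correctness: the remaining mixed transitivity patterns (such as $\dot a\sim\dot b\sim\ddot c$) do not in fact reduce to your two categoricity observations — they follow directly from the definition of $\sim$ together with the set equalities $\Lambda^a=\Lambda^b$ or $\Lambda_a=\Lambda_b$, with no use of categoricity at all, so they are even easier than you suggest.
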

\begin{proof}
Suppose that $(\mathcal{S
    }^{(0)},\mathcal{S
    },\so,\ra,\mu)$ is a graphed semigroupoid and $a,b\in\mathcal{S
    }$. If $\mathcal{S
    }^a$ and $\mathcal{S
    }^b$ are not disjoint, take any $p\in\mathcal{S
    }^a\cap\mathcal{S
    }^b$. For all $q\in\mathcal{S
    }^a$, we have
\[\ra(q)=\so(a)=\ra(p)=\so(b)\]
thus $q\in\mathcal{S}^b$. This proves $\mathcal{S
    }^a\subseteq\mathcal{S
    }^b$, and the reverse inclusion is proven similarly. Hence $\mathcal{S
    }$ is categorical.

Conversely, assume that $(\Lambda,\Lambda^{[2]},\mu)$ is a categorical semigroupoid. We will first construct the vertex set of the underlying graph of $\Lambda$.

Consider two collections of symbols \[V_0=\left\{v_0(a):a\in\Lambda,\Lambda^a=\varnothing\right\}\qquad\text{and}\qquad V_1=\left\{v_1(a):a\in\Lambda,\Lambda_a=\varnothing\right\}.\]
Let $R_0$ and $R_1$ be any two equivalence relations on $V_0$ and $V_1$, respectively, satisfying, for all $(a,b)\in\Lambda^{[2]}$,
\[\text{if }\Lambda^b=\varnothing\text{ then }(v_0(b),v_0(ab))\in R_0\qquad\text{and}\qquad\text{if }\Lambda_a=\varnothing\text{ then }(v_1(a),v_1(ab))\in R_1.\ntag\label{eq:additionalpropertyofequivalencerelations}\]
We denote the $R_i$-equivalence class of a symbol $v_i(a)\in V_i$ as $[v_i(a)]$ (where $a\in\Lambda)$.

Define the vertex set $\Lambda_{R_0,R_1}^{(0)}$ as the disjoint union $\left\{\Lambda_a:a\in\Lambda,\Lambda_a\neq\varnothing\right\}\sqcup(V_0/R_0)\sqcup(V_1/R_1)$, and the source and range maps $\so_{R_0,R_1},\ra_{R_0,R_1}\colon\Lambda\to\Lambda_{R_0,R_1}^{(0)}$ as
\[\so_{R_0,R_1}(a)=\begin{cases}
\Lambda_z,&\text{if }z\text{ is any element of } \Lambda^a\\
[v_0(a)],&\text{if }\Lambda^a=\varnothing,
\end{cases}\qquad\text{and}\qquad \ra_{R_0,R_1}(a)=\begin{cases}
\Lambda_a,&\text{if }\Lambda_a\neq\varnothing,\\
[v_1(a)],&\text{if }\Lambda_a=\varnothing.
\end{cases}
\]
\[\begin{tikzpicture}
\node (Sa) at (0,0) {$\bullet$};
\node (Ra) at (2,0) {$\bullet$};
\draw[->] (Sa)--(Ra) node[above,align=center,midway] {$a$};
\draw[dashed] (Sa)+(-90:1) node[below] {$\so(a)$} -- +(90:1);
\draw[dashed] (Ra)+(-90:1) node[below] {$\ra(a)$}-- +(90:1);
\draw[->] (Sa)--+(60:0.7);
\draw[->] (Sa)--+(-20:0.7);
\node[above] at (1,1) {$\Lambda_z$};
\draw[->] (Ra) -- +(45:1);
\draw[->] (Ra) -- +(-45:1);
\draw[->] (Ra) -- +(0:1);
\draw[->] (Sa)+(150:1) -- (Sa) node[midway,above] {$z$};
\draw[->] (Sa)+(210:1) -- (Sa);
\node[above] at (3,1) {$\Lambda_a$};
\node[above] at (-1,1) {$\Lambda^a$};
\end{tikzpicture}
\]
We need to check that the source map is well-defined: if $\Lambda^a$ is nonempty and $z_1,z_2\in\Lambda^a$, then $a\in\Lambda_{z_1}\cap\Lambda_{z_2}$, which is therefore nonempty and thus $\Lambda_{z_1}=\Lambda_{z_2}$ because $\Lambda$ is categorical. Moreover, note that if $z\in\Lambda^a$, then $a\in\Lambda_z\neq\varnothing$, so $\Lambda_z\in\Lambda_{R_0,R_1}^{(0)}$.

This defines a graph structure on $\Lambda$. Since it depends on $R_0$ and $R_1$, we denote the set of $2$-paths as $\Lambda_{R_0,R_1}^{(2)}$. We need to prove that $\Lambda^{[2]}=\Lambda_{R_0,R_1}^{(2)}$. 

If $(a,b)\in\Lambda^{[2]}$, then $b\in\Lambda^a$, so $\so(a)=\Lambda_b$. Also, $a\in\Lambda_b$, which is nonempty and thus $\ra(b)=\Lambda_b=\so(a)$. This proves $\Lambda^{[2]}\subseteq\Lambda_{R_0,R_1}^{(2)}$.

Conversely, suppose that $(a,b)\in\Lambda_{R_0,R_1}^{(2)}$, that is, that $\so(a)=\ra(b)$. By the definitions of $\Lambda_{R_0,R_1}^{(2)}$ (as a disjoint union) and of the source and range maps, we necessarily have $\so(a)=\ra(b)\in\left\{\Lambda_z:z\in\Lambda\right\}$, that is, that $\so(a)=\Lambda_z$ for some $z\in\Lambda^a$, and that $\ra(b)=\Lambda_b$. Then
\[a\in\Lambda_z=\so(a)=\ra(b)=\Lambda_b,\]
which means that $(a,b)\in\Lambda^{[2]}$.

Finally, since $\Lambda^{ab}=\Lambda^b$ and $\Lambda_{ab}=\Lambda_a$ for all $(a,b)\in\Lambda^{(2)}$, we obtain $\ra(ab)=\ra(a)$ and $\so(ab)=\so(b)$ (this is where Equation \eqref{eq:additionalpropertyofequivalencerelations} is necessary). By Proposition \ref{prop:graphedsemigroupoidsaresemigroupoids}, $(\Lambda_{R_0,R_1}^{(0)},\Lambda,\so,\ra,\mu)$ is a graphed semigroupoid.\qedhere
\end{proof}

The construction above, in fact, yields \emph{all} the compatible graph structures on $\Lambda$, as we now prove. Let $V_0$ and $V_1$ be the two sets considered in the proof above.

\begin{proposition}\label{prop:classificationofgraphsoncategorical}
Let $\Lambda$ be a categorical Exel semigroupoid, endowed with a graph structure which makes it a graphed semigroupoid $(\Lambda^{(0)},\Lambda,\so,\ra,\mu)$. Suppose, moreover, that $\Lambda^{(0)}=\so(\Lambda)\cup\ra(\Lambda)$.

Then there exist unique equivalence relations $R_0$ and $R_1$ on $V_0$ and $V_1$, respectively, and a bijection $I\colon\Lambda_{R_0,R_1}^{(0)}\to\Lambda^{(0)}$ such that $(I,\id_{\Lambda})$ is a graph isomorphism.
\end{proposition}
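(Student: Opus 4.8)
The plan is to recover the triple $(R_0,R_1,I)$ directly from the given maps $\so,\ra$, the key observation being that the three summands of $\Lambda_{R_0,R_1}^{(0)}$ are forced to match the natural tripartition of $\Lambda^{(0)}=\so(\Lambda)\cup\ra(\Lambda)$ into the ``mixed'' vertices $\so(\Lambda)\cap\ra(\Lambda)$, the ``pure sources'' $\so(\Lambda)\setminus\ra(\Lambda)$, and the ``pure ranges'' $\ra(\Lambda)\setminus\so(\Lambda)$. Throughout I would use the set-theoretic descriptions $\Lambda^a=\{b:\so(a)=\ra(b)\}$ and $\Lambda_a=\{b:\so(b)=\ra(a)\}$, which immediately give $\Lambda^a=\varnothing\iff\so(a)\notin\ra(\Lambda)$ and $\Lambda_a=\varnothing\iff\ra(a)\notin\so(\Lambda)$.

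First I would define the candidate relations as kernels: set $v_0(a)\,R_0\,v_0(b)$ iff $\so(a)=\so(b)$ (for $a,b$ with $\Lambda^a=\Lambda^b=\varnothing$) and $v_1(a)\,R_1\,v_1(b)$ iff $\ra(a)=\ra(b)$. These are evidently equivalence relations. To check \eqref{eq:additionalpropertyofequivalencerelations}, let $(a,b)\in\Lambda^{[2]}$. If $\Lambda^b=\varnothing$ then also $\Lambda^{ab}=\Lambda^b=\varnothing$, so $v_0(ab)\in V_0$, and axiom \ref{def:graphedsemigroupoid}\ref{def:graphedsemigroupoiditem1} gives $\so(ab)=\so(b)$, i.e.\ $(v_0(b),v_0(ab))\in R_0$; the condition for $R_1$ follows symmetrically from $\ra(ab)=\ra(a)$ together with $\Lambda_{ab}=\Lambda_a$.

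Next I would define $I$ piecewise by $I(\Lambda_a)=\ra(a)$ on the internal vertices ($\Lambda_a\neq\varnothing$), $I([v_0(a)])=\so(a)$ on $V_0/R_0$, and $I([v_1(a)])=\ra(a)$ on $V_1/R_1$. The main bookkeeping — and the only step needing real care — is that $I$ is a well-defined bijection onto $\Lambda^{(0)}$. Well-definedness and injectivity on the internal vertices come from $\Lambda_a=\Lambda_b\iff\ra(a)=\ra(b)$ (for nonempty classes, via the description of $\Lambda_a$ above); on the other two summands they are immediate from the definitions of $R_0,R_1$. For the images, the equivalences of the first paragraph show that the three component maps have images exactly $\so(\Lambda)\cap\ra(\Lambda)$, $\so(\Lambda)\setminus\ra(\Lambda)$, and $\ra(\Lambda)\setminus\so(\Lambda)$, which are pairwise disjoint with union $\so(\Lambda)\cup\ra(\Lambda)=\Lambda^{(0)}$; hence $I$ is bijective.

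It then remains to verify that $(I,\id_\Lambda)$ is a graph isomorphism, which is a routine case analysis: comparing the definition of $\so_{R_0,R_1}$ with $I$ gives $I(\so_{R_0,R_1}(a))=\so(a)$ (in the case $\Lambda^a\neq\varnothing$ one uses $\so(a)=\ra(z)$ for $z\in\Lambda^a$, noting $a\in\Lambda_z\neq\varnothing$), and likewise $I(\ra_{R_0,R_1}(a))=\ra(a)$; as $I$ and $\id_\Lambda$ are bijective, this is an isomorphism. For uniqueness, suppose $(R_0',R_1',I')$ is another such triple. Since $\Lambda^{(0)}_{R_0',R_1'}$ is the union of the images of $\so_{R_0',R_1'}$ and $\ra_{R_0',R_1'}$, the morphism equations $\so=I'\circ\so_{R_0',R_1'}$ and $\ra=I'\circ\ra_{R_0',R_1'}$ determine $I'$ completely, and injectivity of $I'$ forces $v_0(a)\,R_0'\,v_0(b)\iff\so_{R_0',R_1'}(a)=\so_{R_0',R_1'}(b)\iff\so(a)=\so(b)$, whence $R_0'=R_0$ and symmetrically $R_1'=R_1$. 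I expect the bijectivity of $I$ (matching the summands to the tripartition) to be the crux; the graph-morphism check and the uniqueness argument are then direct unwindings of the construction in Theorem \ref{prop:equivalencegraphableandcategoricalsemigroupoids}.
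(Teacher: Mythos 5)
Your proposal is correct and takes essentially the same approach as the paper's proof: you define $R_0$ and $R_1$ as the kernels of $v_0(a)\mapsto\so(a)$ and $v_1(a)\mapsto\ra(a)$, define $I$ by the same piecewise formula ($\Lambda_a\mapsto\ra(a)$, $[v_0(a)]\mapsto\so(a)$, $[v_1(a)]\mapsto\ra(a)$), and conclude uniqueness by the same chain of equivalences exploiting injectivity of the competing bijection. Your identification of the images of the three summands with the tripartition $\so(\Lambda)\cap\ra(\Lambda)$, $\so(\Lambda)\setminus\ra(\Lambda)$, $\ra(\Lambda)\setminus\so(\Lambda)$ is just a cleaner packaging of the paper's sets $W$, $W_0$, $W_1$.
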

\begin{proof}
Consider the sets \[W_0=\left\{\so(a):\Lambda^a=\varnothing\right\},\qquad W_1=\left\{\ra(a):\Lambda_a=\varnothing\right\}\qquad\text{and}\qquad W=\Lambda^{(0)}\setminus(W_0\cup W_1).\]
Note that $W_0$ and $W_1$ are disjoint, since otherwise there would be $a,b\in\Lambda$ such that $\so(a)=\ra(b)$ and $\Lambda_b=\varnothing$. However, this would imply that $(a,b)\in\Lambda^{(2)}$, so $a\in\Lambda_b$, a contradiction.

First we need to construct the equivalence relations $R_0$ and $R_1$ as in Proposition \ref{prop:equivalencegraphableandcategoricalsemigroupoids}. Consider the functions $F_0\colon V_0\to W_0$ and $F_1\colon V_1\to W_1$ given by
\[F_0(v_0(a))=\so(a)\qquad\text{and}\qquad F_1(v_1(b))=\ra(b)\]
for all $a,b\in\Lambda$ such that $\Lambda^a=\Lambda_b=\varnothing$. Note that these maps are surjective. Consider the equivalence relations $R_i=\ker F_i$ ($i=1,2$), that is,
\[R_i=\left\{(v_i(a),v_i(b)):F_i(v_i(a))=F_i(v_i(b))\right\}.\]
Then Equation \eqref{eq:additionalpropertyofequivalencerelations} is clearly satisfied, by the definition of $F_0$ and $F_1$ and because $\so$ and $\ra$ are compatible with the semigroupoid structure of $\Lambda$ (see Definition \ref{def:graphedsemigroupoid}\ref{def:graphedsemigroupoiditem1}).

Therefore the maps $F_i$ factor through the quotient to bijections $V_i/R_i\to W_i$. This gives us a bijection
\[I\colon(V_0/R_0)\sqcup(V_1/R_1)\to W_0\cup W_1,\qquad I([v_i(a)])=F_i(v_i(a)).\]
We need to extend $I$ to a bijection from $\Lambda_{R_0,R_1}^{(0)}$ to $\Lambda^{(0)}$, that is, we need to define $I(\Lambda_a)$ when $\Lambda_a\neq\varnothing$ in order to obtain a bijection from $\left\{\Lambda_a:\Lambda_a\neq\varnothing\right\}$ to $W$.

If $\Lambda_a\neq\varnothing$, we let $I(\Lambda_a)=\ra(a)$. In order to prove that $I$ is well-defined, suppose $\Lambda_{a_1}=\Lambda_{a_2}\neq\varnothing$. Choose any $z\in\Lambda_{a_1}=\Lambda_{a_2}$, so $(z,a_i)\in\Lambda^{(2)}$, that is, $\ra(a_1)=\so(z)=\ra(a_2)$, so $I(\Lambda_a)$ is uniquely defined.

Now let us prove that $I(\Lambda_a)\in W$ whenever $\Lambda_a\neq\varnothing$, or equivalently that $I(\Lambda_a)\not\in W_0\cup W_1$.
\begin{itemize}
    \item If $\ra(a)=\so(b)$ then $a\in\Lambda^b$ and in particular $\Lambda^b\neq\varnothing$. This proves that $I(\Lambda_a)\not\in W_0$;
    \item If $\ra(a)=\ra(b)$, then $\Lambda_a=\Lambda_b$. As $\Lambda_a\neq\varnothing$, this proves that $I(\Lambda_a)\not\in W_1$.
\end{itemize}
Therefore $I(\Lambda_a)\in\Lambda^{(0)}\setminus(W_0\cup W_1)=W$.

Let us now prove that every $v\in W$ is of the form $I(\Lambda_a)$ for some $a\in\Lambda$ with $\Lambda_a\neq\varnothing$. As we assume that $\Lambda^{(0)}=\so(\Lambda)\cup\ra(\Lambda)$, we have two possibilities:
\begin{itemize}
    \item If $v=\so(b)$ for some $b$, then $\Lambda^b\neq\varnothing$, as $v\not\in W_0$, so we may take any $a\in\Lambda^b$. We have $b\in\Lambda_a\neq\varnothing$, and $I(\Lambda_a)=\ra(a)=\so(b)=v$.
    \item If $v=\ra(a)$ for some $a$, then $\Lambda_a\neq\varnothing$ as $v\not\in W_1$, and so $v=I(\Lambda_a)$.
\end{itemize}

We have thus obtained a surjection $I\colon\left\{\Lambda_a:\Lambda_a\neq\varnothing\right\}\to W$. We still need to prove that $I$ is injective on this set. Suppose $I(\Lambda_a)=I(\Lambda_b)$, that is, $\ra(a)=\ra(b)$, where $\Lambda_a,\Lambda_b\neq\varnothing$. Choose any $z\in\Lambda_a$. Then $\so(z)=\ra(a)=\ra(b)$, so $z\in\Lambda_a\cap\Lambda_b$, and therefore $\Lambda_a=\Lambda_b$ as $\Lambda$ is categorical.

We have, therefore, a well-defined bijection $I\colon\Lambda_{R_0,R_1}^{(0)}\to\Lambda^{(0)}$. We are done if we verify that $\so(a)=I(\so_{R_0,R_1}(a))$ and $\ra(a)=I(\ra_{R_0,R_1}(a))$ for all $a\in\Lambda$. Let $a\in\Lambda$. If $\Lambda^a=\varnothing$, then
\[I(\so_{R_0,R_1}(a))=I([v_0(a)])=F_0(v_0(a))=\so(a).\]
If $\Lambda^a\neq\varnothing$, then we choose $z\in\Lambda^a$, so that $a\in\Lambda_z\neq\varnothing$, and
\[I(\so_{R_0,R_1}(a))=I(\Lambda_z)=\ra(z)=\so(a).\]

This proves that $I\circ\so_{R_0,R_1}=\so$, and similarly $I\circ\ra_{R_0,R_1}=\ra$. Therefore $(I,\id_\Lambda)$ is a graph isomorphism.

Suppose now that $R_0'$ and $R_1'$ are any other equivalence relations on $V_0$ and $V_1$ and $J\colon\Lambda_{R_0',R_1'}^{(0)}\to\Lambda^{(0)}$ is any other bijection for which $(J,\id_{\Lambda})$ is a graph isomorphism. We denote the equivalence classes of $V_i/R_i'$ as $[v_i(a)]'$. For each $v_0(a),v_0(b)\in V_0$, where $\Lambda^a=\Lambda^b=\varnothing$, we have
\[\begin{array}{c c c c c}
    v_0(a)R_0'v_0(b)&\iff&[v_0(a)]'=[v_0(b)]'&\iff& J([v_0(a)]')=J([v_0(b)]')\\
    &\iff& J(\so_{R_0',R_1'}(a))=J(\so_{R_0',R_1'}(b))&\iff& \so(a)=\so(b)\\
    &\iff& F_0(v_0(a))=F_0(v_0(b))&\iff& v_0(a)R_0 v_0(b),
\end{array}\]
where we use the facts that $J$ is injective, the definition of the source map $\so_{R_0',R_1'}$, the fact that $(J,\id_{\Lambda})$ is a graph isomorphism, and the definitions of $F_0$ and of $R_0=\ker F_0$. Therefore $R_0'=R_0$. Similarly, $R_1'=R_1$.\qedhere
\end{proof}

\begin{example}
Let $\Lambda=\left\{a,b,x,y,z\right\}$, with product defined by
\[aa=a,\quad xa=y,\quad ya=y,\quad bb=b, \quad xb=z,\quad zb=z\]
Then $\Lambda$ is a non-categorical semigroupoid, since $\Lambda_a=\left\{x,y\right\}$ and $\Lambda_b=\left\{x,z\right\}$ are different but not disjoint. Thus $\Lambda$ cannot be fully realized as a graphed semigroupoid.
\end{example}

Note that every Exel semigroupoid $\Lambda$ may be identified, in an injective and homomorphic manner, as a subset of some semigroup $S$: Namely, the power set $S=2^\Lambda$ of $\Lambda$ is a semigroup under the product of sets, and the map $\phi\colon\Lambda\to S$, $\phi(a)=\left\{a\right\}$, is an injective semigroupoid homomorphisms. However, $\phi(\Lambda)$ is not a sub-semigroup(oid) of $S$ if $\Lambda$ is not a semigroup itself.

We finish this introduction by providing a condition that allows us to ``extend'' semigroupoid homomorphisms between graphed semigroupoids to graph homomorphisms.

\begin{proposition}\label{prop:vertexmap}
Suppose that $G$ and $H$ are graphs, that $G$ has no sources nor sinks, and that $\phi\colon G\to H$ is a map such that $(\phi\times\phi)(G^{(2)})\subseteq H^{(2)}$. Then there exists a unique ``vertex map'' $\phi^{(0)}\colon G^{(0)}\to H^{(0)}$ such that $(\phi^{(0)},\phi)$ is a graph morphism.
\end{proposition}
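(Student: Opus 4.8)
The plan is to define $\phi^{(0)}$ by the only formula compatible with the source condition of a graph morphism, and then to verify that this same choice automatically respects the range condition. Since $G$ has no sinks, every vertex $v\in G^{(0)}$ equals $\so_G(g)$ for some arrow $g$, and the required identity $\so_H\circ\phi=\phi^{(0)}\circ\so_G$ forces $\phi^{(0)}(v)=\so_H(\phi(g))$. I therefore \emph{define} $\phi^{(0)}(v)\defeq\so_H(\phi(g))$ for an arbitrary arrow $g$ with $\so_G(g)=v$; this makes the first morphism identity hold by construction, and uniqueness will be immediate once the formula is seen to be forced.

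The main obstacle is well-definedness: I must show $\so_H(\phi(g_1))=\so_H(\phi(g_2))$ whenever $\so_G(g_1)=\so_G(g_2)=v$. This is precisely where the hypothesis that $G$ has no sources enters. Since $v$ is not a source, I can pick an arrow $k$ with $\ra_G(k)=v$. Then $\so_G(g_i)=v=\ra_G(k)$ gives $(g_i,k)\in G^{(2)}$ for $i=1,2$, so the hypothesis $(\phi\times\phi)(G^{(2)})\subseteq H^{(2)}$ yields $(\phi(g_i),\phi(k))\in H^{(2)}$, whence $\so_H(\phi(g_i))=\ra_H(\phi(k))$ for both $i$. Comparing the two equalities gives $\so_H(\phi(g_1))=\so_H(\phi(g_2))$, so $\phi^{(0)}$ is well-defined. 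The same argument, with the roles of source and sink interchanged, is the model for the remaining verification.

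It remains to check the range identity and uniqueness. For $\ra_H\circ\phi=\phi^{(0)}\circ\ra_G$, fix an arrow $g$ and put $w=\ra_G(g)$; because $G$ has no sinks, choose $k$ with $\so_G(k)=w$. Then $\so_G(k)=w=\ra_G(g)$ gives $(k,g)\in G^{(2)}$, hence $(\phi(k),\phi(g))\in H^{(2)}$ and $\so_H(\phi(k))=\ra_H(\phi(g))$. Applying the definition of $\phi^{(0)}$ at $w=\so_G(k)$ then gives $\phi^{(0)}(\ra_G(g))=\phi^{(0)}(w)=\so_H(\phi(k))=\ra_H(\phi(g))$, as needed. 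Finally, any vertex map $\psi^{(0)}$ for which $(\psi^{(0)},\phi)$ is a graph morphism must satisfy $\psi^{(0)}(\so_G(g))=\so_H(\phi(g))$ for every arrow $g$; since $G$ has no sinks, every vertex is of the form $\so_G(g)$, so $\psi^{(0)}$ coincides with $\phi^{(0)}$ on all of $G^{(0)}$, giving uniqueness.
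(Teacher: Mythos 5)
Your proof is correct and follows essentially the same route as the paper's: you define $\phi^{(0)}$ by the formula forced by $\so_H\circ\phi=\phi^{(0)}\circ\so_G$ (using surjectivity of $\so_G$, i.e.\ no sinks), establish well-definedness by composing with an arrow into the common source (using no sources), and deduce uniqueness from surjectivity of $\so_G$. The only difference is cosmetic: you write out the range identity $\ra_H\circ\phi=\phi^{(0)}\circ\ra_G$ explicitly, whereas the paper leaves it as a ``similar argument''.
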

\begin{proof}
Uniqueness of such $\phi^{(0)}$ is immediate, since we require that $\so_H\circ\phi=\phi^{(0)}\circ\so_G$, and $\so_G$ is surjective as $G$ has no sinks. This same equation yields us the only possible formula for $\phi^{(0)}$, viz.\ $\phi^{(0)}(v)=\so_H(\phi(a))$ where $a\in G$ is any arrow with $\so_G(a)=v$. Thus we need to verify that $\so_G(a)=\so_G(b)$, where $a,b\in G$, implies $\so_H(\phi(a))=\so_H(\phi(b))$. As $\so_G(a)$ is not a source, there exists $z\in G$ with $\ra_G(z)=\so_G(a)=\so_G(b)$, that is $(a,z)$ and $(b,z)\in G^{(2)}$. Then $(\phi(a),\phi(z))$ and $(\phi(b),\phi(z))\in H^{(2)}$, which means that
\[\so_H(\phi(a))=\ra_H(\phi(z))=\so_H(\phi(b)).\]
Therefore, we obtain a unique function $\phi^{(0)}\colon G^{(0)}\to H^{(0)}$ satisfying $\so_H\circ\phi=\phi^{(0)}\circ\so_G$. The proof that $\ra_H\circ\phi=\phi^{(0)}\circ\ra_G$ follows a similar argument as in the paragraph above, but using instead the fact that $G$ has no sinks.\qedhere
\end{proof}

\begin{corollary}\label{cor:graphedhomomorphisminducesvertexmap}
Suppose that $\mathcal{S}$ and $\mathcal{T}$ are graphed semigroupoids, $\phi\colon\mathcal{S}\to\mathcal{T}$ is a semigroupoid homomorphism, and that $\mathcal{S}$ has no sources nor sinks. Then there exists a unique ``vertex map'' $\phi^{(0)}\colon\mathcal{S}^{(0)}\to\mathcal{T}$ such that $(\phi^{(0)},\phi)$ is a graph homomorphism.
\end{corollary}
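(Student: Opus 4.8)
The plan is to derive this directly from Proposition \ref{prop:vertexmap}. Since $\mathcal{S}$ and $\mathcal{T}$ are graphed semigroupoids, their underlying graphs are genuine graphs, and $\mathcal{S}$ has no sources nor sinks by hypothesis; so two of the three hypotheses of Proposition \ref{prop:vertexmap} are already in place, applied to the underlying graphs of $\mathcal{S}$ and $\mathcal{T}$ together with the arrow map $\phi$. It remains only to verify the path-preservation condition $(\phi\times\phi)(\mathcal{S}^{(2)})\subseteq\mathcal{T}^{(2)}$.

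The key observation is that this condition is nothing but the defining condition for $\phi$ to be a semigroupoid homomorphism, once we pass to the Exel semigroupoid structures. Indeed, by Proposition \ref{prop:graphedsemigroupoidsaresemigroupoids}, when a graphed semigroupoid is regarded as an Exel semigroupoid its set of composable pairs coincides with its set of $2$-paths, so that $\mathcal{S}^{[2]}=\mathcal{S}^{(2)}$ and $\mathcal{T}^{[2]}=\mathcal{T}^{(2)}$. The hypothesis that $\phi$ is a semigroupoid homomorphism therefore asserts precisely that $(\phi\times\phi)(\mathcal{S}^{(2)})\subseteq\mathcal{T}^{(2)}$, which is exactly the remaining input that Proposition \ref{prop:vertexmap} demands.

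With all three hypotheses in hand, Proposition \ref{prop:vertexmap} produces a unique vertex map $\phi^{(0)}\colon\mathcal{S}^{(0)}\to\mathcal{T}^{(0)}$ for which $(\phi^{(0)},\phi)$ is a graph morphism, and this is the desired conclusion. I expect no genuine obstacle here: the only point requiring any care is the identification of composable pairs with $2$-paths, which is exactly what Proposition \ref{prop:graphedsemigroupoidsaresemigroupoids} supplies. One might also remark, in passing, that the homomorphism property $\phi(ab)=\phi(a)\phi(b)$ plays no role in this argument—only the condition $(\phi\times\phi)(\mathcal{S}^{[2]})\subseteq\mathcal{T}^{[2]}$ is used—so the corollary is really a structural consequence of how graphed semigroupoids encode their graphs through their partial products.
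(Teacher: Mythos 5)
Your proof is correct and matches the paper's intended argument exactly: the paper states this as an immediate corollary of Proposition~\ref{prop:vertexmap}, with the only observation needed being that a semigroupoid homomorphism between graphed semigroupoids preserves composable pairs, and that for graphed semigroupoids the composable pairs $\mathcal{S}^{[2]}$, $\mathcal{T}^{[2]}$ are by construction the $2$-paths $\mathcal{S}^{(2)}$, $\mathcal{T}^{(2)}$. Your closing remark—that multiplicativity of $\phi$ is never used, only composability preservation—is also accurate and consistent with the paper's later use of this fact for $\land$-prehomomorphisms.
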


\begin{example}
The proposition above is not valid if we just assume that $\mathcal{S}$ has no sinks (or no sources). Let us associate a semigroupoid $A$ to the \emph{strict} order of $\mathbb{N}$ in the same manner as we associate categories to (non-strict) orders -- namely $A=\left\{(n,m)\in\mathbb{N}\times\mathbb{N}:m<n\right\}$, with product $(n,m)(m,k)=(n,k)$, vertex set $A^{(0)}=\mathbb{N}$ and source and range maps $\so(n,m)=m$, $\ra(n,m)=n$.

Now let $\mathcal{S}_1=A\sqcup A$ consist of two distinct copies of $A$, so $\mathcal{S}_1$ is a graphed semigroupoid over $\mathcal{S}_1^{(0)}=\mathbb{N}\sqcup\mathbb{N}$ (the product of $\mathcal{S}_1$ is defined only for elements in the same copy of $A$). We let $\mathcal{S}_2^{(0)}$ be the set obtained from $\mathcal{S}_1^{(0)}=\mathbb{N}\sqcup\mathbb{N}$ by identifying both copies of $0$, and let $\mathcal{S}_2$ be the graphed semigroupoid obtained from $\mathcal{S}_1$ by composing the source and range maps with the canonical quotient map $\mathcal{S}_1^{(0)}\to\mathcal{S}_2^{(0)}$.
\[
\begin{tikzpicture}
\node (S2) at (0,0) {$\mathcal{S}_2$:};
\node (S20) at ([shift={+(0.5,0)}]S2) {$0$};
\node (S211) at ([shift={+(1,0.5)}]S20) {$1$};
\node (S212) at ([shift={+(1,-0.5)}]S20) {$1$};
\node (S221) at ([shift={+(1,0)}]S211) {$2$};
\node (S222) at ([shift={+(1,0)}]S212) {$2$};
\node (S231) at ([shift={+(1,0)}]S221) {$\cdots$};
\node (S232) at ([shift={+(1,0)}]S222) {$\cdots$};
\draw[->] (S20)--(S211);
\draw[->] (S211)--(S221);
\draw[->] (S221)--(S231);
\draw[->] (S20)--(S212);
\draw[->] (S212)--(S222);
\draw[->] (S222)--(S232);

\node (S1) at ([shift={+(-6,0)}]S2) {$\mathcal{S}_1$:};
\node (S101) at ([shift={+(0.5,0.5)}]S1) {$0$};
\node (S102) at ([shift={+(0.5,-0.5)}]S1) {$0$};
\node (S111) at ([shift={+(1,0)}]S101) {$1$};
\node (S112) at ([shift={+(1,0)}]S102) {$1$};
\node (S121) at ([shift={+(1,0)}]S111) {$2$};
\node (S122) at ([shift={+(1,0)}]S112) {$2$};
\node (S131) at ([shift={+(1,0)}]S121) {$\cdots$};
\node (S132) at ([shift={+(1,0)}]S122) {$\cdots$};
\draw[->] (S101)--(S111);
\draw[->] (S111)--(S121);
\draw[->] (S121)--(S131);
\draw[->] (S102)--(S112);
\draw[->] (S112)--(S122);
\draw[->] (S122)--(S132);
\end{tikzpicture}
\]
Then neither $\mathcal{S}_1$ nor $\mathcal{S}_2$ have sinks, but the identity map $\mathcal{S}_2\to\mathcal{S}_1$ is an Exel semigroupoid isomorphism which cannot be extended to a graph homomorphism.
\end{example}

    \subsection{Inverse semigroupoids}
    The main goal of this subsection if to define inverse semigroupoids and to extend the basic elements of the theory of inverse semigroups to this more general setting.

\begin{definition}\label{def:inversesemigroupoid}
An Exel semigroupoid $\Lambda$ is \emph{regular} if for every $a\in\Lambda$ there exists $b\in\Lambda$ such that $(a,b),(b,a)\in\Lambda^{[2]}$ and
\[aba=a\qquad\text{and}\qquad bab=b.\label{eq:inversesemigroupoid}\]
Such an element $b$ is called an \emph{inverse} of $a$.

If every element $a\in\Lambda$ admits a unique inverse, then $\Lambda$ is called an \emph{inverse semigroupoid}. In this case, the unique inverse of $a$ is denoted $a^*$.
\end{definition}

All groupoids and all inverse semigroups are inverse semigroupoids.

Just as in the case of semigroups, the condition for regular semigroupoids can be weakened to the following: For every $a\in\Lambda$, there exists $b\in\Lambda$ such that $aba=a$. Such an element $b$ is called a \emph{pseudoinverse} of $a$. It follows that $bab$ is an inverse of $a$, and therefore $\Lambda$ is regular.

\begin{example}
Suppose that $\Lambda$ is an inverse graphed semigroupoid, endowed with some compatible graph structure. Then for every $x\in\Lambda^{(0)}$, the \emph{isotropy semigroup} $\Lambda_x^x=\ra^{-1}(x)\cap\so^{-1}(x)$ is in fact an inverse semigroup, with the product induced by $\Lambda$.
\end{example}

Given an Exel semigroupoid $\Lambda$, denote by $E(\Lambda)=\left\{e\in \Lambda:(e,e)\in\Lambda^{[2]}\text{ and }ee=e\right\}$ the set of \emph{idempotents} of $\Lambda$. Note that if $t$ is an inverse of $s$, then $st$ and $ts$ are idempotents.

We will now proceed to prove that every inverse semigroupoid admits a unique compatible graph structure (Corollary \ref{cor:inversesemigroupoidhasauniquegraphstructure}).

\begin{lemma}\label{lem:exelinverseiscategorical}
Let $\Lambda$ be an inverse semigroupoid. The following are equivalent:
\begin{enumerate}[label=(\arabic*)]
\item\label{lem:exelinverseiscategorical1} $\Lambda$ is categorical;
\item\label{lem:exelinverseiscategorical2} For all $e,f\in E(\Lambda)$, if $ef$ is defined, then $fe$ is also defined and $ef=fe$.
\end{enumerate}
\end{lemma}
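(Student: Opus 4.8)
The plan is to treat the two implications separately, using the graphing results of the previous subsection for \ref{lem:exelinverseiscategorical1}$\Rightarrow$\ref{lem:exelinverseiscategorical2} and an elementary ``transitivity'' argument for the converse. For \ref{lem:exelinverseiscategorical1}$\Rightarrow$\ref{lem:exelinverseiscategorical2}, I would first invoke Theorem \ref{prop:equivalencegraphableandcategoricalsemigroupoids} to fix a compatible graph structure $(\Lambda^{(0)},\Lambda,\so,\ra,\mu)$ with $\Lambda^{[2]}=\Lambda^{(2)}$, so that $\Lambda$ becomes an inverse graphed semigroupoid. The key observation is that every idempotent is a loop: if $e\in E(\Lambda)$ then $(e,e)\in\Lambda^{(2)}$ forces $\so(e)=\ra(e)$. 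Hence, if $e,f\in E(\Lambda)$ and $ef$ is defined, then $\so(e)=\ra(f)$, and combining this with $\so(e)=\ra(e)$ and $\so(f)=\ra(f)$ shows that all four vertices coincide at a single $v\in\Lambda^{(0)}$. Thus $e$ and $f$ both lie in the isotropy semigroup $\Lambda_v^v$, which is an inverse semigroup by the Example following Definition \ref{def:inversesemigroupoid}. Since idempotents commute in any inverse semigroup, $fe$ is defined (it lies in $\Lambda_v^v$) and $ef=fe$, giving \ref{lem:exelinverseiscategorical2}.

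For \ref{lem:exelinverseiscategorical2}$\Rightarrow$\ref{lem:exelinverseiscategorical1}, I would reduce to idempotents. First record the identities $\Lambda^a=\Lambda^{a^*a}$ and $\Lambda_a=\Lambda_{aa^*}$, which follow from $\Lambda^{ab}=\Lambda^b$ and $\Lambda_{ab}=\Lambda_a$ together with $a=a(a^*a)=(aa^*)a$. These reduce categoricity to the idempotent case: it suffices to show that for $e,g\in E(\Lambda)$, if $\Lambda^e\cap\Lambda^g\neq\varnothing$ then $\Lambda^e=\Lambda^g$, since $\Lambda^a=\Lambda^{a^*a}$ with $a^*a\in E(\Lambda)$. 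Moreover, for any $p\in\Lambda$ and any $e\in E(\Lambda)$ one has $p\in\Lambda^e$ iff $e\in\Lambda_p=\Lambda_{pp^*}$, i.e.\ iff $(e,pp^*)\in\Lambda^{[2]}$; so membership in $\Lambda^e$ is detected by the idempotent $pp^*$.

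The crux is the following transitivity claim: for $e,f,g\in E(\Lambda)$, if $ef$ and $fg$ are defined then $eg$ is defined. I would argue as follows. From $(e,f),(f,g)\in\Lambda^{[2]}$ and the implication \ref{def:semigroupoiditem1}$\Rightarrow$\ref{def:semigroupoiditem2} of Definition \ref{def:semigroupoid}, the product $efg$ is defined, so $(ef,g)\in\Lambda^{[2]}$. By hypothesis \ref{lem:exelinverseiscategorical2}, $fe$ is defined and $ef=fe$ \emph{as elements}; hence $(fe,g)\in\Lambda^{[2]}$ as well. Now the triple $(f,e,g)$ satisfies item \ref{def:semigroupoiditem2} of Definition \ref{def:semigroupoid}, so item \ref{def:semigroupoiditem3} holds, and in particular $(e,g)\in\Lambda^{[2]}$, i.e.\ $eg$ is defined. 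This step --- converting $(ef,g)\in\Lambda^{[2]}$ into $(fe,g)\in\Lambda^{[2]}$ via the commutation hypothesis and then applying the associativity equivalence --- is the main obstacle, and is precisely where \ref{lem:exelinverseiscategorical2} is indispensable (without it the semigroupoid need not be categorical, so one cannot expect such a product to be defined).

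Finally I would assemble the pieces. Given $p\in\Lambda^e\cap\Lambda^g$, set $f=pp^*$, so that $ef$ and $gf$ are defined. For an arbitrary $q\in\Lambda^e$ set $h=qq^*$, so $eh$ is defined; I must show $gh$ is defined. Using \ref{lem:exelinverseiscategorical2}, $fe$ is defined, so the transitivity claim applied to $(g,f,e)$ yields $ge$ defined, and then the claim applied to $(g,e,h)$ yields $gh$ defined, i.e.\ $q\in\Lambda^g$. Hence $\Lambda^e\subseteq\Lambda^g$, and by symmetry $\Lambda^e=\Lambda^g$, which establishes \ref{lem:exelinverseiscategorical1}.
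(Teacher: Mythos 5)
Your proof is correct. The direction \ref{lem:exelinverseiscategorical1}$\Rightarrow$\ref{lem:exelinverseiscategorical2} is exactly the paper's argument: fix a compatible graph structure via Theorem \ref{prop:equivalencegraphableandcategoricalsemigroupoids}, note that idempotents are loops so all relevant vertices coincide, and commute $e$ and $f$ inside the isotropy inverse semigroup. In the converse direction you diverge: the paper does \emph{not} verify categoricity directly, but instead builds an explicit compatible graph structure --- the vertex set $E(\Lambda)/\!\!\sim$ where $e\sim f$ iff $ef$ is defined, with $\so(a)=[a^*a]$ and $\ra(a)=[aa^*]$ --- checks $\Lambda^{[2]}=\Lambda^{(2)}$ and the compatibility conditions via Proposition \ref{prop:graphedsemigroupoidsaresemigroupoids}, and then appeals to the fact that graphed semigroupoids are categorical. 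You instead reduce categoricity to idempotents through the identities $\Lambda^a=\Lambda^{a^*a}$ and $\Lambda_a=\Lambda_{aa^*}$, and chain your transitivity claim twice to get $\Lambda^e=\Lambda^g$ directly from $\Lambda^e\cap\Lambda^g\neq\varnothing$. The combinatorial crux is the same in both proofs: transitivity of ``the product is defined'' on $E(\Lambda)$, which the paper needs to make $\sim$ an equivalence relation and establishes by essentially your computation (use the commutation hypothesis to replace $(ef,g)\in\Lambda^{[2]}$ by $(fe,g)\in\Lambda^{[2]}$, then apply the equivalences of Definition \ref{def:semigroupoid}); in fact the paper explicitly remarks that ``similar arguments may be used to prove directly that $\Lambda$ is categorical,'' and yours is precisely that direct argument, written out. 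What the paper's packaging buys is a by-product: the quotient $E(\Lambda)/\!\!\sim$ is the space of germs of $E(\Lambda)$, which resurfaces as the canonical vertex space of an inverse semigroupoid (Corollary \ref{cor:inversesemigroupoidhasauniquegraphstructure} and Subsection \ref{subsec:groupoidofgerms}). What yours buys is a shorter, self-contained verification of the lemma that never leaves the Exel-semigroupoid formalism.
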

\begin{proof}
  \ref{lem:exelinverseiscategorical1}$\Rightarrow$\ref{lem:exelinverseiscategorical2}: Suppose that $\Lambda$ is categorical, and that $ef$ is defined, where $e,f\in E(\Lambda)$. We consider any compatible graph structure on $\Lambda$. Then $\so(f)=\ra(f)=\so(e)=\ra(e)$, so $fe$ is defined. Let $x=\so(e)$. Then $e$ and $f$ belong to the isotropy semigroup $\Lambda_x^x$ and are idempotents, therefore $ef=fe$ (\cite[Theorem 5.1.1]{MR1455373}).
  
  \ref{lem:exelinverseiscategorical2}$\Rightarrow$\ref{lem:exelinverseiscategorical1}: Assume \ref{lem:exelinverseiscategorical2} holds. We will construct an explicit compatible graph structure on $\Lambda$, although similar arguments may be used to prove directly that $\Lambda$ is categorical. We define the following equivalence relation $\sim$ on $E(\Lambda)$:
  \[e\sim f\iff ef\text{ is defined}.\]
  The only nontrivial part about $\sim$ being an equivalence relation is transitivity. If $ef$ and $fg$ are defined ($e,f,g\in E(\Lambda)$), then $efg=(ef)(fg)=(fe)(gf)$, and in particular $eg$ is defined. Thus $\sim$ is transitive.
  
  Denote the $\sim$-class of $e\in E(\Lambda)$ by $[e]$. Let $\Lambda^{(0)}=E(\Lambda)/\!\!\sim$, and define a graph structure on $\Lambda$ by setting
  \[\so(a)=[a^*a],\qquad \ra(a)=[aa^*].\]
  Since a product $ab$ is defined if and only if $a^*abb^*$ is defined, i.e., if and only if $\so(a)=\ra(b)$, we have $\Lambda^{[2]}=\Lambda^{(2)}$, as necessary. We may also compute, for all $(a,b)\in\Lambda^{(2)}$,
  \[(ab)(ab)^*=(aa^*a)b(ab)^*=(aa^*)(ab)(ab)^*,\]
  so $\ra(ab)=\ra(a)$. Similarly, $\so(ab)=\so(b)$. By Proposition \ref{prop:graphedsemigroupoidsaresemigroupoids}, $(\Lambda^{(0)},\Lambda,\so,\ra,\mu)$ is a graphed semigroupoid.\qedhere
  \end{proof}

The set $\Lambda^{(0)}$ constructed above is the \emph{space of germs} of $E(\Lambda)$ with its canonical order. This construction will be further developed in Subsection \ref{subsec:groupoidofgerms}.

\begin{theorem}\label{theo:everyinversesemigroupoidiscategorical}
Every inverse semigroupoid $\Lambda$ is categorical. Equivalently, if $e,f\in E(\Lambda)$ and $ef$ is defined, then $fe$ is also defined and $ef=fe$.
\end{theorem}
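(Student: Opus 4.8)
The plan is to reduce immediately via Lemma~\ref{lem:exelinverseiscategorical}: that lemma shows $\Lambda$ is categorical if and only if any two idempotents whose product is defined commute, so it suffices to establish the second, purely algebraic, condition directly. Crucially, this must be done \emph{without} choosing a compatible graph structure, since the existence of such a structure is exactly what we are trying to prove. So I would fix $e,f\in E(\Lambda)$ with $(e,f)\in\Lambda^{[2]}$, write $g=ef$ and $x=g^{*}$, and adapt the classical inverse-semigroup argument: show that $fxe$ is an inverse of $g$, deduce $fxe=x$ by uniqueness, conclude that $x$ (and hence $g=ef$) is idempotent, and finally exhibit $fe$ as an inverse of $ef$ to force $fe=ef$.

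The genuine difficulty, absent in the semigroup case, is that products are only partially defined, so every manipulation must be paired with a verification that the product in question exists; this is where the equivalence of conditions \ref{def:semigroupoiditem1}--\ref{def:semigroupoiditem3} of Definition~\ref{def:semigroupoid} does all the work, and I expect it to be the only subtle point. After recording that idempotents are self-inverse (so $e^{*}=e$, $f^{*}=f$, and, later, $x^{*}=x$ once $x$ is known idempotent), the three definedness facts I would isolate are: $(f,x)\in\Lambda^{[2]}$, from $(e,f)\in\Lambda^{[2]}$ and $(ef,x)=(g,x)\in\Lambda^{[2]}$ via the implication \ref{def:semigroupoiditem2}$\Rightarrow$\ref{def:semigroupoiditem1} for the triple $(e,f,x)$; $(x,e)\in\Lambda^{[2]}$, from $(e,f)\in\Lambda^{[2]}$ and $(x,ef)=(x,g)\in\Lambda^{[2]}$ via \ref{def:semigroupoiditem3}$\Rightarrow$\ref{def:semigroupoiditem1} for $(x,e,f)$; and, most importantly, $(f,e)\in\Lambda^{[2]}$, obtained only after we know $x=ef$, from $(e,f)\in\Lambda^{[2]}$ together with $(f,ef)=(f,x)\in\Lambda^{[2]}$ via \ref{def:semigroupoiditem3}$\Rightarrow$\ref{def:semigroupoiditem1} for the triple $(f,e,f)$. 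The first two make $fxe$ unambiguous; the third is the key that unlocks the conclusion.

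With the bookkeeping in place the algebra is short. Using $ee=e$, $ff=f$ and the defining relations $gxg=g$, $xgx=x$, one checks $g(fxe)g=(ef)x(ef)=g$ and $(fxe)g(fxe)=fxe$, so $fxe$ is an inverse of $g$ and hence $fxe=x$ by uniqueness. Substituting back gives $xx=(fxe)(fxe)=f\bigl(x(ef)x\bigr)e=fxe=x$, so $x$ is idempotent; since the equations $gxg=g$, $xgx=x$ are symmetric, $g$ is an inverse of $x$, whence $x^{*}=g$, and as $x$ is idempotent $ef=g=x^{*}=x$ is itself idempotent. The same argument, now legitimate because $(f,e)\in\Lambda^{[2]}$, applied with $e$ and $f$ interchanged, shows $fe$ is idempotent as well. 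Finally, with both $ef$ and $fe$ idempotent the repeated factors collapse, giving $(ef)(fe)(ef)=(ef)^{2}=ef$ and $(fe)(ef)(fe)=(fe)^{2}=fe$, so $fe$ is an inverse of $ef$; uniqueness yields $fe=(ef)^{*}=x=ef$, which is exactly the commutation we sought, and the theorem then follows from Lemma~\ref{lem:exelinverseiscategorical}.
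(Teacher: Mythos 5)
Your proof is correct and follows essentially the same route as the paper: reduce via Lemma~\ref{lem:exelinverseiscategorical} to commutativity of idempotents, set $x=(ef)^*$, show $fxe$ is an inverse of $ef$ so that $fxe=x$ is idempotent and equals $ef$, then deduce that $fe$ is defined and idempotent by symmetry and conclude $fe=ef$. The only (harmless) deviation is the last step, where you invoke uniqueness of inverses once more instead of the paper's direct computation $fe=fefe=fxe=x=ef$; your explicit bookkeeping of which associativity implications \ref{def:semigroupoiditem1}--\ref{def:semigroupoiditem3} justify each definedness claim is exactly the care the paper's proof asks for.
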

\begin{proof}
  We will prove that $\Lambda$ satisfies condition \ref{lem:exelinverseiscategorical2} of Lemma \ref{lem:exelinverseiscategorical}, which follows the same arguments as in the case of inverse semigroups (see \cite[Proposition 5.1.1]{MR1455373}), as long as we make sure that all products involved are defined. We include the details for completeness. Suppose that $e,f\in E(\Lambda)$, and that $(e,f)\in \Lambda^{[2]}$. Let $x=(ef)^*$. Then
  \[(ef)x(ef)=ef\qquad\text{and}\qquad x(ef)x=x\]
  In particular, the products $fx$ and $xe$ are defined, so $fxe$ is defined. Let us prove that $fxe=x$, by proving that $fxe$ is an inverse of $ef$: Since $ff=f$ and $ee=e$ are defined, then we may compute
  \[(fxe)(ef)(fxe)=fx(e^2)(f^2)xe=fxefxe=fxe\ntag\label{eq:exelinverseiscategorical}\]
  and similarly $(ef)(fxe)(ef)=ef$. Thus $fxe$ is the inverse of $ef$, i.e., $fxe=x$. Moreover, it follows from the last equality of \eqref{eq:exelinverseiscategorical} that
  \[x=fxe=fxefxe=x^2\]
  so $x\in E(\Lambda)$, and in particular $x=x^*=ef$ is idempotent.
  
  Since $fx=f(ef)$ is defined, then $fe$ is defined, and the same argument as above (changing the roles of $e$ and $f$) proves that $fe\in E(\Lambda)$. To finish, we use the equalities $x=fxe=ef$ to obtain
  \[fe=fefe=fxe=x=ef.\qedhere\]
\end{proof}

Therefore, every inverse semigroupoid admits a compatible graph structure. Suppose $\mathcal{S}$ is a graphed inverse semigroupoid. Recall that we assume that $\mathcal{S}^{(0)}=\so(\mathcal{S})\cup\ra(\mathcal{S})$. Given $a\in \mathcal{S}$, $a^*a$ and $aa^*$ are defined, so $\so(a)=\ra(a^*)$ and $\ra(a)=\so(a^*)$. It follows that $\mathcal{S}$ has no sources nor sinks. From Corollary \ref{cor:graphedhomomorphisminducesvertexmap} we may conclude that the compatible graph structure is unique.

\begin{corollary}\label{cor:inversesemigroupoidhasauniquegraphstructure}
If $\Lambda$ is an inverse semigroupoid, then there exists a unique compatible graph structure on $\Lambda$.
\end{corollary}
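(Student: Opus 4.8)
The plan is to split the statement into existence and uniqueness, both of which follow quickly from the results already in place. For existence I would simply chain two earlier results: Theorem \ref{theo:everyinversesemigroupoidiscategorical} guarantees that $\Lambda$ is categorical, and Theorem \ref{prop:equivalencegraphableandcategoricalsemigroupoids} then produces a graphed semigroupoid $(\Lambda^{(0)},\Lambda,\so,\ra,\mu)$ with $\Lambda^{[2]}=\Lambda^{(2)}$. This is precisely a compatible graph structure, so nothing further is needed for existence.

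The heart of the matter is uniqueness, and the key preliminary observation is that \emph{any} graphed inverse semigroupoid has no sources nor sinks. Indeed, for every arrow $a$ both products $a^*a$ and $aa^*$ are defined, so $\so(a)=\ra(a^*)$ and $\ra(a)=\so(a^*)$; together with the standing convention $\Lambda^{(0)}=\so(\Lambda)\cup\ra(\Lambda)$, this shows that every vertex lies simultaneously in the image of $\so$ and in the image of $\ra$. This is exactly the hypothesis required to invoke Corollary \ref{cor:graphedhomomorphisminducesvertexmap}, so establishing it carefully is the one step that genuinely unlocks the argument.

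To conclude, I would suppose $\mathcal{S}_1$ and $\mathcal{S}_2$ are two compatible graph structures on $\Lambda$, necessarily sharing the same arrow set and the same product $\mu$. Because $\Lambda^{[2]}$ is intrinsic to the underlying Exel semigroupoid, the identity $\id_\Lambda$ is a semigroupoid homomorphism both from $\mathcal{S}_1$ to $\mathcal{S}_2$ and from $\mathcal{S}_2$ to $\mathcal{S}_1$. Since neither structure has sources or sinks, Corollary \ref{cor:graphedhomomorphisminducesvertexmap} furnishes unique vertex maps $\phi^{(0)}$ and $\psi^{(0)}$ making $(\phi^{(0)},\id_\Lambda)$ and $(\psi^{(0)},\id_\Lambda)$ graph homomorphisms in the two directions. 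Their composites are vertex maps rendering $(\,\cdot\,,\id_\Lambda)$ a graph endomorphism of $\mathcal{S}_1$ and of $\mathcal{S}_2$ respectively; but the identity vertex map already does this, so the uniqueness clause of the same corollary forces both composites to be identities. Hence $\phi^{(0)}$ is a bijection and $(\phi^{(0)},\id_\Lambda)$ is a graph isomorphism, meaning the two structures agree up to the evident relabelling of vertices. I expect no serious obstacle beyond the verification of the ``no sources nor sinks'' property, which is the linchpin that makes Corollary \ref{cor:graphedhomomorphisminducesvertexmap} applicable.
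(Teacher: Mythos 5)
Your proposal is correct and follows essentially the same route as the paper: existence by chaining Theorem \ref{theo:everyinversesemigroupoidiscategorical} with Theorem \ref{prop:equivalencegraphableandcategoricalsemigroupoids}, and uniqueness by observing that $a^*a$ and $aa^*$ being defined (together with the convention $\Lambda^{(0)}=\so(\Lambda)\cup\ra(\Lambda)$) rules out sources and sinks, so that Corollary \ref{cor:graphedhomomorphisminducesvertexmap} applies. The paper leaves the two-directional application of that corollary and the composition-of-vertex-maps argument implicit, and your write-up simply makes those details explicit.
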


We will thus always regard inverse semigroupoids as graphed semigroupoids, with their unique compatible graph structure, and for every inverse semigroupoid homomorphism $\phi\colon\mathcal{S}\to\mathcal{T}$, we denote $\phi^{(0)}\colon\mathcal{S}^{(0)}\to\mathcal{T}^{(0)}$ the unique map for which $(\phi^{(0)},\phi)$ is a graph homomorphism, as in Corollary \ref{cor:graphedhomomorphisminducesvertexmap}.

Let $\mathcal{S}$ be an inverse semigroupoid. We will now list the remaining algebraic properties of $\mathcal{S}$ which will be used in the remainder of this article.

\begin{definition}
The \emph{canonical order} of an inverse semigroup $\mathcal{S}$ is the relation $\leq$ defined as
\[a\leq b\iff a=ba^*a.\]
(In particular, we require that $\so(a)=\so(b)$ and $\ra(a)=\ra(b)$.)
\end{definition}

Note that inverse semigroupoid homomorphisms preserve the order. 

If a product $ab$ is defined in the inverse semigroupoid $\mathcal{S}$, then $\ra(a^*)=\so(a)=\ra(b)=\so(b^*)$, so $b^*a^*$ is also defined. Using this and the commutativity of $E(\mathcal{S})$, the properties below can be proven by the same computations as in the case of inverse semigroups. See \cite[Chapter 5]{MR1455373} for details.

\begin{proposition}\label{prop:propertiesoperationsofinversesemigroupoid}
Let $\mathcal{S}$ be an inverse semigroupoid. Then
\begin{enumerate}[label=(\alph*)]
    \item\label{prop:propertiesoperationsofinversesemigroupoid1} $(ab)^*=b^*a^*$, for all $(a,b)\in\mathcal{S}^{(2)}$.
    \item\label{prop:propertiesoperationsofinversesemigroupoid2} If $e\in E(\mathcal{S})$ and $(b,e)\in \mathcal{S}^{(2)}$. then $beb^*\in E(\mathcal{S})$.
    \item\label{prop:propertiesoperationsofinversesemigroupoid3} the following are equivalent:
    \begin{enumerate}[label=(\alph{enumi}.\roman*)]
        \item\label{prop:propertiesoperationsofinversesemigroupoid31} $a\leq b$;
        \item\label{prop:propertiesoperationsofinversesemigroupoid32} there exists $e\in E(\mathcal{S})$ such that $a=be$;
        \item\label{prop:propertiesoperationsofinversesemigroupoid33} there exists $f\in E(\mathcal{S})$ such that $a=fb$;
        \item\label{prop:propertiesoperationsofinversesemigroupoid34} $a^*\leq b^*$.
    \end{enumerate}
    \item\label{prop:propertiesoperationsofinversesemigroupoid4} $\leq$ is a partial order on $\mathcal{S}$;
    \item\label{prop:propertiesoperationsofinversesemigroupoid5} If $a\leq b$, $c\leq d$ and $(a,c)\in \mathcal{S}^{(2)}$, then $ac\leq bd$.
\end{enumerate}
\end{proposition}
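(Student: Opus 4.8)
The plan is to transport the standard inverse-semigroup computations to this setting, where the only genuinely new bookkeeping is verifying that every product written down is actually defined, and that two idempotents are commuted only after confirming that they are based at a common vertex, so that Theorem \ref{theo:everyinversesemigroupoidiscategorical} applies. I would first record the standing facts used throughout: since $a^*a$ and $aa^*$ are defined, $\so(a^*)=\ra(a)$ and $\ra(a^*)=\so(a)$; and, as observed just before the statement, whenever $ab$ is defined so is $b^*a^*$. With these in hand, each clause becomes the familiar manipulation together with a vertex check.

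For \ref{prop:propertiesoperationsofinversesemigroupoid1} I would show that $b^*a^*$ is \emph{the} inverse of $ab$. Definedness of $b^*a^*$ is the standing fact. One then computes $(ab)(b^*a^*)(ab)=a(bb^*)(a^*a)b$, and here is the key point: $bb^*$ and $a^*a$ are idempotents both based at the vertex $\so(a)=\ra(b)$, hence commute, which rearranges the string to $aa^*abb^*b=(aa^*a)(bb^*b)=ab$; the dual identity $(b^*a^*)(ab)(b^*a^*)=b^*a^*$ is symmetric, and uniqueness of inverses yields $(ab)^*=b^*a^*$. For \ref{prop:propertiesoperationsofinversesemigroupoid2}, since $(b,e)$ is defined with $e$ idempotent we get $\so(b)=\ra(e)=\so(e)$, whence $beb^*$ is defined; squaring gives $be(b^*b)eb^*$, and commuting the idempotents $e$ and $b^*b$ (both based at $\so(b)$) collapses this to $b(b^*b)eb^*=beb^*$, so $beb^*\in E(\mathcal{S})$.

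For \ref{prop:propertiesoperationsofinversesemigroupoid3}, the implication \ref{prop:propertiesoperationsofinversesemigroupoid31}$\Rightarrow$\ref{prop:propertiesoperationsofinversesemigroupoid32} is immediate with $e=a^*a$. Conversely, given $a=be$ with $e\in E(\mathcal{S})$, I would compute $a^*a=(be)^*be=e b^*b e=b^*be$ (commuting $e$ and $b^*b$ at $\so(b)$), so that $ba^*a=b(b^*be)=(bb^*b)e=be=a$, giving \ref{prop:propertiesoperationsofinversesemigroupoid31}. The equivalence with \ref{prop:propertiesoperationsofinversesemigroupoid33} is the left–right dual, built on $f=aa^*$ and the identity $a=aa^*b$. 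Finally \ref{prop:propertiesoperationsofinversesemigroupoid34} follows from \ref{prop:propertiesoperationsofinversesemigroupoid32} together with \ref{prop:propertiesoperationsofinversesemigroupoid1}: from $a=b(a^*a)$ one gets $a^*=(a^*a)b^*$, which exhibits $a^*\le b^*$ via \ref{prop:propertiesoperationsofinversesemigroupoid33}, and the reverse direction is the same argument applied to $a^*,b^*$ using $(a^*)^*=a$.

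For \ref{prop:propertiesoperationsofinversesemigroupoid4}, reflexivity is $a=aa^*a$; for antisymmetry, from $a=ba^*a$ and $b=ab^*b$ I substitute and commute the idempotents $a^*a,b^*b$ (both at $\so(a)=\so(b)$) to obtain $a=a(a^*a)(b^*b)=ab^*b=b$; for transitivity, writing $a=be$ and $b=cf$ with $e,f\in E(\mathcal{S})$ based at the common vertex $\so(b)$, the product $fe$ is again idempotent and $a=c(fe)$ gives $a\le c$ by \ref{prop:propertiesoperationsofinversesemigroupoid32}. Part \ref{prop:propertiesoperationsofinversesemigroupoid5} then reduces to one-sided monotonicity plus transitivity: writing $a=(aa^*)b$ gives $ac=(aa^*)(bc)$, so $ac\le bc$ by \ref{prop:propertiesoperationsofinversesemigroupoid33}, while writing $c=d(c^*c)$ gives $bc=(bd)(c^*c)$, so $bc\le bd$ by \ref{prop:propertiesoperationsofinversesemigroupoid32}; chaining these with \ref{prop:propertiesoperationsofinversesemigroupoid4} yields $ac\le bd$ (one checks along the way that $bc$ and $bd$ are defined, which follows from $\so(b)=\so(a)=\ra(c)=\ra(d)$). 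The main obstacle throughout is not any single computation but the discipline of never writing an undefined product and never commuting two idempotents before checking they lie over the same vertex; once this is handled, every step is exactly the inverse-semigroup calculation.
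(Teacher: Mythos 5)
Your proposal is correct and takes essentially the same approach as the paper: the paper's proof consists precisely of the observation that $b^*a^*$ is defined whenever $ab$ is, plus commutativity of idempotents over a common vertex (Theorem \ref{theo:everyinversesemigroupoidiscategorical}), after which it defers the computations to the inverse-semigroup case in Howie's book. Your write-up simply executes those standard computations in full, with the correct definedness and vertex checks at each step.
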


We may determine subclasses of the class of semigroupoids algebraically as follows: An inverse semigroupoid $\mathcal{S}$ is a
\begin{itemize}
    \item \emph{semigroup} if and only if any two idempotents may be multiplied (and in this case it is an inverse semigroup).
    \item \emph{groupoid} if the product of two idempotents is defined if and only if they are equal; Alternatively, $\mathcal{S}$ is a groupoid if and only if the canonical order is equality.
    \item \emph{group} if it is both a semigroup and a groupoid, or equivalently if it has a unique idempotent.
\end{itemize}
    
    \subsection{A representation theorem}
    Cayley's theorem states that every group can be represented as a permutation group on some set. More generally, the Vagner-Preston theorem states that every inverse semigroup can be represented as a semigroup of partial bijections on some set. In this subsection, we will further generalize this to represent semigroupoids as certain semigroupoids of partial bijections.

Representation results have been obtained in the context of \emph{inverse categories}, i.e., (possibly large) categories which behave as inverse semigroupoids. Namely, it is proven that every (locally small) inverse category may be faithfully embedded into the category $\cat{PInj}$ of sets and partial bijections. For more details, see \cite[p.\ 87]{MR0506554}, \cite[Theorem 3.8]{MR1871071}, \cite[Proposition 3.11]{MR3093088}. However we adopt a fundamentally different perspective from those works, by considering inverse semigroupoids as innate to bundles. This approach has the advantage of making the dynamical nature of inverse semigroupoids more explicit, and in particular to naturally motivating a natural notion of \emph{action} for inverse semigroupoids.

Let $\pi\colon X\to X^{(0)}$ be a bundle. A \emph{partial bijection between fibers} of $\pi$ is a triple $(y,f,x)$, where $x,y\in X^{(0)}$ and $f\colon\dom(f)\to\ran(f)$ is a bijection with $\dom(f)\subseteq\pi^{-1}(x)$ and $\ran(f)\subseteq\pi^{-1}(y)$ (in other words, $f$ is a \emph{partial bijection} from $\pi^{-1}(x)$ to $\pi^{-1}(y)$). We denote by $\mathcal{I}(\pi)$ the set of all partial bijections between fibers of $\pi$.

We make $\mathcal{I}(\pi)$ into a graph over $X^{(0)}$ by setting
\[\so(y,f,x)=x\qquad\text{and}\qquad\ra(y,f,x)=y\]
and then induce a product structure on $\mathcal{I}(\pi)$ as
\[(z,g,y)(y,f,x)=(z,g\circ f,x)\]
where $g\circ f$ is the usual partial composition of functions
\[g\circ f\colon f^{-1}(\ran(f)\cap\dom(g))\to g(\dom(g)\cap\ran(f)),\qquad (g\circ f)(x)=g(f(x)).\]

The verification that this products makes $\mathcal{I}(\pi)$ into an inverse semigroupoid is straightforward. The inverse of an element $(y,f,x)$ is $(x,f^{-1},y)$, where $f^{-1}\colon\ran(f)\to\dom(f)$ is the inverse function of $f$. The idempotent set of $\mathcal{I}(\pi)$ is
\[E(\mathcal{I}(\pi))=\left\{(x,\id_A,x):x\in X^{(0)},A\subseteq\pi^{-1}(x)\right\}.\]

In particular cases, this construction leads to well-known examples of semigroups and groupoids.

\begin{example}
Suppose that $X$ is a set, seen as a bundle over a singleton set $X^{(0)}=\left\{\ast\right\}$, i.e., we consider the bundle $\pi\colon X\to\left\{\ast\right\}$, $\pi_X(x)=\ast$ for all $x\in X$. Then $\mathcal{I}(X)\defeq\mathcal{I}(\pi)$ is simply the inverse semigroup of partial bijections of $X$.
\end{example}

\begin{example}
Consider the identity function $\id_X$ of a set $X$. Let $L_2=\left\{0,1\right\}$ be the lattice with two elements $0<1$, which is an inverse semigroup under meets. Let $X\times X$ be the transitive equivalence relation on $X$, seen as a groupoid. Then $\mathcal{I}(\id_X)$ is isomorphic to the product inverse semigroupoid $(X\times X)\times L_2$ (where the product semigroupoid structure is defined in the obvious manner). Namely, to an element $(y,f,x)$ of $\mathcal{I}(\id_X)$ we associate the element $(y,x,0)$ of $(X\times X)\times L_2$ if $f=\varnothing$, the empty function, and $(y,x,1)$ otherwise.

Note that the equivalence relation $X\times X$ is isomorphic to the subsemigroupoid of maximal elements (with respect to the canonical order) of $(X\times X)\times L_2$, or to the initial groupoid of $\mathcal{I}(\id_X)$ (see Subsection \ref{subsec:groupoidofgerms}).
\end{example}

We now state our representation theorem for inverse semigroupoids.

\begin{theorem}\label{thm:representationtheoremoid}
Let $\mathcal{S}$ be an inverse semigroupoid. Then $\mathcal{S}$ is isomorphic to a sub-inverse semigroupoid of $\mathcal{I}(\pi)$ for some bundle $\pi$.
\end{theorem}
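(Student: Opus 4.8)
The plan is to mimic the Vagner--Preston construction, representing $\mathcal{S}$ by left translations acting on the fibres of its own range map. Concretely, I would take the bundle $\pi=\ra\colon\mathcal{S}\to\mathcal{S}^{(0)}$, so that the fibre over a vertex $x$ is $\pi^{-1}(x)=\ra^{-1}(x)$. To each $a\in\mathcal{S}$ with $x=\so(a)$ and $y=\ra(a)$ I would associate the partial map $\lambda_a$ with domain the principal right ideal
\[\dom(\lambda_a)=(a^*a)\mathcal{S}=\left\{s\in\mathcal{S}:\ra(s)=\so(a)\text{ and }a^*as=s\right\},\]
given by $\lambda_a(s)=as$. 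Since $\ra(a^*as)=\so(a)$ and $\ra(as)=\ra(a)$, this $\lambda_a$ is a partial map from $\pi^{-1}(x)$ to $\pi^{-1}(y)$, and the representation is $\Phi\colon\mathcal{S}\to\mathcal{I}(\pi)$, $\Phi(a)=(\ra(a),\lambda_a,\so(a))$.

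First I would check that each $\lambda_a$ is a genuine partial bijection, the inverse being $\lambda_{a^*}$: if $s\in\dom(\lambda_a)$ then $a^*(as)=a^*as=s$, and conversely every $t\in(aa^*)\mathcal{S}$ satisfies $\lambda_a(a^*t)=aa^*t=t$, so $\lambda_a^{-1}=\lambda_{a^*}$ with $\dom(\lambda_{a^*})=(aa^*)\mathcal{S}$. Here one only needs that the products written are defined (which follows from $\so(a)=\ra(a^*)$, etc.) together with the identities of Proposition \ref{prop:propertiesoperationsofinversesemigroupoid}. This also gives $\Phi(a)^*=(\so(a),\lambda_{a^*},\ra(a))=\Phi(a^*)$, so the image will be closed under inverses.

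The heart of the argument is that $\Phi$ is a homomorphism whose domain of definition matches that of $\mathcal{S}$ exactly. The product $\Phi(a)\Phi(b)$ is defined in $\mathcal{I}(\pi)$ precisely when $\so(a)=\ra(b)$, which, as $\mathcal{S}$ is graphed (Corollary \ref{cor:inversesemigroupoidhasauniquegraphstructure}), is exactly the condition $(a,b)\in\mathcal{S}^{(2)}$; thus no spurious products appear and, unlike the general situation of Example \ref{ex:imageofhomomorphismisnotasubsemigroupoid}, the image will be a genuine sub-semigroupoid. On values, $\lambda_a(\lambda_b(s))=a(bs)=(ab)s=\lambda_{ab}(s)$ by associativity. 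The main obstacle I anticipate is the equality of domains $\dom(\lambda_a\circ\lambda_b)=\dom(\lambda_{ab})$. The inclusion $\supseteq$ is direct, since $(ab)^*(ab)=b^*a^*ab\leq b^*b$ forces membership in $(b^*b)\mathcal{S}$. For the reverse inclusion, starting from $s=b^*bs$ and $a^*a(bs)=bs$ one computes $b^*a^*abs=b^*(bs)=s$; and to pass back one must commute the idempotents $bb^*$ and $a^*a$, both living over the common vertex $\ra(b)=\so(a)$. This is exactly where the commutativity of idempotents from Theorem \ref{theo:everyinversesemigroupoidiscategorical} is indispensable, alongside the bookkeeping that every product appearing is in fact defined.

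Finally, injectivity closes the proof. If $\Phi(a)=\Phi(b)$ then $\so(a)=\so(b)$, $\ra(a)=\ra(b)$ and $\lambda_a=\lambda_b$; comparing domains gives $(a^*a)\mathcal{S}=(b^*b)\mathcal{S}$, whence $a^*a=b^*b$ (two idempotents over the same vertex generating the same right ideal, again using Theorem \ref{theo:everyinversesemigroupoidiscategorical}), and evaluating at $a^*a$ yields $a=\lambda_a(a^*a)=\lambda_b(a^*a)=ba^*a=bb^*b=b$. Combining these, $\Phi$ is an injective homomorphism under which products are defined on the image exactly as in $\mathcal{S}$, and the image is closed under both products and inverses; hence $\mathcal{S}\cong\Phi(\mathcal{S})$ is a sub-inverse-semigroupoid of $\mathcal{I}(\pi)$.
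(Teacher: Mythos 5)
Your proposal is correct and follows essentially the same route as the paper: the paper also takes the bundle $\ra\colon\mathcal{S}\to\mathcal{S}^{(0)}$ and represents each $a$ by the left translation $t\mapsto at$ on $D_{a^*}=\left\{t\in\mathcal{S}:tt^*\leq a^*a\right\}$, which is exactly your principal right ideal $(a^*a)\mathcal{S}$, and then observes that products in the image are defined precisely when they are defined in $\mathcal{S}$, so the image is a genuine sub-inverse semigroupoid. The only difference is one of detail: the paper delegates the homomorphism and injectivity checks to ``the same arguments as in the Vagner--Preston theorem,'' whereas you carry them out explicitly (correctly, including the essential use of commutativity of idempotents from Theorem \ref{theo:everyinversesemigroupoidiscategorical} in matching the domains $\dom(\lambda_a\circ\lambda_b)=\dom(\lambda_{ab})$).
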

\begin{proof}
Considering the range map $\ra\colon\mathcal{S}\to \mathcal{S}^{(0)}$, we will define an embedding $\alpha\colon\mathcal{S}\to\mathcal{I}(\ra)$. Namely, given $a\in\mathcal{S}$, $\alpha(a)$ is a triple of the form $(y,\alpha_a,x)$, where $x,y\in\mathcal{S}^{(0)}$ and $\alpha_a$ is a partial bijection from $\ra^{-1}(x)$ to $\ra^{-1}(y)$. We first describe the map $\alpha_a$.

For every $a\in\mathcal{S}$, let $D_a=\left\{t\in \mathcal{S}:tt^*\leq aa^*\right\}$. If $t\in D_{a^*}$, then
\[(at)(at)^*=att^*a^*\leq aa^*,\]
so $at\in D_a$. Therefore we may define $\alpha_a\colon D_{a^*}\to D_a$ as $\alpha_a(t)=at$. It is a bijection, since if $t\in D_{a^*}$, then $\alpha_a(t)\in D_a$, so we may apply $\alpha_{a^*}$ and obtain
\[\alpha_{a^*}(\alpha_a(t))=\alpha_{a^*}(at)=a^*at=t,\]
because $tt^*\leq a^*a$. This proves that $\alpha_{a^*}\circ\alpha_a=\id_{D_{a^*}}$. Changing the roles of $a^*$ and $a$ we conclude that $\alpha_a$ is invertible, with $\alpha_a^{-1}=\alpha_a^*$.

Moreover, we have $D_a\subseteq\ra^{-1}(\ra(a))$ and thus we may define $\alpha\colon \mathcal{S}\to\mathcal{I}(\ra)$ by
\[\alpha(a)=(\ra(a),\alpha_a,\so(a)).\]
Notice that, for $a,b\in\mathcal{S}$,
\[ab\text{ is defined}\iff\so(a)=\ra(b)\iff\so(\alpha(a))=\ra(\alpha(b))\iff\alpha(a)\alpha(b)\text{ is defined}.\ntag\label{eq:equivalenceproductdefinedivagnerprestonoid}\]
Using the equivalence above and the same arguments as in the proof of the Vagner-Preston theorem (see \cite[Proposition 2.1.3]{MR1724106}, for example), it follows that $\alpha$ is an injective semigroupoid homomorphism. In fact, the right-to-left implications of \ref{eq:equivalenceproductdefinedivagnerprestonoid} imply that the image of $\alpha$ is a sub-semigroupoid of $\mathcal{I}(\ra)$, onto which $\alpha$ is an isomorphism.\qedhere
\end{proof}

Still considering the map $\alpha$ of the proof above, note that the vertex set of $\mathcal{I}(\ra)$ is $\mathcal{S}^{(0)}$, the vertex set of $\mathcal{S}$. We have $\so\circ\alpha=\so$, so the ``vertex map'' (as in Corollary \ref{cor:graphedhomomorphisminducesvertexmap}) associated to $\alpha$ is the identity of $\mathcal{S}^{(0)}$. This motivates the definition of an \emph{action} of an inverse semigroupoid $\mathcal{S}$ on a set $X$.

\begin{definition}\label{def:action}
A \emph{global action} of an inverse semigroupoid $\mathcal{S}$ on a set $X$ consists of a map $\pi\colon X\to\mathcal{S}^{(0)}$, called the \emph{anchor map} and a semigroupoid homomorphism $\theta\colon\mathcal{S}\to\mathcal{I}(\pi)$ such that the vertex map $\theta^{(0)}
\colon\mathcal{S}^{(0)}\to\mathcal{I}(\pi)^{(0)}=\mathcal{S}^{(0)}$ is the identity $\id_{\mathcal{S}^{(0)}}$.

More explicitly, it consists of a map $\pi\colon X\to\mathcal{S}^{(0)}$ and a family $\left\{\theta_a:a\in\mathcal{S}\right\}$ of bijections $\theta_a\colon\dom(\theta_a)\to\ran(\theta_a)$ such that
\begin{enumerate}[label=(\roman*)]
    \item $\dom(\theta_a)\subseteq\pi^{-1}(\so(a))$ and $\ran(\theta_a)\subseteq\pi^{-1}(\ra(a))$ for all $a\in\mathcal{S}$;
    \item $\theta_{a}\circ\theta_b=\theta_{ab}$ for all $(a,b)\in\mathcal{S}^{(2)}$.
\end{enumerate}
\end{definition}

However, as stated in the introduction, we will be interested in more general notions than actions, which is the content of the next subsection.
    
    \subsection{\texorpdfstring{$\land$}{∧}-prehomomorphisms, partial homomorphisms, and actions}\label{sec:dualprehomomorphisms}
    
    Generalizations of homomorphisms, initially called $\lor$ and $\land$-\emph{prehomomorphisms}, were respectively introduced in \cite[Definition 1.1]{MR0424979} and \cite[Definition 4.1]{MR0470123} by McAlister and Reilly on their study of $E$-unitary inverse semigroups. We will focus on $\land$-prehomomorphisms, which are appropriate for the constructions of semidirect products.

\begin{remark}
    The terminology ``prehomomorphism'' has been used to describe  $\lor$-prehomomorphisms in \cite[p.\ 80]{MR1694900}, but used to describe $\land$-prehomomorphisms in \cite[VI.7.2]{MR752899} and \cite{Mikola2017}. In order to avoid confusion, we will use the original terminology of ``$\land$-prehomomorphism''.
\end{remark}

A more specific case of $\land$-prehomomorphisms are \emph{partial homomorphisms}, whose study was initiated by Exel in \cite{MR1276163}, in order to describe the structure of C*-algebras endowed with an action of the circle. This turned out to be a rich area of research, with several applications in the theory of topological dynamical systems and C*-algebras (see \cite{MR2799098,MR3699795}). Let us mention that partial homomorphisms of general inverse semigroups were first defined in \cite{MR3231479}, while partial actions of groupoids (on rings) were defined in \cite{MR2982887}.

We will now define $\land$-prehomomorphisms and partial homomorphisms in the context of semigroupoids, connecting in a precise manner all the notions described above. From them, we will construct semidirect products in a manner which generalizes semidirect products of groupoids (\cite[Exercise 11.5.1]{MR2273730}), transformation groupoids of partial group actions (\cite{MR2045419}), semidirect products of inverse semigroups acting on semilattices (see \cite[Lemma 1.1]{MR0357660.1} and \cite[VI.7.6-7]{MR752899}). Similarly, groupoids of germs of (partial) inverse semigroup actions are simply the underlying groupoids of the associated semidirect product (see Section \ref{sec:categoricalconstructions}).

\begin{definition}\label{def:dualprehomomorphismandpartialhomomorphism}
Let $\mathcal{S}$ and $\mathcal{T}$ be inverse semigroupoids and $\theta\colon\mathcal{S}\to\mathcal{T}$ a function. Consider the following statements:
\begin{enumerate}[label=(\roman*)]
\item\label{def:dualprehomomorphismandpartialhomomorphism1} For all $a\in\mathcal{S}$ we have $\theta(a^*)=\theta(a)^*$;
\item\label{def:dualprehomomorphismandpartialhomomorphism2} For all $a,b\in\mathcal{S}^{(2)}$, we have $(\theta(a),\theta(b))\in\mathcal{T}^{(2)}$ and $\theta(a)\theta(b)\leq\theta(ab)$;
\item\label{def:dualprehomomorphismandpartialhomomorphism3} If $a\leq b$ in $\mathcal{S}$, then $\theta(a)\leq\theta(b)$ in $\mathcal{T}$.
\end{enumerate}
If $\theta$ satisfies \ref{def:dualprehomomorphismandpartialhomomorphism1}-\ref{def:dualprehomomorphismandpartialhomomorphism2}, then $\theta$ is called a \emph{$\land$-prehomomorphism}. If $\theta$ satisfies all of \ref{def:dualprehomomorphismandpartialhomomorphism1}-\ref{def:dualprehomomorphismandpartialhomomorphism3}, then $\theta$ is called a \emph{partial homomorphism}.
\end{definition}.

\begin{example}
Let $\mathcal{S}$ be any inverse semigroupoid, $X$ any set, and consider $2^X$, the power set of $X$, as a semigroup under intersection.

If $X$ has sufficiently large cardinality (e.g.\ $|X|\geq|\mathcal{S}|$), then there is a map $\theta\colon\mathcal{S}\to 2^X\setminus\left\{\varnothing\right\}$ such that $\theta(a)\cap\theta(b)=\varnothing$ if $a\neq b$. Then $\theta$ is a $\land$-prehomomorphism, but not a partial homomorphism as long as $\mathcal{S}$ is not a groupoid.
\end{example}

If $\theta\colon\mathcal{S}\to\mathcal{T}$ is a $\land$-prehomomorphism, then $\theta$ takes idempotents to idempotents. Indeed, if $e\in E(\mathcal{S})$, then $\theta(e)^*\theta(e)=\theta(e^*)\theta(e)\leq\theta(e^*e)=\theta(e)$. Applying the definition of the order of $\mathcal{T}$ implies $\theta(e)=\theta(e)^*\theta(e)$, which is idempotent.

Following \cite[p.\ 12]{MR1694900}, two elements $a,b\in\mathcal{S}$ are \emph{compatible} if $a^*b$ and $ab^*$ are idempotent. Any $\land$-prehomomorphism $\theta\colon\mathcal{S}\to\mathcal{T}$ takes compatible elements to compatible elements, if $a,b\in\mathcal{S}$ are compatible, then $\theta(a)^*\theta(b)\leq\theta(a^*b)$, which is idempotent. Similarly, $\theta(a)\theta(b)^*$ is idempotent.

The statements \ref{def:dualprehomomorphismandpartialhomomorphism}.\ref{def:dualprehomomorphismandpartialhomomorphism1}-\ref{def:dualprehomomorphismandpartialhomomorphism3} are independent, that is, there exist maps $\theta\colon\mathcal{S}\to\mathcal{T}$ between inverse semigroupoids which satisfy any two of them but not the third one. In fact, we can obtain such examples for inverse semigroups, which we present for the sake of reference.

\begin{example}
Let $G=\left\{1,g\right\}$ be the cyclic group of order $2$ and $X=\left\{a,b\right\}$. Define $\theta\colon G\to\mathcal{I}(X)$ as $\theta_1=\id_X$ and \[\theta_g\colon\left\{a\right\}\to\left\{b\right\},\qquad\theta_g(a)=b.\]
Then $\theta$ satisfies \ref{def:dualprehomomorphismandpartialhomomorphism}\ref{def:dualprehomomorphismandpartialhomomorphism2} and \ref{def:dualprehomomorphismandpartialhomomorphism3}, but not \ref{def:dualprehomomorphismandpartialhomomorphism1}.
\end{example}

\begin{example}
Let $G=\left\{1\right\}$ be the trivial group, and $H=\left\{1,g\right\}$ the cyclic group of order $2$. The map $\theta\colon G\to H$, $1\mapsto g$ satisfies  \ref{def:dualprehomomorphismandpartialhomomorphism}\ref{def:dualprehomomorphismandpartialhomomorphism1} and \ref{def:dualprehomomorphismandpartialhomomorphism3}, but not \ref{def:dualprehomomorphismandpartialhomomorphism2}.
\end{example}

\begin{example}\label{ex:landprehomomorphismnotpartialaction}
Let $L_2=\left\{0,1\right\}$ be the lattice with two elements, $0<1$, and define $\theta\colon L_2\to L_2$ as $\theta(a)=b$ and $\theta(b)=\theta(a)$. Then \ref{def:dualprehomomorphismandpartialhomomorphism}\ref{def:dualprehomomorphismandpartialhomomorphism1} and \ref{def:dualprehomomorphismandpartialhomomorphism2} are satisfied, but \ref{def:dualprehomomorphismandpartialhomomorphism3} is not (i.e., $\theta$ is a $\land$-prehomomorphisms but not a partial homomorphism).
\end{example}

We should remark that $\land$-prehomomorphisms are not stable under composition.

\begin{example}
Let $S=\left\{a,b\right\}$ and $\theta$ be as in Example \ref{ex:landprehomomorphismnotpartialaction}. Let $G=\left\{1,g\right\}$ be the group with two elements and $\eta\colon G\to S$ be given by $\eta(1)=b$, $\eta(g)=a$. Then $\theta$ is a $\land$-prehomomorphism and $\eta$ is a partial homomorphism, however $\theta\circ\eta$ is not a $\land$-prehomomorphism since
\[\theta(\eta(g))\theta(\eta(g))=\theta(a)\theta(a)=bb=b\qquad\text{and}\qquad\theta(\eta(gg))=\theta(\eta(1))=\theta(b)=a.\]
\end{example}

On the other hand, it is easy to verify that if $\eta$ is a $\land$-prehomomorphism and $\theta$ is a partial homomorphism, then $\theta\circ\eta$ is a $\land$-prehomomorphism, and also that the composition of two partial homomorphisms is a partial homomorphism.

We may now define $\land$-preactions and partial actions. In particular from property \ref{def:dualprehomomorphismandpartialhomomorphism}\ref{def:dualprehomomorphismandpartialhomomorphism2}, we may apply Proposition \ref{prop:vertexmap} to obtain a unique map $\phi^{(0)}\colon\mathcal{S}^{(0)}\to\mathcal{T}^{(0)}$ such that $(\phi^{(0)},\phi)$ is a graph homomorphism.

\begin{definition}\label{def:partialactiononset}
    A \emph{$\land$-preaction} of an inverse semigroupoid $\mathcal{S}$ on a set $X$ consists of an \emph{anchor map} $\pi\colon X\to\mathcal{S}^{(0)}$ and a $\land$-prehomomorphism $\theta\colon \mathcal{S}\to\mathcal{I}(\pi)$ such that the vertex map $\theta^{(0)}$ is the identity of $\mathcal{S}^{(0)}$. If $\theta$ is a partial homomorphism, we call $(\pi,\theta)$ a \emph{partial action}. We say that $(\pi,\theta)$ is \emph{non-degenerate} if $X=\bigcup_{a\in\mathcal{S}}\dom(\theta_a)$.
    
    We use the notation $(\pi,\theta)\colon\mathcal{S}\curvearrowright X$ to denote a $\land$-preaction of $\mathcal{S}$ on $X$.
\end{definition}

As $\land$-prehomomorphisms take idempotents to idempotents, then whenever $(\pi,\theta)\colon\mathcal{S}\curvearrowright X$ is a $\land$-preaction and $e\in E(\mathcal{S})$, the map $\theta_e$ is the identity of its domain. Moreover, if $a\in\mathcal{S}$, then
\[\theta_a=\theta_a\theta_a^{-1}\theta_a\leq\theta_a\theta_{a^*a}\]
which implies that $\dom(\theta_a)\subseteq\dom(\theta_{a^*a})$.

If $a\leq b$ and $x\in\dom(a^*a)\cap\dom(b)$, then we may compute
\[\theta_b(x)=\theta_b(\theta_{a^*a}(x))=\theta_{ba^*a}(x)=\theta_a(x).\ntag\label{eq:changepreactionbygreater}\]
Note, however, that we do not necessarily have $\dom(\theta_a)\subseteq\dom(\theta_b)$ (unless $(\pi,\theta)$ is a partial action), so we need some care with these computations.

These notions extend, simultaneously, those of (global) actions and partial actions which have commonly appeared throughout the literature.

\begin{example}\label{ex:actionofsemigroup}
	Let $S$ be an inverse semigroup, so $S^{(0)}=\left\{\ast\right\}$ is a singleton. Given any set $X$, there is a unique map $\pi\colon X\to S^{(0)}$. Therefore, partial and global actions of $S$ on $X$, as an inverse semigroupoid, are the same as partial and global actions of $S$ on $X$ in the usual sense of inverse semigroups (see e.g.\ \cite[Definition 5.1]{MR2565546} and \cite[Definition 3.3]{MR3231479}).
\end{example}

\begin{example}\label{ex:groupoidnondegenerateaction}
	If $\G$ is a groupoid, then a non-degenerate global action $(\pi,\theta)\colon\G\curvearrowright X$ is the same as a groupoid action as in \cite[Definition 3.6]{MR2969047} or \cite[Exercise 11.2.1]{MR2273730}, i.e., it satisfies $\dom(\theta_a)=\pi^{-1}(a)$ for all $a\in\mathcal{S}$.
	
	Indeed, as $\theta$ is a global action then $\dom(\theta_a)=\dom(\theta_{a^*a})=\dom(\theta_{\so(a)})$ for all $a\in\mathcal{S}$. That is, $\dom(\theta_a)$ depends only on $\so(a)$, so if $\so(a)=\so(b)$ then $\dom(\theta_a)=\dom(\theta_b)$. Non-degeneracy of $(\pi,\theta)$ means precisely that for all $a\in A$, $\pi^{-1}(\so(a))=\bigcup_{b\in\so^{-1}(\so(a))}\dom(\theta_b)=\dom(\theta_a)$, as we wanted.
\end{example}

In order to construct semidirect products, we will need to consider $\land$-preactions which preserve the structure of semigroupoids. This notion will be used to connect the topological and algebraic settings (i.e., when considering appropriate actions of semigroupoids on topological spaces and on algebras).

\begin{definition}
A \emph{left ideal} of a semigroupoid $\Lambda$ is a subset $I\subseteq\Lambda$ such that $\Lambda I\subseteq I$. Right ideals are defined similarly. An \emph{ideal} is a subset of $\Lambda$ which is simultaneously a left and a right ideal.
\end{definition}

\begin{example}
Let $\G$ be a groupoid. The map $I\mapsto\so(I)$ is an order isomorphism from the set of ideals of $\G$ to the set of invariant subsets of $\G[0]$ (a subset $A\subseteq\G[0]$ is \emph{invariant} if $A=\ra(\so^{-1}(A))$).
\end{example}

Every set $X$ is regarded as a groupoid in the trivial manner: $X^{(0)}=X$, and for all $x\in X$, $\so(x)=\ra(x)=x$, and the product is defined as $xx=x$. The groupoids constructed in this manner are called \emph{unit groupoids}. Ideals of $X$ are precisely its subsets.

If $\Lambda$ is an Exel semigroupoid, a homomorphism $\phi\colon\Lambda\to X$ is a function such that, if $(a,b)\in\Lambda^{[2]}$, then $\phi(a)=\phi(b)$. In particular, functions between sets are the same as their (semi)groupoid homomorphisms.

In the definition below, we regard the vertex set $\mathcal{S}^{(0)}$ as a unit groupoid. This is an adaptation of \cite[p. 3660]{MR2982887}

\begin{definition}\label{def:partialactiononsemigroupoid}
    A $\land$-preaction (resp.\ partial, global action) $(\pi,\theta$) of an inverse semigroupoid $\mathcal{S}$ on a semigroupoid $\Lambda$ is a $\land$-preaction (resp.\ partial, global action) of $\mathcal{S}$ on $\Lambda$, as a set, which further satisfies:
    \begin{enumerate}[label=(\roman*)]
        \item\label{def:partialactiononsemigroupoid1} The anchor map $\pi\colon\Lambda\to\mathcal{S}^{(0)}$ is a homomorphism.
        \item\label{def:partialactiononsemigroupoid2} For every $x\in\mathcal{S}^{(0)}$, $\pi^{-1}(x)$ is an ideal of $\Lambda$;
        \item\label{def:partialactiononsemigroupoid3} For every $a\in\Lambda$, $\dom(\theta_a)$ is an ideal of $\pi^{-1}(\so(a))$;
        \item\label{def:partialactiononsemigroupoid4} For every $a\in\mathcal{S}$, $\theta_a$ is a semigroupoid isomorphism from $\dom(\theta_a)$ to $\ran(\theta_a)$.
    \end{enumerate}
\end{definition}

\begin{remark}
\begin{enumerate}
    \item Alternatively to \ref{def:partialactiononsemigroupoid2} and \ref{def:partialactiononsemigroupoid3}, we could adopt the simpler (and slightly stronger) assumption that $\dom(\theta_a)$ is an ideal of $\Lambda$ for all $a\in\mathcal{S}$. This is the case, for example, when all such ideals $\dom(\theta_a)$ are \emph{idempotent} (see Definition \ref{def:idempotentsemigroupoid}).
    \item If $\Lambda$ is a set/unit groupoid, then this definition coincides with \ref{def:partialactiononset}, since the additional properties of Definition \ref{def:partialactiononsemigroupoid} become trivial. Accordingly, by a $\land$-preaction we will always mean one in the sense of Definition \ref{def:partialactiononsemigroupoid}.
\end{enumerate}
\end{remark}

As we will be more interested in \emph{continuous $\land$-preactions of topological semigroupoids}, we refer to Subsection \ref{subsec:continuouslandpreactions} for examples. In fact, every $\land$-preaction $(\pi,\theta)\colon\mathcal{S}\curvearrowright\Lambda$ may be ``extended'' to a partial action, in the sense that partial action $(\pi,\overline{\theta})\colon\mathcal{S}\curvearrowright\Lambda$ (with same anchor map) such that $\theta_a\leq\overline{\theta}_a$ for all $a\in\mathcal{S}$. See Proposition \ref{prop:extensionoflandpreactiontopartialaction}

Let us finish this subsection by proving that any $\land$-preaction may be ``extended'' to a partial action.

\begin{proposition}\label{prop:extensionoflandpreactiontopartialaction}
Let $(\pi,\theta)\colon\mathcal{S}\curvearrowright\Lambda$ be a $\land$-preaction. Then there exists a partial action $(\pi,\overline{\theta})\colon\mathcal{S}\curvearrowright\Lambda$ such that $\theta_a\leq\overline{\theta}_a$ for all $a\in\mathcal{S}$.
\end{proposition}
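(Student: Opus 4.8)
The plan is to build $\overline\theta$ as the smallest monotone extension of $\theta$, namely to set $\overline\theta_a\defeq\bigvee_{c\le a}\theta_c$ for each $a\in\mathcal S$, the join being taken in $\mathcal I(\pi)$ (that is, the union of the graphs of the partial bijections $\theta_c$). For this to make sense I first need the family $\{\theta_c:c\le a\}$ to be compatible. Any two $c,c'\le a$ are compatible in $\mathcal S$: writing $c=ae$ and $c'=af$ with $e,f\in E(\mathcal S)$ (Proposition \ref{prop:propertiesoperationsofinversesemigroupoid}\ref{prop:propertiesoperationsofinversesemigroupoid3}), one computes that $c^*c'=e(a^*a)f$ and $cc'^*=a(ef)a^*$ are products of commuting idempotents, hence idempotent, using that $\mathcal S$ is categorical and Proposition \ref{prop:propertiesoperationsofinversesemigroupoid}\ref{prop:propertiesoperationsofinversesemigroupoid2}. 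Since $\land$-prehomomorphisms carry compatible elements to compatible elements (as recorded just after Definition \ref{def:dualprehomomorphismandpartialhomomorphism}), the $\theta_c$ are pairwise compatible in $\mathcal I(\pi)$, so their union is again a partial bijection $\overline\theta_a$ from $\bigcup_{c\le a}\dom(\theta_c)$ to $\bigcup_{c\le a}\ran(\theta_c)$, with source $\so(a)$ and range $\ra(a)$ since every $c\le a$ has those same source and range. By construction $\theta_a\le\overline\theta_a$.

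Granting this, the order-theoretic axioms are immediate. Monotonicity (Definition \ref{def:dualprehomomorphismandpartialhomomorphism}\ref{def:dualprehomomorphismandpartialhomomorphism3}) holds because $a\le b$ gives $\{c:c\le a\}\subseteq\{c:c\le b\}$, hence $\overline\theta_a\le\overline\theta_b$; the involutivity axiom \ref{def:dualprehomomorphismandpartialhomomorphism1} follows from $c\le a^*\iff c^*\le a$ (Proposition \ref{prop:propertiesoperationsofinversesemigroupoid}\ref{prop:propertiesoperationsofinversesemigroupoid3}) together with $\theta_c^*=\theta_{c^*}$ and the fact that taking inverses commutes with compatible joins. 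For sub-multiplicativity \ref{def:dualprehomomorphismandpartialhomomorphism2}, fix $(a,b)\in\mathcal S^{(2)}$. Composition in $\mathcal I(\pi)$ distributes over compatible joins, so $\overline\theta_a\overline\theta_b=\bigvee_{c\le a,\,d\le b}\theta_c\theta_d$. For each such pair one has $\so(c)=\so(a)=\ra(b)=\ra(d)$, so $(c,d)\in\mathcal S^{(2)}$ and $\theta_c\theta_d\le\theta_{cd}$ by \ref{def:dualprehomomorphismandpartialhomomorphism2} for $\theta$; moreover $cd\le ab$ by Proposition \ref{prop:propertiesoperationsofinversesemigroupoid}\ref{prop:propertiesoperationsofinversesemigroupoid5}, whence $\theta_{cd}\le\overline\theta_{ab}$. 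Thus every term is a restriction of $\overline\theta_{ab}$, and therefore so is their join, i.e.\ $\overline\theta_a\overline\theta_b\le\overline\theta_{ab}$.

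It remains to verify the conditions of Definition \ref{def:partialactiononsemigroupoid}, the anchor map and the fibres being unchanged. Condition \ref{def:partialactiononsemigroupoid3} is clear, since $\dom(\overline\theta_a)=\bigcup_{c\le a}\dom(\theta_c)$ is a union of ideals of $\pi^{-1}(\so(a))$, and a union of ideals is an ideal. The delicate point — and the step I expect to be the main obstacle — is condition \ref{def:partialactiononsemigroupoid4}, that $\overline\theta_a$ is a semigroupoid \emph{isomorphism} and not merely a bijection. The union of a compatible family of ideal-isomorphisms need not respect products, because being a homomorphism is not a local condition on a general semigroupoid; so the content here is genuine. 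Concretely, given $p\in\dom(\theta_c)$ and $q\in\dom(\theta_{c'})$ with $c,c'\le a$ and $pq$ defined, I first note (the anchor being a homomorphism into a unit groupoid) that $\pi(p)=\pi(q)=\so(a)$, and that since the domains are ideals of $\pi^{-1}(\so(a))$ the product $pq$ lies in $\dom(\theta_c)\cap\dom(\theta_{c'})$, where $\theta_c$ and $\theta_{c'}$ agree. The crux is then to exhibit a single $d\le a$ with $p,q\in\dom(\theta_d)$ on which $\theta_d$ matches $\theta_c$ at $p$ and $\theta_{c'}$ at $q$; once this is available, $\theta_d$ being a homomorphism on the ideal $\dom(\theta_d)$ gives $\overline\theta_a(pq)=\theta_d(pq)=\theta_d(p)\theta_d(q)=\overline\theta_a(p)\,\overline\theta_a(q)$, and applying the same reasoning to $\overline\theta_{a^*}=\overline\theta_a^{*}$ shows $\overline\theta_a^{-1}$ is a homomorphism as well. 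I would produce such a $d$ from the compatibility of $c$ and $c'$ below $a$, exploiting the idempotent ideals governing the domains through the containment $\dom(\theta_c)\subseteq\dom(\theta_{c^*c})$ and relation \eqref{eq:changepreactionbygreater}; this is precisely the place where the ideal hypotheses \ref{def:partialactiononsemigroupoid2}--\ref{def:partialactiononsemigroupoid3} of a $\land$-preaction, rather than mere set-level compatibility, must be used essentially.
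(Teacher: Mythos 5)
Your construction is the same as the paper's: set $\overline\theta_a\defeq\bigvee_{b\le a}\theta_b$, the join being legitimate because elements below $a$ are pairwise compatible and $\land$-prehomomorphisms preserve compatibility. Your verifications of the order-theoretic axioms (monotonicity, $\overline\theta_{a^*}=\overline\theta_a^{-1}$, and $\overline\theta_a\overline\theta_b\le\overline\theta_{ab}$ by distributing composition over compatible joins), and of conditions \ref{def:partialactiononsemigroupoid1}--\ref{def:partialactiononsemigroupoid3} of Definition \ref{def:partialactiononsemigroupoid}, are correct, and are in fact more careful than the paper's proof, which simply asserts that the glued map ``defines an isomorphism between the ideals $\bigcup_{b\leq a}\dom(\theta_b)$ and $\bigcup_{b\leq a}\ran(\theta_b)$'' and calls the rest straightforward. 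So the gap is exactly where you flag it: condition \ref{def:partialactiononsemigroupoid}\ref{def:partialactiononsemigroupoid4}, multiplicativity of $\overline\theta_a$, is never proved. Moreover, your proposed repair cannot work. You want, for $p\in\dom(\theta_c)$, $q\in\dom(\theta_{c'})$ with $pq$ defined, a single $d\le a$ with $p,q\in\dom(\theta_d)$ matching $\theta_c$ at $p$ and $\theta_{c'}$ at $q$; but no candidate exists. Taking $d=a$ fails because $\dom(\theta_a)$ need not contain $\dom(\theta_c)$ (that failure is the whole reason the proposition is nontrivial), and taking $d=c\wedge c'$ fails because the $\land$-prehomomorphism axioms only give $\theta_{c\wedge c'}\ge\theta_c\theta_{(c')^*}\theta_{c'}$ and $\theta_{c\wedge c'}\ge\theta_{c'}\theta_{(c')^*}\theta_{c}$, whose domains are guaranteed to contain $pq$ but not $p$ or $q$.

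In fact no argument can close this gap, because the asserted step is false: the axioms of a $\land$-preaction place no constraint on products of $\Lambda$ that cross between $\dom(\theta_c)$ and $\dom(\theta_{c'})$, and such products can be destroyed in the ranges. Concretely, let $\mathcal{S}$ be the nine-element inverse semigroup $\left\{0,a,c,a^*,c^*,a^*a,aa^*,c^*c,cc^*\right\}\subseteq\mathcal{I}(\left\{1,2,3,4\right\})$, where $a\colon 1\mapsto 3,\ 2\mapsto 4$ and $c\defeq a|_{\left\{1\right\}}$; then $\left\{b\in\mathcal{S}:b\le a\right\}=\left\{a,c,0\right\}$. Let $\Lambda_1=\left\{p,q,s,z_1\right\}$ be the semigroup with $pq=s$ and all other products equal to $z_1$, let $\Lambda_2=\left\{p',q',s',z_2\right\}$ be the null semigroup (all products equal $z_2$), and let $\Lambda=\Lambda_1\sqcup\Lambda_2$ with the trivial anchor $\pi\colon\Lambda\to\mathcal{S}^{(0)}$. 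Define $\theta_c\colon\left\{p,s,z_1\right\}\to\left\{p',s',z_2\right\}$ by $p\mapsto p'$, $s\mapsto s'$, $z_1\mapsto z_2$, define $\theta_a\colon\left\{q,s,z_1\right\}\to\left\{q',s',z_2\right\}$ by $q\mapsto q'$, $s\mapsto s'$, $z_1\mapsto z_2$, and set $\theta_{c^*}=\theta_c^{-1}$, $\theta_{a^*}=\theta_a^{-1}$, $\theta_{a^*a}=\id_{\Lambda_1}$, $\theta_{aa^*}=\id_{\Lambda_2}$, $\theta_{c^*c}=\id_{\dom(\theta_c)}$, $\theta_{cc^*}=\id_{\ran(\theta_c)}$, $\theta_0=\varnothing$. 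Every domain and range is an ideal of $\Lambda$, and every $\theta_x$ is a semigroupoid isomorphism, because $pq=s$ is not a product of two elements of any single $\dom(\theta_x)$ or $\ran(\theta_x)$: within each of these sets all products equal the relevant zero. A routine finite check gives $\theta_x\theta_y\le\theta_{xy}$ for all $x,y$, the essential point being that $\theta_a,\theta_c$ agree on $\dom(\theta_a)\cap\dom(\theta_c)=\left\{s,z_1\right\}$ and $\theta_a^{-1},\theta_c^{-1}$ agree on $\left\{s',z_2\right\}$. So $(\pi,\theta)$ is a $\land$-preaction in the full sense of Definition \ref{def:partialactiononsemigroupoid}. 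Yet if $(\pi,\gamma)$ were any partial action with $\theta_x\le\gamma_x$ for all $x$, order preservation would give $\gamma_a\ge\gamma_c\ge\theta_c$, hence $p,q\in\dom(\gamma_a)$, and multiplicativity of $\gamma_a$ would force $s'=\gamma_a(pq)=\gamma_a(p)\gamma_a(q)=p'q'=z_2$, a contradiction. So you have not merely left a hole in your own write-up: you have located a genuine error in the paper, whose proof asserts precisely the step that fails. The statement is sound if ``partial action'' is read at the level of sets (Definition \ref{def:partialactiononset}), where condition \ref{def:partialactiononsemigroupoid}\ref{def:partialactiononsemigroupoid4} is vacuous, but as a statement about actions on semigroupoids it needs additional hypotheses on $\Lambda$ or on $\theta$ controlling products that cross between the domains $\dom(\theta_c)$, $c\le a$.
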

\begin{proof}
    As the only (possible) problem with is that $\theta$ does not necessarily satisfy $\theta_b\leq\theta_a$ when $b\leq a$, we simply ``glue'' $\theta_b$ to $\theta_a$ in this case.
    
    More precisely: Let $a\in\mathcal{S}$. If $b_1,b_2\leq a$. Then $b_1$ and $b_2$ are compatible, so $\theta_{b_1}$ and $\theta_{b_2}$ are compatible (see the paragraphs after Definition \ref{def:dualprehomomorphismandpartialhomomorphism}), which means that $\theta_{b_1}$ and $\theta_{b_2}$ coincide on the intersection of their domains, as do $\theta_{b_1}^{-1}$ and $\theta_{b_2}^{-1}$.
    
    Thus we set $\overline{\theta}_a=\bigvee_{b\leq a}\theta_b$, the join taken in $\mathcal{I}(\Lambda)$ i.e., $\overline{\theta}_a(x)$ is defined if and only if $\theta_b(x)$ is defined for some $b\leq a$, in which case $\overline{\theta}_a(x)=\theta_b(x)$. This defines an isomorphism between the ideals $\bigcup_{b\leq a}\dom(\theta_b)$ of $\pi^{-1}(\so(a))$ and $\bigcup_{b\leq a}\ran(\theta_b)$ of $\pi^{-1}(\ra(a))$. It is clear that $\overline{\theta}_{a^*}=\overline{\theta}_a^{-1}$, and that $\overline{\theta}$ is order preserving, and the verification that it is a $\land$-preaction, and thus a partial action, is straightforward.\qedhere
\end{proof}

Note that the partial action $(\pi,\overline{\theta})$ constructed above is minimal, in the sense that any other partial action $(\pi,\gamma)$ extending $(\pi,\theta)$ satisfies $\overline{\theta}_a\leq\gamma_a$ for all $a\in\mathcal{S}$.

\subsection{Semidirect products}

Throughout this subsection, we fix a $\land$-preaction $(\pi,\theta)\colon\mathcal{S}\curvearrowright \Lambda$. Consider the set
\[\mathcal{S}\ltimes\Lambda=\left\{(a,x)\in\mathcal{S}\times X:x\in\dom(\theta_a)\right\}.\]
We will endow $\mathcal{S}\ltimes\Lambda$ with a product, and call the resulting semigroupoid the \emph{semidirect product} of $(\pi,\theta)$. If $(a,x)$ and $(b,y)\in\mathcal{S}\ltimes\Lambda$, $ab$ and $x\theta_b(y)$ are defined (in $\mathcal{S}$ and $\Lambda$, respectively), we set
\[(a,x)(b,y)=(ab,\theta_{b^*}(x\theta_b(y))).\ntag\label{eq:semidirectproduct}\]
(Compare this formula with the product on partial crossed product algebras; see \cite{arxiv1804.00396,MR1456588,MR1331978}.)

To check that the right-hand side of \eqref{eq:semidirectproduct} is a well-defined element of $\mathcal{S}\ltimes\Lambda$, first note that $\theta_b(y)$ belongs to the ideal $\dom(\theta_{b^*})$ of $\Lambda$, so $\theta_{b^*}(x\theta_b(y))$ is an element of $\dom(\theta_b)$. We have
\[\theta_b(\theta_{b^*}(x\theta_b(y)))=x\theta_b(y).\]
Since $x$ belongs to the ideal $\dom(\theta_a)$, then $\theta_{b^*}(x\theta_b(y))$ belongs to $\dom(\theta_a\circ\theta_b)$, which is contained in $\dom(\theta_{ab})$ because $\theta$ is a $\land$-prehomomorphism. Therefore $(ab,\theta_{b^*}(x\theta_b(y)))\in\mathcal{S}\ltimes\Lambda$.

However, the product \eqref{eq:semidirectproduct} is not associative in general, even when considering global actions of inverse semigroups on semigroups. The example below is a slight modification of \cite[Example 3.5]{MR2115083}.

\begin{example}
Let $G=\left\{1,g\right\}$ be the cyclic group of order $2$, and $S$ the inverse semigroup obtained by adjoining a new unit $x$ to $G$.

Let $T=\left\{0,t,u,v\right\}$ be the semigroup with product defined by
\[tv=vt=u\qquad\text{and}\qquad ab=0\quad\text{if}\quad(a,b)\not\in\left\{(t,v),(v,t)\right\}.\]
Consider the ideal $I=\left\{0,u,v\right\}$ of $T$, and define an action $\theta\colon S\curvearrowright T$ as $\theta_x=\id_T$, $\quad\theta_1=\id_I$ and $\theta_g\colon I\to I$ as
\[\theta_g(0)=0,\qquad\theta_g(u)=v\qquad\text{and}\qquad\theta_g(v)=u.\]

Then $(1,t)((g,u)(1,t))=(g,0)$, but $((1,t)(g,u))(1,t)=(g,u)$.
\end{example}

\begin{definition}
	We call $\mathcal{S}\ltimes\Lambda$ with the product \eqref{eq:semidirectproduct} the \emph{semidirect product semigroupoid} (induced by the $\land$-preaction $(\pi,\theta)$), whenever the product \eqref{eq:semidirectproduct} is associative.
\end{definition}

We will therefore need to consider conditions on the $\land$-preaction $\theta\colon\mathcal{S}\curvearrowright\Lambda$ which make the product in \eqref{eq:semidirectproduct} associative, so that we obtain a semigroupoid structure on $\mathcal{S}\ltimes\Lambda$. The same ideas as in \cite[Section 3]{MR2115083}, which lie in the context of partial actions of groups on algebras, may be easily adapted to the context of semigroupoid actions and show that the product \eqref{eq:semidirectproduct} is in fact associative for a large class of inverse semigroupoid actions. See the paragraph after Proposition \ref{prop:idempotentornondegenerateimpliesLRassociative}

\begin{definition}
A \emph{multiplier} of a semigroupoid $\Lambda$ is a pair $(L,R)$, of partially defined maps $L$ and $R$ on $\Lambda$ satisfying
\begin{enumerate}[label=(\roman*)]
\item $\dom(L)$ is a right ideal of $\Lambda$ and $\dom(R)$ is a left ideal of $\Lambda$;
\item $L(ab)=L(a)b$;
\item $R(ab)=aR(b)$;
\item $R(a)b=aL(b)$,
\end{enumerate}
in the sense that each side of these equations is defined if and only if the other side is defined, in which case they coincide. $L$ and $R$ are called, respectively, a \emph{left} and a \emph{right multipliers}.
\end{definition}

\begin{example}\label{ex:multipliersofmultiplication}
Given $x\in\Lambda$, we let $L_x\colon\Lambda^x\to x\Lambda^x$ and $R_x\colon \Lambda_x\to\Lambda_xx$ be given by $L_x(a)=xa$ and $R_x(a)=ax$. Then $(L_x,R_x)$ is a multiplier. If $I$ is an ideal of $\Lambda$, then the restrictions of $L_x$ and $R_x$ to $I$ form a multiplier of $I$.
\end{example}

\begin{definition}
	$\Lambda$ is \emph{non-degenerate} if the map $a\mapsto (L_a,R_a)$ is injective.
\end{definition}

Moreover, the associative property of $\Lambda$ may be rewritten in terms of multipliers as follows: For all $a,b\in\Lambda$,
\[\dom(L_a)\cap\dom(R_b)\subseteq\dom(L_a\circ R_b)\qquad\text{and}\qquad L_a\circ R_b=R_b\circ L_a.\]

\begin{definition}
A semigroupoid $\Lambda$ is \emph{$(L,R)$-associative} if for any two multipliers $(L,R)$ and $(L',R')$ of $\Lambda$, we have
\[\dom(L)\cap\dom(R')\subseteq\dom(L\circ R')\qquad\text{and}\qquad L\circ R'=R'\circ L\]
\end{definition}

\begin{theorem}\label{thm:semidirectproductisassociativeifLRassociative}
The product of $\mathcal{S}\ltimes\Lambda$ is associative if $\dom(\theta_a)$ is $(L,R)$-associative for all $a\in\mathcal{S}$.
\end{theorem}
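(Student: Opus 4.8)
The plan is to verify the two Exel-semigroupoid conditions of Definition \ref{def:semigroupoid} for an arbitrary triple of composable elements $(a,x),(b,y),(c,z)\in\mathcal{S}\ltimes\Lambda$: that $\big((a,x)(b,y)\big)(c,z)$ is defined exactly when $(a,x)\big((b,y)(c,z)\big)$ is, and that the two agree. The first coordinate of either iterated product is $abc$, and the $\mathcal{S}$-level definedness of the two bracketings is governed by conditions \ref{def:semigroupoiditem2} and \ref{def:semigroupoiditem3} of Definition \ref{def:semigroupoid}, which are equivalent; so the entire problem reduces to the second ($\Lambda$-)coordinate.

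First I would expand both coordinates using \eqref{eq:semidirectproduct}. Writing $m=\theta_b(y)$ and $s=\theta_c(z)$, a direct computation gives
\[
\big((a,x)(b,y)\big)(c,z)=\Big(abc,\ \theta_{c^*}\big(\theta_{b^*}(xm)\,s\big)\Big),
\qquad
(a,x)\big((b,y)(c,z)\big)=\Big(abc,\ \theta_{(bc)^*}\big(x\,\theta_b(ys)\big)\Big).
\]
Two simplifications here must be justified with care: replacing $\theta_{bc}$ by $\theta_b\circ\theta_c$ (on the right) and $\theta_{(bc)^*}$ by $\theta_{c^*}\circ\theta_{b^*}$ (to match the left). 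Property \ref{def:dualprehomomorphismandpartialhomomorphism}\ref{def:dualprehomomorphismandpartialhomomorphism2} only yields the inequalities $\theta_b\theta_c\le\theta_{bc}$ and $\theta_{c^*}\theta_{b^*}\le\theta_{(bc)^*}$, i.e.\ the right-hand maps \emph{extend} the composites. To upgrade these to genuine equalities on the specific elements at hand, I would check that those elements lie in $\dom(\theta_b\circ\theta_c)$, respectively $\dom(\theta_{c^*}\circ\theta_{b^*})$, which is where the ideal axioms of Definition \ref{def:partialactiononsemigroupoid} are used repeatedly: for instance $ys\in\dom(\theta_b)$ because $\dom(\theta_b)$ is a right ideal and $y\in\dom(\theta_b)$, and the analogous memberships follow from each $\pi^{-1}(v)$ and each $\dom(\theta_g)$ being an ideal and each $\theta_g$ a semigroupoid isomorphism.

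The crux is to read the remaining discrepancy as a multiplier commutation. The element $x$ need not lie in $\dom(\theta_{b^*})$, so $\theta_{b^*}$ cannot simply be distributed over the product $x\,\theta_b(\cdots)$; instead $x$ acts on the ideal $\dom(\theta_{b^*})$ as the left multiplier $L_x$ of Example \ref{ex:multipliersofmultiplication} (restricted to that ideal), while $s$ acts on $\dom(\theta_b)$ as the right multiplier $R_s$. Transporting $L_x$ through the isomorphism $\theta_b$ produces a left multiplier $\widehat{L}=\theta_{b^*}\circ L_x\circ\theta_b$ of $\dom(\theta_b)$. Rewriting both second coordinates in these terms, I expect the left one to become $\theta_{c^*}\big((R_s\circ\widehat{L})(y)\big)$ and the right one $\theta_{c^*}\big((\widehat{L}\circ R_s)(y)\big)$. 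These are equal, with matching domains of definition, exactly when $R_s\circ\widehat{L}=\widehat{L}\circ R_s$ and $\dom(\widehat{L})\cap\dom(R_s)\subseteq\dom(\widehat{L}\circ R_s)$ hold on $\dom(\theta_b)$ — which is precisely the hypothesis that $\dom(\theta_b)$ is $(L,R)$-associative. Since $\theta_{c^*}$ is injective on its domain, applying it then yields equality of the two second coordinates.

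The main obstacle I anticipate is the bookkeeping of domains. One must track that every product and every application of a $\theta_g$ stays inside the correct ideal, so that the two inequality-to-equality upgrades are legitimate; and one must check that the $(L,R)$-associativity domain inclusion $\dom(\widehat{L})\cap\dom(R_s)\subseteq\dom(\widehat{L}\circ R_s)$ translates exactly into the equivalence of the $\Lambda$-level definedness conditions for the two bracketings, so that the ``defined iff defined'' half of Definition \ref{def:semigroupoid} is secured and not merely the equality of values. A small auxiliary fact I would record along the way is that the transport of a multiplier through the semigroupoid isomorphism $\theta_b$ is again a multiplier, which is what makes $\widehat{L}$ the left part of a genuine multiplier of $\dom(\theta_b)$ and hence legitimizes the appeal to the hypothesis.
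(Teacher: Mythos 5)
Your proposal is correct and follows essentially the same route as the paper's own proof: expand both bracketings, use the ideal axioms and the extension property $\theta_b\circ\theta_c\leq\theta_{bc}$ to reduce everything to the single identity $\theta_{b^*}(x\theta_b(y))s=\theta_{b^*}(x\theta_b(ys))$ on $\dom(\theta_b)$, and recognize this as the commutation of the transported left multiplier $\theta_{b^*}\circ L_x\circ\theta_b$ with the right multiplier $R_s$, which $(L,R)$-associativity of $\dom(\theta_b)$ supplies together with the domain inclusion needed for the ``defined iff defined'' clauses. The paper organizes the same ingredients slightly differently (isolating the equivalence of its Equations \eqref{eq:equationwhichdeterminesassociativity1} and \eqref{eq:equationwhichdeterminesassociativity2} by applying $\theta_c^{\pm1}$ to both sides), but the decomposition, the key multiplier, and the use of the hypothesis are identical to yours.
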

\begin{proof}
Let $(a,r),(b,s),(c,t)\in \mathcal{S}\ltimes\Lambda$. As long as it makes sense, we compute
\[\left((a,r)(b,s)\right)(c,t)=\left(ab,\theta_{b}^{-1}(r\theta_{b}(s))\right)(c,t)=(abc,\theta_{c}^{-1}(\theta_{b}^{-1}(r\theta_{b}(s))\theta_{c}(t)))\]
and
\[(a,r)\left((b,s),(c,t)\right)=(a,r)\left(bc,\theta_{c}^{-1}(s\theta_{c}(t))\right)=(abc,\theta_{bc}^{-1}(r\theta_{bc}(\theta_{c}^{-1}(s\theta_{c}(t)))))\]
Thus, we need to prove that for all $(a,b,c)\in\mathcal{S}^{(3)}$,
\[\theta_{c}^{-1}(\theta_{b}^{-1}(r\theta_{b}(s))\theta_{c}(t))=\theta_{bc}^{-1}(r\theta_{bc}(\theta_{c}^{-1}(s\theta_{c}(t))))\]
in the sense that either side is defined if and only if the other one is, and in which case the equation holds. As $\theta_c(t)$ runs through all of $\ran(\theta_c)$, let us rewrite it as $\theta_c(t)=w$. Let us first rewrite the right-hand side. We have
\[\theta_{bc}(\theta_c^{-1}(sw)=\theta_{bcc^*}(sw)\]
and the element $sw$ belongs to the domains of both $\theta_{bcc^*}$ and of $\theta_b$. As $bcc^*\leq b$ then $\theta_{bcc^*}(sw)=\theta_b(sw)$ (see Equation \eqref{eq:changepreactionbygreater}), thus we need instead to prove
\[\theta_{c}^{-1}(\theta_{b}^{-1}(r\theta_{b}(s))w)=\theta_{bc}^{-1}(r\theta_b(sw))\ntag\label{eq:equationwhichdeterminesassociativity1}.\]

To further simplify the expression above we would need to rewrite the term ``$\theta_{bc}^{-1}$'', at the beggining of the right-hand side above, as ``$\theta_c^{-1}\circ\theta_b^{-1}$'', however this is not so immediate. If this is the case, Equation \eqref{eq:equationwhichdeterminesassociativity1} implies
\[\theta_{b}^{-1}(r\theta_{b}(s))w=\theta_{b}^{-1}(r\theta_{b}(sw)).\ntag\label{eq:equationwhichdeterminesassociativity2}\]

Let us postpone the proof of the equivalence \eqref{eq:equationwhichdeterminesassociativity1}$\iff$\eqref{eq:equationwhichdeterminesassociativity2} and instead finish the proof of the theorem. Then \eqref{eq:equationwhichdeterminesassociativity2} may be rewritten as
\[R_{w}\circ \theta_{b}^{-1}\circ L_{r}\circ\theta_{b}=\theta_{b}^{-1}\circ L_{r}\circ \theta_{b}\circ R_{w}\ntag\label{eq:prop:semidirectproductisassociativeifLRassociative}\]
on $\dom(\theta_{b})$. Here we regard $L_r$ as a left multiplier of $\ran(\theta_b)$, as in Example \ref{ex:multipliersofmultiplication}, thus $\theta_{b}^{-1}\circ L_{r}\circ\theta_{b}$ is a left multiplier of $\dom(\theta_{b})$. Similarly, we regard $R_w$ as a right multiplier of $\dom(\theta_{b})$. As we assume that $\dom(\theta_{b})$ is $(L,R)$-associative, then Equation \eqref{eq:prop:semidirectproductisassociativeifLRassociative} holds.

In particular, this also proves that $((a,r)(b,s))(c,t)$ is defined if and only if $(a,r)((b,s)(c,t))$ is defined, which is equivalence \ref{def:semigroupoiditem2}$\iff$\ref{def:semigroupoiditem3} of Definition \ref{def:semigroupoid}. We still need to prove that these terms are defined when both $(a,r)(b,s)$ and $(b,s)(c,t)$ are defined..

Indeed, in this case, $\theta_b^{-1}(r\theta_b(s))$ and $s\theta_c(t)$ are defined, which means that
\[s\in\dom(\theta_b^{-1}\circ L_r\circ\theta_b)\cap\dom(R_{\theta_c(t)})\subseteq\dom(R_{\theta_c(t)}\circ(\theta_b^{-1}\circ  L_r\circ\theta_{b^{-1}}))\]
because $\theta_b^{-1}\circ L_r\circ\theta_b$ and $R_{\theta_c(t)}$ are left and a right multipliers on $\dom(\theta_b)$, respectively. Then
\[\theta_b^{-1}(r\theta_b(s))\theta_c(t)\]
is defined, thus $((a,r)(b,s))(c,t)$ is also defined.\qedhere
\end{proof}

\begin{proof}[Proof of the equivalence of Equations \eqref{eq:equationwhichdeterminesassociativity1} and \eqref{eq:equationwhichdeterminesassociativity2}]
First assume that \eqref{eq:equationwhichdeterminesassociativity1} holds. We may apply $\theta_c$ on the left-hand side, and hence on the right-hand side as well, to obtain
\[\theta_b^{-1}(r\theta_b(s))w=\theta_c(\theta_{(bc)^*}(r\theta_b(sw)))=\theta_{cc^*b^*}(r\theta_b(sw))=\theta_b^{-1}(r\theta_b(sw)),\]
where the last equality follows from the fact that $cc^*b^*\leq b^*$ and $r\theta_b(sw)$ belongs to $\dom(\theta_{cc^*b^*})\cap\dom(\theta_b^*)$. This is precisely \eqref{eq:equationwhichdeterminesassociativity2}.

In the other direction, note that the left-hand side of \eqref{eq:equationwhichdeterminesassociativity2} is $\theta_b^{-1}(r\theta_b(s))w$, and $w\in\ran(\theta_c)=\dom(\theta_{c^*})$. Thus we may apply $\theta_c^{-1}$ on both sides and use $\theta_c^{-1}\circ\theta_b^{-1}\leq\theta_{bc}^{-1}$ to obtain \eqref{eq:equationwhichdeterminesassociativity1}.\qedhere
\end{proof}

Proposition \ref{prop:idempotentornondegenerateimpliesLRassociative} below yields a large class of $(L,R)$-associative semigroupoids.

\begin{definition}\label{def:idempotentsemigroupoid}
	$\Lambda$ is \emph{idempotent} if $\Lambda=\Lambda\Lambda$, that is, if every $a\in\Lambda$ may be rewritten as a product $a=a_1a_2$, where $a_1,a_2\in\Lambda$.
\end{definition}

\begin{proposition}\label{prop:idempotentornondegenerateimpliesLRassociative}
A semigroupoid $\Lambda$ is $(L,R)$-associative if any of the following two conditions below holds.
\begin{enumerate}[label=(\roman*)]
	\item\label{prop:idempotentornondegenerateimpliesLRassociative1} $\Lambda$ is idempotent;
	\item\label{prop:idempotentornondegenerateimpliesLRassociative2} $\Lambda$ is non-degenerate, and for every $a\in\Lambda$ we have $\Lambda_a\neq\varnothing$.
	\item\label{prop:idempotentornondegenerateimpliesLRassociative3} $\Lambda$ is non-degenerate, and for every $a\in\Lambda$ we have $\Lambda^a\neq\varnothing$.
\end{enumerate}
\end{proposition}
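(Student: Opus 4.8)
The plan is to reduce the statement to its core: given two multipliers $(L,R)$ and $(L',R')$ and an element $a\in\dom(L)\cap\dom(R')$, I must check that $R'(a)\in\dom(L)$ and $L(a)\in\dom(R')$ (so that both $L\circ R'$ and $R'\circ L$ are defined at $a$) and that $L(R'(a))=R'(L(a))$. Before splitting into cases I would record one bookkeeping lemma that drives everything: using the relation $L(xy)=L(x)y$ together with the fact that $\dom(L)$ is a right ideal, a left multiplier satisfies $\Lambda^{L(x)}=\Lambda^{x}$ for all $x\in\dom(L)$ (it preserves right-composability), and dually, using $R(xy)=xR(y)$ and $\dom(R)$ being a left ideal, a right multiplier satisfies $\Lambda_{R(x)}=\Lambda_{x}$ for all $x\in\dom(R)$ (it preserves left-composability). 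These identities are what certify that each intermediate product below is defined exactly when I need it.

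For the idempotent case \ref{prop:idempotentornondegenerateimpliesLRassociative1}, write $a=bc$ with $(b,c)\in\Lambda^{[2]}$. Since $a\in\dom(L)$, the relation $L(bc)=L(b)c$ (defined on one side iff on the other) forces $b\in\dom(L)$, $(L(b),c)\in\Lambda^{[2]}$ and $L(a)=L(b)c$; symmetrically, $a\in\dom(R')$ gives $c\in\dom(R')$, $(b,R'(c))\in\Lambda^{[2]}$ and $R'(a)=bR'(c)$. By the bookkeeping lemma $\Lambda_{R'(c)}=\Lambda_{c}$, so from $(L(b),c)\in\Lambda^{[2]}$ I obtain $(L(b),R'(c))\in\Lambda^{[2]}$; hence $L(b)R'(c)$ is defined. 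Applying $L(xy)=L(x)y$ to $R'(a)=bR'(c)$ then shows $R'(a)\in\dom(L)$ with $L(R'(a))=L(b)R'(c)$, while applying $R'(xy)=xR'(y)$ to $L(a)=L(b)c$ shows $L(a)\in\dom(R')$ with $R'(L(a))=L(b)R'(c)$. The two composites therefore agree.

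For the non-degenerate cases \ref{prop:idempotentornondegenerateimpliesLRassociative2} and \ref{prop:idempotentornondegenerateimpliesLRassociative3}, which are mutually left--right dual, I no longer have a factorization of $a$, so instead I would compute the two candidate values against test elements and invoke injectivity of $x\mapsto(L_x,R_x)$, following the reference \cite{MR2115083}. The model computation is the purely formal chain: for a right test element $c$, the defining relations give $L(R'(a))\,c=L(R'(a)c)=L(aL'(c))=L(a)L'(c)=R'(L(a))\,c$, and for a left test element $b$, $b\,L(R'(a))=R(b)R'(a)=R'(R(b)a)=R'(bL(a))=b\,R'(L(a))$. Thus $L(R'(a))$ and $R'(L(a))$ have the same left and right products with every admissible test element, and non-degeneracy yields $L(R'(a))=R'(L(a))$. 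The hypothesis $\Lambda_{a}\neq\varnothing$ guarantees that left test elements exist wherever they are needed, and $\Lambda^{a}\neq\varnothing$ does the same on the right; the bookkeeping lemma again certifies that the intermediate products in each chain are simultaneously defined.

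The main obstacle is precisely this partial-definedness bookkeeping. In the idempotent case it is mild, since the factorization $a=bc$ channels every product through the single well-defined element $L(b)R'(c)$. In the non-degenerate cases it is the crux: one must verify that the test elements can be chosen inside the correct ideals so that all terms of the two chains above are defined at the same time, and that enough of them exist to separate points under $x\mapsto(L_x,R_x)$ --- this is exactly the work done by the existence hypotheses $\Lambda_{a}\neq\varnothing$ and $\Lambda^{a}\neq\varnothing$, and it is where care is required rather than in the algebraic identities themselves.
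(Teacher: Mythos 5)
Your proposal is correct and follows essentially the same route as the paper's proof: the idempotent case is handled by factoring $a=bc$ and pushing $L$ and $R'$ through the factorization, and the non-degenerate cases use exactly the paper's two test-element chains $L(R'(a))c=L(R'(a)c)=L(aL'(c))=L(a)L'(c)=R'(L(a))c$ and $bL(R'(a))=R(b)R'(a)=R'(R(b)a)=R'(bL(a))=bR'(L(a))$ together with injectivity of $x\mapsto(L_x,R_x)$, with the hypotheses $\Lambda_a\neq\varnothing$ (resp.\ $\Lambda^a\neq\varnothing$) supplying the test elements and the one-sided ideal properties of $\dom(L)$, $\dom(R')$ propagating definedness, which is precisely how the paper obtains $\dom(L)\cap\dom(R')\subseteq\dom(L\circ R')$. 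The only organizational difference is that your composability-preservation lemma ($\Lambda^{L(x)}=\Lambda^{x}$, $\Lambda_{R(x)}=\Lambda_{x}$) lets you establish the domain containment and the commutation in one pass in the idempotent case, where the paper runs two separate computations.
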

\begin{proof}
Let $(L,R)$ and $(L',R')$ be multipliers of $\Lambda$.

First assume that $a=a_1a_2$ in $\Lambda$. Then
\[L(R'(a))=L(R'(a_1a_2))=L(a_1R'(a_2))=L(a_1)R'(a_2)=R'(L(a_1)a_2)=R'(L(a_1a_2))=R'(L(a)).\]
Thus if $\Lambda$ is idempotent, then $L\circ R'=R'\circ L$ as well.

Alternatively, for all $a,b$ we have
\[L(R'(a))b=L(R'(a)b)=L(aL'(b))=L(a)L'(b)=R'(L(a))b,\]
and
\[bL(R'(a))=R(b)R'(a)=R'(R(b)a)=R'(bL(a))=bR'(L(a)),\]
in the sense that, in each of these chains of equalities, one of the terms is defined if and only if the other ones are, in which case they coincide. If $\Lambda$ is non-degenerate, this implies that $L(R'(a))=R'(L(a))$ for all $a\in\Lambda$, i.e., $L\circ R'=R'\circ L$.

It remains only to prove that, under either of the hypotheses \ref{prop:idempotentornondegenerateimpliesLRassociative1} or \ref{prop:idempotentornondegenerateimpliesLRassociative2} above, we have $\dom(L)\cap\dom(R')\subseteq\dom(L\circ R')$. Fix $a\in\dom(L)\cap\dom(R')$.

First assume that $\Lambda$ is idempotent, and write $a=a_1a_2$. Then we may compute
\[L(a)=L(a_1a_2)=L(a_1)a_2\qquad\text{and}\qquad R'(a)=R'(a_1a_2)=a_1R'(a_2).\]
In particular, $a_1\in\dom(L)$, which is a right ideal and thus $a_1R'(a_2)\in\dom(L)$, so we may compute
\[L(a_1R'(a_2))=L(R'(a_1a_2))=L(R'(a)),\]
and so $a\in\dom(L\circ R')$.

Now assume that condition \ref{prop:idempotentornondegenerateimpliesLRassociative2} holds, and take $x\in\Lambda_{L(a)}$, so $xL(a)=R(x)a$ is defined. Since $a$ belongs to the left ideal $\dom(R')$, we may compute
\[R'(R(x)a)=R(x)R'(a)=xL(R'(a)),\]
and in particular $a\in\dom(L\circ R')$. The proof that condition \ref{prop:idempotentornondegenerateimpliesLRassociative3} implies $(L,R)$-associativity is similar.\qedhere
\end{proof}

We could also adapt the terminology ``$s$-unital'' from ring theory (see \cite{MR0419511}) to the setting of semigroupoids: A semigroupoid $\Lambda$ is \emph{left $s$-unital} if for every $t\in\Lambda$ there exists $u\in\Lambda_t$ such that $ut=t$. Of course every left $s$-unital semigroupoid is idempotent (and also non-degenerate). Examples of left $s$-unital semigroupoids include regular semigroupoids and categories -- in particular sets, groupoids, regular semigroups, multiplicative semigroups of $s$-unital rings, monoids, lattices etc\ldots

Note that if $\Lambda$ is a regular semigroupoid, $I$ is an ideal of $\Lambda$, $x\in I$ and $y$ is an inverse of $x$ in $\Lambda$, then $y=yxy\in I$. It follows that $I$ itself is a regular semigroupoid. In particular, a $\land$-preaction $(\pi,\theta)\colon\mathcal{S}\curvearrowright\Lambda$ of an inverse semigroupoid $\mathcal{S}$ on a regular semigroupoid $\Lambda$ will always have the product \eqref{eq:semidirectproduct} associative. In this case, $\mathcal{S}\ltimes\Lambda$ is also a regular semigroupoid, and in fact the converse implication is also true.

\begin{proposition}\label{prop:semidirectproductisinverseiffsemigroupoidisinverse}
Suppose that the $\land$-preaction $(\pi,\theta)$ is non-degenerate and $\mathcal{S}\ltimes\Lambda$ is a semigroupoid. Then $\mathcal{S}\ltimes\Lambda$ is regular (resp.\ inverse) if and only if $\Lambda$ is regular (resp.\ inverse).
\end{proposition}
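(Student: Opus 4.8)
The plan is to reduce everything to a single computation identifying the inverses of a pair $(a,x)\in\mathcal{S}\ltimes\Lambda$ in terms of inverses of $x$ in $\Lambda$. First note that if $(b,y)$ is any inverse of $(a,x)$, then projecting \eqref{eq:semidirectproduct} onto the first coordinate gives $aba=a$ and $bab=b$, so $b$ is an inverse of $a$ in $\mathcal{S}$; since $\mathcal{S}$ is an inverse semigroupoid this forces $b=a^*$. Thus every inverse of $(a,x)$ has the form $(a^*,y)$ with $y\in\dom(\theta_{a^*})=\ran(\theta_a)$, and I may write $y=\theta_a(x')$ with $x'\defeq\theta_{a^*}(y)\in\dom(\theta_a)$.

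The core step is then to compute the two ``sandwich'' products. Using the product formula \eqref{eq:semidirectproduct}, the identities $\theta_{a^*}\circ\theta_a=\id_{\dom(\theta_a)}$ and $\theta_a\circ\theta_{a^*}=\id_{\dom(\theta_{a^*})}$, and the fact that $\theta_a$ restricts to a semigroupoid isomorphism on $\dom(\theta_a)$, I would obtain
\[(a,x)(a^*,y)(a,x)=\bigl(a,\,x\,\theta_{a^*}(y)\,x\bigr)=(a,xx'x)\]
and
\[(a^*,y)(a,x)(a^*,y)=\bigl(a^*,\,y\,\theta_a(x)\,y\bigr)=\bigl(a^*,\theta_a(x'xx')\bigr).\]
Since $\theta_a$ is injective on $\dom(\theta_a)$, these equal $(a,x)$ and $(a^*,y)$ respectively if and only if $xx'x=x$ and $x'xx'=x'$. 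Hence $(a^*,\theta_a(x'))$ is an inverse of $(a,x)$ in $\mathcal{S}\ltimes\Lambda$ \emph{if and only if} $x'$ is an inverse of $x$ in $\Lambda$. The main bookkeeping obstacle here is to verify that each product appearing is defined exactly when needed --- this uses that $\dom(\theta_a)$ is an ideal (so it is closed under the products formed) and that $\mathcal{S}$ is categorical.

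With this equivalence in hand, the forward implications are immediate. If $\Lambda$ is regular, then $\dom(\theta_a)$, being an ideal of the ideal $\pi^{-1}(\so(a))$ of $\Lambda$, is itself regular, so $x\in\dom(\theta_a)$ admits an inverse $x'\in\dom(\theta_a)$; the equivalence then produces the inverse $(a^*,\theta_a(x'))$ of $(a,x)$, so $\mathcal{S}\ltimes\Lambda$ is regular. If moreover $\Lambda$ is inverse, then $x'$ is uniquely determined, hence so is $y=\theta_a(x')$, and together with the forced first coordinate $a^*$ this shows $(a,x)$ has a unique inverse; thus $\mathcal{S}\ltimes\Lambda$ is inverse.

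For the converse implications I would use non-degeneracy. Given $r\in\Lambda$, non-degeneracy provides $a\in\mathcal{S}$ with $r\in\dom(\theta_a)\subseteq\dom(\theta_{a^*a})$; setting $e\defeq a^*a\in E(\mathcal{S})$ we have $r\in\dom(\theta_e)$ and $(e,r)\in\mathcal{S}\ltimes\Lambda$. Since $e=e^*$ and $\theta_e=\id_{\dom(\theta_e)}$, the core equivalence specializes (here $x'=y$) to: $(e,y)$ is an inverse of $(e,r)$ if and only if $y$ is an inverse of $r$ in $\Lambda$. If $\mathcal{S}\ltimes\Lambda$ is regular, then $(e,r)$ has an inverse whose first coordinate is forced to be $e^*=e$, yielding an inverse $y$ of $r$; hence $\Lambda$ is regular. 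If $\mathcal{S}\ltimes\Lambda$ is inverse, then any two inverses $y_1,y_2$ of $r$ lie in $\dom(\theta_e)$ (because $\pi$ is a homomorphism, so $\pi(y_i)=\pi(r)=\so(e)$, whence $y_i=y_i(ry_i)\in\dom(\theta_e)$ using that $\dom(\theta_e)$ is an ideal of $\pi^{-1}(\so(e))$), so $(e,y_1)$ and $(e,y_2)$ are both inverses of $(e,r)$ and therefore coincide, giving $y_1=y_2$; hence $\Lambda$ is inverse. The only delicate point in this last step is confirming that the candidate inverses actually land in $\dom(\theta_e)$, which is exactly where the ideal and homomorphism hypotheses of Definition \ref{def:partialactiononsemigroupoid} are used.
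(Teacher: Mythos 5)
Your proof is correct and follows essentially the same route as the paper's: both rest on the characterization that the inverses of $(a,x)$ in $\mathcal{S}\ltimes\Lambda$ are exactly the pairs $(a^*,\theta_a(x'))$ with $x'$ an inverse of $x$ in $\Lambda$, obtained by forcing the first coordinate to be $a^*$ and transporting the sandwich identities through the isomorphism $\theta_a$. The only difference is cosmetic: you spell out the converse direction explicitly (where non-degeneracy and the ideal/homomorphism hypotheses place the inverses of $r$ inside $\dom(\theta_e)$), which the paper leaves implicit in its opening claim that it suffices to describe all inverses of $(a,x)$.
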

\begin{proof}
We may simply prove that the inverses of $(a,x)\in\mathcal{S}\ltimes X$ are precisely the elements of the form $(a^*,\theta_a(x))$, where $y$ is an inverse of $x$ in $\Lambda$.

First assume that $(a,x)\in\mathcal{S}\ltimes\Lambda$, and that $x$ has an inverse $y$. Then as $y=yxy$ is defined, so $y\in p^{-1}(\so(a))$, and as $\dom(\theta_a)$ is an ideal of $p^{-1}(\so(a))$ then $y\in\dom(\theta_a)$. It is easy to verify that $(a^*,\theta_a(y))$ is an inverse of $(a,x)$.

Conversely, suppose that $(a,x)$ has an inverse $(b,z)$ in $\mathcal{S}\ltimes\Lambda$. This means that
\[(a,x)=(aba,\theta_{a^*}(\theta_{b^*}(x\theta_b(z))\theta_a(x)))\qquad\text{and}\qquad (z,y)=(zaz,\theta_{b^*}(\theta_{a^*}(z\theta_a(x))\theta_b(z))),\]
so $b=a^*$. In particular $\theta_b=\theta_a^{-1}$, so using the fact that $\theta_a$ is a semigroupoid homomorphism, the equations above mean that
\[(a,x)=(a,x\theta_{a^*}(z)x)\qquad\text{and}\qquad (a^*,z)=(a^*,z\theta_a(x)z).\ntag\label{eq:prop:semidirectproductisinverseiffsemigroupoidisinverse}\]
The second equation above implies that
\[\theta_{a^*}(z)=\theta_{a^*}(z\theta_a(x)z)=\theta_{a^*}(z)x\theta_{a^*}(z).\]
Together with the first equation of \eqref{eq:prop:semidirectproductisinverseiffsemigroupoidisinverse}, this means that the element $y=\theta_{a^*}(z)$ is an inverse of $x$ in $\Lambda$, and $z=\theta_a(y)$.\qedhere
\end{proof}

\subsubsection*{The graphed case}

If $\mathcal{T}$ is a graphed semigroupoid and $(\pi,\theta)\colon\mathcal{S}\curvearrowright\mathcal{T}$ is a $\land$-preaction, the semidirect product $\mathcal{S}\ltimes\mathcal{T}$ has a natural graphed structure over $\mathcal{T}^{(0)}$ (or more precisely, its subset $\bigcup_{a\in\mathcal{S}}\dom(\theta_a)$. Namely, the source and range maps of $\mathcal{S}\ltimes\mathcal{T}$ are defined as
\[\so(a,x)=\so(x)\qquad\text{ and }\ra(a,x)=\ra(\theta_a(x)).\]
    
\section{Topological and étale inverse semigroupoids}
    We will now consider semigroupoid endowed with topologies which are compatible with their algebraic structures. In general, a (partial) algebraic structure $\mathscr{A}$ consists of families $\mathscr{A}_0$ of sets and $\mathscr{A}_1$ of partial functions between sets in $\mathscr{A}_0$, and $\mathscr{A}$ is said to be \emph{topological} if all sets in $\mathscr{A}_0$ are endowed with topologies, and all functions in $\mathscr{A}_1$ are continuous. The homomorphisms between such structure are also assumed to be continuous. This applies, in particular, to graphs and (Exel/graphed/inverse) semigroupoids. We write the following definitions in full for the sake of completeness.

\begin{definition}
A \emph{topological graph} is a graph $(G^{(0)},G,\so,\ra)$, such that $G^{(0)}$ and $G$ are endowed with topologies making $\so$ and $\ra$ continuous.

A \emph{topological} (Exel) semigroupoid is a semigroupoid $\Lambda$ endowed with some topology which makes the product continuous (where $\Lambda^{[2]}$ is endowed with the product topology of $\Lambda\times\Lambda$).

A topological graphed semigroupoid is a graphed semigroupoid which is both a topological graph and a topological semigroupoid.
\end{definition}

\begin{definition}
A \emph{topological inverse semigroupoid} is an inverse semigroupoid $\mathcal{S}$ which is a topological graphed semigroupoid, and such that the inverse map $(\ )^*\colon\mathcal{S}\to\mathcal{S}$, $s\mapsto s^*$, is continuous.
\end{definition}

Note that since $(\ )^*\circ (\ )^*=\id_{\mathcal{S}}$, then $(\ )^*$ being continuous implies that it is a homeomorphism.

\begin{definition}\label{def:etaleinversesemigroupoid}
A topological inverse semigroupoid $\mathcal{S}$ is \emph{étale} if the source map $\so\colon\mathcal{S}\to\mathcal{S}^{(0)}$ is a local homeomorphism. (Equivalently, the range map $\ra=\so\circ(\ )^*$ is a local homeomorphism.)
\end{definition}

\begin{remark}
As in \cite{MR2304314}, recall that a topological groupoid $\mathcal{G}$ is \emph{étale} if the source map $\so\colon a\mapsto a^{-1}a$ is a local homeomorphism from $\G$ to $\G[0]=\so(\G)$, where \emph{$\G[0]$ is endowed with the subspace topology of $\G$}. It is not immediately clear that this coincides with the notion in Definition \ref{def:etaleinversesemigroupoid} since for étale semigroupoids we do not assume $\mathcal{S}^{(0)}$ to even be a subset of $\mathcal{S}$.

Corollary \ref{cor:propertiesopeninversesemigroupoid} below deals with this: Suppose $\G$ is a topological groupoid, étale in the sense of \ref{def:etaleinversesemigroupoid}. Let $\tau$ be the topology of $\G$, $\tau|_{\G[0]}$ ist restriction to $\G[0]$, and $\eta$ the topology of $\G[0]$. By \ref{cor:propertiesopeninversesemigroupoid}\ref{cor:propertiesopeninversesemigroupoid1}, $E(\G)=\G[0]$ is open in $\G$. The source map on $\G[0]$ is simply the identity map $(\G[0],\tau|_{\G[0]})\to(\G[0],\eta)$, and it is a bijective (local) homeomorphism, hence $\tau|_{\G[0]}=\eta$.
\end{remark}

Homomorphisms of topological semigroupoids are always assumed to be continuous. A homomorphism $\phi=(\phi^{(0)},\phi^{(1)})\colon G\to H$ of topological graphs is continuous in the sense that $\phi^{(0)}$ and $\phi^{(1)}$ are continuous. When dealing with topological graphed semigroupoids this is a condition which we need to require, but when dealing with étale inverse semigroupoids, continuity of the ``arrow map'' actually implies continuity of the ``vertex map''. For this we recall a fact from the general theory of étale spaces.

Suppose that $\pi_i\colon X_i^{(1)}\to X_i^{(0)}$ ($i=1,2$) are continuous bundles,  and that $f=(f^{(0)},f^{(1)})\colon\pi_1\to\pi_2$ is a bundle homomorphism (possibly discontinuous a priori). Suppose further that $f^{(1)}$ is continuous and that $\pi_1$ is open (and surjective). Then $f^{(0)}$ is continuous. Indeed, since $\pi_1$ is open and surjective then $X_1^{(0)}$ is endowed with the the quotient (final) topology that $\pi_1$ induces. We have the commutative square
\[\begin{tikzpicture}
\node (X11) at (0,0) {$X_1^{(1)}$};
\node (X10) at ([shift={+(0,-1)}]X11) {$X_1^{(0)}$};
\node (X21) at ([shift={+(2,0)}]X11) {$X_2^{(1)}$};
\node (X20) at ([shift={+(0,-1)}]X21) {$X_2^{(0)}$};
\draw[->] (X11)--(X21) node[midway,above] {$f^{(1)}$};
\draw[->] (X11)--(X10) node[midway,left] {$\pi_1$};
\draw[->] (X21)--(X20) node[midway,right] {$\pi_2$};
\draw[->] (X10)--(X20) node[midway,above] {$f^{(0)}$};
\end{tikzpicture}\]
which implies that $f^{(0)}$ is the quotient map induced from the continuous map $\pi_2\circ f^{(1)}$, and is therefore continuous.

We thus apply the fact above with the bundle induced by the source maps of graphed semigroupoids, and in conjunction with Corollary \ref{cor:graphedhomomorphisminducesvertexmap} to obtain the result below:

\begin{corollary}
Suppose that $\phi\colon\mathcal{S}\to\mathcal{T}$ is a continuous homomorphism between topological inverse semigroupoids and that $\mathcal{S}$ is étale. Then there exists a unique continuous map $\phi^{(0)}\colon\mathcal{S}^{(0)}\to\mathcal{T}^{(0)}$ for which $(\phi^{(0)},\phi)$ is a graphed semigroupoid homomorphism.
\end{corollary}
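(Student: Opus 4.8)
The plan is to separate the statement into its algebraic content (existence and uniqueness of the vertex map as a function) and its topological content (continuity of that map), and to dispatch each using a result already available.

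First I would observe that every inverse semigroupoid has no sources nor sinks, as noted just before Corollary \ref{cor:inversesemigroupoidhasauniquegraphstructure}: for any $a\in\mathcal{S}$ the products $a^*a$ and $aa^*$ are defined, so $\so(a)=\ra(a^*)$ and $\ra(a)=\so(a^*)$, whence $\so^{-1}(v)$ and $\ra^{-1}(v)$ are nonempty for every vertex $v$. Consequently Corollary \ref{cor:graphedhomomorphisminducesvertexmap} applies verbatim to the semigroupoid homomorphism $\phi\colon\mathcal{S}\to\mathcal{T}$ and produces a unique map $\phi^{(0)}\colon\mathcal{S}^{(0)}\to\mathcal{T}^{(0)}$ for which $(\phi^{(0)},\phi)$ is a graph homomorphism. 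Since $\phi$ is already a semigroupoid homomorphism, $(\phi^{(0)},\phi)$ is in fact a graphed semigroupoid homomorphism, and this settles everything except continuity of $\phi^{(0)}$.

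For the continuity I would invoke the general fact about bundle morphisms recorded immediately before the statement, applied with $\pi_1=\so_{\mathcal{S}}$, $\pi_2=\so_{\mathcal{T}}$, $f^{(1)}=\phi$, and $f^{(0)}=\phi^{(0)}$. The relation $\phi^{(0)}\circ\so_{\mathcal{S}}=\so_{\mathcal{T}}\circ\phi$ is exactly the condition that $(\phi^{(0)},\phi)$ be a morphism over the source bundles, and $\phi$ is continuous by hypothesis, so the fact yields continuity of $\phi^{(0)}$ provided $\so_{\mathcal{S}}$ is open and surjective. Openness is immediate because $\mathcal{S}$ is étale, so $\so_{\mathcal{S}}$ is a local homeomorphism and in particular open. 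Surjectivity follows from the identity $\ra(a)=\so(a^*)$, which gives $\ra(\mathcal{S})\subseteq\so(\mathcal{S})$; combined with the running convention $\mathcal{S}^{(0)}=\so(\mathcal{S})\cup\ra(\mathcal{S})$ this forces $\mathcal{S}^{(0)}=\so(\mathcal{S})$.

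I do not expect a genuine obstacle, since both ingredients are already in place; the only point demanding a little care is verifying the hypotheses of the bundle fact, and in particular the surjectivity of $\so_{\mathcal{S}}$ onto $\mathcal{S}^{(0)}$. That is precisely where the inverse structure (through $\ra(a)=\so(a^*)$) and the convention $\mathcal{S}^{(0)}=\so(\mathcal{S})\cup\ra(\mathcal{S})$ enter, and it is worth stating explicitly so that the quotient-topology argument underlying the bundle fact genuinely applies.
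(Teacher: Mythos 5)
Your proposal is correct and follows essentially the same route as the paper: the paper also obtains the vertex map from Corollary \ref{cor:graphedhomomorphisminducesvertexmap} (using that inverse semigroupoids have no sources nor sinks) and then derives continuity from the quotient-topology fact about bundle morphisms, applied to the source maps, with openness of $\so_{\mathcal{S}}$ coming from the étale hypothesis. Your explicit verification of the surjectivity of $\so_{\mathcal{S}}$ via $\ra(a)=\so(a^*)$ and the convention $\mathcal{S}^{(0)}=\so(\mathcal{S})\cup\ra(\mathcal{S})$ is exactly the point the paper leaves implicit.
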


A \emph{bisection} of a graphed semigroupoid $\mathcal{S}$ is a subset $U\subseteq\mathcal{S}$ such that the source and range maps are injective on $U$. We denote by $\mathbf{B}(\mathcal{S})$ the set of all open bisections of an étale inverse semigroupoid $\mathcal{S}$. If $U\in\mathbf{B}(\mathcal{S})$, then the source and range maps restrict to homeomorphisms from $U$ onto open subsets of $\mathcal{S}^{(0)}$. Moreover, $\mathbf{B}(\mathcal{S})$ is a basis for the topology of $\mathcal{S}$.

\begin{proposition}\label{prop:productmapisopen}
If $\mathcal{S}$ is an étale inverse semigroupoid then the product map $\mu\colon\mathcal{S}^{(2)}\to\mathcal{S}$ is open.
\end{proposition}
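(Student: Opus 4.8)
The plan is to show openness by working with the basis of open bisections. Since $\mathbf{B}(\mathcal{S})$ is a basis for the topology of $\mathcal{S}$, and products and intersections of open sets can be described in terms of bisections, it suffices to show that $\mu$ carries basic open sets of $\mathcal{S}^{(2)}$ to open subsets of $\mathcal{S}$. The product topology on $\mathcal{S}^{(2)}$ (as a subspace of $\mathcal{S}\times\mathcal{S}$) has a basis consisting of sets of the form $(U\times V)\cap\mathcal{S}^{(2)}$, where $U,V\in\mathbf{B}(\mathcal{S})$. So the crux is to understand $\mu\bigl((U\times V)\cap\mathcal{S}^{(2)}\bigr)=UV$, the product of two open bisections, and to show this set is open.

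First I would record the key structural fact that the product of two open bisections is again an open bisection. Let $U,V\in\mathbf{B}(\mathcal{S})$. The product $UV=\{uv:(u,v)\in\mathcal{S}^{(2)},\,u\in U,\,v\in V\}$ is a bisection: if $u_1v_1$ and $u_2v_2$ lie in $UV$ with $\so(u_1v_1)=\so(u_2v_2)$, then $\so(v_1)=\so(v_2)$, so $v_1=v_2$ by injectivity of $\so$ on $V$; then $\ra(v_1)=\ra(v_2)$ forces $\so(u_1)=\so(u_2)$, whence $u_1=u_2$ by injectivity of $\so$ on $U$, giving $u_1v_1=u_2v_2$. An analogous argument handles injectivity of $\ra$ on $UV$. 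Thus $UV$ is a bisection; openness is the substantive part.

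The main idea for openness is to exhibit $UV$ locally as a composite of homeomorphisms onto open sets. Because $\mathcal{S}$ is étale, $\so$ restricts to a homeomorphism from each bisection onto an open subset of $\mathcal{S}^{(0)}$, and likewise $\ra$; and $\so,\ra$ are open maps. Fix $w_0=u_0v_0\in UV$ with $u_0\in U$, $v_0\in V$, $(u_0,v_0)\in\mathcal{S}^{(2)}$. Consider the open set $\Omega=\so(U)\cap\ra(V)\subseteq\mathcal{S}^{(0)}$, which is nonempty as it contains $\so(u_0)=\ra(v_0)$. Using that $\so|_U$ and $\ra|_V$ are homeomorphisms onto their (open) images, I would shrink $U$ and $V$ to the open bisections $U'=U\cap\so^{-1}(\Omega)$ and $V'=V\cap\ra^{-1}(\Omega)$, so that $\so(U')=\Omega=\ra(V')$ and every element of $U'$ is composable with exactly one element of $V'$. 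On these shrunk bisections the assignment $w\mapsto$ (its unique factorization) is governed by the chain of homeomorphisms $\ra|_{V'}^{-1}$ and $\so|_{U'}^{-1}$. The plan is then to realize $U'V'$ as the image of a section of $\mu$: the map $x\mapsto (\so|_{U'}^{-1}(\ra(x)))\,x$ for $x\in V'$, or dually, and to identify $U'V'$ with the range of a continuous local section, hence open via the openness of $\so$ (equivalently $\ra$) combined with continuity of $\mu$ and the inverse map.

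The hard part will be assembling these pieces into a genuinely open neighbourhood of $w_0$ inside $UV$, rather than merely a continuous parametrization. Concretely, the delicate step is to show that $\ra(U'V')$ (or $\so(U'V')$) is open in $\mathcal{S}^{(0)}$ and that $U'V'$ is recovered as a bisection over that open base set, so that $U'V'=\ra|_{?}^{-1}(\text{open})$ within an ambient open bisection. I expect the cleanest route is: verify $\ra(U'V')=\ra(U')$, which is open since $\ra$ is an open map on the étale semigroupoid and $U'$ is open; then observe $U'V'$ is the unique bisection extending $w_0$ whose range projection is $\ra(U')$ and which is contained in the open set $UV\subseteq\mathcal{S}$; finally use that a bisection lying over an open subset of $\mathcal{S}^{(0)}$, on which $\ra$ is a homeomorphism, is itself open. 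Since $w_0\in UV$ was arbitrary and $U,V$ range over a basis, this shows $\mu$ maps basic open sets to open sets, and therefore $\mu$ is open.
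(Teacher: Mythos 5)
Your reduction to showing that $UV=\mu\bigl((U\times V)\cap\mathcal{S}^{(2)}\bigr)$ is open for $U,V\in\mathbf{B}(\mathcal{S})$ is sound, as are the verification that $UV$ is a bisection, the identity $UV=U'V'$, and the computation $\ra(U'V')=\ra(U')$. The gap sits exactly at the step you flag as delicate, and as written it does not close. First, your appeal to ``the open set $UV\subseteq\mathcal{S}$'' is circular: openness of $UV$ is the statement being proved, so no ambient open set (let alone an ambient open bisection) containing $U'V'$ is available to you -- which is also why the placeholder ``$U'V'=\ra|_{?}^{-1}(\text{open})$ within an ambient open bisection'' cannot be instantiated. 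Second, your closing lemma -- that a bisection lying over an open subset of $\mathcal{S}^{(0)}$, on which $\ra$ is a homeomorphism, is itself open -- is true only if ``homeomorphism'' is taken seriously: you need $\ra|_{U'V'}\colon U'V'\to\ra(U')$ (subspace topology on $U'V'$) to have \emph{continuous inverse}, and you never verify this. Injectivity of $\ra$ on $U'V'$ plus openness of $\ra(U')$ is not enough: in the étale group bundle $\mathbb{Z}_2\times\mathbb{R}$, the bisection $(\{0\}\times(-\infty,0))\cup(\{1\}\times[0,\infty))$ lies over the open set $\mathbb{R}$ and $\ra$ is injective on it, yet it is not open. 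Moreover the lemma itself, even when correctly stated, is asserted without proof.

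Both defects are reparable, so your strategy can be completed. The inverse of $\ra|_{U'V'}$ is the map $x\mapsto \mu\bigl(\ra|_{U'}^{-1}(x),\,\ra|_{V'}^{-1}(\so(\ra|_{U'}^{-1}(x)))\bigr)$, a composition of continuous maps (local inverses of $\ra$ on the open bisections $U'$ and $V'$, the map $\so$, and $\mu$), so $\ra|_{U'V'}$ is indeed a homeomorphism onto the open set $\ra(U')$. The lemma then follows by local comparison with basis bisections: given $b\in U'V'$, choose $A\in\mathbf{B}(\mathcal{S})$ with $b\in A$; then $\ra(A\cap U'V')$ is open, and $A\cap\ra^{-1}\bigl(\ra(A\cap U'V')\bigr)$ is an open neighbourhood of $b$ contained in $U'V'$ by injectivity of $\ra$ on $A$. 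With these two insertions your argument works, and it is genuinely different from the paper's, which sidesteps this analysis entirely: there one shows the first-coordinate projection $\pi\colon\mathcal{S}^{(2)}\to\mathcal{S}$, $\pi(a,b)=a$, is a local homeomorphism (open because $\pi\bigl((A\times B)\cap\mathcal{S}^{(2)}\bigr)=A\cap\so^{-1}(\ra(B))$, locally injective because $\ra$ is injective on $B$), and then deduces from $\ra\circ\mu=\ra\circ\pi$, with $\ra$ and $\pi$ local homeomorphisms, that $\mu$ is itself a local homeomorphism -- a conclusion stronger than openness and obtained without ever analysing the sets $UV$ directly.
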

\begin{proof}
We follow the proof of \cite[1.3.11]{cordeirothesis}: Let $\pi\colon \mathcal{S}^{(2)}\to\mathcal{S}$ be the projection $\pi(a,b)=a$. We prove that $\pi$ is open. If $A,B\in\mathbf{B}(\mathcal{S})$, then $\pi((A\times B)\cap \mathcal{S}^{(2)})=A\cap \so^{-1}(\ra(B))$. Since $\ra(B)$ is open in $\mathcal{S}^{(0)}$ and $\so$ is continuous, then $\pi((A\times B)\cap \mathcal{S}^{(2)})$ is open in $\mathcal{S}$. This proves that $\pi$ is open, as $\mathbf{B}(\mathcal{S})$ is a basis for $\mathcal{S}$. Moreover, $\pi$ is injective on $(A\times B)\cap\mathcal{S}^{(2)}$, because the range map is injective on $B$. Therefore, $\pi$ is a locally injective open continuous map, that is, a local homeomorphism.

Since $\ra\circ\pi=\ra\circ\mu$ on $\mathcal{S}^{(2)}$ and both $\ra$ and $\pi$ are local homeomorphisms, then $\mu$ is a local homeomorphism as well.\qedhere
\end{proof}

We thus make $\mathbf{B}(\mathcal{S})$ into a semigroup with the usual product of sets: for all $A,B\in\mathbf{B}(\mathcal{S})$,
\[AB=\left\{ab:(a,b)\in (A\times B)\cap \mathcal{S}^{(2)}\right\}.\]
Then $\mathbf{B}(\mathcal{S})$ is, in fact, an inverse semigroup. However, we are not able to recover the semigroupoid $\mathcal{S}$ from $\mathbf{B}(\mathcal{S})$ alone, since the canonical order of $\mathbf{B}(\mathcal{S})$ does not correspond to set inclusion. We have
\[A\leq B\iff A=BA^*A\iff\text{for all }a\in A\text{ there exists }b\in B\text{ such that }a\leq b.\]

In Section \ref{sec:duality}, we will prove that an inverse semigroupoid $\mathcal{S}$ may be recovered from $\mathbf{B}(\mathcal{S})$ and set inclusion $\subseteq$.

We finish this section by mentioning a few simple, but nevertheless important, consequences of Proposition \ref{prop:productmapisopen}.

\begin{corollary}\label{cor:propertiesopeninversesemigroupoid}
Let $\mathcal{S}$ be an étale inverse semigroupoid. Then
\begin{enumerate}[label=(\alph*)]
    \item\label{cor:propertiesopeninversesemigroupoid1} $E(\mathcal{S})$ is open in  $\mathcal{S}$;
    \item\label{cor:propertiesopeninversesemigroupoid2} For all open $A\subseteq \mathcal{S}$, the \emph{upper} and \emph{lower closures} \[A^{\uparrow,\leq}=\left\{b\in\mathcal{S}:b\geq a\text{ for some }a\in A\right\}\quad\text{and}\quad A^{\downarrow,\leq}=\left\{b\in\mathcal{S}:b\leq a\text{ for some }a\in A\right\}\]
    are open in $\mathcal{S}$.
\end{enumerate}
\end{corollary}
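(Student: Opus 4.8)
The plan is to derive both statements from Proposition \ref{prop:productmapisopen} (openness of the product map $\mu$, and of the first‑coordinate projection $\pi\colon\mathcal{S}^{(2)}\to\mathcal{S}$ established in its proof), together with the facts that the open bisections $\mathbf{B}(\mathcal{S})$ form a basis and that $\so,\ra$ are injective on each bisection.

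For \ref{cor:propertiesopeninversesemigroupoid1}, I would first observe that for any open bisection $U$ the set $U^{*}U$ consists only of idempotents and in fact equals $\{u^{*}u:u\in U\}$. Indeed, a product $u^{*}v$ with $u,v\in U$ is defined precisely when $\ra(u)=\so(u^{*})=\ra(v)$, and since $\ra$ is injective on the bisection $U$ this forces $u=v$; each $u^{*}u$ is an idempotent. Because $U^{*}$ is open (the involution is a homeomorphism) and $\mu$ is open by Proposition \ref{prop:productmapisopen}, the set $U^{*}U=\mu\big((U^{*}\times U)\cap\mathcal{S}^{(2)}\big)$ is open. Finally, every idempotent $e$ satisfies $e=e^{*}e$ and lies in some open bisection $U$, whence $e\in U^{*}U$; thus $E(\mathcal{S})=\bigcup_{U\in\mathbf{B}(\mathcal{S})}U^{*}U$ is a union of open sets, and therefore open.

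For the lower closure in \ref{cor:propertiesopeninversesemigroupoid2}, I would use the order characterization of Proposition \ref{prop:propertiesoperationsofinversesemigroupoid}\ref{prop:propertiesoperationsofinversesemigroupoid32}: $b\leq a$ iff $b=ae$ for some $e\in E(\mathcal{S})$. This rewrites the lower closure as the set product $A^{\downarrow,\leq}=A\,E(\mathcal{S})=\mu\big((A\times E(\mathcal{S}))\cap\mathcal{S}^{(2)}\big)$. Since $A$ is open and $E(\mathcal{S})$ is open by part \ref{cor:propertiesopeninversesemigroupoid1}, the intersection with $\mathcal{S}^{(2)}$ is open there, and openness of $\mu$ gives that $A^{\downarrow,\leq}$ is open.

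The upper closure is the step I expect to be the main obstacle, since it cannot be written as a product of $A$ with idempotents (that operation only produces smaller elements). Instead I would characterize it as $A^{\uparrow,\leq}=\{b\in\mathcal{S}:\text{there is }e\in E(\mathcal{S})\text{ with }(b,e)\in\mathcal{S}^{(2)}\text{ and }be\in A\}$: if $a\leq b$ with $a\in A$ then $e=a^{*}a$ works since $be=a$, and conversely $be\leq b$ gives $b\in A^{\uparrow,\leq}$. Now consider $B=\{(b,e)\in\mathcal{S}^{(2)}:e\in E(\mathcal{S}),\,be\in A\}$. This is the preimage of the open set $A$ under the restriction of $\mu$ to the open subset $(\mathcal{S}\times E(\mathcal{S}))\cap\mathcal{S}^{(2)}$ of $\mathcal{S}^{(2)}$ (open by part \ref{cor:propertiesopeninversesemigroupoid1}), hence $B$ is open in $\mathcal{S}^{(2)}$. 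Finally $A^{\uparrow,\leq}$ is the image of $B$ under the first‑coordinate projection $\pi\colon\mathcal{S}^{(2)}\to\mathcal{S}$, which was shown to be an open local homeomorphism in the proof of Proposition \ref{prop:productmapisopen}; therefore $A^{\uparrow,\leq}=\pi(B)$ is open.
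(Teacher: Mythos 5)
Your proof is correct, and for part \ref{cor:propertiesopeninversesemigroupoid1} and the lower closure it coincides with the paper's argument: you note $E(\mathcal{S})=\bigcup\left\{U^*U:U\in\mathbf{B}(\mathcal{S})\right\}$ and $A^{\downarrow,\leq}=AE(\mathcal{S})$, both open by Proposition \ref{prop:productmapisopen} (your extra observation that $U^*U=\left\{u^*u:u\in U\right\}$ for a bisection $U$ is a detail the paper leaves implicit). The upper closure is where you take a genuinely different route. The paper argues pointwise: given $b\geq a$ with $a\in A$, continuity of the operations yields neighbourhoods $U\ni b$ and $V\ni a$ with $UV^*V\subseteq A$, and then $U\cap\so^{-1}(\so(A\cap V))$ is an open neighbourhood of $b$ contained in $A^{\uparrow,\leq}$. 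You instead express $A^{\uparrow,\leq}$ globally as the image, under the first-coordinate projection $\pi\colon\mathcal{S}^{(2)}\to\mathcal{S}$, of the open set $\mu^{-1}(A)\cap(\mathcal{S}\times E(\mathcal{S}))\cap\mathcal{S}^{(2)}$; the set-theoretic identity behind this is exactly the characterization of the order in Proposition \ref{prop:propertiesoperationsofinversesemigroupoid}\ref{prop:propertiesoperationsofinversesemigroupoid3}, and your verification of it is correct. This version is more structural and avoids the neighbourhood bookkeeping, but it buys this at the cost of invoking openness of the projection $\pi$, which is established \emph{inside} the proof of Proposition \ref{prop:productmapisopen} rather than in its statement — so you are citing an intermediate fact of a proof, which would be cleaner if extracted as a standalone lemma. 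The paper's argument, by contrast, uses only continuity of the operations and openness of $\so$ on open bisections. Both arguments are valid.
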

\begin{proof}
\begin{enumerate}[label=\ref{cor:propertiesopeninversesemigroupoid\arabic*}]
    \item Simply note that $E(\mathcal{S})=\bigcup\left\{A^*A:A\in \mathbf{B}(\mathcal{S})\right\}$.
    \item The lower set $A^{\downarrow,\leq}=AE(\mathcal{S})$ is the product of open sets, hence open.
    
    Suppose $b\geq a$ for some $a\in A$, i.e., $ba^*a=a$. Since the semigroupoid operations are continuous, there exist neighbourhoods $U$ of $b$ and $V$ of $a$ such that $UV^*V\subseteq A$. Let $B=U\cap\so^{-1}(\so(A\cap V))$. Then $B$ is an open neighbourhood of $b$.
    
    Given $c\in B$, we have $\so(c)=\so(a)$ for some $a\in A\cap V$, so $ca^*a$ is defined, belongs to $UV^*V\subseteq A$ and $ca^*a\leq c$, so $c\in A^{\uparrow,\leq}$. Therefore $B$ is an open neighbourhood of $b$ contained in $A^{\uparrow,\leq}$.\qedhere
\end{enumerate}
\end{proof}

\begin{example}
    For non-étale semigroupoids, upper (and lower) closures of open sets are not necessarily open.

    Let $X=[0,1]$ with its usual topology and $L_2=\left\{0,1\right\}$ the lattice with $0<1$. The inverse semigroupoid $L_2\times X$ is étale. Consider the subsemigroupoid $\Lambda=(L_2\times X)\setminus\left\{(0,1/n):n\in\mathbb{N}_{\geq 1}\right\}$. Then $\Lambda$ is a non-étale topological inverse semigroupoid.

    Let $U$ be any neighbourhood of $0$ in $X$. Then $\tilde{U}\defeq(\left\{0\right\}\times U)\cap\Lambda$ is a neighbourhood of $(0,0)$ in $\Lambda$, but the upper closure $\tilde{U}^{\uparrow,\leq}=(L_2\times U)\cap\Lambda$ contains the non-interior point $(1,0)$.
\end{example}

It follows from Corollary \ref{cor:propertiesopeninversesemigroupoid} that if $\mathcal{S}$ is an étale inverse semigroupoid, then $E(\mathcal{S})$ is also an étale inverse semigroupoid with the subspace topology. In fact a weaker version of the converse holds: If $\mathcal{S}$ is a topological inverse semigroupoid and $E(\mathcal{S})$ is open and étale, then $\mathcal{S}$ has a basis of open bisections. However this only implies that the source map is locally injective, but not necessarily open, and so $\mathcal{S}$ may be non-étale. In any case, the following generalization of \cite[Theorem 5.18]{MR2304314} holds, with simple adaptations on the proof.

\begin{theorem}
    Let $\mathcal{S}$ be a topological inverse semigroupoid. Then the following are equivalent:
    \begin{enumerate}[label=(\arabic*)]
        \item $\mathcal{S}$ is étale;
        \item $E(\mathcal{S})$ is open and étale, and the product of open sets is open.
        \item $E(\mathcal{S})$ is open and étale, and $\mathcal{S}A$ is open for each open $A\subseteq\mathcal{S}$.
        \item $E(\mathcal{S})$ is open and étale, and $A^*A$ is open for each open $A\subseteq\mathcal{S}$.
        \item $E(\mathcal{S})$ is open and étale, and the source map is open;
    \end{enumerate}
\end{theorem}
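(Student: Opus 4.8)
The plan is to run the chain of implications $(1)\Rightarrow(2)\Rightarrow(3)\Rightarrow(5)\Rightarrow(1)$, and then to splice in condition (4) by the two extra implications $(2)\Rightarrow(4)$ and $(4)\Rightarrow(5)$; since $(2)$ and $(5)$ already belong to the main cycle, this makes (4) equivalent to the rest. Throughout, the ambient hypotheses ``$E(\mathcal{S})$ is open and étale'' are carried along for free in conditions $(2)$--$(5)$, and the only genuinely topological inputs are Proposition \ref{prop:productmapisopen} (the product map is open when $\mathcal{S}$ is étale), Corollary \ref{cor:propertiesopeninversesemigroupoid}\ref{cor:propertiesopeninversesemigroupoid1} ($E(\mathcal{S})$ is open), and the fact recorded just before the theorem that whenever $E(\mathcal{S})$ is open and étale the open bisections $\mathbf{B}(\mathcal{S})$ form a basis for $\mathcal{S}$.

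For the easy implications, $(1)\Rightarrow(2)$ follows by recalling that $E(\mathcal{S})$ is open and carries an étale subspace structure (the restriction of the local homeomorphism $\so$ to the open set $E(\mathcal{S})$ is again a local homeomorphism), while the product of two open sets $A,B$ is $\mu((A\times B)\cap\mathcal{S}^{(2)})$, which is open by Proposition \ref{prop:productmapisopen}. Both $(2)\Rightarrow(3)$ and $(2)\Rightarrow(4)$ are then immediate: $\mathcal{S}A$ is the product of the open sets $\mathcal{S}$ and $A$, and $A^*A$ is the product of the open sets $A^*$ and $A$ (the inverse map being a homeomorphism).

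The crux is to pass from the one-sided product conditions (3) and (4) to openness of the source map (5). Here I would use that, since $E(\mathcal{S})$ is étale, the restriction $\so|_{E(\mathcal{S})}$ is a local homeomorphism, hence an open map into $\mathcal{S}^{(0)}$. The key observation is that for any open $A\subseteq\mathcal{S}$ one has the two identities
\[\so(A)=\so\big(E(\mathcal{S})\cap\mathcal{S}A\big)=\so\big(E(\mathcal{S})\cap A^*A\big);\]
indeed the inclusion $\supseteq$ in each case uses the idempotents $a^*a\in E(\mathcal{S})$ for $a\in A$ (with $a^*a=a^*\cdot a$ lying in $\mathcal{S}A\cap A^*A$ and $\so(a^*a)=\so(a)$), and the inclusion $\subseteq$ uses that $\so(sa)=\so(a)$ and $\so(a^*b)=\so(b)$ whenever these products are defined. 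Granting (3) (resp.\ (4)), the set $\mathcal{S}A$ (resp.\ $A^*A$) is open, hence $E(\mathcal{S})\cap\mathcal{S}A$ (resp.\ $E(\mathcal{S})\cap A^*A$) is open in $E(\mathcal{S})$, and applying the open map $\so|_{E(\mathcal{S})}$ shows $\so(A)$ is open; as $A$ was arbitrary, $\so$ is open, which is (5).

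Finally, for $(5)\Rightarrow(1)$ I would invoke the basis of open bisections: since $E(\mathcal{S})$ is open and étale, every point of $\mathcal{S}$ has a neighbourhood $U\in\mathbf{B}(\mathcal{S})$ on which $\so$ is injective, so $\so$ is locally injective; being also continuous (as $\mathcal{S}$ is a topological semigroupoid) and open by (5), $\so$ restricts on each such $U$ to a continuous open bijection onto the open set $\so(U)$, i.e.\ a homeomorphism, so $\so$ is a local homeomorphism and $\mathcal{S}$ is étale in the sense of Definition \ref{def:etaleinversesemigroupoid}. I expect the main obstacle to be the step $(3),(4)\Rightarrow(5)$: recognising that $\so(A)$ can be recovered as the source-image of an \emph{open subset of the idempotents}, and checking carefully that ``$E(\mathcal{S})$ étale'' indeed makes $\so|_{E(\mathcal{S})}$ an open map \emph{into $\mathcal{S}^{(0)}$} (rather than merely onto its own germ space), so that the displayed identities genuinely produce open subsets of $\mathcal{S}^{(0)}$.
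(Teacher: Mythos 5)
Your proposal is correct, and it is essentially the proof the paper intends: the paper itself gives no argument, deferring to the groupoid case (\cite[Theorem 5.18]{MR2304314}) ``with simple adaptations'', and your proof is precisely that adaptation, with the groupoid identity $\so(A)=A^{-1}A\cap\mathcal{G}^{(0)}$ replaced by $\so(A)=\so\bigl(E(\mathcal{S})\cap\mathcal{S}A\bigr)=\so\bigl(E(\mathcal{S})\cap A^*A\bigr)$ and the local homeomorphism $\so|_{E(\mathcal{S})}$ (étaleness of $E(\mathcal{S})$) transporting openness into $\mathcal{S}^{(0)}$, which here is not a subspace of $\mathcal{S}$. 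The implication cycle $(1)\Rightarrow(2)\Rightarrow(3)\Rightarrow(5)\Rightarrow(1)$ together with $(2)\Rightarrow(4)\Rightarrow(5)$ is logically complete, and your appeals to Proposition \ref{prop:productmapisopen}, Corollary \ref{cor:propertiesopeninversesemigroupoid}, and the paper's stated fact that $E(\mathcal{S})$ open and étale gives a basis of open bisections (hence local injectivity of $\so$) are all legitimate.
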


\subsection{Continuous \texorpdfstring{$\land$}{∧}-preactions and semidirect products}\label{subsec:continuouslandpreactions}

Tipically, in the topological setting we consider actions which preserve the topological structure. Let us consider the ``action map'' associated to a $\land$-preaction $(\pi,\theta)\colon\mathcal{S}\curvearrowright\Lambda$, which we also denote by $\theta$:
\[\theta\colon\mathcal{S}\ltimes\Lambda\to\Lambda,\qquad\theta(a,x)=\theta_a(x).\]
In the case that $\mathcal{S}$ and $\Lambda$ are topological semigroupoids, we endow $\mathcal{S}\ltimes\Lambda$ with the product topology. If $A\subseteq\mathcal{S}$ and $U\subseteq\Lambda$, we denote $A\ast U=(A\times U)\cap(\mathcal{S}\ltimes\Lambda)$, so that the basic open sets of $\mathcal{S}\ltimes\Lambda$ have the form $A\ast U$ where $A$ and $U$ are (basic) open subsets of $\mathcal{S}$ and $\Lambda$, respectively.

\begin{definition}
    A $\land$-preaction $(\pi,\theta)\colon\mathcal{S}\curvearrowright\Lambda$ of a topological inverse semigroupoid $\mathcal{S}$ on a topological semigroupoid $\Lambda$ is \emph{continuous} if $\pi$ and the ``action map'' $\theta\colon\mathcal{S}\ltimes\Lambda\to\Lambda$ are continuous. If the action map $\theta$ is open then we call $(\pi,\theta)$ an \emph{open} $\land$-preaction.
\end{definition}

Evidently, the semidirect product $\mathcal{S}\ltimes\Lambda$ associated to a continuous $\land$-preaction $(\pi,\theta)\colon\mathcal{S}\curvearrowright\Lambda$ is a topological semigroupoid (as long as the product \eqref{eq:semidirectproduct} is associative).

\begin{remark}
Suppose that $(\pi,\theta)\colon\mathcal{S}\curvearrowright\Lambda$ is a continuous open $\land$-preaction. Given $a\in\mathcal{S}$, let $A$ be any open bisection containing $a$. Then \[\ran(\theta_a)=\pi^{-1}(\ra(a))\cap\theta(A\ast\Lambda)\]
is open in $\pi^{-1}(\ra(a))$.
\end{remark}

If $\Lambda$ is a topological graphed or inverse semigroupoid then $\mathcal{S}\ltimes\Lambda$ will also be a topological graphed or inverse semigroupoid, where the vertex set $(\mathcal{S}\ltimes\Lambda)^{(0)}=\Lambda^{(0)}$ is endowed with its original topology.

The following proposition simplifies the verification of when an action is open.

\begin{proposition}\label{prop:preactionisopeniffprojectionisopen}
Let $(\pi,\theta)\colon\mathcal{S}\curvearrowright\Lambda$ be a continuous $\land$-preaction of topological semigroupoids. Then $(\pi,\theta)$ is open if and only if the map
\[p\colon\mathcal{S}\ltimes\Lambda\to\Lambda,\qquad(a,x)\mapsto x\]
is open.
\end{proposition}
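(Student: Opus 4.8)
The plan is to reduce the equivalence to a symmetry of the semidirect product that swaps the two maps in question. Concretely, I would introduce the map
\[\Psi\colon\mathcal{S}\ltimes\Lambda\to\mathcal{S}\ltimes\Lambda,\qquad \Psi(a,x)=(a^*,\theta_a(x)).\]
First I would check that $\Psi$ is well defined. Since $\theta$ is a $\land$-prehomomorphism into $\mathcal{I}(\pi)$, property \ref{def:dualprehomomorphismandpartialhomomorphism}\ref{def:dualprehomomorphismandpartialhomomorphism1} gives $\theta_{a^*}=\theta_a^{-1}$, so $\dom(\theta_{a^*})=\ran(\theta_a)$. Hence $\theta_a(x)\in\ran(\theta_a)=\dom(\theta_{a^*})$, which says exactly that $(a^*,\theta_a(x))\in\mathcal{S}\ltimes\Lambda$. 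The same identity $\theta_{a^*}\circ\theta_a=\id_{\dom(\theta_a)}$ shows that $\Psi$ is an involution, $\Psi\circ\Psi=\id_{\mathcal{S}\ltimes\Lambda}$.

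Next I would verify that $\Psi$ is continuous, and therefore a homeomorphism. Its first coordinate is the composite of the projection $(a,x)\mapsto a$ with the inversion $(\ )^*\colon\mathcal{S}\to\mathcal{S}$, which is continuous because $\mathcal{S}$ is a topological inverse semigroupoid; its second coordinate is precisely the action map $\theta$, continuous by the hypothesis that $(\pi,\theta)$ is a continuous $\land$-preaction. Since $\mathcal{S}\ltimes\Lambda$ carries the subspace topology of $\mathcal{S}\times\Lambda$, continuity of both coordinates yields continuity of $\Psi$; being its own inverse, $\Psi$ is then a homeomorphism.

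The crux is the observation that $\Psi$ interchanges the action map and the projection:
\[p\circ\Psi=\theta\qquad\text{and}\qquad \theta\circ\Psi=p.\]
Indeed $p(\Psi(a,x))=p(a^*,\theta_a(x))=\theta_a(x)=\theta(a,x)$, and symmetrically $\theta(\Psi(a,x))=\theta_{a^*}(\theta_a(x))=x=p(a,x)$. Because composition with a homeomorphism takes open maps to open maps (and the image of an open set under a homeomorphism is open), $\theta=p\circ\Psi$ is open precisely when $p$ is open, which is the asserted equivalence.

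I do not expect a serious obstacle; the only points requiring care are the well-definedness of $\Psi$ (confirming that $(a^*,\theta_a(x))$ genuinely lies in $\mathcal{S}\ltimes\Lambda$, via $\ran(\theta_a)=\dom(\theta_{a^*})$) and the appeal to continuity of the inversion map on $\mathcal{S}$, which is available exactly because $\mathcal{S}$ is a topological \emph{inverse} semigroupoid. Note that étaleness of $\mathcal{S}$ is not needed for this argument.
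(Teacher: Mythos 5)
Your proof is correct and takes essentially the same approach as the paper: your involution $\Psi$ is exactly the paper's map $I(a,x)=(a^*,\theta_a(x))$, used together with the identity $\theta=p\circ I$. The only cosmetic difference is that you exploit the involution symmetrically for both directions, whereas the paper handles the forward direction by the direct computation $p(A\ast U)=\theta(A^*\ast\theta(A\ast U))$ and reserves $I$ for the converse; your unified treatment is a mild streamlining.
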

\begin{proof}
Let $A\subseteq\mathcal{S}$ and $U\subseteq\Lambda$ be open. Assuming that $(\pi,\theta)$ is open, we have
\[p(A\ast U)=\theta(A^*\ast\theta(A\ast U)),\]
which is open as the action map $\theta$ is open.

Conversely, assume that $p$ is open. The map \[I\colon\mathcal{S}\ltimes\Lambda\to\mathcal{S}\ltimes\Lambda,\qquad (a,x)\mapsto (a^*,\theta_a(x))\]
is a self-homeomorphism of $\mathcal{S}\ltimes\Lambda$ of order $2$, and in particular it is an open map. The action map is the composition $\theta=p\circ I$, hence an open map.
\end{proof}

For example, if $(\pi,\theta)\colon\mathcal{S}\curvearrowright\Lambda$ is a continuous $\land$-preaction and $\mathcal{S}\ltimes\Lambda$ is open in $\mathcal{S}\times\Lambda$, then $(\pi,\theta)$ is also open. This is an assumption made, for example, in \cite{MR2045419}.)

Finally, if $\mathcal{S}$ and $\mathcal{T}$ are étale inverse semigroupoids and $(\pi,\theta)\colon\mathcal{S}\curvearrowright\mathcal{T}$ is an open continuous $\land$-preaction, then $\mathcal{S}\ltimes\mathcal{T}$ is also an étale inverse semigroupoid. Indeed, if $A\subseteq\mathcal{S}$ is an open bisection and $U\subseteq\mathcal{T}$ is open, then the map $p$ is injective on $A\ast U$. Indeed, if $p(a,x)=p(b,y)$, where $(a,x),(b,y)\in A\ast U$, then $x=y$ (by definition of $p$) and $\so(a)=\pi(x)=\pi(y)=\so(b)$, thus $a=b$ as $A$ is a bisection. Therefore $p$ is locally injective, continuous and open, i.e., a local homeomorphism. The range map $\ra_{\mathcal{S}\ltimes\mathcal{T}}$ of $\mathcal{S}\ltimes\mathcal{T}$ is the composition of the range map $\ra_{\mathcal{T}}$ of $\mathcal{T}$ and the action map $\theta$, which are both local homeomorphisms. Therefore $\mathcal{S}\ltimes\mathcal{T}$ is étale.

\begin{example}\label{ex:munn}
    We will now describe an analogue, in the setting of semigroupoids, of the canonical actions of a semigroup on its idempotent semilattice (the \emph{Munn representation}), and of 
    
    Let $\mathcal{S}$ be an inverse semigroupoid. We denote by $\operatorname{F}(\mathcal{S})$ the set of elements $b\in\mathcal{S}$ such that
    \begin{enumerate}[label=(\roman*)]
        \item $\so(b)=\ra(b)$;
        \item For all $e\leq b^*b$, we have $beb^*=e$;
    \end{enumerate}
    
    The verification that $\operatorname{F}(\mathcal{S})$ is an inverse sub-semigroupoid of $\mathcal{S}$ is straightforwards, using properties of the canonical order of $\mathcal{S}$. Note that $E(\mathcal{S})\subseteq\operatorname{F}(\mathcal{S})$. If $b\in\operatorname{F}(\mathcal{S})$, then $b^*b=bb^*$, and for all $e\in E(\mathcal{S})\cap\mathcal{S}^b$, $beb^*=eb^*b$
    
    If $a\in\mathcal{S}$, $b\in\operatorname{F}(\mathcal{S})$ and $ab$ is defined, then $aba^*\in\operatorname{F}(\mathcal{S})$ as well.
    
    Then $\mathcal{S}$ carries a natural action by conjugation) on $\operatorname{F}(\mathcal{S})$ as follows. Consider the bundle $\pi\defeq\so|_{\operatorname{F}(\mathcal{S})}=\ra|_{\operatorname{F}(\mathcal{S})}\colon\operatorname{F}(\mathcal{S})\to\mathcal{S}^{(0)}$. Given $a\in\mathcal{S}$, let $\dom(\mu_a)\defeq\left\{b\in\operatorname{F}(\mathcal{S}):bb^*\leq a^*a\right\}$, which is an ideal of $\operatorname{F}(\mathcal{S})$, and contained in $\pi^{-1}(\so(a))$. The map $\mu_a\colon\dom(\mu_a)\to\ran(\mu_{a^*})$, $\mu_a(b)=aba^*$, is an isomorphism, and the pair $(\pi,\tau)$ is a global action of $\mathcal{S}$ on $\operatorname{F}(\mathcal{S})$.
    
    If $\mathcal{S}$ is an étale inverse semigroupoid, then $\operatorname{int}(\operatorname{F}(\mathcal{S})$ is invariant, in the sense that $\mu(\mathcal{S}\ast\operatorname{int}(\operatorname{F}(\mathcal{S})))\subseteq \operatorname{int}(\operatorname{F}(\mathcal{S}))$. Indeed, if $A$ and $U$ are open bisections of $\mathcal{S}$ and $U\subseteq\operatorname{F}(\mathcal{S})$, then $\mu(A\ast U)=AUA^*$ is the set of products $aua^*$ (whenever defined), where $a\in A$ and $u\in U$. Since the product of open sets is open and $\mathcal{S}\ast\operatorname{int}(\operatorname{F}(\mathcal{S}))$ is the union of all such sets $A\ast U$, then $\operatorname{int}(\operatorname{F}(\mathcal{S}))$ is invariant.
    
    Thus we may restrict the action $(\pi,\mu)$ to $\operatorname{int}(\operatorname{F}(\mathcal{S}))$. In fact, the argument in the paragraph above proves that the restriction of $(\pi,\mu|_U)$ to any invariant open subset $U$ of $\operatorname{F}(\mathcal{S})$ is an open action. Since $E(\mathcal{S})$ is also open and invariant, then the semigroupoids $\mathcal{S}\ltimes\operatorname{int}(\operatorname{F}(\mathcal{S})$ and $\mathcal{S}\ltimes E(\mathcal{S})$ are étale as well.
    
    In classical cases, the action $\mu$ is well-known.
    
    \begin{itemize}
        \item If $\mathcal{S}$ is an étale groupoid, then $\operatorname{F}(\mathcal{S})$ is the isotropy subgroupoid of $\mathcal{S}$. The restriction of $(\pi,\mu)$ to $E(\mathcal{S})=\mathcal{S}^{(0)}$ is the canonical action of $\mathcal{S}$ on its unit space: $\mu_g$ is defined only as $\mu_g(\so(g))=\ra(g)$, for each $g\in\mathcal{S}$.
        \item if $\mathcal{S}$ is a discrete inverse semigroup then the restriction of $(\pi,\mu)$ to $E(\mathcal{S})$ is the \emph{Munn representation} of $\mathcal{S}$.
    \end{itemize}
\end{example}

We finish this section by proving that continuous $\land$-preactions of discrete inverse semigroups, and continuous non-degenerate global actions of étale groupoids, are always open. Of course, it is enough to consider only actions on topological spaces (unit groupoids).

\begin{example}\label{ex:semidirectproductofactionisetale}
Just as in Example \ref{ex:actionofsemigroup}, a topological partial (or global) action $\theta$ of an inverse semigroup $S$ on a topological space $X$ is precisely a continuous partial action of $S$ on $X$, as topological semigroupoids (as in \cite[Definition 2.3]{arxiv1804.00396}), where we regard $S$ as a discrete space.

In fact, more generally, if $\theta$ is a continuous $\land$-preaction of $S$ on $X$, then $\dom(\theta_a)$ is open in $X$ for all $a\in S$ (by a previous remark). Thus $S\ltimes X=\bigcup_{a\in S}\left\{a\right\}\times\dom(\theta_a)$ is open in $S\times X$ (since we regard $S$ as a discrete space). From the comment after Proposition \ref{prop:preactionisopeniffprojectionisopen}, it follows that $\theta$ is an open $\land$-preaction, and in particular $S\ltimes X$ is an étale inverse semigroupoid.
\end{example}

\begin{example}
As in Example \ref{ex:groupoidnondegenerateaction}, a continuous, non-degenerate global action $(\pi,\theta)\colon\G\curvearrowright X$ of a topological groupoid $\G$ on a topological space $X$ is the same notion as used in \cite[Definition 3.6]{MR2969047}. Suppose moreover that $\G$ is étale, and let us prove that $(\pi,\theta)$ is open. The map $p$ of Proposition \ref{prop:preactionisopeniffprojectionisopen} is precisely the source map $\so\colon\G\ltimes X\to X$, $\so(g,x)=x$, so we need only to prove that it is open. Let $A\subseteq\G$ and $U\subseteq X$ be open. From Example \ref{ex:groupoidnondegenerateaction}, we know that $\pi^{-1}(\so(a))=\dom(\theta_a)$ for all $a\in\G$, so it follows that
\[\so(A\ast U)=\pi^{-1}(\so(A))\cap U\]
which is open because $\G$ is étale. Therefore $(\pi,\theta)$ is an open global action.
\end{example}

We now refine Proposition \ref{prop:extensionoflandpreactiontopartialaction} to the topological setting. Recall that any $\land$-preaction $(\pi,\theta)$ may be extended, in a minimal manner, to a partial action $(\pi,\overline{\theta})$.

\begin{proposition}
If $\mathcal{S}$ is an étale semigroupoid, $\Lambda$ is a topological semigroupoid and $(\pi,\theta)$ is a continuous (resp.\ open) $\land$-preaction, then the minimal partial action $(\pi,\overline{\theta})$ of Proposition \ref{prop:extensionoflandpreactiontopartialaction} is also continuous (resp.\ open).
\end{proposition}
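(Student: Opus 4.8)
The anchor map of $(\pi,\overline{\theta})$ is the same $\pi$ as for $(\pi,\theta)$, so continuity of the anchor is automatic and the whole content lies in controlling the action map $\overline{\theta}$ of the extended partial action. The plan is to treat the two implications separately, throughout exploiting the explicit description from Proposition~\ref{prop:extensionoflandpreactiontopartialaction}: $\dom(\overline{\theta}_a)=\bigcup_{b\leq a}\dom(\theta_b)$, and $\overline{\theta}_a(x)=\theta_b(x)$ whenever $b\leq a$ and $x\in\dom(\theta_b)$, together with the fact that $\overline{\theta}$, being a partial action, is domain-monotone ($b\leq c$ forces $\dom(\overline{\theta}_b)\subseteq\dom(\overline{\theta}_c)$). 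The two topological inputs I expect to use are that $(\ )^{\downarrow,\leq}$ and $(\ )^{\uparrow,\leq}$ preserve openness (Corollary~\ref{cor:propertiesopeninversesemigroupoid}), and that, since $\mathcal{S}$ is étale, every point lies in an open bisection $B$ on which $\so$ is a homeomorphism, giving a continuous ``section'' $\sigma_B=(\so|_B)^{-1}\circ\so$ on the open set $B^{\uparrow,\leq}$, characterized by $\sigma_B(c)$ being the unique element of $B$ with $\sigma_B(c)\leq c$. I write $\mathcal{S}\ltimes_\theta\Lambda$ and $\mathcal{S}\ltimes_{\overline{\theta}}\Lambda$ to distinguish the two semidirect products.

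The openness implication is the clean one. By Proposition~\ref{prop:preactionisopeniffprojectionisopen} it suffices to show that the projection $p\colon\mathcal{S}\ltimes_{\overline{\theta}}\Lambda\to\Lambda$, $(a,x)\mapsto x$, is open. Given open $A\subseteq\mathcal{S}$ and $U\subseteq\Lambda$, I would compute the image of $\Omega=(A\times U)\cap(\mathcal{S}\ltimes_{\overline{\theta}}\Lambda)$ directly: the condition ``$x\in\dom(\overline{\theta}_a)$ for some $a\in A$'' is, by the description above and domain-monotonicity, equivalent to ``$x\in\dom(\theta_b)$ for some $b\in A^{\downarrow,\leq}$'', whence
\[
p(\Omega)=\{x\in U:\exists\,b\in A^{\downarrow,\leq},\ x\in\dom(\theta_b)\}=p_{\theta}\bigl((A^{\downarrow,\leq}\times U)\cap(\mathcal{S}\ltimes_{\theta}\Lambda)\bigr),
\]
where $p_\theta$ is the analogous projection for $(\pi,\theta)$. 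Since $A^{\downarrow,\leq}$ is open (Corollary~\ref{cor:propertiesopeninversesemigroupoid}) and $p_\theta$ is open (Proposition~\ref{prop:preactionisopeniffprojectionisopen}, as $(\pi,\theta)$ is open), the right-hand side is open; hence $p$ is open and $(\pi,\overline{\theta})$ is open.

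For continuity I would fix an open $W\subseteq\Lambda$ and rewrite the preimage using the well-definedness of $\overline{\theta}$: with $T\defeq\theta^{-1}(W)$, which is open in $\mathcal{S}\ltimes_\theta\Lambda$ by continuity of $\theta$, one has
\[
\overline{\theta}^{-1}(W)=\{(c,z)\in\mathcal{S}\ltimes_{\overline{\theta}}\Lambda:\exists\,b\leq c,\ (b,z)\in T\}.
\]
Covering $T$ by basic open sets $B\ast U$ with $B$ an open bisection, the value $\overline{\theta}_c(z)$ on the corresponding piece ought to be realized by $\sigma_B(c)\in B$, so that locally $\overline{\theta}$ coincides with the continuous map $\theta\circ(\sigma_B\times\id)$ and $\overline{\theta}^{-1}(W)$ becomes a union of sets of the form $\{(c,z):c\in B^{\uparrow,\leq},\ z\in U,\ z\in\dom(\theta_{\sigma_B(c)})\}$.

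The main obstacle, and the step that forces an essential use of the étale hypothesis, is to show that these pieces are genuinely open in $\mathcal{S}\ltimes_{\overline{\theta}}\Lambda$. The difficulty is precisely the domain condition ``$z\in\dom(\theta_{\sigma_B(c)})$'': it cannot be read off from openness of $\mathcal{S}\ltimes_\theta\Lambda$ inside $\mathcal{S}\times\Lambda$, since that fails in general (already for the canonical action of an étale groupoid, where $\mathcal{S}\ltimes_\theta\Lambda$ is a diagonal-type, non-open set). A point $(c,z)$ near a given one need only satisfy $z\in\dom(\overline{\theta}_c)$, i.e.\ $z\in\dom(\theta_\delta)$ for \emph{some} branch $\delta\leq c$ which need not be $\sigma_B(c)$ nor even lie in $B^{\downarrow,\leq}$. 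The plan to overcome this is to combine the domain-monotonicity of $\overline{\theta}$ with the openness of $E(\mathcal{S})$ and of upper closures (Corollary~\ref{cor:propertiesopeninversesemigroupoid}): writing an arbitrary realizing branch as $\delta=c e$ with $e\in E(\mathcal{S})$ and pushing it into $B$ via $\sigma_B$, one should be able to show that for $(c,z)$ in a sufficiently small neighborhood of a fixed point the value is forced to be realized on the $B$-branch, so that the local formula $\overline{\theta}=\theta\circ(\sigma_B\times\id)$ is valid on an open set. Once each piece is shown open, $\overline{\theta}^{-1}(W)$ is open and continuity follows; the open case then also re-follows, since an open continuous $\land$-preaction has the self-homeomorphism $(a,x)\mapsto(a^*,\overline{\theta}_a(x))$ available to pass between $p$ and the action map as in Proposition~\ref{prop:preactionisopeniffprojectionisopen}.
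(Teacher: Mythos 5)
Your treatment of the open case is essentially correct, but note that it is the paper's computation transported to the projection map: the identity you prove, $p\bigl((A\times U)\cap(\mathcal{S}\ltimes_{\overline{\theta}}\Lambda)\bigr)=p_{\theta}\bigl((A^{\downarrow,\leq}\times U)\cap(\mathcal{S}\ltimes_{\theta}\Lambda)\bigr)$, has an exact analogue for the action map itself, namely $\overline{\theta}(A\,\overline{\ast}\,V)=\theta(A^{\downarrow,\leq}\ast V)$, and that is how the paper argues. Proving the identity for $\overline{\theta}$ rather than for $p$ matters: your return trip from ``$p$ is open'' to ``$\overline{\theta}$ is open'' uses the converse half of Proposition~\ref{prop:preactionisopeniffprojectionisopen}, which requires $(\pi,\overline{\theta})$ to be \emph{continuous}, so as written your openness conclusion is contingent on the continuity half.

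The continuity half is where there is a genuine gap, and it sits exactly at the step you flag as ``the main obstacle'' and then leave as a plan: nothing in your proposal forces the value of $\overline{\theta}$ at points near the base point to be realized on the $B$-branch, and in fact no argument using only continuity of $\theta$ can do so, because the assertion is false. Let $G=\left\{1,g\right\}$ be the cyclic group of order two, $E=\left\{1,e,f,0\right\}$ the semilattice with $e,f\leq 1$ and $ef=0$, and $\mathcal{S}=E\times G$ the product inverse semigroup, discrete and hence étale; set $c=(1,g)$, $a=(e,g)$, $b=(f,g)$, so that $a,b\leq c$ and $a^*b=ab^*=(0,1)$ (thus $a$ and $b$ are compatible). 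Let $\Lambda=\mathbb{R}$ with the trivial anchor map, let $D=\left\{1/n:n\geq 1\right\}\cup\left\{-n:n\geq 1\right\}$, and define $\theta_{(1,1)}=\id_{\mathbb{R}}$, $\theta_{(e,1)}=\id_{D}$, $\theta_{(f,1)}=\id_{\left\{0,10\right\}}$, $\theta_{a}$ the involution of $D$ swapping $1/n\leftrightarrow -n$, $\theta_{b}$ the involution of $\left\{0,10\right\}$ swapping $0\leftrightarrow 10$, and $\theta_{s}=\varnothing$ for the three remaining elements (in particular $\theta_{c}=\varnothing$). Since $D\cap\left\{0,10\right\}=\varnothing$, every composition $\theta_{s}\theta_{t}$ is either a restriction of $\theta_{st}$ or empty, so $\theta$ is a $\land$-prehomomorphism; and each $\theta_{s}$ is continuous on its domain ($\mathbb{R}$, a discrete subspace, or $\varnothing$), so $(\pi,\theta)$ is a continuous $\land$-preaction. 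Yet $\dom(\overline{\theta}_{c})=D\cup\left\{0,10\right\}$ with $\overline{\theta}_{c}(0)=\theta_{b}(0)=10$ and $\overline{\theta}_{c}(1/n)=\theta_{a}(1/n)=-n$, so $\overline{\theta}_{c}$ is discontinuous at $0$: the points of the branch $a$, about whose values continuity of $\theta$ says nothing near $0$, accumulate at the point $0$ of the branch $b$.

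What completes your plan is the hypothesis you set aside in this half: openness of $\theta$. If $\theta$ is open then $p_{\theta}$ is open, so you may shrink $V$ to $V'=V\cap p_{\theta}(B\ast V)$, still an open neighbourhood of the base point. Then every $z'\in V'$ lies in $\dom(\theta_{a''})$ for some $a''\in B$, and for $(c',z')\in B^{\uparrow,\leq}\,\overline{\ast}\,V'$ (open by Corollary~\ref{cor:propertiesopeninversesemigroupoid}) the anchor map forces $\so(a'')=\pi(z')=\so(c')=\so(\sigma_{B}(c'))$, whence $a''=\sigma_{B}(c')$ because $B$ is a bisection; your local formula $\overline{\theta}=\theta\circ(\sigma_{B}\times\id)$ is then valid on $B^{\uparrow,\leq}\,\overline{\ast}\,V'$ and takes values in $W$. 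In short, your argument can be completed under ``continuous and open'', but not under ``continuous'' alone. (For what it is worth, the set identity asserted in the paper's own proof of this half --- that $\overline{\theta}^{-1}(U)$ equals the union of the sets $(A^{\uparrow,\leq})\,\overline{\ast}\,V$ with $A\ast V\subseteq\theta^{-1}(U)$ --- has the same unproved inclusion, which fails in the example above; so the obstacle you isolated is genuine and not an artifact of your particular approach.)
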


To avoid confusion, we denote $\mathcal{S}\ltimes\Lambda$ and $\mathcal{S}\overline{\ltimes}\Lambda$ the (underlying sets of the) semidirect products associated to $\theta$ and $\overline{\theta}$, respectively, and if $A\subseteq\mathcal{S}$, $U\subseteq\Lambda$, by $A\ast U\defeq(A\times U)\cap(\mathcal{S}\ltimes\Lambda)$ and $A\overline{\ast}U\defeq(A\times U)\cap(\mathcal{S}\overline{\ltimes}\Lambda)$. The action maps are still denoted $\theta$ and $\overline{\theta}$.

\begin{proof}
    If $U\subseteq\Lambda$ is open, then $\overline{\theta}^{-1}(U)$ is the union of the sets of the form $(A^{\uparrow,\leq})\overline{\ast} V$, where $A\subseteq \mathcal{S}$ and $V\subseteq\Lambda$ are open, and $A\ast V\subseteq\theta^{-1}(U)$. Thus, if $(\pi,\theta)$ is continuous then $(\pi,\overline{\theta})$ is also continuous.
    
    Similarly, if $A\subseteq\mathcal{S}$ and $V\subseteq\Lambda$ are open, then $\overline{\theta}(A\overline{\ast} V)=\theta(A^{\downarrow,\leq}\ast V)$, so $(\pi,\theta)$ being open implies that $(\pi,\overline{\theta})$ is also open. \end{proof}

\section{Constructions with inverse semigroupoids}\label{sec:categoricalconstructions}
    Throughout this section the letters $\mathcal{S}$, $\mathcal{T}$,\ldots will always denote inverse semigroupoids. We define $\cat{TopIS}$ as the category of topological inverse semigroupoids and their continuous homomorphisms, and $\cat{TopGr}$ as the full subcategory of $\cat{TopIS}$ of topological groupoids. By $\cat{EtIS}$ and $\cat{EtGr}$ we denote the full subcategories of $\cat{TopIS}$ of étale inverse semigroupoids and groupoids, respectively.

\subsection{Underlying groupoids}
\begin{definition}
The \emph{underlying} or \emph{restricted product groupoid} of a semigroupoid $\mathcal{S}$ is the groupoid $\mathscr{U}(\mathcal{S})$ obtained from $\mathcal{S}$ by restricting its product to the set $\mathscr{U}(\mathcal{S})^{(2)}=\left\{(a,b)\in \mathcal{S}\times \mathcal{S}:a^*a=bb^*\right\}$.
\end{definition}

It is straightforward to check that $\mathscr{U}(\mathcal{S})$ is indeed a groupoid. The unit space (identified with the object space) of $\mathscr{U}(\mathcal{S})$ is $E(\mathcal{S})$, and the inverse of $a\in\mathscr{U}(\mathcal{S})$ is $a^{-1}=a^*$. The source and range maps on $\mathscr{U}(\mathcal{S})$ are given, respectively, by
\[\so_{\mathscr{U}(\mathcal{S})}(a)=a^*a,\qquad\ra_{\mathscr{U}(\mathcal{S})}(a)=aa^*.\]

If $\mathcal{S}$ is a topological semigroupoid, then the same topology of $\mathcal{S}$ makes $\mathscr{U}(\mathcal{S})$ a topological groupoid: The product and inverse maps are immediately continuous.

If $\mathcal{S}$ is étale, then $\mathscr{U}(\mathcal{S})$ is étale as well. Indeed, suppose that $A$ is a bisection of the semigroupoid $\mathcal{S}$. If $a,b\in A$ and $a^*a=b^*b$, then $\so(a)=\ra(a^*a)=\ra(b^*b)=\so(b)$, thus $a=b$. Then the source map $\so_{\mathscr{U}(\mathcal{S})}\colon a\mapsto a^*a$ is injective on $A$. Moreover, $\so_{\mathscr{U}(\mathcal{S})}(A)=A^*A$ is open for every $A\in\mathbf{B}(\mathcal{S})$. Thus $\so_{\mathscr{U}(\mathcal{S})}$ is a continuous, open, locally injective map, i.e., a local homeomorphism. Therefore $\mathscr{U}(\mathcal{S})$ is an étale groupoid.

Note that if $\G$ is a groupoid and $\phi\colon\mathcal{G}\to\mathcal{S}$ is a homomorphism, then for all $(g,h)\in\G[2]$ we have
\[\phi(g)^*\phi(g)=\phi(g^{-1}g)=\phi(hh^{-1})=\phi(h)\phi(h)^*\]
so the same map $\phi$ may be seen as a groupoid homomorphism from $\G$ to $\mathscr{U}(\mathcal{S})$.

In particular, if $\phi\colon\mathcal{S}\to \mathcal{T}$ is a continuous semigroupoid homomorphism, then the same map induces a continuous groupoid homomorphism $\mathscr{U}(\phi)=\phi\colon\mathscr{U}(\mathcal{S})\to\mathscr{U}(\mathcal{T})$. This gives us a functor $\mathscr{U}\colon\cat{TopIS}\to\cat{TopGr}$. In fact, $\mathscr{U}$ is a retraction from $\cat{TopIS}$ to $\cat{TopGr}$, i.e., the restriction of $\mathscr{U}$ to $\cat{TopGr}$ is the identity functor.

The functor $\mathscr{U}$ can be easily seen to be terminal universal with respect to the inclusion of categories $\iota\colon\cat{TopGr}\hookrightarrow\cat{TopIS}$. The same facts are also true when restricted to $\cat{EtIS}$ and $\cat{EtGr}$.

\begin{proposition}
Let $\mathcal{S}$ be a topological (resp.\ étale) inverse semigroupoid. Then for every topological (resp.\ étale) groupoid $\G$ and for every continuous semigroupoid homomorphism $\phi\colon\G\to\mathcal{S}$, the associated map $\phi\colon\G\to\mathscr{U}(\mathcal{S})$ is a groupoid morphism. In other words, there exists a unique groupoid morphism $\psi\colon\G\to\mathscr{U}(S)$ (namely, the same function as $\phi$) such that the following diagram commutes:
\[\begin{tikzpicture}
\node (G) at (0,0) {$\G$};
\node (GS) at ([shift={+(0,-1.5)}]G) {$\mathscr{U}(\mathcal{S})$};
\node (iG) at ([shift={+(1.5,0)}]G) {$\iota(\G)$};
\node (iGS) at ([shift={+(1.5,-1.5)}]G) {$\iota(\mathscr{U}(\mathcal{S}))$};
\node (S) at ([shift={+(1.5,0)}]iGS) {$\mathcal{S}$};
\draw[->,dashed] (G)--(GS) node[left,midway] {$\psi$};
\draw[->] (iG)--(iGS) node[left,midway] {$\iota(\psi)$};
\draw[->] (iG)--(S) node[right,midway] {$\phi$};
\draw[->] (iGS)--(S) node[below,midway] {$\id$};
\end{tikzpicture}\]
\end{proposition}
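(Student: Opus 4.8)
The plan is to reduce everything to the one algebraic identity already recorded in the paragraph preceding the statement, since that computation does essentially all the work. First I would record the standard fact that any homomorphism $\phi$ between inverse semigroupoids preserves the involution: from $aa^*a=a$ and $a^*aa^*=a^*$ one gets, after applying $\phi$, that $\phi(a^*)$ is an inverse of $\phi(a)$, whence $\phi(a^*)=\phi(a)^*$ by uniqueness of inverses. With this in hand I would verify that the function $\psi\defeq\phi$ defines a groupoid morphism $\G\to\mathscr{U}(\mathcal{S})$. If $(g,h)\in\G[2]$, then $\so_{\G}(g)=\ra_{\G}(h)$, that is $g^*g=hh^*$; applying $\phi$ and using that it preserves $*$ gives
\[
g^*g=hh^*\quad\Longrightarrow\quad \phi(g)^*\phi(g)=\phi(h)\phi(h)^*,
\]
which is exactly the defining relation for $(\phi(g),\phi(h))\in\mathscr{U}(\mathcal{S})^{(2)}$. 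Multiplicativity $\psi(g)\psi(h)=\phi(gh)=\psi(gh)$ is inherited directly from $\phi$, and on units $\psi$ restricts to $\phi|_{E(\G)}\colon E(\G)\to E(\mathcal{S})$, which is the required vertex map (it lands in $E(\mathcal{S})$ because homomorphisms send idempotents to idempotents, and it intertwines the source and range maps by the same computation).

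For the topological content I would use that $\mathscr{U}(\mathcal{S})$ carries the very same topology on its arrow set as $\mathcal{S}$, so $\psi=\phi$ is continuous as a map of arrow spaces without any further argument; the vertex map $\phi|_{E(\G)}$ is continuous as the restriction of the continuous map $\phi$ to the subspace $E(\G)=\G[0]$. This disposes of the topological and the étale cases simultaneously, since it has already been established above that $\mathscr{U}$ sends étale inverse semigroupoids to étale groupoids, so no extra verification is needed in the étale case beyond noting that both $\G$ and $\mathscr{U}(\mathcal{S})$ are étale and $\psi$ is continuous.

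Finally, uniqueness is immediate from the shape of the diagram: the bottom map $\id\colon\iota(\mathscr{U}(\mathcal{S}))\to\mathcal{S}$ is the identity function and $\iota$ acts as the identity on underlying functions, so the commutativity condition $\id\circ\iota(\psi)=\phi$ forces $\psi=\phi$ as a function. Thus $\phi$ is the only possible candidate, and the first two paragraphs show it is a genuine continuous groupoid morphism. I do not anticipate a real obstacle here; the single point deserving a moment's care is the translation of the composability relation $\so_{\G}(g)=\ra_{\G}(h)$ in $\G$ into the defining relation $a^*a=bb^*$ of $\mathscr{U}(\mathcal{S})$, together with the (easy but necessary) observation that $\phi$ preserves $*$ — both of which are precisely what make the displayed implication valid.
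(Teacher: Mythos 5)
Your proof is correct and follows essentially the same route as the paper: the paper's entire argument is the displayed computation $\phi(g)^*\phi(g)=\phi(g^{-1}g)=\phi(hh^{-1})=\phi(h)\phi(h)^*$ in the paragraph preceding the proposition, which is exactly your translation of composability in $\G$ into the defining relation of $\mathscr{U}(\mathcal{S})^{(2)}$. Your additional remarks — that homomorphisms into an inverse semigroupoid automatically preserve the involution, that $\mathscr{U}(\mathcal{S})$ carries the same arrow-space topology so continuity and the étale case are free, and that uniqueness is forced by the diagram — are details the paper leaves implicit but are all accurate.
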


\subsection{Quotients}

A somewhat general notion of quotient for discrete semigroupoids is considered in \cite{MR3597709}. We will consider only quotients of inverse semigroupoids which preserve their vertex sets.

\begin{definition}
If $G$ is a graph, an equivalence relation $R$ on $G^{(1)}$ is said to be \emph{graphed} the source and range maps are $R$-invariant (i.e., constant on all $R$-equivalence classes).

A \emph{graph congruence} on an inverse semigroupoid $\mathcal{S}$ is a graphed equivalence relation $R$ on $\mathcal{S}$ such that for all $a,\widetilde{a},b,\widetilde{b}\in \mathcal{S}$, if $(a,\widetilde{a}),(b,\widetilde{b})\in R$ and $(a,b)\in\mathcal{S}^{(2)}$, then $(ab,\widetilde{a}\widetilde{b})\in R$.
\end{definition}

Note that, in the definition above, the product $\widetilde{a}\widetilde{b}$ is defined since $\so(\widetilde{a})=\so(a)=\ra(b)=\ra(\widetilde{b})$, as the source and range maps are constant on $R$-equivalence classes.

Given a graph congruence $R$ on an inverse semigroupoid $\mathcal{S}$, we let $\mathcal{S}/R$ be the \emph{quotient semigroupoid}, which is a graphed semigroupoid constructed in the same manner as quotients of categories (see \cite[Section 2.8]{MR1712872}): The vertex space is $(\mathcal{S}/R)^{(0)}\defeq\mathcal{S}^{(0)}$. The arrow space is the usual quotient set $\mathcal{S}/R$, and we denote the $R$-equivalence class of $a\in\mathcal{S}$ as $[a]$. Since $\so$ and $\ra$ are constant on $R$-equivalence classes, then they factor (uniquely) to maps $\so,\ra\colon\mathcal{S}/R\to\mathcal{S}^{(0)}$, $\so([a])=\so(a)$ and $\ra([a])=\ra(a)$ for all $a\in\mathcal{S}$. The product is defined in the only manner which makes the natural quotient map $\mathcal{S}\to\mathcal{S}/R$ a semigroupoid homomorphism. Namely, given $(x,y)\in (\mathcal{S}/R)^{(2)}$, choose representatives $a\in x$ and $b\in y$. Then $\so(a)=\so(x)=\ra(y)=\ra(b)$, so we may define $xy=[ab]$. Since $R$ is a congruence, this product does not depend on the choice of representatives of $x$ and $y$. Associativity of the product is immediate.

Let us prove that quotients of inverse semigroupoids are also inverse semigroupodis. The following is an analogue of a well-known fact for semigroups, and its proof follows the same steps. See \cite[Theorem 5.1.1]{MR1455373}, for example.

\begin{lemma}[{\cite[Lemma 3.3.1]{MR3597709}}]\label{lem:regularisinverseiffidempotentscommute}
A regular graphed semigroupoid $\mathcal{S}$ is an inverse semigroupoid if and only if elements of $E(\mathcal{S})$ commute, i.e., for all $(e,f)\in (E(\mathcal{S})\times E(\mathcal{S}))\cap \mathcal{S}^{(2)}$, $ef=fe$.
\end{lemma}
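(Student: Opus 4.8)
The plan is to prove the two implications separately, mirroring the classical characterization of inverse semigroups among regular semigroups, with the extra care that in the semigroupoid setting every product appearing in the argument must be checked to be defined.

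For the forward implication, suppose $\mathcal{S}$ is an inverse semigroupoid. Then the commutativity of composable idempotents is precisely the content of Theorem \ref{theo:everyinversesemigroupoidiscategorical}: if $e,f\in E(\mathcal{S})$ and $(e,f)\in\mathcal{S}^{(2)}$, then $(f,e)\in\mathcal{S}^{(2)}$ and $ef=fe$. So this direction requires nothing new beyond a citation.

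For the converse, assume $\mathcal{S}$ is regular with commuting idempotents, and let $b$ and $c$ both be inverses of a fixed $a\in\mathcal{S}$; the goal is $b=c$. First I would record the source/range constraints forced by the defining relations: since $aba=a$ and $aca=a$ are defined, both $b$ and $c$ satisfy $\ra(b)=\ra(c)=\so(a)$ and $\so(b)=\so(c)=\ra(a)$. This guarantees that every product written below lies in $\mathcal{S}^{(2)}$ and, crucially, that the idempotents $ab,ba,ac,ca$ (idempotency being the routine computations $(ab)(ab)=a(bab)=ab$, and similarly for the others) are loops, with $ab,ac$ based at $\ra(a)$ and $ba,ca$ based at $\so(a)$. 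Consequently $(ba)(ca)$ and $(ab)(ac)$ are defined, so the commutativity hypothesis applies to exactly these pairs.

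The computation then follows Vagner's classical argument (cf.\ the semigroup case in \cite[Theorem 5.1.1]{MR1455373}). Using $aca=a$, $bab=b$ and the commutativity of the idempotents $ba$ and $ca$,
\[b=bab=b(aca)b=(ba)(ca)b=(ca)(ba)b=cab,\]
and symmetrically $c=bac$. Multiplying the first identity on the left by $a$ gives $ab=(ac)(ab)$, and the second gives $ac=(ab)(ac)$; since $ab$ and $ac$ commute, both equal $(ab)(ac)$, whence $ab=ac$. Finally $b=cab=c(ab)=c(ac)=cac=c$, as desired. The only genuinely new difficulty compared to the semigroup case is the bookkeeping of defined products and the verification that the commutativity hypothesis is applied only to composable idempotents; the source/range identities extracted at the outset handle this uniformly, so the main obstacle is organizing that verification rather than any new algebraic idea.
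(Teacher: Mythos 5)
Your proof is correct and takes essentially the same route as the paper: the paper disposes of this lemma by citing the classical semigroup argument (\cite[Theorem 5.1.1]{MR1455373}) adapted to checking that all products are defined, which is precisely what you carry out, and your use of Theorem \ref{theo:everyinversesemigroupoidiscategorical} for the forward direction matches how the paper has already established commutativity of composable idempotents in inverse semigroupoids. The composability bookkeeping (the loops $ab,ac$ at $\ra(a)$ and $ba,ca$ at $\so(a)$) is exactly the extra care the paper alludes to, and your verification of it is sound.
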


The following is a particular case of \cite[Lemma 3.3.3]{MR3597709} and the Lemma above. It may also be proven directly as in the case of inverse semigroups -- see \cite[Proposition 2.1.1(iii)]{MR1724106}.

\begin{proposition}
If $R$ is a graph congruence on an inverse semigroupoid $\mathcal{S}$, then $\mathcal{S}/R$ is an inverse semigroupoid.
\end{proposition}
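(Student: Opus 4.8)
The plan is to invoke Lemma \ref{lem:regularisinverseiffidempotentscommute}: since the construction preceding this proposition already exhibits $\mathcal{S}/R$ as a graphed semigroupoid, it suffices to verify that $\mathcal{S}/R$ is regular and that its idempotents commute. Write $q\colon\mathcal{S}\to\mathcal{S}/R$ for the (homomorphic) quotient map, $[a]=q(a)$.

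First I would establish regularity. Given $[a]\in\mathcal{S}/R$, the candidate inverse is $[a^*]$. Source/range bookkeeping gives $\so([a])=\so(a)=\ra(a^*)=\ra([a^*])$, so $[a][a^*]$ is defined, and symmetrically $[a^*][a]$ is defined; applying $q$ to the identities $aa^*a=a$ and $a^*aa^*=a^*$ then yields $[a][a^*][a]=[a]$ and $[a^*][a][a^*]=[a^*]$. Hence every element of $\mathcal{S}/R$ is regular.

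The substantive step is that idempotents of $\mathcal{S}/R$ commute, and the key is a Lallement-type lifting: every $\epsilon\in E(\mathcal{S}/R)$ is the class of an idempotent of $\mathcal{S}$. Pick a representative $\epsilon=[a]$; idempotency forces $\so([a])=\ra([a])$, hence $\so(a)=\ra(a)$, so $a$ sits in the isotropy at $v=\so(a)$ and every product formed from $a$ and its inverses is defined. Let $x\in\mathcal{S}$ be the inverse of $a^2=aa$ (itself an isotropy element at $v$), and set $e=axa$. Using $xa^2x=x$ one gets $e^2=ax a^2 xa=a(xa^2x)a=axa=e$, so $e\in E(\mathcal{S})$; and from $a^2xa^2=a^2$, applying $q$ and using $[a][a]=[a]$, one finds $[a][x][a]=[a]$, that is $[e]=[axa]=[a]=\epsilon$. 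Thus $E(\mathcal{S}/R)=q(E(\mathcal{S}))$.

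With this in hand, let $\epsilon,\phi\in E(\mathcal{S}/R)$ be composable and write $\epsilon=[e]$, $\phi=[f]$ with $e,f\in E(\mathcal{S})$. As idempotents have equal source and range, composability reads $\so(e)=\ra(f)=\so(f)$, so $ef$ is defined in $\mathcal{S}$. Since $\mathcal{S}$ is an inverse semigroupoid it is categorical (Theorem \ref{theo:everyinversesemigroupoidiscategorical}), so $fe$ is defined and $ef=fe$; applying $q$ gives $\epsilon\phi=[ef]=[fe]=\phi\epsilon$. Therefore $\mathcal{S}/R$ is regular with commuting idempotents, and Lemma \ref{lem:regularisinverseiffidempotentscommute} concludes that it is an inverse semigroupoid. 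I expect the only real obstacle to be the idempotent lifting: one must check that the Lallement argument survives in the partial-product setting, keeping careful track via $\so$ and $\ra$ of which products are defined (everything here is confined to a single isotropy semigroup, which is what makes it go through); once $E(\mathcal{S}/R)=q(E(\mathcal{S}))$ is secured, commutativity is immediate from categoricity of $\mathcal{S}$.
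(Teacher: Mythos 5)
Your proposal is correct and takes essentially the approach the paper intends: the paper disposes of this proposition by citing Lemma \ref{lem:regularisinverseiffidempotentscommute} together with the literature (``it may also be proven directly as in the case of inverse semigroups''), and that direct proof is exactly your argument --- regularity of the quotient, a Lallement-type lifting of idempotents carried out inside a single isotropy semigroup so that all products are defined, commutativity via Theorem \ref{theo:everyinversesemigroupoidiscategorical}, and then the cited lemma. You have simply written out in full the details the paper delegates to its references.
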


\subsubsection{The étale case}

Suppose now that $\mathcal{S}$ is a topological (or étale) inverse semigroupoid, and that $R$ is a graphed congruence on $\mathcal{S}$. We wish to endow $\mathcal{S}/R$ with a natural topology making it a topological (or étale) inverse semigroupoid as well, and for this we use the \emph{quotient topology}. However, in general we do not have control over the topology of $(\mathcal{S}/R)^{(2)}$, and thus we cannot guarantee that the multiplication map of $\mathcal{S}/R$ is continuous with respect to the quotient topology. We will therefore to make further topological assumptions on the congruence $R$ in order to obtain the desired result. Let us briefly recall some facts about quotient topologies.

\begin{itemize}
    \item If $X$ is a topological space, $Y$ is a set, and $\pi\colon X\to Y$ is a function, the \emph{quotient topology} of $Y$ (induced by $\pi$) has as open subsets the subsets $A$ of $Y$ such that $\pi^{-1}(A)$ is open in $X$. In this case, a function $p\colon Y\to Z$, where $Z$ is a topological space, is continuous if and only if $p\circ\pi$ is continuous. In other words, continuous maps from $Y$ are precisely the factors of continuous maps from $X$ through $\pi$.
    \item If $f\colon X\to Y$ is a continuous, open, surjective function between topological spaces, then the topology of $Y$ concides the quotient topology of $f$.
\end{itemize}

Suppose that $R$ is an equivalence relation on a topological space $X$. Given a subset $A\subseteq\mathcal{S}$, we let $R[A]$ denote the \emph{saturation} of $A$, i.e., $R[A]\defeq\left\{x\in X:xRa\text{ for some }a\in A\right\}$. 
We say that $R$ is \emph{open} if the saturation of every open subset of $X$ is open, or equivalently if the quotient map $X\to X/R$ is an open map.

\begin{proposition}\label{prop:equivalecesetalequotient}
Let $\mathcal{S}$ be a topological (resp.\ étale) inverse semigroupoid and $R$ a graphed open congruence on $\mathcal{S}$. Then the quotient topology of $\mathcal{S}/R$ makes it a topological (resp.\ étale) inverse semigroupoid, where we endow $\mathcal{T}^{(0)}=\mathcal{S}^{(0)}$ with its original topology.
\end{proposition}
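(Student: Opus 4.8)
The plan is to carry out everything through the quotient map $q\colon\mathcal{S}\to\mathcal{S}/R$, $q(a)=[a]$, whose openness is precisely the hypothesis that $R$ is open; recall that $\mathcal{S}/R$ carries the quotient topology induced by $q$ while $(\mathcal{S}/R)^{(0)}=\mathcal{S}^{(0)}$ keeps its original topology, and that the algebraic fact that $\mathcal{S}/R$ is an inverse semigroupoid is already established. First I would dispose of the easy structure maps. Since $\so$ and $\ra$ are $R$-invariant, they factor as $\so_{\mathcal{S}/R}\circ q=\so_{\mathcal{S}}$ and $\ra_{\mathcal{S}/R}\circ q=\ra_{\mathcal{S}}$, so the universal property of the quotient topology immediately gives continuity of $\so_{\mathcal{S}/R}$ and $\ra_{\mathcal{S}/R}$. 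The inverse map descends because $[a]^{*}=[a^{*}]$ in the quotient inverse semigroupoid (the unique-inverse property), so $(\ )^{*}_{\mathcal{S}/R}\circ q=q\circ(\ )^{*}_{\mathcal{S}}$ is continuous, and the universal property again yields continuity of $(\ )^{*}_{\mathcal{S}/R}$. None of this uses openness of $R$.

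The crux is continuity of the product $\mu_{\mathcal{S}/R}\colon(\mathcal{S}/R)^{(2)}\to\mathcal{S}/R$, where $(\mathcal{S}/R)^{(2)}$ carries the subspace topology from $(\mathcal{S}/R)\times(\mathcal{S}/R)$. The difficulty is that quotient topologies do not in general commute with products, so one cannot simply invoke the universal property on $\mathcal{S}^{(2)}$. Instead I would prove that the restriction $q^{(2)}\defeq(q\times q)|_{\mathcal{S}^{(2)}}\colon\mathcal{S}^{(2)}\to(\mathcal{S}/R)^{(2)}$ is itself a quotient map; granting this, the commuting square $\mu_{\mathcal{S}/R}\circ q^{(2)}=q\circ\mu_{\mathcal{S}}$ (whose right side is continuous) forces $\mu_{\mathcal{S}/R}$ to be continuous. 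To see $q^{(2)}$ is a quotient map I would show it is an open surjection. Both surjectivity and openness rest on the same observation, which is where the graphed (i.e.\ $R$-invariant source/range) hypothesis enters: for a basic open set $(U\times V)\cap\mathcal{S}^{(2)}$ one checks
\[q^{(2)}\big((U\times V)\cap\mathcal{S}^{(2)}\big)=\big(q(U)\times q(V)\big)\cap(\mathcal{S}/R)^{(2)}.\]
The inclusion $\subseteq$ is clear; for $\supseteq$, given $([a],[b])$ in the right-hand side with $a\in U$, $b\in V$, the condition $\so_{\mathcal{S}/R}([a])=\ra_{\mathcal{S}/R}([b])$ reads $\so_{\mathcal{S}}(a)=\ra_{\mathcal{S}}(b)$ by $R$-invariance, so $(a,b)\in\mathcal{S}^{(2)}$ already and $([a],[b])=q^{(2)}(a,b)$. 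Since $q$ is open, $q(U)$ and $q(V)$ are open, so the right-hand side is open in $(\mathcal{S}/R)^{(2)}$; as such sets form a basis of $\mathcal{S}^{(2)}$, the map $q^{(2)}$ is open, hence a quotient map. I expect this to be the main obstacle, precisely because it is the only place where openness of $R$ must be married to the fibred-product structure.

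Finally, for the étale case I would show $\so_{\mathcal{S}/R}$ is a local homeomorphism by checking it is continuous (done above), open, and locally injective. For openness, given open $W\subseteq\mathcal{S}/R$ the set $q^{-1}(W)$ is open and $\so_{\mathcal{S}/R}(W)=\so_{\mathcal{S}}(q^{-1}(W))$, which is open because the source map of the étale semigroupoid $\mathcal{S}$ is open. For local injectivity, given $[a]$ choose an open bisection $A\in\mathbf{B}(\mathcal{S})$ with $a\in A$; then $q(A)$ is an open neighbourhood of $[a]$, and if $[a_1],[a_2]\in q(A)$ with $a_1,a_2\in A$ satisfy $\so_{\mathcal{S}/R}([a_1])=\so_{\mathcal{S}/R}([a_2])$, then $\so_{\mathcal{S}}(a_1)=\so_{\mathcal{S}}(a_2)$, whence $a_1=a_2$ since $\so$ is injective on the bisection $A$, so $[a_1]=[a_2]$. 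A continuous, open, locally injective map is a local homeomorphism, so $\mathcal{S}/R$ is étale, the range map being automatically a local homeomorphism as $\ra=\so\circ(\ )^{*}$.
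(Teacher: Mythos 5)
Your proposal is correct and follows essentially the same route as the paper: both proofs reduce continuity of the quotient product to showing that the restriction of $q\times q$ to $\mathcal{S}^{(2)}$ is an open continuous surjection (hence a quotient map) onto $(\mathcal{S}/R)^{(2)}$, using openness of $R$ together with the graphed hypothesis (which makes $\mathcal{S}^{(2)}$ saturated), and both handle the étale case by checking that the quotient source map is continuous, open, and locally injective via bisections. The only cosmetic difference is that you verify openness on basic open sets $(U\times V)\cap\mathcal{S}^{(2)}$ directly, whereas the paper first notes $\pi\times\pi$ is a quotient map and then restricts using the set identity $(\pi\times\pi)(A\cap\mathcal{S}^{(2)})=(\pi\times\pi)(A)\cap\mathcal{T}^{(2)}$.
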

\begin{proof}
    We first assume only that $\mathcal{S}$ is a topological inverse semigroupoid. Let us denote $\mathcal{T}\defeq\mathcal{S}/R$ and $\pi\colon\mathcal{S}\to\mathcal{T}$ the quotient map.
    
    We have the commutative diagram
    \[\begin{tikzpicture}
    \node (S1) at (0,0) {$\mathcal{S}$};
    \node (S2) at ([shift={+(2,0)}]S1) {$\mathcal{S}$};
    \node (T1) at ([shift={+(0,-1)}]S1) {$\mathcal{T}$};
    \node (T2) at ([shift={+(2,0)}]T1) {$\mathcal{T}$};
    \draw[->] (S1)--(S2) node[midway,above] {$(\ )^*$};
    \draw[->] (S1)--(T1) node[midway,left] {$\pi$};
    \draw[->] (S2)--(T2) node[midway,right] {$\pi$};
    \draw[->] (T1)--(T2) node[midway,above] {$(\ )^*$};
    \end{tikzpicture}
    \]
    where the horizontal arrows denote the inversion maps. Therefore the inversion map of $\mathcal{T}$ is simply the factor map of $\pi\circ(\ )^*$ through $\pi$, and is therefore continuous. Continuity of the source and range maps may be proven similarly with diagrams.
    
    We will prove that $\mathcal{T}^{(2)}$ has the quotient topology induced by the restriction of $\pi\times\pi$ to $\mathcal{T}^{(2)}$. After that, continuity or the product on $\mathcal{T}$ follows easily, since we have the commutative diagram
    \[\begin{tikzpicture}
    \node (S2) at (0,0) {$\mathcal{S}^{(2)}$};
    \node (S) at ([shift={+(2,0)}]S2) {$\mathcal{S}$};
    \node (T2) at ([shift={+(0,-1)}]S2) {$\mathcal{T}^{(2)}$};
    \node (T) at ([shift={+(2,0)}]T2) {$\mathcal{T}$};
    \draw[->] (S2)--(S) node[midway,above] {$\mu_{\mathcal{S}}$};
    \draw[->] (S2)--(T2) node[midway,left] {$\pi\times\pi$};
    \draw[->] (S)--(T) node[midway,right] {$\pi$};
    \draw[->] (T2)--(T) node[midway,above] {$\mu_{\mathcal{T}}$};
    \end{tikzpicture}
    \]
    where the horizontal arrows denote the product maps. This means that the product of $\mathcal{T}$ is the factor of a continuous map through $\pi\times\pi$, and is therefore continuous.
    
    Since $\pi$ is surjective, continuous and open, because $R$ is open, then $\pi\times\pi$ is also surjective, continuous and open, and therefore $\mathcal{T}\times\mathcal{T}$ has the quotient topology induced by $\pi\times\pi$.
    
    Moreover, we have $\mathcal{S}^{(2)}=(\pi\times\pi)^{-1}\left(\mathcal{T}^{(2)}\right)$ and so for every $A\subseteq \mathcal{S}\times\mathcal{S}$ we have $(\pi\times\pi)(A\cap\mathcal{S}^{(2)})=(\pi\times\pi)(A)\cap\mathcal{T}^{(2)}$. In particular the restriction $(\pi\times\pi)|_{\mathcal{S}^{(2)}}\colon\mathcal{S}^{(2)}\to\mathcal{T}^{(2)}$ is also continuous, surjective, and open, and therefore $\mathcal{T}^{(2)}$ has its quotient topology, and continuity of the product map follows.
    
    Now assume that $\mathcal{S}$ is étale. If $A$ is an open bisection of $\mathcal{S}$, then $\pi(A)$ is an open bisection of $\mathcal{T}$, because $\so_{\mathcal{T}}\circ\pi=\so_{\mathcal{S}}$ and $\ra_{\mathcal{T}}\circ\pi=\ra_{\mathcal{S}}$ are injective on $A$. Moreover, the source map of $\mathcal{T}$ is open: if $C\subseteq\mathcal{T}$ is open, then $\so_{\mathcal{T}}(C)=\so_{\mathcal{S}}(\pi^{-1}(C))$ is open in $\mathcal{T}^{(0)}=\mathcal{S}^{(0)}$. Therefore $\so_{\mathcal{T}}$ is continuous, open, and locally injective, hence a local homeomorphism.\qedhere
\end{proof}

Therefore, under the hypotheses of Proposition \ref{prop:equivalecesetalequotient}, $\pi\colon\mathcal{S}\to\mathcal{S}/R$ is a continuous homomorphism of topological inverse semigroupoid satisfying the usual universal property: If $\phi\colon\mathcal{S}\to\Lambda$ is any continuous (resp.\ open) homomorphism from $\mathcal{S}$ to a topological semigroupoid $\Lambda$ such that $R\subseteq\ker\phi$, then there exists a unique continuous (resp.\ open) semigroupoid homomorphism $\psi\colon\mathcal{S}/R\to\Lambda$ such that $\phi=\psi\circ\pi$.

\begin{remark}
A similar construction as in \cite{MR0412333} shows, by set-theoretic considerations, that the category of topological inverse semigroupoids admits arbitrary quotients in the following sense: If $\mathcal{S}$ is any topological inverse semigroupoid and $R$ is any congruence on $\mathcal{S}$, then there exists a topological inverse semigroupoid $\mathcal{S}/R$ and a continuous homomorphism $\pi_R\colon\mathcal{S}\to\mathcal{S}/R$ such that $\pi_R(a)=\pi_R(b)$ whenever $(a,b)\in R$, and for any other inverse semigroupoid $\mathcal{T}$ and any continuous homomorphism $\phi\colon\mathcal{S}\to\mathcal{T}$ such that $\phi(a)=\phi(b)$ whenever $(a,b)\in R$, then there exists a unique continuous groupoid homomorphism $\psi\colon\mathcal{S}/R\to\mathcal{T}$ such that $\psi\circ\pi_R=\phi$.

The main idea in the construction of $\mathcal{S}/R$ as above is to show that there is a set $\mathscr{Q}$ of all possible topological semigroupoid quotients of $\mathcal{S}$ which identify $R$-equivalent elements of $\mathcal{S}$. More precisely, $\mathscr{Q}$ is a set of topological inverse semigroupoids $Q$ endowed with continuou maps $\pi_Q\colon\mathcal{S}\to Q$ such that $\pi_Q(a)=\pi_Q(b)$ whenever $(a,b)\in R$, and which satisfies the following property: If $\phi\colon\mathcal{S}\to\mathcal{T}$ is any topological semigroupoid homomorphism such that $\phi(a)=\phi(b)$ whenever $(a,b)\in R$ and $\phi(\mathcal{S})$ generates $\mathcal{T}$ (as an inverse semigroupoid), then $(\mathcal{T},\phi)$ may be represented (faithfully) as $(Q,\pi_Q)$ for some $Q\in\mathscr{Q}$. We then take $(\mathcal{S}/R)'=\prod_{Q\in\mathscr{Q}}Q$, $\pi=\prod_{Q\in\mathscr{Q}}\pi_Q$, and $\mathcal{S}/R$ the sub-inverse semigroupoid of $(\mathcal{S}/R)'$ generated by $\pi(S)$.

However, the topology of $\mathcal{S}/R$ is not manageable in abstract terms, and a priori the inverse semigroupoid $\mathcal{S}/R$ might be trivial even if $R\neq\mathcal{S}\times\mathcal{S}$.
\end{remark}

\subsection{Semigroupoids of germs and the initial groupoid}\label{subsec:groupoidofgerms}

\begin{definition}
A poset $(P,\preceq)$ is \emph{conditionally downwards directed} for all $p\leq P$, the downset $p^{\downarrow,\preceq}=\left\{x\in P:x\preceq p\right\}$ is downwards directed. Explicitly, this means that whenever $x,y\preceq p$ in $P$, there exists $z\in P$ with $z\preceq x,y$.
\end{definition}

As an example, every inverse semigroupoid with its canonical order is conditionally downwards directed.

The \emph{germ relation} $\sim_{\preceq}$ on a poset $(P,\preceq)$ is defined as
\[a\sim_{\preceq} b\iff\text{there exists }z\in P\text{ such that } z\leq a\text{ and }z\leq b.\]

\begin{lemma}
$\sim_\preceq$ is always reflexive and symmetric, and it is transitive (and thus an equivalence relation) if and only if $(P,\preceq)$ is conditionally downwards directed.
\end{lemma}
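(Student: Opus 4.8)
The plan is to dispatch reflexivity and symmetry directly from the definition, and then prove the equivalence ``$\sim_\preceq$ is transitive $\iff (P,\preceq)$ is conditionally downwards directed'' by treating each implication in turn. For reflexivity, given $a\in P$ the element $z=a$ witnesses $a\sim_\preceq a$, since $\preceq$ is reflexive. Symmetry is immediate because the defining condition is already symmetric in $a$ and $b$: any witness $z$ with $z\preceq a$ and $z\preceq b$ equally witnesses $b\sim_\preceq a$. These two facts require no hypothesis on $(P,\preceq)$.

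For the implication that conditional downward directedness implies transitivity, I would suppose $a\sim_\preceq b$ and $b\sim_\preceq c$, witnessed by some $x\preceq a$, $x\preceq b$ and some $y\preceq b$, $y\preceq c$ respectively. The key observation is that both $x$ and $y$ lie in the downset $b^{\downarrow,\preceq}$. Applying the hypothesis at $p=b$ then yields a common lower bound $z$ with $z\preceq x$ and $z\preceq y$; transitivity of the order $\preceq$ gives $z\preceq a$ and $z\preceq c$, so $z$ witnesses $a\sim_\preceq c$. Hence $\sim_\preceq$ is transitive.

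For the converse, I would assume $\sim_\preceq$ is transitive and fix $p\in P$ together with $x,y\preceq p$, aiming to produce a common lower bound. The trick is that $x$ itself witnesses $x\sim_\preceq p$ (since $x\preceq x$ and $x\preceq p$), and symmetrically $y$ witnesses $p\sim_\preceq y$. Transitivity then forces $x\sim_\preceq y$, and unwinding the definition produces exactly an element $z$ with $z\preceq x$ and $z\preceq y$ — which is precisely what conditional downward directedness demands. The entire argument is formal; the only point meriting minor care is the repeated appeal to transitivity of the \emph{underlying order} $\preceq$ (rather than the germ relation) when chaining $z\preceq x\preceq a$ into $z\preceq a$, and I anticipate no genuine obstacle.
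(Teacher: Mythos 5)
Your proposal is correct and follows essentially the same argument as the paper: the same witness-chaining via a common lower bound below $b$ for the forward direction, and the same observation that $x\sim_\preceq p\sim_\preceq y$ forces a common lower bound of $x,y$ for the converse. The only difference is that you spell out reflexivity and symmetry, which the paper dismisses as immediate.
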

\begin{proof}
    Only the last statement warrants a proof. If $(P,\preceq)$ is conditionally downwards directed, then suppose $a\sim_{\preceq} b$ and $b\sim_{\preceq} c$. Let $x$ and $y$ with $x\preceq a,b$ and $y\preceq b,c$. As both $x$ and $y$ are bounded above by $b$, we may take $z\preceq x,y$. Then $z\preceq x\preceq a$ and $z\preceq y\preceq c$, so $a\sim_\preceq c$. Hence $\sim_\preceq$ is transitive.
    
    Conversely, if $\sim_\preceq$ is transitive, suppose $x,y\preceq p$ in $P$. Then $x\sim_\preceq p$ and $p\sim_\preceq y$, so $x\sim_\preceq y$, which means that there exists $z\preceq x,y$, as desired.\qedhere
\end{proof}

The following example will be used in our duality result, and yields a large class 

\begin{definition}\label{def:compatibleorder}
    A preorder $\preceq$ on a semigroupoid $\mathcal{S}$ is \emph{compatible} (with $\mathcal{S}$) if
    \begin{enumerate}[label=(\roman*)]
        \item\label{def:compatibleorder1} $a\preceq b$ implies $a\leq b$;
        \item\label{def:compatibleorder2} $a\preceq b$ implies $ax\preceq bx$ and $ya\preceq yb$ whenever $(a,x),(y,a)\in\mathcal{S}^{(2)}$.
    \end{enumerate}
    (note that $bx$ and $yb$ in \ref{def:compatibleorder2} are defined because, by \ref{def:compatibleorder1}, $a\leq b$, so $\so(a)=\so(b)$ and $\ra(a)=\ra(b)).$
\end{definition}

Any compatible preorder $\preceq$ on a semigroupoid $\mathcal{S}$ is preserved by inverses: If $a\preceq b$, then $a\leq b$, so
\[a^*=b^*ab^*\preceq b^*bb^*=b^*\]

\begin{example}\label{ex:setorderofbisections}.
    Let $\mathcal{S}$ be an étale inverse semigroupoid and let $\mathbf{B}(\mathcal{S})$ be its semigroup of open bisections. Then set inclusion $\subseteq$ is a compatible preorder compatible with $\mathbf{B}(\mathcal{S})$. Set inclusion coincides with the canonical order of $\mathbf{B}(\mathcal{S})$ if and only if $\mathcal{S}$ is a groupoid.
    
    Indeed, suppose that $\mathcal{S}$ is not a groupoid. Then the canonical order of $\mathcal{S}$ is not equality, i.e., there are $a\neq b$ in $\mathcal{S}$ with $a\leq b$. Take any two open bisections $A_0,B_0\in\mathbf{B}(\mathcal{S})$ containing $a$ and $b$, respectively. Then $A\defeq A_0\cap B_0^{\uparrow,\leq}$ and $B\defeq B_0\cap A^{\downarrow,\leq}$ are also open bisections of $\mathcal{S}$ containing $a$ and $b$, respectively (see Corollary \ref{cor:propertiesopeninversesemigroupoid}). However, $B\leq A$ in the canonical order of $\mathbf{B}(\mathcal{S})$, but $B$ is not contained in $A$.
    
    The verification of the other direction -- that if $\G$ is an étale groupoid then the canonical order of $\mathbf{B}(\mathcal{G})$ is set inclusion -- is straightforward.
\end{example}

\begin{lemma}\label{lem:relation.of.germs.is.a.graphed.congruence}
If $\preceq$ is a compatible preorder on an inverse semigroupoid $\mathcal{S}$ then it is conditionally downwards directed, and $\sim_{\preceq}$ is a graphed congruence.
\end{lemma}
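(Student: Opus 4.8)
The statement has two parts: first, that a compatible preorder $\preceq$ on an inverse semigroupoid $\mathcal{S}$ is conditionally downwards directed, and second, that $\sim_\preceq$ is a graphed congruence. My plan is to handle these in order, since the first part is needed to know (via the preceding Lemma) that $\sim_\preceq$ is an equivalence relation at all, which is a prerequisite for it being a congruence.

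\emph{Conditional downward directedness.} The plan is to take $x, y \preceq p$ and produce a lower bound in $\preceq$. The natural candidate to try is $z \defeq x y^* y$ (or symmetrically $y x^* x$); I would verify $z \preceq x$ and $z \preceq y$. Here is where I would lean heavily on compatibility. Since $\preceq$ refines $\leq$ by \ref{def:compatibleorder1}, from $x \preceq p$ and $y \preceq p$ I get $x \leq p$ and $y \leq p$, so in particular all of $x, y, p$ share the same source and range, and $x, y$ are compatible in $\mathcal{S}$ in the sense defined earlier (their products $x^* y$, $x y^*$ are idempotent). I would use \ref{def:compatibleorder2}: from $y \preceq p$, right-multiplying appropriately, and from $x \preceq p$, I want to massage $x y^* y \preceq x$ and $x y^* y \preceq y$ using the fact that $y^* y$ is idempotent and that multiplying on the right by an idempotent produces a $\leq$-smaller element (Proposition~\ref{prop:propertiesoperationsofinversesemigroupoid}\ref{prop:propertiesoperationsofinversesemigroupoid3}). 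The cleanest route is probably: $x y^* y \leq x$ always, and then show $x y^* y \preceq x$ by writing it as $x$ multiplied/acted on in a way controlled by the compatible-preorder axioms, and that $x y^* y = y x^* x \preceq y$ by the compatibility of $x$ and $y$.

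\emph{$\sim_\preceq$ is a graphed congruence.} First I would check it is graphed: if $a \sim_\preceq b$, witnessed by $z \preceq a, b$, then $z \leq a$ and $z \leq b$, so $\so(a) = \so(z) = \so(b)$ and likewise for $\ra$; hence $\so, \ra$ are constant on $\sim_\preceq$-classes. Then for the congruence property, I take $(a, \widetilde a), (b, \widetilde b) \in \sim_\preceq$ with $(a,b) \in \mathcal{S}^{(2)}$, pick witnesses $u \preceq a, \widetilde a$ and $v \preceq b, \widetilde b$, and aim to show $ab \sim_\preceq \widetilde a \widetilde b$. The candidate witness is $z \defeq uv$ (which is defined, since all four elements share the relevant source/range and compatibility forces the products to exist). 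Using \ref{def:compatibleorder2} twice — $u \preceq a$ gives $uv \preceq av$, and $v \preceq b$ gives $av \preceq ab$, hence $uv \preceq ab$ by transitivity of the preorder — and the symmetric computation $uv \preceq \widetilde a \widetilde b$, I conclude $ab \sim_\preceq \widetilde a \widetilde b$.

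\emph{Main obstacle.} I expect the only genuinely delicate point to be the first part, choosing the correct lower bound and verifying $z \preceq x, y$ rather than merely $z \leq x, y$: the compatible-preorder axioms only let me push $\preceq$ through multiplication, so I must express the candidate $z$ purely as a product-translate of $x$ (or $y$) by terms I can control, and then invoke \ref{def:compatibleorder2}. Getting the bookkeeping of idempotents right — exploiting that $x, y$ are compatible so that $x y^* y = y x^* x$ — is the crux; once that identity and the two $\preceq$-comparisons are established, the congruence argument is routine bookkeeping with \ref{def:compatibleorder2} and transitivity.
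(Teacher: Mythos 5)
Your proposal is correct and takes essentially the same route as the paper: the paper's proof uses the identical witness $z=xy^*y$ for conditional downward directedness, justified by applications of Definition \ref{def:compatibleorder}\ref{def:compatibleorder2} together with the canonical-order identities $ay^*y=y$ and $xa^*a=x$, and then the identical witness $c_1c_2$ (your $uv$) with two applications of \ref{def:compatibleorder}\ref{def:compatibleorder2} and transitivity for the graphed-congruence part. The only cosmetic difference is at the crux you flag: the paper gets the bound $xy^*y\preceq y$ in one step by right-multiplying $x\preceq a$ by $y^*y$ (so that only the bound $xy^*y\preceq xa^*a=x$ needs the inverse-preservation fact $y^*\preceq a^*$ and transitivity), whereas you route one of the two comparisons through the compatibility identity $xy^*y=yx^*x$, which is valid but slightly less direct.
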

\begin{proof}
    To prove that $\preceq$ is conditionally downwards directed, suppose $x,y\preceq a$. Then $x,y\leq a$ as well. The product $z\defeq xy^*y$ is thus defined, and
    \[z=xy^*y\preceq ay^*y=y,\qquad\text{and}\qquad z=xy^*y\preceq xa^*a=x.\]
    (In fact, note that any $\preceq$-lower bound $w$ of $\left\{x,y\right\}$ satisfies $w=ww^*w\preceq xy^*y=z$, so $z=\inf_{\preceq}\left\{x,y\right\}$.)
    
    Property \ref{def:compatibleorder}\ref{def:compatibleorder1} implies that the source and range maps are $\sim_{\preceq}$-invariant, so $\sim_{\preceq}$ is graphed. If $a_i\sim_{\preceq} b_i$ ($i=1,2$) and $(a_1,a_2)\in\mathcal{S}^2$, we choose $c_i\preceq a_i,b_i$, so applications of \ref{def:compatibleorder}\ref{def:compatibleorder1} imply $c_1c_2\preceq a_1a_2$ and $c_1c_2\preceq b_1b_2$, thus $a_1a_2\sim_{\preceq} b_1b_2$, which proves that $\sim_{\preceq}$ is a congruence.\qedhere
\end{proof}

\begin{remark}
The germ relation associated to a compatible order does not determine the order completely. For example, let $L_3=\left\{0,1,2\right\}$ be the lattice with $0<1<2$. Set $x\preceq y$ if and only if $x=y$ or $x=a$. Then $\preceq$ is a compatible order on $L_3$, different from the canonical order $\leq$ but with the same germ relation (namely, all elements of $L_3$ are equivalent).
\end{remark}

Therefore, given an inverse semigroupoid $\mathcal{S}$ and a compatible order $\preceq$, we may construct the quotient inverse semigroupoid $\mathcal{S}/\!\!\sim_{\preceq}$. If $\mathcal{S}$ is étale, we wish to apply Proposition \ref{prop:equivalecesetalequotient} to make $\mathcal{S}/\!\!\sim_{\preceq}$ étale as well.

\begin{definition}
A \emph{topologically compatible} order $\preceq$ on a topological inverse semigroupoid $\mathcal{S}$ is a compatible order $\preceq$ such that upper and lower closures of open sets are also open, i.e., if $A\subseteq\mathcal{S}$ is open, then
\[A^{\uparrow,\preceq}=\left\{z\in\mathcal{S}:a\preceq z\text{ for some }a\in A\right\}\quad\text{and}\quad A^{\downarrow,\preceq}=\left\{z\in\mathcal{S}:z\preceq a\text{ for some }a\in A\right\}\]
are open in $\mathcal{A}$.
\end{definition}

\begin{example}
Let $X=[0,1]$ as a unit topological groupoid and its usual topology. Consider the discrete lattice $L_2=\left\{0,1\right\}$ with $0<1$ and the topological semigroupoid $\mathcal{S}\defeq L_2\times X$. Then $\mathcal{S}$ is an étale inverse semigroupoid. The order
\[x\preceq y\iff x=y\text{ or }x=(0,1)\text{ and }y=(1,1)\]
is compatible but not topologically compatible with $\mathcal{S}$. Moreover, it is not hard to verify that the quotient $\mathcal{S}/\!\!\sim_{\preceq}$ is a topological, non-étale, inverse semigroupoid when endowed with the quotient topology.
\end{example}

At this moment, we do not have an example of a compatible order $\preceq$ on a topological inverse semigroupoid $\mathcal{S}$ such that the product map on $\mathcal{S}/\!\!\sim_{\preceq}$ is not continuous with respect to the quotient topology.

Given a topologically compatible order $\preceq$ on a topological inverse semigroupoid $\mathcal{S}$, the $\sim_{\preceq}$-saturation of an open subset $A\subseteq\mathcal{S}$ is
\[\sim_{\preceq}[A]=(A^{\downarrow,\preceq})^{\uparrow,\preceq}.\]
which is open. Therefore $\sim_{\preceq}$ is open, and as a particular case of Proposition \ref{prop:equivalecesetalequotient}, we obtain the following result.

\begin{proposition}
Suppose that $\preceq$ is a topologically compatible order on a topological (resp.\ étale) inverse semigroupoid $\mathcal{S}$. Then the quotient topology on $\mathcal{T}=\mathcal{S}/\!\!\sim_{\preceq}$ makes it a topological (resp.\ étale) inverse semigroupoid. (We endow $\mathcal{S}^{(0)}=\mathcal{T}^{(0)}$ with its original topology.)
\end{proposition}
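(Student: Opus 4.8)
The plan is to reduce the statement to the already-established Proposition \ref{prop:equivalecesetalequotient}. That proposition says: if $R$ is a graphed \emph{open} congruence on a topological (resp.\ étale) inverse semigroupoid $\mathcal{S}$, then the quotient topology makes $\mathcal{S}/R$ a topological (resp.\ étale) inverse semigroupoid. I have the congruence $R = \sim_{\preceq}$ in hand: by Lemma \ref{lem:relation.of.germs.is.a.graphed.congruence}, a compatible preorder is conditionally downwards directed and its germ relation $\sim_{\preceq}$ is a graphed congruence. So the only missing hypothesis is that $\sim_{\preceq}$ is \emph{open} as an equivalence relation, i.e.\ that the saturation of every open set is open. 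Once that is verified, the result is an immediate instance of Proposition \ref{prop:equivalecesetalequotient}.

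Let me address openness of $\sim_{\preceq}$, which is really the content of the proof. First I would establish the formula for the saturation that is already asserted in the text just before the statement, namely
\[
\sim_{\preceq}[A] = (A^{\downarrow,\preceq})^{\uparrow,\preceq}
\]
for every subset $A \subseteq \mathcal{S}$. The inclusion $\supseteq$ is clear: if $b$ lies above some $c$ which lies below some $a \in A$, then $c \preceq a$ and $c \preceq b$ witness $a \sim_{\preceq} b$, so $b \in {\sim_{\preceq}}[A]$. For $\subseteq$, if $b \sim_{\preceq} a$ with $a \in A$, pick $z \preceq a, b$; then $z \in A^{\downarrow,\preceq}$ and $b \in (A^{\downarrow,\preceq})^{\uparrow,\preceq}$. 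Now if $A$ is open and $\preceq$ is \emph{topologically} compatible, then $A^{\downarrow,\preceq}$ is open by definition, hence $(A^{\downarrow,\preceq})^{\uparrow,\preceq}$ is open as well, again by topological compatibility. Thus $\sim_{\preceq}[A]$ is open, which is precisely the statement that $\sim_{\preceq}$ is an open equivalence relation (equivalently, the quotient map $\mathcal{S} \to \mathcal{S}/\!\!\sim_{\preceq}$ is open).

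With openness in hand I would simply invoke Proposition \ref{prop:equivalecesetalequotient} applied to $R = \sim_{\preceq}$, which directly yields that the quotient topology makes $\mathcal{T} = \mathcal{S}/\!\!\sim_{\preceq}$ a topological (resp.\ étale) inverse semigroupoid, with $\mathcal{T}^{(0)} = \mathcal{S}^{(0)}$ carrying its original topology. I do not anticipate any real obstacle here, since the hard topological work (continuity of the product via the quotient topology on $\mathcal{T}^{(2)}$, and the étale conclusion via openness of the source map) is entirely encapsulated in Proposition \ref{prop:equivalecesetalequotient}. The only genuine step is the saturation computation, and its delicacy is just that one must use topological compatibility \emph{twice}—once to open up the downset and once to open up the subsequent upset—rather than assuming openness of up- or down-closures in isolation. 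Indeed, the definition of topologically compatible order was tailored precisely so that both closures preserve openness, making this two-step argument go through cleanly.
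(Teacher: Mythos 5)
Your proposal is correct and takes essentially the same approach as the paper: identify the $\sim_{\preceq}$-saturation of an open set $A$ as $(A^{\downarrow,\preceq})^{\uparrow,\preceq}$, conclude openness of the congruence from topological compatibility (applied once to the down-closure and once to the up-closure), and then apply Proposition \ref{prop:equivalecesetalequotient} together with Lemma \ref{lem:relation.of.germs.is.a.graphed.congruence}. The only difference is that you spell out both inclusions of the saturation formula, which the paper asserts without proof.
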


Given an étale inverse semigroupoid $\mathcal{S}$, the canonical order $\leq$ of $\mathcal{S}$ is topologically compatible, by Corollary \ref{cor:propertiesopeninversesemigroupoid}\ref{cor:propertiesopeninversesemigroupoid2}, and thus we may apply the result above.

\begin{definition}
    The \emph{initial groupoid} of an étale inverse semigroupoid is the topological (semi)groupoid $\IG(\mathcal{S})=\mathcal{S}/\!\!\sim_{\leq}$.
\end{definition}

To check that $\IG(\mathcal{S})$ is indeed a groupoid, we may simply verify that its canonical order is trivial. Denote by $\pi_{\mathcal{S}}\colon\mathcal{S}\to\IG(\mathcal{S})$ the (usual) quotient map. If $\pi_{\mathcal{S}}(a)\leq\pi_{\mathcal{S}}(b)$, then $\pi_{\mathcal{S}}(a)=\pi_{\mathcal{S}}(ba^*a)$. Since $ba^*a\leq b$ then $ba^*a$ and $b$ are $\sim_{\leq}$ related, hence $\pi_{\mathcal{S}}(b)=\pi_{\mathcal{S}}(ba^*a)=\pi_{\mathcal{S}}(a)$.

Moreover, $\IG(\mathcal{S})$ satisfies a universal property for semigroupoid homomorphisms from $\mathcal{S}$ to topological groupoids (and hence its name): Let $\pi_{\mathcal{S}}\colon\mathcal{S}\to\IG(\mathcal{S}$ be the canonical quotient map. Suppose that $\phi\colon\mathcal{S}\to\G$ is a continuous semigroupoid homomorphism, where $\G$ is a topological groupoid. As $\phi$ preserves the orders and the order of $\G$ is trivial, then $\phi$ is $\sim_{\leq}$-invariant, and thus $\phi$ factors through $\pi_\mathcal{S}$ to a continuous groupoid homomorphism $\psi\colon \IG(\mathcal{S})\to\G$, i.e., $\phi=\psi\circ\pi_{\mathcal{S}}$.

In particular, if $\mathcal{T}$ is another étale inverse semigroupoid and $\phi\colon\mathcal{S}\to\mathcal{T}$ is a continuous semigroupoid homomorphism, then the composition $\pi_T\circ\phi\colon\mathcal{S}\to\IG(\mathcal{T})$ factors uniquely through $\IG(\mathcal{S})$, so we obtain a continuous groupoid homomorphism $\IG(\phi)\colon\IG(\mathcal{S})\to\IG(\mathcal{T})$. Therefore we have a functor $\IG\colon\cat{EtIS}\to\cat{EtGr}$, and the previous paragraph proves that it is initial with respect to the inclusion of categories $\iota\colon\cat{EtGr}\hookrightarrow\cat{EtIS}$.

\begin{proposition}
Let $\mathcal{S}$ be an étale inverse semigroupoid. Then for every étale groupoid $\G$, every continuous semigroupoid morphism $\phi\colon \mathcal{S}\to\G$ factors through a continuous groupoid homomorphism $\IG(\mathcal{S})\to\G$. In other words, there exists a unique continuous groupoid homomorphism $\psi\colon\IG(\mathcal{S})\to\G$ such that the following diagram commutes:
\[\begin{tikzpicture}
\node (S) at (0,0) {$S$};
\node (igS) at ([shift={+(2.5,0)}]S) {$\iota(\IG(S))$};
\node (iG) at ([shift={+(0,-1.5)}]igS) {$\iota(\G)$};
\node (gS) at ([shift={+(2.5,0)}]igS) {$\IG(S)$};
\node (G) at ([shift={+(0,-1.5)}]gS) {$\G$};
\draw[->,dashed] (gS)--(G) node[right,midway] {$\psi$};
\draw[->] (igS)--(iG) node[right,midway] {$\iota(\psi)$};
\draw[->] (S)--(igS) node[above,midway] {$\pi_S$};
\draw[->] (S)--(iG) node[below,midway] {$\phi$};
\end{tikzpicture}\]
\end{proposition}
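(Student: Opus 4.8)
The plan is to reduce the entire statement to the universal property of the quotient construction, so that the only genuine content becomes the $\sim_{\leq}$-invariance of $\phi$. The two facts I would lean on are that an inverse-semigroupoid homomorphism preserves the canonical order (recorded just after the definition of the canonical order), and that the canonical order of any groupoid is equality. Since $\IG(\mathcal{S})=\mathcal{S}/\!\!\sim_{\leq}$ has already been shown to be an étale groupoid and $\pi_{\mathcal{S}}$ to be a continuous open surjective quotient homomorphism (by Proposition~\ref{prop:equivalecesetalequotient} applied to the topologically compatible order $\leq$), all the topological and algebraic machinery is in place.

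First I would verify that $\phi$ is constant on $\sim_{\leq}$-equivalence classes. If $a\sim_{\leq}b$ in $\mathcal{S}$, then by definition there is some $z\in\mathcal{S}$ with $z\leq a$ and $z\leq b$; applying order-preservation of $\phi$ gives $\phi(z)\leq\phi(a)$ and $\phi(z)\leq\phi(b)$ in $\mathcal{G}$, and triviality of the order on the groupoid $\mathcal{G}$ forces $\phi(a)=\phi(z)=\phi(b)$. Hence $\phi$ factors set-theoretically as $\phi=\psi\circ\pi_{\mathcal{S}}$ for a unique map $\psi\colon\IG(\mathcal{S})\to\mathcal{G}$, uniqueness being immediate from surjectivity of $\pi_{\mathcal{S}}$. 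Next I would check that $\psi$ carries the required structure: it is a semigroupoid homomorphism because $\pi_{\mathcal{S}}$ is a surjective homomorphism and $\phi$ is one (choose representatives of a composable pair and transport the product); since both $\IG(\mathcal{S})$ and $\mathcal{G}$ are inverse semigroupoids, $\psi$ automatically preserves inverses (the image of an inverse is an inverse, and inverses are unique), so $\psi$ is in fact a groupoid homomorphism. Continuity of $\psi$ is precisely the universal property of the quotient topology: because $\IG(\mathcal{S})$ carries the quotient topology induced by $\pi_{\mathcal{S}}$, the map $\psi$ is continuous if and only if $\psi\circ\pi_{\mathcal{S}}=\phi$ is, which holds by hypothesis.

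I expect essentially no obstacle here, since the substantive work has been done upstream. The one point deserving a word of care is that the conclusion ``$\psi$ is a groupoid homomorphism'' is not an extra hypothesis to be arranged but a free consequence of $\psi$ being a homomorphism between inverse semigroupoids. I would also remark that étaleness of $\mathcal{G}$ is never invoked — the argument goes through verbatim for an arbitrary topological groupoid — so the proposition is simply the specialization to $\cat{EtGr}$ of the universal property already sketched in the paragraph preceding the statement.
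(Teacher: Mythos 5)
Your proposal is correct and follows essentially the same route as the paper: the paper's own justification (in the paragraphs preceding the proposition) likewise observes that $\phi$ preserves the canonical order, that the order of a groupoid is trivial, hence $\phi$ is $\sim_{\leq}$-invariant and factors through $\pi_{\mathcal{S}}$ via the universal property of the quotient already recorded after Proposition~\ref{prop:equivalecesetalequotient}. Your closing remark that étaleness of $\G$ is never used also matches the paper, which states the universal property for arbitrary topological groupoids before specializing to $\cat{EtGr}$.
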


If $\G$ is a groupoid, then its order is trivial and the associated germ relation is the identity, hence the restriction of $\IG$ to $\cat{EtGr}$ is naturally isomorphic to the identity functor. In this manner, $\IG$ may also be regarded as a retraction from $\cat{EtIS}$ onto its full subcategory $\cat{EtGr}$.

\begin{example}
If $S$ is a discrete inverse semigroup, then $\IG(S)$ is called the \emph{maximal group homomorphic image} of $S$. See \cite[Proposition 2.1.2]{MR1724106}.
\end{example}

\begin{example}
If $\theta\colon S\curvearrowright X$ is a continuous $\land$-preaction of an inverse semigroup $S$ on a topological space $X$, then we have seen in Example \ref{ex:semidirectproductofactionisetale} the semidirect product $S\ltimes X$ is an étale inverse semigroupoid. The groupoid $\IG(S\ltimes X)$ is the \emph{groupoid of germs} of $\theta$, as considered in \cite{arxiv1804.00396}.
\end{example}

\subsubsection*{Quotients of semidirect products}

Let $(\pi,\theta)\colon\mathcal{S}\curvearrowright\mathcal{T}$ be an open, continuous $\land$-preaction, where $\mathcal{S}$ and $\mathcal{T}$ are topological inverse semigroupoids. We will now analyse how the operations of ``taking quotients'' and ``taking semidirect products'' behave with respect to each other. Abusing language, we might simply say that ``quotients and semidirect products commute''.

Suppose that $R_1$ and $R_2$ are graphed congruences on $\mathcal{S}$ and $\mathcal{T}$, respectively. We denote equivalence classes, either in $\mathcal{S}$ or $\mathcal{T}$, simply by brackets, if $a\in\mathcal{S}$ then $[a]$ denotes its $R_1$-equivalence class, and similarly in $\mathcal{T}$.

On one direction, consider the equivalence relation $R_1\times R_2$ on $\mathcal{S}\ltimes\mathcal{T}$ -- namely $(s_1,t_1)$ is $(R_1\times R_2)$-equivalent to $(s_2,t_2)$ if and only if $s_1$ is $R_1$-equivalent to $s_2$ and $t_1$ is $R_2$-equivalent to $t_2$. We wish that $R_1\times R_2$ is a congruence on $\mathcal{S}\ltimes\mathcal{T}$, and for this we need to impose conditions on the congruences $R_1$ and $R_2$.

The $(R_1\times R_2)$-class of $(a,x)\in\mathcal{S}\ltimes\mathcal{T}$ is denoted $[a,x]$.

\begin{definition}
    A graphed congruence $R$ on an inverse semigroupoid $R$ is \emph{idempotent pure} if $(a,e)\in R$ and $e\in E(\mathcal{S})$ implies $a\in\mathcal{E(\mathcal{S})}$ (i.e., the saturation of $E(\mathcal{S})$ is $E(\mathcal{S})$).
\end{definition}

The following equivalences are well-known in the case of inverse semigroups, and the proofs are easy enough, so we omit them.

\begin{proposition}\label{prop:equivalencesidempotentpure}
The following are equivalent:
\begin{enumerate}[label=(\roman*)]
    \item\label{prop:equivalencesidempotentpure1} $R$ is idempotent pure;
    \item\label{prop:equivalencesidempotentpure2} The canonical quotient map $\pi\colon\mathcal{S}\to\mathcal{S}/R$ is idempotent pure, in the sense that $\pi^{-1}(E(\mathcal{S}/R))=E(\mathcal{S})$.
    \item\label{prop:equivalencesidempotentpure3} If $(a,b)\in R$, then $a^*b\in E(\mathcal{S})$ (and also $ab^*\in E(\mathcal{S})$);
\end{enumerate}
\end{proposition}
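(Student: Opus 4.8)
The plan is to prove the three conditions equivalent by a short cycle of implications, namely $\ref{prop:equivalencesidempotentpure1}\Rightarrow\ref{prop:equivalencesidempotentpure3}\Rightarrow\ref{prop:equivalencesidempotentpure2}\Rightarrow\ref{prop:equivalencesidempotentpure1}$, mirroring the well-known argument for inverse semigroups but paying attention to the fact that every product appearing must genuinely be defined. This is exactly where the hypothesis that $R$ is \emph{graphed} does the work: if $(a,b)\in R$ then $\so(a)=\so(b)$ and $\ra(a)=\ra(b)$, so that the pairs $(a^*,b)$, $(a^*,a)$, $(a,a^*)$ all lie in $\mathcal{S}^{(2)}$ and the expressions $a^*b$, $a^*a$, $aa^*$ are legitimate. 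I will also use repeatedly the elementary facts, valid in any inverse semigroupoid, that idempotents are self-adjoint (so $e^*=e$ and $e=ee^*$ for $e\in E(\mathcal{S})$) and that $a^*aa^*=a^*$; these follow from Definition \ref{def:inversesemigroupoid} and Proposition \ref{prop:propertiesoperationsofinversesemigroupoid}. Throughout, the inclusion $E(\mathcal{S})\subseteq\pi^{-1}(E(\mathcal{S}/R))$ is automatic, since $\pi$ is a homomorphism and homomorphisms preserve idempotents.

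For $\ref{prop:equivalencesidempotentpure1}\Rightarrow\ref{prop:equivalencesidempotentpure3}$, given $(a,b)\in R$ I would left-multiply by $a^*$: since $R$ is a congruence, combining the pair $(a,b)\in R$ with the reflexive pair $(a^*,a^*)\in R$ (and $aa^*$, $a^*b$ being defined) yields $(a^*a,a^*b)\in R$. As $a^*a\in E(\mathcal{S})$, idempotent purity forces $a^*b\in E(\mathcal{S})$; the symmetric right-multiplication by $b^*$ gives $(ab^*,bb^*)\in R$, hence $ab^*\in E(\mathcal{S})$. For $\ref{prop:equivalencesidempotentpure3}\Rightarrow\ref{prop:equivalencesidempotentpure2}$ only the inclusion $\pi^{-1}(E(\mathcal{S}/R))\subseteq E(\mathcal{S})$ needs proof. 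If $[a]\in E(\mathcal{S}/R)$ then, idempotents being self-adjoint in the inverse semigroupoid $\mathcal{S}/R$, we have $[a]=[a][a]^*=[aa^*]$, so $(a,aa^*)\in R$. Applying $\ref{prop:equivalencesidempotentpure3}$ to this pair gives $a^*(aa^*)\in E(\mathcal{S})$, and $a^*(aa^*)=(a^*a)a^*=a^*$, so $a^*\in E(\mathcal{S})$. Consequently $a=(a^*)^*=a^*\in E(\mathcal{S})$, the middle equality holding because the idempotent $a^*$ is its own inverse.

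Finally, $\ref{prop:equivalencesidempotentpure2}\Rightarrow\ref{prop:equivalencesidempotentpure1}$ is immediate: if $(a,e)\in R$ with $e\in E(\mathcal{S})$ then $[a]=[e]=\pi(e)\in E(\mathcal{S}/R)$, whence $a\in\pi^{-1}(E(\mathcal{S}/R))=E(\mathcal{S})$. I do not expect a genuine obstacle here, since the content is a faithful transcription of the inverse-semigroup proof. The only points requiring care — and the reason the statement is not purely formal book-keeping — are, first, verifying at each multiplication that the relevant pair lies in $\mathcal{S}^{(2)}$, which is guaranteed precisely because $R$ is graphed, and second, confirming that the small identities $e^*=e$, $e=ee^*$ and $a^*aa^*=a^*$ remain available in the semigroupoid setting. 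As these were already established in the preceding subsections, the argument reduces to the computations sketched above.
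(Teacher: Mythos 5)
Your proof is correct, and it is exactly the argument the paper has in mind: the paper itself omits the proof of this proposition, remarking only that the equivalences are well known for inverse semigroups, and your cycle $\ref{prop:equivalencesidempotentpure1}\Rightarrow\ref{prop:equivalencesidempotentpure3}\Rightarrow\ref{prop:equivalencesidempotentpure2}\Rightarrow\ref{prop:equivalencesidempotentpure1}$ is the faithful transcription of that standard argument, with the graphed hypothesis correctly invoked to guarantee that the products $a^*b$, $ab^*$, $a^*a$, $bb^*$ are defined before the congruence property is applied. The only blemish is a typo: in the step deriving $(a^*a,a^*b)\in R$ you cite ``$aa^*$'' as one of the needed defined products, where you mean $a^*a$.
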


We therefore make two standing hypotheses on the congruence $R_1$ and $R_2$, and the action $(\pi,\theta)$.

\begin{enumerate}[label=(H\arabic*)]
\item\label{hyp:onrelationsforsemidirectproduct1} $R_1$ is idempotent pure;
\item\label{hyp:onrelationsforsemidirectproduct2} $\theta$ is an $R_2$-morphism: For all $a\in\mathcal{S}$ and $x,y\in\dom(\theta_a)$, if $(x,y)\in R_2$ then $(\theta_a(x),\theta_a(y))\in R_2$.
\end{enumerate}

\begin{lemma}\label{lem:hypmakeformulasgood}
Under hypotheses \ref{hyp:onrelationsforsemidirectproduct1} and \ref{hyp:onrelationsforsemidirectproduct2}, $(R_1\times R_2)$ is a graphed congruence on $\mathcal{S}\ltimes\mathcal{T}$. Moreover, if $[a,x]=[b,y]$ then $[\theta_a(x)]=[\theta_b(y)]$.
\end{lemma}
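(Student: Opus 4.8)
The plan is to verify the two claims separately. The statement asserts that under hypotheses \ref{hyp:onrelationsforsemidirectproduct1} and \ref{hyp:onrelationsforsemidirectproduct2}, the product relation $R_1\times R_2$ is a graphed congruence on $\mathcal{S}\ltimes\mathcal{T}$, and that whenever $[a,x]=[b,y]$ we have $[\theta_a(x)]=[\theta_b(y)]$. I would handle the second ``moreover'' claim first, since it is the conceptual heart of the lemma and its output is needed to make the congruence computation work.

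\textbf{The key equality.} Suppose $[a,x]=[b,y]$, which means $(a,b)\in R_1$ and $(x,y)\in R_2$. Since $R_1$ is idempotent pure, Proposition \ref{prop:equivalencesidempotentpure} gives that $a^*b\in E(\mathcal{S})$; in particular $\so(a)=\so(b)$ and $\ra(a)=\ra(b)$, so $a\leq b$ fails in general but $a$ and $b$ are compatible. The idea is to relate $\theta_a$ and $\theta_b$ using the element $a^*b$. Writing $e=a^*b\in E(\mathcal{S})$, I would use that $\land$-prehomomorphisms send idempotents to idempotents so that $\theta_e$ is a partial identity, and then exploit the relation between $\theta_a,\theta_b$ on the overlap of their domains. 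Concretely, on $\dom(\theta_a)\cap\dom(\theta_b)$ the maps $\theta_a$ and $\theta_b$ agree up to the $R_2$-preserving action, via formulas of the form $\theta_b(x)=\theta_a(\theta_{a^*b}(x))$ when $x\in\dom(\theta_{a^*b})$, analogous to Equation \eqref{eq:changepreactionbygreater}. The conclusion $[\theta_a(x)]=[\theta_b(y)]$ then follows by combining (i) $(x,y)\in R_2$ together with hypothesis \ref{hyp:onrelationsforsemidirectproduct2} to get $(\theta_a(x),\theta_a(y))\in R_2$, and (ii) the idempotent-pure comparison of $\theta_a(y)$ with $\theta_b(y)$, again landing in $R_2$ because $a^*b$ is idempotent and $\theta_{a^*b}$ is a partial identity whose graph lies inside $R_2$-classes.

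\textbf{The congruence claim.} For $R_1\times R_2$ to be a graphed congruence I must check it is graphed (the source and range maps of $\mathcal{S}\ltimes\mathcal{T}$ are constant on classes) and that it respects the product \eqref{eq:semidirectproduct}. The graphed condition is immediate from the formulas $\so(a,x)=\so(x)$ and $\ra(a,x)=\ra(\theta_a(x))$ in the graphed case, using that $R_2$ is graphed for the source and the ``moreover'' equality together with $R_2$ being graphed for the range. For the congruence property, given $[a_1,x_1]=[a_2,x_2]$ and $[b_1,y_1]=[b_2,y_2]$ with the product defined, I would expand both $(a_1,x_1)(b_1,y_1)$ and $(a_2,x_2)(b_2,y_2)$ using \eqref{eq:semidirectproduct}, so that the first coordinates $a_1b_1$ and $a_2b_2$ are $R_1$-equivalent (since $R_1$ is a congruence) and the second coordinates $\theta_{b_1^*}(x_1\theta_{b_1}(y_1))$ and $\theta_{b_2^*}(x_2\theta_{b_2}(y_2))$ must be shown $R_2$-equivalent. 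This last step is where hypothesis \ref{hyp:onrelationsforsemidirectproduct2} and the ``moreover'' equality are used repeatedly: $\theta_{b_i}(y_i)$ are $R_2$-equivalent by the moreover claim, the products $x_i\theta_{b_i}(y_i)$ stay $R_2$-equivalent because $R_2$ is a congruence on $\mathcal{T}$, and finally applying $\theta_{b_i^*}$ preserves $R_2$-equivalence by \ref{hyp:onrelationsforsemidirectproduct2} combined once more with the moreover claim (to pass between $\theta_{b_1^*}$ and $\theta_{b_2^*}$).

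\textbf{The main obstacle} I anticipate is the bookkeeping in the second-coordinate computation: one is simultaneously changing the acting element (from $b_1$ to $b_2$) and the argument (from $y_1$ to $y_2$), so the moreover claim must be invoked to handle the change of acting element while \ref{hyp:onrelationsforsemidirectproduct2} handles the change of argument, and these have to be interleaved correctly with the congruence property of $R_2$ on products. Getting the partial-domain issues right — ensuring every intermediate element actually lies in the relevant domain so that the $R_2$-comparisons are legitimate — is the delicate part, though it is precisely the role of the idempotent-pure hypothesis \ref{hyp:onrelationsforsemidirectproduct1} to guarantee that the domain discrepancies between $R_1$-equivalent elements are governed by idempotents and hence by partial identities that respect $R_2$.
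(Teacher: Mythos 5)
Your overall architecture matches the paper's proof: establish the ``moreover'' identity first, deduce graphedness from it, and then obtain the congruence property by applying the identity twice together with the congruence properties of $R_1$ and $R_2$ (including on inverses, so that the pair $(b_1^*,b_2^*)$ is again $R_1$-related). That half of your plan is sound. The genuine gap is in your proof of the key identity itself. In step (i) you apply hypothesis \ref{hyp:onrelationsforsemidirectproduct2} to the pair $(x,y)$ under $\theta_a$; this requires $y\in\dom(\theta_a)$, which is not given --- you only know $x\in\dom(\theta_a)$ and $y\in\dom(\theta_b)$, and for a $\land$-preaction these domains need not be comparable. Step (ii) has the same defect: comparing $\theta_a(y)$ with $\theta_b(y)$ presupposes $y\in\dom(\theta_a)\cap\dom(\theta_b)$. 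Idempotent purity of $R_1$ does make $a$ and $b$ compatible, hence $\theta_a$ and $\theta_b$ compatible and therefore equal on the intersection of their domains; but it does nothing to place $y$ (or $x$) inside that intersection, and the partial identity $\theta_{a^*b}$ has its own domain, which need not contain $y$ either. Relatedly, your formula $\theta_b(x)=\theta_a(\theta_{a^*b}(x))$ ``when $x\in\dom(\theta_{a^*b})$'' is unjustified for a mere $\land$-preaction: one has $\theta_a\circ\theta_{a^*b}\leq\theta_{aa^*b}$ and $aa^*b\leq b$, but $\theta_{aa^*b}\leq\theta_b$ can fail (this is exactly the difference between $\land$-preactions and partial actions), so that identity would itself need $x$ to lie in several domains simultaneously plus an appeal to Equation \eqref{eq:changepreactionbygreater}.

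The missing idea is the production of a common element of $\dom(\theta_a)\cap\dom(\theta_b)$ in the correct $R_2$-class, and it comes from the ideal axioms of Definition \ref{def:partialactiononsemigroupoid}, not from idempotent purity. Since $R_1$ and $R_2$ are graphed, $\so(x)=\so(y)$, $\ra(x)=\ra(y)$ and $\pi(x)=\so(a)=\so(b)=\pi(y)$, so the product $xy^*y$ is defined; by Definition \ref{def:partialactiononsemigroupoid}\ref{def:partialactiononsemigroupoid2}--\ref{def:partialactiononsemigroupoid3} (the fibers of $\pi$ are ideals of $\mathcal{T}$, and each $\dom(\theta_a)$ is an ideal of its fiber), $xy^*y\in\dom(\theta_a)\cap\dom(\theta_b)$; and since $R_2$ is a congruence, $[xy^*y]=[yy^*y]=[y]=[x]$. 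With this element in hand the computation closes: hypothesis \ref{hyp:onrelationsforsemidirectproduct2} applied to $(x,xy^*y)$ under $\theta_a$ gives $[\theta_a(x)]=[\theta_a(xy^*y)]$; compatibility of $\theta_a$ and $\theta_b$ (the actual role of \ref{hyp:onrelationsforsemidirectproduct1}) gives $\theta_a(xy^*y)=\theta_b(xy^*y)$; and \ref{hyp:onrelationsforsemidirectproduct2} applied to $(xy^*y,y)$ under $\theta_b$ gives $[\theta_b(xy^*y)]=[\theta_b(y)]$. Without this (or an equivalent) device the comparison you describe cannot even be set up, so as written the proposal does not go through.
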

\begin{proof}
    We start with the last statement. Suppose $[a,x]=[b,y]$. Then
    $[x]=[y]=[xy^*y]$, and $xy^*y$ belongs to both $\dom(\theta_a)$ and $\dom(\theta_b)$. As $R_1$ is idempotent pure, $\theta_a$ and $\theta_b$ are compatible, and thus coincide on the intersection of their domains. Since $\theta$ is an $R_2$-morphism then
    \[[\theta_a(x)]=[\theta_a(xy^*y)]=[\theta_b(xy^*y)]=[\theta_b(y)],\]
    as desired.
    
    We can now prove that the source and range maps of $\mathcal{S}\ltimes\mathcal{T}$ are $R_1\times R_2$-invariant. Assuming $[a,x]=[b,y]$, we have $[\theta_a(x)]=[\theta_b(y)]$ by the previous paragraph. As $R_2$ is graphed,
    \[\ra(a,x)=\ra(\theta_a(x))=\ra(\theta_b(y))=\ra(b,y).\]
    and $\so(a,x)=\so(x)=\so(y)=\so(b,y)$.
    
    It remains only to prove that if $[a,x]=[b,y]$, $[c,z]=[d,w]$, and the product $(a,x)(c,z)$ is defined, then $[(a,x)(c,z)]=[(b,y)(d,w)]$. More specifically, we need to prove that
    \[[ac,\theta_c^{-1}(x\theta_c(z))]=[bd,\theta_d^{-1}(y\theta_d(w))]\]
    Again using the last part of the lemma, we have $[\theta_c(z)]=[\theta_d(w)]$, so $[x\theta_c(z)]=[y\theta_d(w)]$ because $R_2$ is a congruence. Applying the last part one more time yields
    \[[\theta_c^{-1}(x\theta_c(z))]=[\theta_d^{-1}(y\theta_d(w))]\]
    and also $[ac,bd]$ because $R_1$ is a congruence.\qedhere
\end{proof}

The lemma above allows us to construct the quotient of the semidirect product, $(\mathcal{S}\ltimes\mathcal{T})/R_1\times R_2$. We now do the opposite, namely we construct a semidirect product of the quotients, $(\mathcal{S}/R_1)\ltimes(\mathcal{T}/R_2)$. First we need to describe the $\land$-preaction of $\mathcal{S}/R_1$ on $\mathcal{T}/R_2$.

The anchor map $\pi\colon\mathcal{T}\to\mathcal{S}^{(0)}$ is a homomorphism, so if $\ra(x)=\ra(y)$, then $x^*y$ is defined, so $\pi(x)=\pi(x^*)=\pi(y)$. Since $R_2$ is graphed then $\pi$ factors unique to a homomorphism, $\mathcal{T}/R_2\to\mathcal{S}^{(0)}$, which we also denote $\pi$.

We now define the action of $\mathcal{S}/R_1$ on $\mathcal{T}/R_2$, which we denote by $\Theta$. Given an $R_1$-class $\alpha\in\mathcal{S}/R_1$, consider the subsets of $\mathcal{T}/R_2$
\[D_\alpha\defeq\left\{[x]:x\in\dom(\theta_b)\text{ for some }b\in\mathcal{S}\text{ with }(b,\alpha)\in R_1\right\}.\]
and
\[R_\alpha\defeq\left\{[x]:x\in\ran(\theta_b)\text{ for some }b\in\mathcal{S}\text{ with }(b,\alpha)\in R_1\right\}.\]
Note that $R_\alpha=D_{\alpha^*}$, and that $D_\alpha$ is a subset of $\pi^{-1}(\so(\alpha))$.

By Lemma \ref{lem:hypmakeformulasgood}, we may define a map $\Theta_{\alpha}\colon D_\alpha\to R_\alpha$ by
\[\Theta_\alpha([x])=[\theta_a(x)],\qquad\text{ whenever }a\in\alpha\text{ and }x\in\dom(\theta_a).\]
We have already seen that if $a,b\in\alpha$ then $\theta_a$ and $\theta_b$ coincide on their common domain, and from this it follows that $\Theta_\alpha$ is a bijection from $D_\alpha$ to $R_\alpha$. We will prove that it is a $\land$-preaction, in the sense of Definition \ref{def:partialactiononsemigroupoid}. For this, let us make a small detour to the theory of \emph{$\lor$-prehomomorphisms}, which are dual to $\land$-prehomomorphisms.

\begin{definition}
A \emph{$\lor$-prehomomorphism} between inverse semigroupoids is a map $\theta\colon\mathcal{S}\to\mathcal{T}$ which satisfies $(\theta\times\theta)(\mathcal{S}^{(2)})\subseteq\mathcal{T}^{(2)}$ and $\theta(ab)\leq\theta(a)\theta(b)$ whenever $(a,b)\in\mathcal{S}^{(2)}$.
\end{definition}

\begin{proposition}\label{prop:invertiblelorprehomarehom}
If $\theta\colon\mathcal{S}\to\mathcal{T}$ is a $\land$-prehomomorphism, then $\theta$ preserves idempotents, the canonical order, and inverses.

If $\theta$ is invertible and $\theta^{-1}$ is also a $\lor$-prehomomorphism, then $\theta$ is an isomorphism.
\end{proposition}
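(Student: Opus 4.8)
Before outlining the proof I note that the first assertion must concern \emph{$\lor$}-prehomomorphisms, not $\land$-prehomomorphisms: Example \ref{ex:landprehomomorphismnotpartialaction} exhibits a $\land$-prehomomorphism that fails to preserve the canonical order. Reading the statement this way (consistent with its role in the present ``detour'' on $\lor$-prehomomorphisms), the plan is first to show that every $\lor$-prehomomorphism preserves idempotents, the canonical order, and inverses, and then to obtain the isomorphism statement by a two-sided inequality argument.

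For idempotents I would use the elementary fact that in any inverse semigroupoid $u\leq u^2$ forces $u=u^2$, since $u\leq u^2$ means $u=u^2u^*u=u(uu^*u)=u^2$. Thus if $e\in E(\mathcal{S})$ then $(\theta(e),\theta(e))\in\mathcal{T}^{(2)}$ and $\theta(e)=\theta(ee)\leq\theta(e)\theta(e)$, so $\theta(e)=\theta(e)^2\in E(\mathcal{T})$. Order-preservation then follows at once: if $a\leq b$ then $a=b(a^*a)$ with $a^*a\in E(\mathcal{S})$, so $\theta(a)\leq\theta(b)\theta(a^*a)\leq\theta(b)$, the last inequality by Proposition \ref{prop:propertiesoperationsofinversesemigroupoid}\ref{prop:propertiesoperationsofinversesemigroupoid3} since $\theta(a^*a)$ is idempotent and right multiplication by an idempotent decreases in the order.

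The main obstacle is inverse-preservation, because the defining inequality only produces \emph{lower} bounds: applying $\theta$ twice to $a=aa^*a$ gives $\theta(a)\leq\theta(a)\theta(a^*)\theta(a)$ but no matching upper bound, so one cannot directly certify $\theta(a^*)$ as an inverse of $\theta(a)$. The device I would use is compression. Writing $s=\theta(a)$ and $t=\theta(a^*)$, from $s\leq sts$ I multiply on the left and right by $s^*$ (Proposition \ref{prop:propertiesoperationsofinversesemigroupoid}\ref{prop:propertiesoperationsofinversesemigroupoid5}) to get $s^*=s^*ss^*\leq(s^*s)\,t\,(ss^*)\leq t$, the last step because $s^*s$ and $ss^*$ are idempotents and multiplication by idempotents decreases in the order (Proposition \ref{prop:propertiesoperationsofinversesemigroupoid}\ref{prop:propertiesoperationsofinversesemigroupoid3}). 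Hence $\theta(a)^*\leq\theta(a^*)$ for all $a$. Applying this to $a^*$ in place of $a$ gives $\theta(a^*)^*\leq\theta(a)$, and taking inverses (which preserve the order, Proposition \ref{prop:propertiesoperationsofinversesemigroupoid}\ref{prop:propertiesoperationsofinversesemigroupoid34}) yields $\theta(a^*)\leq\theta(a)^*$; antisymmetry gives $\theta(a^*)=\theta(a)^*$.

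For the isomorphism statement, assume $\theta$ is bijective with $\theta^{-1}$ also a $\lor$-prehomomorphism. The equivalence $(a,b)\in\mathcal{S}^{(2)}\iff(\theta(a),\theta(b))\in\mathcal{T}^{(2)}$ is immediate from the domain condition applied to $\theta$ (forward) and to $\theta^{-1}$ (backward). For multiplicativity, $\theta(ab)\leq\theta(a)\theta(b)$ holds by definition, while applying the $\lor$-inequality for $\theta^{-1}$ to the pair $(\theta(a),\theta(b))$ gives $\theta^{-1}(\theta(a)\theta(b))\leq ab$; applying $\theta$, which preserves the order by the first part, yields $\theta(a)\theta(b)\leq\theta(ab)$. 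Thus $\theta(ab)=\theta(a)\theta(b)$, and together with $\theta(a^*)=\theta(a)^*$ this exhibits $\theta$ as an isomorphism of inverse semigroupoids. I expect the compression step giving inverse-preservation to be the only genuinely delicate point; everything else reduces to routine manipulation of the canonical order.
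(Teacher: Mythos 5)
Your reading of the statement is right: as literally written with ``$\land$'' the first sentence is false --- Example \ref{ex:landprehomomorphismnotpartialaction} in the paper itself is a $\land$-prehomomorphism that fails to preserve the canonical order --- and the intended hypothesis is ``$\lor$-prehomomorphism'', consistent with the surrounding detour, with the citation to \cite[Proposition 3.1.5]{MR1694900}, and with how the proposition is applied afterwards to $\Theta_\alpha$. With that reading, your proof is correct and follows essentially the paper's route. For the isomorphism statement the paper runs the identical two-sided squeeze, only on the other side of $\theta$: it traps $\theta^{-1}(\theta(a)\theta(b))$ between $ab$ and $ab$, whereas you trap $\theta(a)\theta(b)$ between $\theta(ab)$ and $\theta(ab)$; the ingredients (the $\lor$-inequality for one map, order-preservation for the other) are the same. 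For the first paragraph the paper gives no argument at all, deferring to the inverse-semigroup case in \cite[Proposition 3.1.5]{MR1694900}; your compression argument $s^*\leq (s^*s)\,t\,(ss^*)\leq t$ is exactly that standard argument, correctly transplanted to semigroupoids --- in particular, checking composability of the relevant products via the domain condition $(\theta\times\theta)(\mathcal{S}^{(2)})\subseteq\mathcal{T}^{(2)}$ is the only point where the semigroupoid setting adds anything, and you handle it, with no circularity in the order idempotents $\to$ order $\to$ inverses.
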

\begin{proof}
    The statements in the first paragraph may be proven just as in the case of inverse semigroups; see \cite[Proposition 3.1.5]{MR1694900}. Let us prove the last statement. Suppose $\theta$ is invertible and $\theta^{-1}$ is also a $\lor$-prehomomorphism.
    
    Given $(a,b)\in\mathcal{S}^{(2)}$, we have
    \[\theta(ab)\leq\theta(a)\theta(b).\]
    Since $\theta^{-1}$ is a $\lor$-prehomomorphism, and in particular it preserves the order,
    \[ab\leq\theta^{-1}(\theta(a)\theta(b))\leq\theta^{-1}(\theta(a))\theta^{-1}(\theta(b))=ab\]
    so $ab=\theta^{-1}(\theta(a)\theta(b))$, or equivalently, $\theta(ab)=\theta(a)\theta(b)$.\qedhere
\end{proof}

\begin{proposition}
If $(\pi,\theta)$ is a $\land$-preaction then $(\pi,\Theta)$ is a $\land$-preaction.
\end{proposition}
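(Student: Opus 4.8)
The plan is to realise $\Theta$ as the map induced by $\theta$ through the quotient map $q\colon\mathcal{T}\to\mathcal{T}/R_2$, and then to transport each defining property of a $\land$-preaction (Definitions~\ref{def:dualprehomomorphismandpartialhomomorphism} and \ref{def:partialactiononsemigroupoid}) from $\theta$ to $\Theta$ along $q$. First I would record the intertwining identity $\Theta_\alpha\circ q=q\circ\theta_a$ on $\dom(\theta_a)$ for every $a\in\alpha$, which is just the defining formula for $\Theta_\alpha$; well-definedness of $\Theta_\alpha$ is already Lemma~\ref{lem:hypmakeformulasgood}. Since \ref{hyp:onrelationsforsemidirectproduct1} makes any two $a,a'\in\alpha$ compatible (Proposition~\ref{prop:equivalencesidempotentpure}), the maps $\{\theta_a:a\in\alpha\}$ are pairwise compatible, so their join $\widehat\theta_\alpha=\bigvee_{a\in\alpha}\theta_a$ is a well-defined partial bijection of $\mathcal{T}$ with domain $\widehat{D}_\alpha=\bigcup_{a\in\alpha}\dom(\theta_a)$, and $\Theta_\alpha$ is precisely the partial bijection of $\mathcal{T}/R_2$ whose graph is the $(q\times q)$-image of the graph of $\widehat\theta_\alpha$; in particular $D_\alpha=q(\widehat D_\alpha)$ and $R_\alpha=q(\ran\widehat\theta_\alpha)$.

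With this dictionary in place, the structural conditions are immediate. The identity $\theta_{a^*}=\theta_a^{-1}$ gives $\Theta_{\alpha^*}=\Theta_\alpha^{-1}$, i.e.\ condition~\ref{def:dualprehomomorphismandpartialhomomorphism1}, and the vertex map is the identity because $\so(\alpha)=\so(a)$ and $\theta^{(0)}=\id_{\mathcal{S}^{(0)}}$. Since $q$ is a surjective semigroupoid homomorphism it carries ideals to ideals, so $\pi^{-1}(x)=q(\pi_{\mathcal{T}}^{-1}(x))$ is an ideal of $\mathcal{T}/R_2$ (condition~\ref{def:partialactiononsemigroupoid2}), and $D_\alpha=q(\widehat D_\alpha)$, a $q$-image of a union of ideals of $\pi_{\mathcal{T}}^{-1}(\so\alpha)$, is an ideal of $\pi^{-1}(\so\alpha)$ (condition~\ref{def:partialactiononsemigroupoid3}); moreover each $\Theta_\alpha$, intertwining with the semigroupoid isomorphism $\widehat\theta_\alpha$ through the homomorphism $q$, is itself a semigroupoid isomorphism (condition~\ref{def:partialactiononsemigroupoid4}).

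The only substantial point is the $\land$-prehomomorphism inequality~\ref{def:dualprehomomorphismandpartialhomomorphism2}, namely $\Theta_\alpha\Theta_\beta\le\Theta_{\alpha\beta}$ in $\mathcal{I}(\pi)$ for composable $\alpha,\beta$. Fixing $a\in\alpha$, $b\in\beta$ (the product $ab\in\alpha\beta$ being automatic since $R_1$ is graphed), the forward estimate is routine: for $y\in\dom(\theta_a\circ\theta_b)\subseteq\dom(\theta_{ab})$ the intertwining and $\theta_a\theta_b\le\theta_{ab}$ give $\Theta_\alpha(\Theta_\beta([y]))=[\theta_a\theta_b(y)]=[\theta_{ab}(y)]=\Theta_{\alpha\beta}([y])$. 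I expect the main obstacle to be the reverse domain inclusion $\dom(\Theta_\alpha\Theta_\beta)\subseteq\dom(\Theta_{\alpha\beta})$: because $D_\alpha$ and $R_\beta$ are defined up to $R_2$-classes while the ideals $\dom(\theta_a)$ and $\ran(\theta_b)$ need not be $R_2$-saturated, a point $[y]$ of $\dom(\Theta_\alpha\Theta_\beta)$ carries two a priori distinct representatives of the ``middle'' class $\Theta_\beta([y])$, one in $\ran(\theta_b)$ and one in $\dom(\theta_a)$.

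To overcome this I would reconcile the two representatives by an idempotent adjustment. Writing $w'=\theta_b(y_1)\in\ran(\theta_b)$ for a representative $y_1\in\dom(\theta_b)$ of $[y]$, and $w''\in\dom(\theta_a)$ with $w'\mathrel{R_2}w''$, the element $\mu:=w'\,(w''^{*}w'')$ is defined (both factors share source and range since $R_2$ is graphed) and lies in $\ran(\theta_b)\cap\dom(\theta_a)$, using that each of these is an ideal of the fibre $\pi_{\mathcal{T}}^{-1}(\so\alpha)$, while still $\mu\mathrel{R_2}w'$ as $R_2$ is a congruence. Setting $x^{\dagger}:=\theta_{b^*}(\mu)\in\dom(\theta_b)$, hypothesis~\ref{hyp:onrelationsforsemidirectproduct2} applied to $b^{*}$ gives $x^{\dagger}\mathrel{R_2}y_1$, and $\theta_b(x^{\dagger})=\mu\in\dom(\theta_a)$ shows $x^{\dagger}\in\dom(\theta_a\circ\theta_b)\subseteq\dom(\theta_{ab})$; hence $[y]=[x^{\dagger}]\in D_{\alpha\beta}=\dom(\Theta_{\alpha\beta})$, and \ref{hyp:onrelationsforsemidirectproduct2} applied to $a$ (together with the forward computation) shows that the two values agree. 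This establishes $\Theta_\alpha\Theta_\beta\le\Theta_{\alpha\beta}$ and completes the verification that $(\pi,\Theta)$ is a $\land$-preaction.
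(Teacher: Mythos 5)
Your verification of the $\land$-prehomomorphism inequality \ref{def:dualprehomomorphismandpartialhomomorphism}\ref{def:dualprehomomorphismandpartialhomomorphism2} is essentially the paper's own argument: your idempotent adjustment $\mu=w'(w''^{*}w'')$ is exactly the paper's element $\theta_b(x)y^*y$, and pulling it back through $\theta_{b^*}$ to obtain $x^{\dagger}$ is exactly the paper's $z=\theta_{b^*}(\theta_b(x)y^*y)$, with hypothesis \ref{hyp:onrelationsforsemidirectproduct2} used in the same places. That part of your proposal is correct.

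The genuine gap is in the claim that "the structural conditions are immediate", specifically that each $\Theta_\alpha$ is a semigroupoid isomorphism (condition \ref{def:partialactiononsemigroupoid}\ref{def:partialactiononsemigroupoid4}) because it intertwines, through $q$, with "the semigroupoid isomorphism $\widehat\theta_\alpha=\bigvee_{a\in\alpha}\theta_a$". Compatibility of the family $\left\{\theta_a:a\in\alpha\right\}$ does make $\widehat\theta_\alpha$ a well-defined partial \emph{bijection}, but its \emph{multiplicativity} is not immediate, and you never prove it. If $x\in\dom(\theta_a)$ and $y\in\dom(\theta_{a'})$ with $a,a'\in\alpha$ and $xy$ defined, then $y$ need not belong to $\dom(\theta_a)$, so one cannot split $\theta_a(xy)$ as $\theta_a(x)\theta_a(y)$; a direct computation using compatibility and the ideal properties only yields
\[\widehat\theta_\alpha(xy)=\theta_a(xy)=\theta_a(xyy^*)\,\theta_{a'}(y)\leq\theta_a(x)\,\theta_{a'}(y)=\widehat\theta_\alpha(x)\,\widehat\theta_\alpha(y),\]
i.e., that $\widehat\theta_\alpha$ is a $\lor$-prehomomorphism. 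Upgrading this inequality to an equality is precisely where the paper invests its effort: it shows that $\Theta_\alpha$ preserves idempotents, that $\Theta_\alpha$ and $\Theta_\alpha^{-1}=\Theta_{\alpha^*}$ are both $\lor$-prehomomorphisms, and then invokes Proposition \ref{prop:invertiblelorprehomarehom} (an invertible $\lor$-prehomomorphism whose inverse is also a $\lor$-prehomomorphism is an isomorphism). Your argument needs the same lemma, applied either to $\widehat\theta_\alpha$ or to $\Theta_\alpha$ directly; without it, the assertion that $\widehat\theta_\alpha$, and hence $\Theta_\alpha$, is an isomorphism between ideals is unsupported, and this is the crux of the verification rather than a routine transport along $q$.
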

\begin{proof}
    We simply need to verify that if $(\pi,\theta)$ satisfies any of properties \ref{def:dualprehomomorphismandpartialhomomorphism}\ref{def:dualprehomomorphismandpartialhomomorphism1}, \ref{def:dualprehomomorphismandpartialhomomorphism2} or \ref{def:dualprehomomorphismandpartialhomomorphism3}, then $(\pi,\Theta)$ satisfies the same property.
    
    The work for \ref{def:dualprehomomorphismandpartialhomomorphism}\ref{def:dualprehomomorphismandpartialhomomorphism1} is straightforward, i.e., if $\theta_a^{-1}=\theta_{a^*}$ for all $a\in\mathcal{S}$, then $\Theta_\alpha^{-1}=\Theta_{\alpha^*}$.
    
    Assume then that $(\pi,\theta)$ satisfies \ref{def:dualprehomomorphismandpartialhomomorphism}\ref{def:dualprehomomorphismandpartialhomomorphism2}: For all $(a,b)\in\mathcal{S}^{(2)}$, $\theta_a\circ\theta_b\leq\theta_{ab}$.
    
    Suppose $(\alpha,\beta)\in(\mathcal{S}/R_1)^{(2)}$ and $\gamma\in\dom(\Theta_\alpha\circ\Theta_\beta)$. Then we can find $b\in\beta$ and $x\in\gamma\cap\dom(\theta_b)$ such that $\Theta_\beta(\gamma)=[\theta_b(x)]$, and this belongs to $\dom(\Theta_\alpha)$. Then, find $a\in\alpha$ and $y\in\dom(\theta_a)$ such that $[\theta_b(x)]=[y]$ and $\Theta_\alpha(\Theta_\beta(\gamma))=[\theta_a(y)]$.
    
    We have $[y]=[\theta_b(x)]=[\theta_b(x)y^*y]$, and the element $\theta_b(x)y^*y$ belongs to both $\ran(\theta_b)$ and $\dom(\theta_a)$. Then we can consider $z\in\dom(\theta_b)$ (namely, $z=\theta_{b^*}(\theta_b(x)y^*y)$) such that $\theta_b(x)y^*y=\theta_b(z)$. Moreover, as $[\theta_b(x)]=[\theta_b(z)]$ then $[x]=[z]$, because $\theta$ is an $R_2$-morphism. Then $z\in\dom(\theta_a\circ\theta_b)$, so $\gamma=[z]\in\dom(\Theta_\alpha\circ\Theta_\beta)$, and
    \[\Theta_\alpha(\Theta_\beta(\gamma))=\Theta_\alpha([\theta_b(z)])=[\theta_a(\theta_b(z))]=[\theta_{ab}(z)]=\Theta_{\alpha\beta}(\gamma).\]
    This proves that $\Theta_\alpha\circ\Theta_\beta\leq\Theta_{\alpha\beta}$, as we wanted.
    
    It remain only to prove that each map $\Theta_\alpha$ is an isomorphism between ideals of $\mathcal{T}/R_2$. It should be clear that $D_\alpha$ is an ideal for each $\alpha\in\mathcal{S}/R_1$. Moreover, recall that as $R_2$ is graphed, then a product $[x][y]$ is defined in $\mathcal{T}/R_2$ if and only if $xy$ is defined in $\mathcal{T}$, in which case $[x][y]=[xy]$.
    
    Let $\alpha\in\mathcal{S}/R_1$. First we prove that $\Theta_\alpha$ preserves idempotents. Indeed, if $\gamma\in\dom(\Theta_\alpha)$ is idempotent, take $a\in\alpha$ and $x\in\gamma\cap\dom(\theta_a)$. As $[x]=\gamma=\gamma^*\gamma=[x^*x]$, we may assume that $x\in E(\mathcal{S})$, so $\theta_a(x)\in\mathcal{E}(\mathcal{T})$, because $\theta_a$ is an isomorphism, and thus $\Theta_\alpha(\gamma)=[\theta_a(x)]\in E(\mathcal{T}/R_2)$.
    
    We may now prove that $\Theta_{\alpha}$ is a $\lor$-prehomomorphism. Suppose that $\gamma,\delta\in\dom(\Theta_\alpha)$. Then are $a,b\in\alpha$ and $x\in\gamma\cap\dom(\theta_a)$ and $y\in\delta\cap\dom(\theta_b)$. If $\gamma\delta$ is defined and it also belongs to $\dom(\Theta_\alpha)$, so there is $c\in\alpha$ and $z\in(\gamma\delta)\cap\dom(\theta_c)$. We then apply Lemma \ref{lem:hypmakeformulasgood} several times, which allows us to compute
    \begin{align*}
        \Theta_\alpha(\gamma\delta)&=[\theta_c(z)]=[\theta_a(xy)]=[\theta_a(x)][\theta_a(x^*xy)]=\Theta_\alpha(\gamma)[\theta_b(x^*xy)]=\Theta_\alpha(\gamma)[\theta_b(x^*xyy^*)][\theta_b(y)]\\
        &=\Theta_\alpha(\gamma)\Theta_\alpha([x^*xyy^*])\Theta_\alpha(\delta).
    \end{align*}
    Since $\Theta_\alpha$ preserves idempotents, then $\Theta_\alpha(\gamma\delta)\leq\Theta_\alpha(\gamma)\Theta_\alpha(\delta)$, so $\Theta_\alpha$ is a $\lor$-prehomomorphism. The same holds for $\Theta_{\alpha^*}=\Theta_\alpha^{-1}$, so by Proposition \ref{prop:invertiblelorprehomarehom}, $\Theta_\alpha$ is an isomorphism.\qedhere.
\end{proof}

\begin{remark}
    The analogous result to the proposition above for partial actions is not true. The following example was considered in \cite[Example 4.5]{Mikola2017}. Let $E=\left\{0,a,b\right\}$ be the semilattice with $x\leq y$ if and only if $x=0$ or $x=y$, acting on the lattice $L_2=\left\{0,1\right\}$, where $0<1$, as follows: $\theta_0$ and $\theta_a$ are the identity of $\left\{0\right\}$ and $\theta_b=\id_{L_2}$. Then, in fact, $\theta$ is a global action.
    
    We let $R_1$ be the congruence which identifies $0$ and $b$ in $E$, and $R_2$ the identity of $L_2$. Again denoting by $\Theta$ the $\land$-preaction of $E/R_1$ on $L_2/R_2\cong T$, we have $[0]\leq[a]$, but $\Theta_{[0]}=\theta_b$, $\Theta_{[a]}=\theta_a$, and $\theta_a<\theta_b$.
\end{remark}

We will now consider the topological setting. The two standing hypotheses \ref{hyp:onrelationsforsemidirectproduct1} and \ref{hyp:onrelationsforsemidirectproduct2} are still maintained.

\begin{lemma}\label{lem:topologicalpropertiesofacctionpasstoquotients}
Suppose that $\mathcal{S}$ is étale, $\mathcal{T}$ is topological, $(\pi,\theta)$ is a continuous open $\land$-preaction, and $R_1$ and $R_2$ are open. Let $\pi_1\colon\mathcal{S}\to\mathcal{S}/R_1$, $\pi_2\colon\mathcal{T}\to\mathcal{T}/R_2$ be the canonical quotient maps. Then
\begin{enumerate}[label=(\alph*)]
    \item\label{lem:topologicalpropertiesofacctionpasstoquotients1} $\pi_1\times\pi_2\colon\mathcal{S}\ltimes\mathcal{S}\to(\mathcal{S}/R_1)\ltimes(\mathcal{T}/R_2)$ is surjective, continuous and open. In particular, $(\mathcal{S}/R_1)\ltimes(\mathcal{T}/R_2)$ carries the quotient topology of $\pi_1\times\pi_2$.
    \item\label{lem:topologicalpropertiesofacctionpasstoquotients2} $R_1\times R_2$ is open.
    \item\label{lem:topologicalpropertiesofacctionpasstoquotients3} $(\pi,\Theta)$ is open and continuous.
\end{enumerate}
\end{lemma}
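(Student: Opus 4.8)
The plan is to deduce parts \ref{lem:topologicalpropertiesofacctionpasstoquotients2} and \ref{lem:topologicalpropertiesofacctionpasstoquotients3} formally from part \ref{lem:topologicalpropertiesofacctionpasstoquotients1}, so the real work lies in showing that the restriction of $\pi_1\times\pi_2$ to $\mathcal{S}\ltimes\mathcal{T}$ is a continuous open surjection onto $(\mathcal{S}/R_1)\ltimes(\mathcal{T}/R_2)$. Surjectivity is immediate: any $(\alpha,\gamma)$ in the codomain has $\gamma\in D_\alpha$, so $\gamma=[x]$ for some $x\in\dom(\theta_b)$ with $(b,\alpha)\in R_1$, and then $(b,x)\in\mathcal{S}\ltimes\mathcal{T}$ is sent to $(\alpha,\gamma)$. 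Continuity is also immediate, since $\pi_1\times\pi_2$ is continuous on $\mathcal{S}\times\mathcal{T}$ and both the domain and codomain carry subspace topologies. Thus the crux — and the step I expect to be the main obstacle — is \emph{openness}.

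For openness I would use that, as $\mathcal{S}$ is étale, the sets $A\ast U$ with $A\in\mathbf{B}(\mathcal{S})$ an open bisection and $U\subseteq\mathcal{T}$ open form a basis of $\mathcal{S}\ltimes\mathcal{T}$, so it suffices to show each $(\pi_1\times\pi_2)(A\ast U)$ is open. The identity to establish is
\[(\pi_1\times\pi_2)(A\ast U)=\bigl(\pi_1(A)\times\pi_2(p(A\ast U))\bigr)\cap\bigl((\mathcal{S}/R_1)\ltimes(\mathcal{T}/R_2)\bigr),\]
whose right-hand side is open in the subspace topology because $\pi_1(A)$ is open in $\mathcal{S}/R_1$ (as $R_1$ is open), while $p(A\ast U)$ is open in $\mathcal{T}$ by Proposition \ref{prop:preactionisopeniffprojectionisopen} (since $(\pi,\theta)$ is open), hence $\pi_2(p(A\ast U))$ is open (as $R_2$ is open). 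The inclusion $\subseteq$ is routine. For $\supseteq$, given $(\beta,\delta)$ in the right-hand side, I would write $\delta=[y]$ with $y\in U\cap\dom(\theta_{a'})$ for some $a'\in A$; membership $\delta\in D_\beta$ forces $\pi(\delta)=\so(\beta)$, while $y\in\dom(\theta_{a'})$ gives $\pi(\delta)=\pi(y)=\so(a')=\so([a'])$. The essential point is that $\pi_1(A)$ is an open \emph{bisection} of $\mathcal{S}/R_1$ by Proposition \ref{prop:equivalecesetalequotient}, so $\so$ is injective on it; since $\beta$ and $[a']$ both lie in $\pi_1(A)$ with the same source, $\beta=[a']$, whence $(\beta,\delta)=([a'],[y])=(\pi_1\times\pi_2)(a',y)$ lies in the image. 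This bisection argument, recovering the $\mathcal{S}/R_1$-coordinate from the anchor value, is the heart of the proof; everything else is bookkeeping.

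With part \ref{lem:topologicalpropertiesofacctionpasstoquotients1} in hand, a continuous open surjection endows its codomain with the quotient topology, and the remaining parts follow formally. For \ref{lem:topologicalpropertiesofacctionpasstoquotients2}, the $(R_1\times R_2)$-saturation of an open $W\subseteq\mathcal{S}\ltimes\mathcal{T}$ equals $(\pi_1\times\pi_2)^{-1}\bigl((\pi_1\times\pi_2)(W)\bigr)$, which is open because $\pi_1\times\pi_2$ is both continuous and open. For \ref{lem:topologicalpropertiesofacctionpasstoquotients3}, I would exploit the commuting squares
\[P\circ(\pi_1\times\pi_2)=\pi_2\circ p\qquad\text{and}\qquad\Theta\circ(\pi_1\times\pi_2)=\pi_2\circ\theta,\]
where $P(\alpha,\gamma)=\gamma$ and $\Theta$ is the action map of $(\pi,\Theta)$. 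Both right-hand sides are continuous and open (composites of continuous open maps), and since $\pi_1\times\pi_2$ realizes $(\mathcal{S}/R_1)\ltimes(\mathcal{T}/R_2)$ as a quotient, it follows that $P$ and $\Theta$ are continuous and open; continuity of the anchor on $\mathcal{T}/R_2$ is analogous via $\pi\circ\pi_2=\pi$. By the definition of open continuous $\land$-preaction together with Proposition \ref{prop:preactionisopeniffprojectionisopen}, this yields that $(\pi,\Theta)$ is open and continuous.
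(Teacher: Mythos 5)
Your proposal is correct and takes essentially the same route as the paper: the identical key identity $(\pi_1\times\pi_2)(A\ast U)=\pi_1(A)\ast\pi_2\bigl(p(A\ast U)\bigr)$ established by the anchor-map/bisection argument, the same kernel--saturation argument for part (b), and the same commutative square $\Theta\circ(\pi_1\times\pi_2)=\pi_2\circ\theta$ together with the quotient-map property for part (c). The only cosmetic difference is that you apply source-injectivity to the bisection $\pi_1(A)$ in $\mathcal{S}/R_1$ while the paper applies it to $A$ in $\mathcal{S}$; since $\so_{\mathcal{S}/R_1}\circ\pi_1=\so_{\mathcal{S}}$, these are the same argument.
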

\begin{proof}
    Consider the map $p\colon\mathcal{S}\ltimes\mathcal{T}\to\mathcal{T}$, $p(a,x)=x$, which we know to be a from Proposition \ref{prop:preactionisopeniffprojectionisopen}.
    \begin{enumerate}[label=\ref{lem:topologicalpropertiesofacctionpasstoquotients\arabic*}]
    \item Surjectivity and continuity of $\pi_1\times\pi_2$ should be clear, so we prove that it is open. A basic open set of $\mathcal{S}\ltimes\mathcal{T}$ has the form $A\ast U$, where $A$ is an open bisection of $\mathcal{S}$ and $U$ is open in $\mathcal{T}$. Let us prove that
    \[(\pi_1\times\pi_2)(A\ast U)=\pi_1(A)\ast\pi_2(p(A\ast U)).\ntag\label{eq:proofofisomorphismofquotientandsemidirectproduct}\]
    Indeed, the inclusion ``$\subseteq$'' is immediate. Conversely, an element of the right-hand side may be writtens in the form $([a],[x])$, where $x\in\dom(\theta_b)$ for some $b\in A$, and also in the form $([c],[z])$ where $z\in\dom(\theta_c)$. As $R_2$ is graphed then $\ra(x)=\ra(z)$, so $\pi(x)=\pi(z)$. As $R_1$ is also graphed, then
    \[\so(a)=\so(c)=\pi(z)=\pi(x)=\so(b)\]
    and therefore $a=b$ as $A$ is a bisection. Therefore $(a,x)\in\mathcal{S}\ltimes \mathcal{T}$ and $([a],[x])=(\pi_1\times\pi_2)(a,x)$.
    
    Since $\pi_1$, $\pi_2$ and $p$ are open maps then $(\pi_1\times\pi_2)$ is also open.
    \item $R_1\times R_2$ is the kernel of $\pi_1\times\pi_2$, so the $R_1\times R_2$-saturation of an open subset $U\subseteq\mathcal{S}\times\mathcal{T}$ is $(\pi_1\times\pi_2)^{-1}(\pi_1\times\pi_2(U))$, which is open by the previous item.
    \item We have the commutative diagram
        \[\begin{tikzpicture}
        \node (ST) at (0,0) {$\mathcal{S}\ltimes\mathcal{T}$};
        \node (T) at ([shift={+(3,0)}]ST) {$\mathcal{T}$};
        \node (SRTR) at ([shift={+(0,-1)}]ST) {$(\mathcal{S}/R_1)\ltimes(\mathcal{T}/R_2)$};
        \node (TR) at ([shift={+(0,-1)}]T) {$\mathcal{T})/R_2$};
        \draw[->] (ST)--(T) node[midway,above] {$\theta$};
        \draw[->] (ST)--(SRTR) node[midway,left] {$\pi_1\times\pi_2$};
        \draw[->] (T)--(TR) node[midway,right] {$\pi_2$};
        \draw[->] (SRTR)--(TR) node[midway,above] {$\Theta$};
    \end{tikzpicture}\]
    where the horizontal arrows are the action maps and the vertical ones are topological quotient maps. Since $\theta$ is continuous and open then $\Theta$ is continuous and open.\qedhere
    \end{enumerate}
\end{proof}

\begin{theorem}
Assuming hypotheses \ref{hyp:onrelationsforsemidirectproduct1} and \ref{hyp:onrelationsforsemidirectproduct2}, that $\mathcal{S}$ is étale, $\mathcal{T}$ is topological, $(\pi,\theta)$ is a continuous open $\land$-preaction, and that $R_1$ and $R_2$ are open, the map
\[\Phi\colon(\mathcal{S}\ltimes\mathcal{T})/(R_1\times R_2)\to(\mathcal{S}/R_1)\ltimes(\mathcal{T}/R_2),\qquad\Phi([a,x])=([a],[x])\]
is a topological semigroupoid isomorphism.
\end{theorem}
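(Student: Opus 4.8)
The plan is to verify that $\Phi$ is a well-defined bijective semigroupoid homomorphism and then a homeomorphism, handling the algebraic and topological parts separately. First I would establish that $\Phi$ is well-defined and injective: by definition of the congruence $R_1\times R_2$ on $\mathcal{S}\ltimes\mathcal{T}$, the assignment $[a,x]\mapsto([a],[x])$ is well-defined precisely because $(a,x)\,(R_1\times R_2)\,(b,y)$ means $(a,b)\in R_1$ and $(x,y)\in R_2$, i.e.\ $[a]=[b]$ and $[x]=[y]$; and this same equivalence shows injectivity. Surjectivity is immediate, since any element $([a],[x])$ of $(\mathcal{S}/R_1)\ltimes(\mathcal{T}/R_2)$ has $[x]\in D_{[a]}$, so by definition of $D_{[a]}$ there exist $b\in[a]$ and $x'\in\dom(\theta_b)$ with $[x']=[x]$, and then $(b,x')\in\mathcal{S}\ltimes\mathcal{T}$ maps to $([b],[x'])=([a],[x])$.

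Next I would check that $\Phi$ is a homomorphism. Both sides carry products induced from the semidirect-product formula \eqref{eq:semidirectproduct}, and on the domain quotient the product is computed by choosing representatives and applying the quotient-semigroupoid structure. Using that $[x][y]$ is defined in $\mathcal{T}/R_2$ iff $xy$ is defined in $\mathcal{T}$ (as $R_2$ is graphed), and that $\Theta_{[c]}([z])=[\theta_c(z)]$, the computation
\[
\Phi\bigl([a,x][c,z]\bigr)=\Phi\bigl([ac,\theta_c^{-1}(x\theta_c(z))]\bigr)=\bigl([ac],[\theta_c^{-1}(x\theta_c(z))]\bigr)
\]
should be matched against $([a],[x])([c],[z])=([a][c],\Theta_{[c]}^{-1}([x]\Theta_{[c]}([z])))$, and this reduces, via Lemma~\ref{lem:hypmakeformulasgood} and the definition of $\Theta$, to the identities $[ac]=[a][c]$ and $[\theta_c^{-1}(x\theta_c(z))]=\Theta_{[c]}^{-1}([x]\Theta_{[c]}([z]))$. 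The latter is exactly the content of Lemma~\ref{lem:hypmakeformulasgood} applied to the representatives, so the products agree.

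Finally I would address the topology, which is where the two lemmas on open maps do the real work. By Lemma~\ref{lem:topologicalpropertiesofacctionpasstoquotients}\ref{lem:topologicalpropertiesofacctionpasstoquotients2}, $R_1\times R_2$ is open, so the quotient $(\mathcal{S}\ltimes\mathcal{T})/(R_1\times R_2)$ carries the quotient topology of the open surjection $q\colon\mathcal{S}\ltimes\mathcal{T}\to(\mathcal{S}\ltimes\mathcal{T})/(R_1\times R_2)$. By Lemma~\ref{lem:topologicalpropertiesofacctionpasstoquotients}\ref{lem:topologicalpropertiesofacctionpasstoquotients1}, $\pi_1\times\pi_2$ is a continuous open surjection, so $(\mathcal{S}/R_1)\ltimes(\mathcal{T}/R_2)$ carries the quotient topology of $\pi_1\times\pi_2$. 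Since $\Phi\circ q=\pi_1\times\pi_2$ as maps out of $\mathcal{S}\ltimes\mathcal{T}$, the universal property of the quotient topology gives that $\Phi$ is continuous; and because both $q$ and $\pi_1\times\pi_2$ are open surjections inducing the respective quotient topologies, $\Phi$ is also open. Being a continuous open bijection, $\Phi$ is a homeomorphism, and combined with the homomorphism property it is a topological semigroupoid isomorphism.

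The main obstacle I anticipate is not the topology — which is largely bookkeeping once the two preparatory lemmas are invoked — but making the homomorphism verification airtight: one must carefully justify that choosing arbitrary representatives on each side yields the same equivalence class, which is precisely why hypotheses \ref{hyp:onrelationsforsemidirectproduct1} and \ref{hyp:onrelationsforsemidirectproduct2} (idempotent purity of $R_1$ and $R_2$-equivariance of $\theta$) were isolated and why Lemma~\ref{lem:hypmakeformulasgood} was proved in advance. With that lemma in hand, the equality of products becomes a routine substitution, so the proof should be short.
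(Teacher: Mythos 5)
Your proposal is correct and follows the same route as the paper: the algebraic part (which the paper dismisses as immediate, resting on Lemma \ref{lem:hypmakeformulasgood} and the construction of $\Theta$) plus the topological part via Lemma \ref{lem:topologicalpropertiesofacctionpasstoquotients}, using that both $\pi$ and $\pi_1\times\pi_2$ are open topological quotient maps and that $\Phi$ factors one through the other — your relation $\Phi\circ q=\pi_1\times\pi_2$ is exactly the commutativity of the paper's diagram. The only difference is presentational: you spell out well-definedness, bijectivity, and the product computation that the paper leaves implicit.
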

\begin{proof}
    The fact that $\Phi$ is a semigroupoid isomorphism is immediate. To check that it is also a homeomorphism, consider the commutative diagram
    \[\begin{tikzpicture}
        \node (1ST) at (0,0) {$\mathcal{S}\ltimes\mathcal{T}$};
        \node (SRTR) at ([shift={+(0,-1)}]1ST) {$(\mathcal{S}/R_1)\ltimes(\mathcal{T}/R_2)$};
        \node (2ST) at ([shift={+(-4,0)}]1ST) {$\mathcal{S}\ltimes\mathcal{T}$};
        \node (STRR) at ([shift={+(0,-1)}]2ST) {$(\mathcal{S}\ltimes\mathcal{T})/(R_1\times R_2)$};
        \draw[->] (2ST)--(1ST) node[midway,above] {$\id$};
        \draw[->] (1ST)--(SRTR) node[midway,right] {$\pi_1\times\pi_2$};
        \draw[->] (2ST)--(STRR) node[midway,left] {$\pi$};
        \draw[->] (STRR)--(SRTR) node[midway,above] {$\Phi$};
    \end{tikzpicture}\]
    where $\pi_1$, $\pi_2$ and $\pi$ are canonical quotient maps. The vertical arrows are topological quotient maps, so and $\id$ is continuous and open, therefore $\Phi$ is continuous and open as well, hence a homeomorphism.\qedhere
\end{proof}

\subsection{Global actions}

In this subsection, let us consider global open actions of étale inverse semigroupoids on sets.

\begin{example}\label{ex:actiononvertexspace}
Every étale inverse semigroupoid $\mathcal{S}$ admits a canonical continuous, open action on its vertex space.

Let $\id_{\mathcal{S}^{(0)}}\colon\mathcal{S}^{(0)}\to\mathcal{S}^{(0)}$ be the trivial bundle. Define  $\tau\colon\mathcal{S}\to\mathcal{I}(\id_{\mathcal{S}^{(0)}})$ as $\tau_a(\so(a))=\ra(a)$ for all $a\in\mathcal{S}^{(0)}$ (that is, $\dom(\tau_a)=\left\{\so(a)\right\}$ and $\ran(\tau_a)=\left\{\ra(a)\right\})$.

As $\mathcal{S}$ is étale, it is easy to verify that $\mathbb{T}(\mathcal{S})\defeq(\id_{\mathcal{S}^{(0)}},\tau)$ is a continuous open action.
\end{example}

Denote by $\cat{Act}$ the category of continuous open actions of étale inverse semigroupoids on topological spaces. A morphism between actions $(\pi^i,\theta^i)\colon\mathcal{S}_i\curvearrowright X_i$ ($i=1,2$) is a pair $(f,\phi)$, where $f\colon X_1\to X_2$ is a continuous function and $\phi\colon\mathcal{S}_1\to\mathcal{S}_2$ is a continuous homomorphism, which are equivariant in the sense that for all $a\in\mathcal{S}$, $f\circ\theta^1_a\leq \theta^2_{\phi(a)}\circ f$ (i.e., $\theta^2_{\phi(a)}\circ f$ is an extension of $f\circ\theta_1^a$).

The composition of two morphisms is simply $(f_1,\phi_1)(f_2,\phi_2)=(f_1\circ f_2,\phi_1\circ\phi_2)$.

We will construct a pair of adjunct functors between $\cat{Act}$ and $\cat{EtIS}$. Let us denote by $\mathbb{T}(\mathcal{S})=(\id_{\mathcal{S}^{(0)}},\tau)$ the action constructed in Example \ref{ex:actiononvertexspace}, for an étale inverse semigroupoid $\mathcal{S}$. We already know that any continuous étale semigroupoid homomorphism $\phi\colon\mathcal{S}_1\to\mathcal{S}_2$ induces a continuous map on the vertex sets $\phi^{(0)}\colon\mathcal{S}_1^{(0)}\to\mathcal{S}_2^{(0)}$, by $\phi^{(0)}(\so(a))=\so(\phi(a))$. It is easy to see that $\mathbb{T}(\phi)\defeq(\phi^{(0)},\phi)$ is equivariant, and thus a morphism between the actions $\mathbb{T}(\mathcal{S}_1)$ and $\mathbb{T}(\mathcal{S}_2)$. We thus have a functor $\mathbb{T}\colon\cat{EtIS}\to\cat{Act}$.

In the other direction, given a morphism $(f,\phi)\colon(\pi_1,\theta_1)\to(\pi_2,\theta_2)$ of continuous open actions $(\pi_i,\theta_i)\colon\mathcal{S}_i\curvearrowright \mathcal{T}_i$, we define a continuous semigroupoid homomorphism $\ast(f,\phi)\colon\mathcal{S}_1\ltimes\mathcal{T}_1\to\mathcal{S}_2\ltimes \mathcal{T}_2$ by
\[\ast(f,\phi)(a,x)=(\phi(a),f(x)).\]
Therefore we have a functor $\ast\colon\cat{Act}\to\cat{EtIS}$, taking any action to the associated semidirect product.

\begin{proposition}
$(\mathbb{T},\ast)$ is a pair of adjoint functors. Moreover, $\ast\mathbb{T}$ is (naturally isomorphic to) the identity of $\cat{EtIS}$.
\end{proposition}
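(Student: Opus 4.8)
The plan is to treat the easy identity $\ast\mathbb{T}\cong\id_{\cat{EtIS}}$ first, as it simultaneously supplies the unit of the adjunction. Fix an étale inverse semigroupoid $\mathcal{S}$. Since $\mathbb{T}(\mathcal{S})=(\id_{\mathcal{S}^{(0)}},\tau)$ is the action with $\dom(\tau_a)=\{\so(a)\}$ and $\tau_a(\so(a))=\ra(a)$, its semidirect product is
\[\ast\mathbb{T}(\mathcal{S})=\mathcal{S}\ltimes\mathcal{S}^{(0)}=\{(a,\so(a)):a\in\mathcal{S}\},\]
since the only $x\in\mathcal{S}^{(0)}$ with $x\in\dom(\tau_a)$ is $x=\so(a)$. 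I would define $\eta_{\mathcal{S}}\colon\mathcal{S}\to\ast\mathbb{T}(\mathcal{S})$ by $\eta_{\mathcal{S}}(a)=(a,\so(a))$ and check directly, using formula \eqref{eq:semidirectproduct} together with the fact that $\mathcal{S}^{(0)}$ is a unit groupoid, that $\eta_{\mathcal{S}}(a)\eta_{\mathcal{S}}(b)=(ab,\so(ab))=\eta_{\mathcal{S}}(ab)$ whenever $ab$ is defined, and conversely that the product on the right is defined only when $\so(a)=\ra(b)$. Thus $\eta_{\mathcal{S}}$ is a semigroupoid isomorphism; it is a homeomorphism because $\eta_{\mathcal{S}}=(\id_{\mathcal{S}},\so)$ is continuous and its inverse is the (continuous) first projection. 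Naturality of $\eta$ reduces to the identity $\phi^{(0)}\circ\so=\so\circ\phi$, which is exactly the defining property of the induced vertex map. This establishes $\ast\mathbb{T}\cong\id_{\cat{EtIS}}$.

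For the adjunction $\mathbb{T}\dashv\ast$ I would exhibit a bijection
\[\operatorname{Hom}_{\cat{Act}}(\mathbb{T}(\mathcal{S}),\mathcal{A})\;\cong\;\operatorname{Hom}_{\cat{EtIS}}(\mathcal{S},\ast\mathcal{A}),\qquad\mathcal{A}=(\pi,\theta)\colon\mathcal{T}\curvearrowright X,\]
natural in $\mathcal{S}$ and $\mathcal{A}$. In the semidirect product $\ast\mathcal{A}=\mathcal{T}\ltimes X$ (with $X$ a unit groupoid) the graph structure is $\so(b,x)=x$ and $\ra(b,x)=\theta_b(x)$. Given a morphism $(f,\phi)\colon\mathbb{T}(\mathcal{S})\to\mathcal{A}$ -- so $f\colon\mathcal{S}^{(0)}\to X$ and $\phi\colon\mathcal{S}\to\mathcal{T}$ are continuous, $\phi$ is a homomorphism, and equivariance reads $f(\so(a))\in\dom(\theta_{\phi(a)})$ with $f(\ra(a))=\theta_{\phi(a)}(f(\so(a)))$ -- I would set $\Psi(a)=(\phi(a),f(\so(a)))$. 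The membership condition guarantees $\Psi(a)\in\mathcal{T}\ltimes X$, and a short computation with \eqref{eq:semidirectproduct} shows $\Psi$ is a continuous homomorphism, the equivariance identity being precisely what makes the second coordinates match when multiplying.

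The substantive direction is the inverse. Given a continuous homomorphism $\Psi\colon\mathcal{S}\to\mathcal{T}\ltimes X$, put $\phi=p_1\circ\Psi$, which is automatically a continuous homomorphism since first coordinates multiply. To recover $f$ I would invoke the fact that $\mathcal{S}$, being étale, has no sources nor sinks, and that $\mathcal{T}\ltimes X$ is again étale; hence $\Psi$ induces a unique continuous vertex map $\Psi^{(0)}\colon\mathcal{S}^{(0)}\to(\mathcal{T}\ltimes X)^{(0)}\subseteq X$ (Corollary \ref{cor:graphedhomomorphisminducesvertexmap} and its topological refinement). Writing $\Psi(a)=(\phi(a),g(a))$ and setting $f=\Psi^{(0)}$, the graph-homomorphism identities $\so\circ\Psi=\Psi^{(0)}\circ\so$ and $\ra\circ\Psi=\Psi^{(0)}\circ\ra$ translate, via $\so(b,x)=x$ and $\ra(b,x)=\theta_b(x)$, into $g(a)=f(\so(a))$ and the equivariance $f(\ra(a))=\theta_{\phi(a)}(f(\so(a)))$. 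These two assignments are mutually inverse, and naturality in both variables is a routine diagram chase once one records how $\mathbb{T}$ and $\ast$ act on morphisms.

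The main obstacle is precisely this reconstruction of the equivariant map $f$ from the bare semigroupoid homomorphism $\Psi$: a priori $\Psi$ only records an element $g(a)\in X$ per arrow, and it is not obvious that $g$ factors through $\so_{\mathcal{S}}$. The key observation that removes the difficulty is that the source map of the semidirect product $\mathcal{T}\ltimes X$ is exactly the second projection, so the induced (continuous) vertex map of $\Psi$ forces $g(a)=f(\so(a))$; the étale hypothesis on $\mathcal{S}$ is what guarantees this vertex map exists and is continuous. Everything else is bookkeeping with formula \eqref{eq:semidirectproduct} specialized to a unit groupoid.
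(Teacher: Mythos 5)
Your argument is correct, but it is organized differently from the paper's. The paper proves the statement in unit--counit form: it exhibits the counit $\epsilon_{(\pi,\theta)}=(\id_X,\,(a,x)\mapsto a)\colon\mathbb{T}(\ast(\pi,\theta))\to(\pi,\theta)$ and the unit $\eta_{\mathcal{S}}(a)=(a,\so(a))$ (the same isomorphism you construct), and then dismisses the triangle identities as ``elementary, although long, computations''. You instead prove the adjunction by building the hom-set bijection $\operatorname{Hom}_{\cat{Act}}(\mathbb{T}(\mathcal{S}),\mathcal{A})\cong\operatorname{Hom}_{\cat{EtIS}}(\mathcal{S},\ast\mathcal{A})$ directly; your forward map $a\mapsto(\phi(a),f(\so(a)))$ is exactly the adjunct supplied by the paper's unit, and your backward map is the paper's counit in disguise, since $(\Psi^{(0)},p_1\circ\Psi)=\epsilon_{\mathcal{A}}\circ\mathbb{T}(\Psi)$. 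What your route buys is that the suppressed ``long computations'' get replaced by one structural observation: because the source map of $\mathcal{T}\ltimes X$ is the second projection, and because an inverse semigroupoid has no sources or sinks, so that a continuous homomorphism out of the étale $\mathcal{S}$ admits a unique continuous vertex map (Corollary \ref{cor:graphedhomomorphisminducesvertexmap} together with its topological refinement in Section 3), the second coordinate of any continuous homomorphism $\Psi\colon\mathcal{S}\to\mathcal{T}\ltimes X$ automatically factors as $f\circ\so$ with $f$ continuous, and the range identity $\ra\circ\Psi=f\circ\ra$ translates precisely into equivariance of the pair $(f,p_1\circ\Psi)$. This isolates the only genuinely non-formal step and makes the bijectivity and naturality checks routine; the mild cost is that you must know in advance that $\mathcal{T}\ltimes X$ is an étale inverse semigroupoid (so the vertex-map corollary applies), which the paper establishes at the end of Subsection \ref{subsec:continuouslandpreactions} and which your proof correctly invokes.
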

\begin{proof}
    We first define a natural transformation $\epsilon\colon\mathbb{T}\ast\to\id_{\cat{Act}}$. Given an action $(\pi,\theta)\colon\mathcal{S}\curvearrowright\mathcal{T}$, consider the homomorphism $\epsilon^{(1)}\colon\mathcal{S}\ast\mathcal{T}\to \mathcal{S}$, $\epsilon^{(1)}(a,x)=a$. Since the vertex set $(\mathcal{S}\ltimes X)^{(0)}$ is (a subset of) $X$, let $\epsilon^{(0)}\defeq\id_X$. Then $\epsilon_{(\pi,\theta)}=(\epsilon^{(0)},\epsilon^{(1)})$ is a morphism of actions, and this defines a natural transformation $\epsilon\colon \mathbb{T}\ast\to\id_{\cat{Act}}$.
    
    In the other direction, given an inverse semigroupoid $\mathcal{S}$, consider $\eta_{\mathcal{S}}\colon\mathcal{S}\to\mathcal{S}\ast\mathcal{S}^{(0)}$ as $\eta_{\mathcal{S}}(a)=(a,\so(a))$. Then $\eta_{\mathcal{S}}$ is actually an isomorphism, and this defines a natural isomorphism $\eta\colon\id_{\cat{EtIS}}\to\ast\mathbb{T}$.
    
    Elementary, although long, computations prove that $(\epsilon,\eta)$ is an adjunction between $\mathbb{T}$ and $\ast$.\qedhere
\end{proof}

\section{Duality}\label{sec:duality}
    By making an analogy with the Non-Commutative Stone Duality of Lawson-Lenz, between certain classes of groupoids and inverse semigroups, we look at the following question: How can one recover an inverse semigroupoid from its set of bisections?

Let $\mathcal{S}$ be a discrete inverse semigroupoid. Then the set $\mathbf{B}(\mathcal{S})$ of bisections of $\mathcal{S}$ is an inverse semigroup under product of sets. However, the natural order in this semigroup does not coincide with set inclusion: Given $A,B\in\mathbf{B}(\mathcal{S})$, we have $A\leq B$ if and only if for all $a\in A$, there exists $b\in B$ with $(\ra(b),\so(b))=(\ra(a),\so(a))$ and $a\leq b$.

As a more concrete example, if $S$ is an inverse semigroup, then the non-empty bisections of $S$ are simply singleton sets, and thus set inclusion coincides with identity: $\left\{a\right\}\subseteq\left\{b\right\}$ if and only if $a=b$. However, as long as $S$ is not a group, there will be distinct elements $a\neq b$ with $a\leq b$.

Therefore, in order to recover $\mathcal{S}$ from $\mathbf{B}(\mathcal{S})$ we will need to consider information about set containment as well.

Following the approach in classical Stone duality, as well as in the initial versions of non-commutative Stone duality for groupoids, we will consider only locally compact étale semigroupoids with Hausdorff and zero-dimensional idempotent spaces.

\begin{lemma}\label{lem:ES.Hausdorff.and.S0.Hausdorff}
    Suppose that $\mathcal{S}$ is a topological inverse semigroupoid.
    \begin{enumerate}[label=(\alph*)]
        \item\label{lem:ES.Hausdorff.and.S0.Hausdorff1} If $E(\mathcal{S})$ is Hausdorff and , then $\mathcal{S}^{(0)}$ is Hausdorff as well.
        \item\label{lem:ES.Hausdorff.and.S0.Hausdorff2} If $\mathcal{S}^{(0)}$ is Hausdorff. then the product of compact subsets of $\mathcal{S}$ is compact.
    \end{enumerate}
\end{lemma}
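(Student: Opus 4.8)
The plan is to treat part \ref{lem:ES.Hausdorff.and.S0.Hausdorff2} first, since it is self-contained, and then reduce part \ref{lem:ES.Hausdorff.and.S0.Hausdorff1} to a closedness statement about the set $\mathcal{S}^{(2)}$. For \ref{lem:ES.Hausdorff.and.S0.Hausdorff2} the key observation is that $\mathcal{S}^{(2)}$ is closed in $\mathcal{S}\times\mathcal{S}$: indeed $\mathcal{S}^{(2)}=(\so\times\ra)^{-1}(\Delta)$, where $\Delta\subseteq\mathcal{S}^{(0)}\times\mathcal{S}^{(0)}$ is the diagonal, and $\Delta$ is closed precisely because $\mathcal{S}^{(0)}$ is Hausdorff; since $\so\times\ra$ is continuous, $\mathcal{S}^{(2)}$ is closed. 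Then, given compact $K,L\subseteq\mathcal{S}$, the set $(K\times L)\cap\mathcal{S}^{(2)}$ is a closed subset of the compact space $K\times L$, hence compact, and as the product map $\mu$ is continuous, $KL=\mu\bigl((K\times L)\cap\mathcal{S}^{(2)}\bigr)$ is a continuous image of a compact set, hence compact. I expect no obstacle here.

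For \ref{lem:ES.Hausdorff.and.S0.Hausdorff1} I would exploit the identification of $\mathcal{S}^{(0)}$ as a quotient of $E(\mathcal{S})$. By Lemma \ref{lem:exelinverseiscategorical} the vertex set is $\mathcal{S}^{(0)}=E(\mathcal{S})/{\sim}$, where $e\sim f$ iff $ef$ is defined, and the quotient map is $\so|_{E(\mathcal{S})}$ (recall that for idempotents $ef$ is defined iff $\so(e)=\so(f)$). Assuming, as the (elided) hypothesis of the statement must ensure, that the source map is open on $E(\mathcal{S})$ — which holds in the étale case, since $E(\mathcal{S})$ is open by Corollary \ref{cor:propertiesopeninversesemigroupoid} and $\so$ is open by Definition \ref{def:etaleinversesemigroupoid} — the map $q\defeq\so|_{E(\mathcal{S})}$ is a continuous open surjection onto $\mathcal{S}^{(0)}$, so $\mathcal{S}^{(0)}$ carries the quotient topology. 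I would then invoke the standard fact that the quotient of a space by an \emph{open} equivalence relation is Hausdorff if and only if the relation is closed in the product; thus it suffices to prove that
\[R\defeq\left\{(e,f)\in E(\mathcal{S})\times E(\mathcal{S}):ef\text{ is defined}\right\}=(E(\mathcal{S})\times E(\mathcal{S}))\cap\mathcal{S}^{(2)}\]
is closed in $E(\mathcal{S})\times E(\mathcal{S})$.

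The closedness of $R$ is the crux, and it is where Hausdorffness of $E(\mathcal{S})$ together with the additional hypothesis of the statement must genuinely be used: mere Hausdorffness of $E(\mathcal{S})$ does \emph{not} suffice, as the doubled origin — realised as a commutative idempotent étale inverse semigroupoid $\mathcal{S}=E(\mathcal{S})$ — has $E(\mathcal{S})$ Hausdorff yet $R$ not closed and $\mathcal{S}^{(0)}$ non-Hausdorff. To establish closedness I would take $(e,f)\notin R$, so that necessarily $e\neq f$ and $\so(e)\neq\so(f)$, and seek open bisections $U\ni e$, $W\ni f$ in $E(\mathcal{S})$ with $(U\times W)\cap R=\varnothing$; equivalently, separating the germs $\so(e),\so(f)$ by the sources of small bisections. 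Effecting this separation is exactly the delicate point, and I expect it to require the extra hypothesis in order to control the semilattice $\so^{-1}(\so(e))\cap E(\mathcal{S})$ of idempotents lying over a single vertex (for instance via properness or closedness of $q$, or continuity of meets), so that limits of $\sim$-related nets of idempotents remain $\sim$-related.
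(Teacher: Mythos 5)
Your proof of part (b) is correct and coincides with the paper's own: $\mathcal{S}^{(2)}$ is the preimage of the diagonal under the continuous map $\so\times\ra$, hence closed once $\mathcal{S}^{(0)}$ is Hausdorff, and $KL=\mu\bigl((K\times L)\cap\mathcal{S}^{(2)}\bigr)$ is then a continuous image of a compact set.

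Concerning part (a), there is no complete argument in the paper to measure you against: the paper's proof breaks off after choosing $e_1,e_2\in E(\mathcal{S})$ with $\so(e_i)=x_i$ and open bisections $U_i\ni e_i$, and never carries out the separation of the sources. Your diagnosis explains why it could not be carried out: the statement as printed has a dangling conjunct (``Hausdorff and ,''), and without the missing hypothesis the assertion is false. Indeed, the example the paper gives immediately after this lemma is essentially the counterexample you describe: a Hausdorff, étale, purely idempotent inverse semigroupoid $\mathcal{T}$ (so $E(\mathcal{T})=\mathcal{T}$ is Hausdorff and, by étaleness together with Corollary \ref{cor:propertiesopeninversesemigroupoid}, the source map is open on $E(\mathcal{T})$) whose vertex space is a line-with-doubled-point and hence non-Hausdorff; this also shows that adding étaleness alone would not repair (a). Your reduction --- identify $\mathcal{S}^{(0)}$ with the quotient of $E(\mathcal{S})$ by the open equivalence relation $e\sim f\iff ef$ is defined, so that Hausdorffness of $\mathcal{S}^{(0)}$ becomes closedness of $R=(E(\mathcal{S})\times E(\mathcal{S}))\cap\mathcal{S}^{(2)}$ --- is sound under those openness assumptions, and it localizes the obstruction more sharply than the paper's direct separation attempt: in the counterexample one sees a net in $R$ converging in $E(\mathcal{T})\times E(\mathcal{T})$ to a pair outside $R$, because the products along the net converge to a point deleted from the semigroupoid. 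In summary, part (b) matches the paper exactly, and for part (a) you have correctly identified a defect in the statement that the paper itself concedes via its own example, rather than leaving a gap of your own.
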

\begin{proof}
    \begin{enumerate}[label={\ref{lem:ES.Hausdorff.and.S0.Hausdorff\theenumi}}]
    \item Suppose $E(\mathcal{S})$ is Hausdorff. Given $x_1\neq x_2$ in $\mathcal{S}^{(0)}$, choose $e_1,e_2\in E(\mathcal{S})$ with $\so(e_i)=x_i$, so $e_1\neq e_2$. Take any two open bisections $U_1,U_2$ with $e_i\in U_i$. 
    \item We have $\mathcal{S}^{(2)}=\left\{(a,b)\in\mathcal{S}\colon \so(a)=\ra(b)\right\}$, which is closed in $\mathcal{S}\times\mathcal{S}$ as $\so$ and $\ra$ are continuous and $\mathcal{S}^{(0)}$ is Hausdorff.
    
    If $\mu\colon\mathcal{S}^{(2)}\to\mathcal{S}$ denotes the (continuous) product, then for any two compact subsets $K,L\subseteq\mathcal{S}$, $(K\times L)\cap\mathcal{S}^{(2)}$ is closed in the compact $K\times L$, hence compact itself. Therefore $KL=\mu((K\times L)\cap\mathcal{S}^{(2)})$ is compact.\qedhere
    \end{enumerate}
\end{proof}

\begin{example}
Let $X$ be any Hausdorfff topological space with a non-isolated point $x_0$., and let $L_2=\left\{0,1\right\}$ be the lattice with $0\leq 1$. Let $\mathcal{S}$ be the inverse semigroupoid obtained from $L_2\times X$ by identifying $(0,x)$ with $(1,x)$ whenever $x\neq x_0$. Then $\mathcal{S}=E(\mathcal{S})$ is a non-Hausdorff, étale inverse semigroupoid, and $\mathcal{S}^{(0)}=X$ is Hausdorff.

Now consider $Y=\mathcal{S}$ simply as a topological space. Let $E=\left\{0,a,b\right\}$ be the semilattice with $x\leq y\iff x=0\text{ or }x=y$, and $\mathcal{T}=E\times X\setminus\left\{(0,x_0)\right\}$, which we make an inverse semigroupoid over $Y$ as follows: If $\so(a,x_0)$ is the class of $(0,x_0)$ in $Y$, $\so(b,x_0)$ is the class of $(1,x_0)$ in $Y$, and if $x\neq x_0$, then $\so(e,x)$ is the class of $(0,x)$ in $Y$. Then the product of $E\times X$ restricted to $\mathcal{T}$ makes it a Hausdorff, étale inverse semigroupoid, with non-Hausdorff vertex space.

\[\begin{tikzpicture}[scale=0.7]
\node (SS0) at (0,0) {};
\draw[fill=black] ([shift={+(-0.1,0)}]SS0) circle (0.1);
\node (start_X) at ([shift={+(-2,-0.5)}]SS0) {};
\node (end_X) at ([shift={+(2,0)}]start_X) {};
\draw[-] (start_X)--(end_X);
\draw[fill=white] ([shift={+(-0.1,0)}]end_X) circle (0.1);
\node (Y1) at ([shift={+(0,-0.5)}]end_X) {};
\draw[fill=black] ([shift={+(-0.1,0)}]Y1) circle (0.1);
\node (start_S0) at ([shift={+(0,-2)}]start_X) {};
\node (end_S0) at ([shift={+(2,0)}]start_S0) {};
\draw[-] (start_S0)--(end_S0);
\draw[fill=black] ([shift={+(-0.1,0)}]end_S0) circle (0.1);

\node (S) at ([shift={+(-1,0)}]start_X) {$\mathcal{S}:$} ;
\node (S0) at ([shift={+(-1,0)}]start_S0) {$\mathcal{S}^{(0)}:$} ;

\node (start_a) at ([shift={+(5,0)}]SS0) {$a$};
\node (end_a) at ([shift={+(2,0)}]start_a)  {};
\draw[-] (start_a)--(end_a);
\draw[fill=black] ([shift={+(-0.1,0)}]end_a) circle (0.1);
\node (start_0) at ([shift={+(0,-0.5)}]start_a) {$0$} ;
\node (end_0) at ([shift={+(2,0)}]start_0)  {};
\draw[-] (start_0)--(end_0) ;
\draw[fill=white] ([shift={+(-0.1,0)}]end_0) circle (0.1);
\node (start_b) at ([shift={+(0,-0.5)}]start_0) {$b$} ;
\node (end_b) at ([shift={+(2,0)}]start_b)  {};
\draw[-] (start_b)--(end_b) ;
\draw[fill=black] ([shift={+(-0.1,0)}]end_b) circle (0.1);
\node (Y0) at ([shift={+(0,-1)}]end_b) {};
\draw[fill=black] ([shift={+(-0.1,0)}]Y0) circle (0.1);
\node (start_X) at ([shift={+(-2,-0.5)}]Y0) {};
\node (end_X) at ([shift={+(2,0)}]start_X) {};
\draw[-] (start_X)--(end_X);
\draw[fill=white] ([shift={+(-0.1,0)}]end_X) circle (0.1);
\node (Y1) at ([shift={+(0,-0.5)}]end_X) {};
\draw[fill=black] ([shift={+(-0.1,0)}]Y1) circle (0.1);

\node (T) at ([shift={+(-1,0)}]start_0) {$\mathcal{T}:$} ;
\node (T0) at ([shift={+(-1,0)}]start_X) {$\mathcal{T}^{(0)}:$} ;
\end{tikzpicture}
\]
\end{example}
\begin{definition}
    An étale inverse semigroupoid $\mathcal{S}$ is \emph{ample} if $\mathcal{S}^{(0)}$ is locally compact, Hausdorff and zero-dimensional. We denote by $\mathbf{KB}(\mathcal{S})$ the inverse semigroup of compact-open bisections of $\mathcal{S}$.
\end{definition}

We now axiomatize set inclusion for $\mathbf{KB}(\mathcal{S})$. For a preorder $\subseteq$ on an inverse semigroup $S$, we denote respective joins and meets by $a\cup b=\sup_{\subseteq}\left\{a,b\right\}$ and $a\cap b=\inf_{\subseteq}\left\{a,b\right\}$, whenever either exists.

\begin{definition}\label{def:sigmaorder}
A \emph{$\Sigma$-ordered} inverse semigroup is an inverse semigroup with zero $S$ equipped with a compatible order $\subseteq$ satisfying
\begin{enumerate}[label=({$\Sigma$-}\roman*)]
    \item\label{def:sigmaorder.0minimum} $0\subseteq a$ for all $a\in S$.
    \item\label{def:sigmaorder.conditionaljoins} $(S,\subseteq)$ admits \emph{conditional joins}: If $a_1,a_2\subseteq c$, then the join $a_1\cup a_2=\sup_{\subseteq}\left\{a_1,a_2\right\}$ exists.
    \item\label{def:sigmaorder.distributivity} $(S,\subseteq)$ is (finitely) \emph{distributive}: If a join $c_1\cup c_2$ exists and $a\in S$, then $a(c_1\cup c_2)=(ac_1)\cup (ac_2)$ (the latter term exists by the previous item).
    \item\label{def:sigmaorder.ESrelativecomplements} $(E(S),\subseteq)$ admits relative complements: If $e\subseteq f$ in $E(S)$, then there exists a $c\in E(S)$ such that $e\cup c=f$ and $e\cap c=0$. Such $c$ is necessarily unique (see next paragraph), and denoted by $f\setminus e$.
    \item\label{def:sigmaorder.zerojoins} If $a^*b=ab^*=0$, then the join $a\cup b$ exists.
    \item\label{def:sigmaorder.interpolation} If $t\leq a$ in $S$, then there exists $z\subseteq a$ such that $t\leq z\subseteq a$ and
    \[tx=0\iff zx=0\qquad\text{ for all }x\in S.\]
\end{enumerate}
\end{definition}

The proof that relative complements of $(E(S),\subseteq)$ are unique follows as in the classical setting by distributivity: If $e\subseteq f$ in $E(S)$ and $c_1,c_2$ are relative complements of $e$ in $f$, then $c_1e=c_1\cap e=0$ (see Lemma \ref{lem:technicalpropertiesofsigmaorder}\ref{lem:technicalpropertiesofsigmaorder3}), so
\[c_1=c_1f=c_1(e\cup c_2)=c_1e\cup c_1c_2=0\cup c_1c_2=c_1c_2\]
i.e., $c_1\leq c_2$, and symmetrically we conclude that $c_1=c_2$.

Since compatible orders are preserved by taking inverses, then property \ref{def:sigmaorder.distributivity} implies $(c_1\cup c_2)a=(c_1a)\cup(c_2a)$ as well.

In the next few technical lemmas, we compile the technical properties of $\Sigma$-ordered inverse semigroups which will be necessary, and follow the discussion with two canonical examples. Let us fix, throughout this subsection, a $\Sigma$-ordered inverse semigroup $(S,\subseteq)$.

\begin{lemma}\label{lem:technicalpropertiesofsigmaorder}
Suppose that $(S,\subseteq)$ is a $\Sigma$-ordered inverse semigroup.
    \begin{enumerate}[label=(\alph*)]
        \item\label{lem:technicalpropertiesofsigmaorder1} If $x\leq y\leq z$ and $x\subseteq z$, then $x\subseteq y$.
        \item\label{lem:technicalpropertiesofsigmaorder2} If $a\cap b$ exists and $c\subseteq b$, then $a\cap c$ exists, and $a\cap c=(a\cap b)c^*c$.
        \item\label{lem:technicalpropertiesofsigmaorder3} If $x,y\subseteq a$, then $x\cap y$ exists, and $x\cap y=xy^*y=yy^*x$.
        \item\label{lem:technicalpropertiesofsigmaorder4} If $x,y\subseteq a$ and $b\in S$, then $b(x\cap y)=bx\cap by$.
        \item\label{lem:technicalpropertiesofsigmaorder5} If $a\cup b$ exists, then $(a\cup b)^*(a\cup b)=a^*a\cup b^*b$
        \item\label{lem:technicalpropertiesofsigmaorder6} If $b\cup c$ exists, then
        \[a\cap(b\cup c)=(a\cap b)\cup(a\cap c)\]
        in the sense that one side is defined if and only if the other one is, in which case they coincide.
        \item\label{lem:technicalpropertiesofsigmaorder7} $S$ admits relative complements. More precisely, if $a\subseteq b$, then $b\setminus a=b(b^*b\setminus a^*a)$. (Note that the relative complement $b^*b\setminus a^*a$ is defined by \ref{def:sigmaorder.ESrelativecomplements}.)
        \item\label{lem:technicalpropertiesofsigmaorder8} If $a\cup b$, $a\cup c$ and $b\cap c$ are defined, then
        \[a\cup(b\cap c)=(a\cup b)\cap (a\cup c)\]
        in the sense that both sides are defined and coincide.
        \item\label{lem:technicalpropertiesofsigmaorder9} If $a\subseteq b$ and $c\in S$, then $c(b\setminus a)=(cb)\setminus (ca)$.
        \item\label{lem:technicalpropertiesofsigmaorder10} (De Morgan's Laws) If $a_1,a_2\subseteq b$ then $(b\setminus a_1)\cap(b\setminus a_2)=b\setminus(a_1\cup a_2)$ and $(b\setminus a_1)\cup(b\setminus a_2)=(b\setminus a_1\cap a_2)$.
    \end{enumerate}
\end{lemma}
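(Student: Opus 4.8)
The plan is to prove the items in the order that exposes their dependencies, treating (c) as the foundational computation from which the whole meet structure of $(S,\subseteq)$ is extracted, and then bootstrapping the distributive and complementation identities exactly as one does for (generalized) Boolean inverse semigroups. Throughout I would freely use that $\subseteq$ is compatible, that it is preserved by $(\ )^*$ (noted just before the lemma), that $a\subseteq b$ implies $a\leq b$, and the elementary canonical-order identities $c^*a=a^*a$ and $ac^*c=a$ valid whenever $a\subseteq c$ (hence $a\leq c$, hence $a=ca^*a$).

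First I would dispatch (a): right-multiplying $x\subseteq z$ by $y^*y$ gives $xy^*y\subseteq zy^*y$, and since $x\leq y\leq z$ one has $zy^*y=y$ and $xy^*y=x$, whence $x\subseteq y$. Then comes the central item (c). Putting $w=xy^*y$, I would show $w$ is a $\subseteq$-lower bound of $\{x,y\}$: the inclusion $w\subseteq y$ is immediate by right-multiplying $x\subseteq a$ by $y^*y$; and $w\subseteq x$ follows by first proving $y^*y\subseteq a^*a$ (left-multiply $y\subseteq a$ by $a^*$ and use $a^*y=y^*y$) and then left-multiplying by $x$, using $xa^*a=x$. For maximality, any common lower bound $z\subseteq x,y$ satisfies $z=zy^*y\subseteq xy^*y=w$; the identity $xy^*y=yy^*x$ is the standard compatibility computation for $x,y\leq a$. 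With (c) in hand, (b) is short: $m:=a\cap b$ and $c$ both lie $\subseteq b$, so $mc^*c=m\cap c$ by (c); this is a $\subseteq$-lower bound of $\{a,c\}$ by transitivity, and any $z\subseteq a,c$ forces $z\subseteq b$, hence $z\subseteq a\cap b=m$, and then $z\subseteq m\cap c$. For (d) I would observe that applying $b\cdot$ to $x\cap y\subseteq x,y$ shows $b(x\cap y)$ is a lower bound of $\{bx,by\}$, note $bx,by\subseteq ba$, and then identify $b(x\cap y)$ with $bx\cap by=(bx)(by)^*(by)$ via the inverse-semigroup identity $bxy^*b^*by=bxy^*y$; this equality holds because $x,y\leq a$ agree on the overlap of their supports, and it is the one place where genuine compatibility, rather than formal symbol-pushing, is needed.

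The distributivity and complement items then follow the classical template. For (e) I would expand $(a\cup b)^*(a\cup b)=(a\cup b)^*a\cup(a\cup b)^*b$ by ($\Sigma$-iii) and simplify each term with $(a\cup b)^*a=a^*a$ (again from $a\subseteq a\cup b$), obtaining $a^*a\cup b^*b$. Item (f), the distribution of $\cap$ over $\cup$, I would prove by checking both the existence equivalence and the equality, expressing all meets as products via (c)/(b) and expanding the products by ($\Sigma$-iii), reducing the claim to an identity among the idempotent parts. For the complement items I would start from the idempotent relative complements guaranteed by ($\Sigma$-iv): (g) is verified by confirming that $d:=b^*b\setminus a^*a$ yields $a\cup bd=b(a^*a\cup d)=bb^*b=b$ (using $ba^*a=a$) and $a\cap bd=b(a^*a\cap d)=0$ (using (d)), so by uniqueness $b\setminus a=bd$. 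Once (g) is available, (i) follows by applying $c\cdot$ to the defining join/meet equations of $b\setminus a$, invoking ($\Sigma$-iii), (d), and uniqueness of complements; and (j) and (h) are then formal consequences of (f), (g), and the uniqueness of relative complements, exactly as for the corresponding Boolean-algebra laws.

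The main obstacle I anticipate is not a single hard idea but the bookkeeping in the \emph{defined iff defined} clauses of (f) and (h): I must argue that the relevant conditional joins exist on one side precisely when they exist on the other, which forces me to use ($\Sigma$-ii) together with monotonicity of the constructions rather than merely computing with elements already assumed to exist. The secondary delicate point is the compatibility identity underlying (d); it is routine for inverse semigroups but must be re-checked to be valid here, which is exactly why $S$ is taken to be an honest inverse semigroup with zero.
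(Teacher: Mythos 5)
Your proposal is correct, and for items (a), (b), (c), (d), (e), (g), (i), (j) it is essentially the paper's own proof: the same multiplication by $y^*y$ for (a); the meet formula $x\cap y=xy^*y$ for (c), which the paper obtains by citing its earlier lemma on compatible preorders rather than re-deriving it, and from which you deduce (b), whereas the paper proves (b) directly (a cosmetic reordering); the commutation of the idempotent $xy^*$ past $b^*b$ for (d); distributivity plus absorption of the cross terms for (e); and the exhibition of $b(b^*b\setminus a^*a)$ as a complement together with uniqueness for (g), (i), (j). The genuine divergence is item (h). The paper proves it directly: $a\cup(b\cap c)$ exists by ($\Sigma$-ii), and is shown to be the greatest $\subseteq$-lower bound of $\left\{a\cup b,a\cup c\right\}$ by taking an arbitrary lower bound $p$, splitting it as $(p\cap a)\cup\bigl(p\setminus(p\cap a)\bigr)$ and treating the two pieces by cases --- so the paper's (h) consumes the relative complements of (g). You instead derive (h) formally from (f) via the classical lattice computation $(a\cup b)\cap(a\cup c)=a\cup\bigl((a\cup b)\cap c\bigr)=a\cup(a\cap c)\cup(b\cap c)=a\cup(b\cap c)$, and this does go through here: the meets $a\cap b$ and $a\cap c$ needed along the way exist automatically by (c) (each pair is $\subseteq$-bounded by $a\cup b$, resp.\ $a\cup c$), the intermediate joins exist by ($\Sigma$-ii), and existence of $(a\cup b)\cap(a\cup c)$ then falls out of the ``defined iff defined'' clause of (f). Your route is more economical (the infimum argument is run once, inside (f)); the paper's is self-contained in (h) but duplicates that work.

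The one place your outline needs an actual argument rather than a computation is the clause of (f) in the direction ``$(a\cap b)\cup(a\cap c)$ defined $\Rightarrow$ $a\cap(b\cup c)$ defined'', on which your (h) now entirely depends. This cannot be settled by ``reducing to an identity among the idempotent parts'': identities only compare elements already known to exist, whereas here you must produce a meet, i.e.\ show that $(a\cap b)\cup(a\cap c)$ is the greatest lower bound. Concretely, for any $p\subseteq a$ and $p\subseteq b\cup c$ one writes $p=(b\cup c)p^*p=bp^*p\cup cp^*p=(p\cap b)\cup(p\cap c)$ using ($\Sigma$-iii) and the product formula of (c), then $p\cap b\subseteq a\cap b$ and $p\cap c\subseteq a\cap c$, and monotonicity of joins gives $p\subseteq(a\cap b)\cup(a\cap c)$. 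This quantification over all lower bounds $p$ is exactly what the paper carries out (modulo typos, finishing with its item (a) to upgrade the relation $p\leq(a\cap b)\cup(a\cap c)$ to $p\subseteq(a\cap b)\cup(a\cap c)$); your plan flags the existence bookkeeping as the main obstacle but should spell this step out.
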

\begin{proof}
    \begin{enumerate}[label={\ref{lem:technicalpropertiesofsigmaorder\arabic*}}]
        \item As $x\subseteq z$, we multiply both sides by $y^*y$ to obtain $x=xy^*y\subseteq zy^*y=y$, because $x\leq y\leq z$.
        \item On one hand, $(a\cap b)c^*c\subseteq (a\cap b)b^*b=a\cap b\subseteq a$, and $(a\cap b)c^*c\subseteq bc^*c=c$, so $(a\cap b)c^*c$ is a $\subseteq$-lower bound of $\left\{a,c\right\}$. Simple computations prove that it is the greatest lower bound.
        \item This was already proven in Lemma \ref{lem:relation.of.germs.is.a.graphed.congruence}.
        \item As $bx,by\subseteq ba$, item \ref{lem:technicalpropertiesofsigmaorder2} yields $bx\cap by=bx(by)^*(by)$. As $x,y\leq a$, then $xy^*\in E(S)$, so we may commute $xy^*$ and $b^*b$ to obtain
        \[bx\cap by=b(xy^*)(b^*b)y=b(b^*b)(xy^*)y=b(x\cap y)\]
        where we used item \ref{lem:technicalpropertiesofsigmaorder2} in the last equality, as $x\cap y=xy^*y$.
        
        \item By distributivity, Property \ref{def:sigmaorder.distributivity},
        \[(a\cup b)^*(a\cup b)=(a^*\cup b^*)(a\cup b)=(a^*a)\cup(a^*b)\cup(b^*a)\cup(b^*b).\]
        But since $a^*b\subseteq a^*(a\cup b)=a^*a$, and similarly for $b^*a$, then
        \[(a\cup b)^*(a\cup b)=a^*a\cup b^*b.\]
        
        \item If $a\cap(b\cup c)$ exists, then item \ref{lem:technicalpropertiesofsigmaorder5} implies that both $a\cap b$ and $a\cap c$ exist, Since both are $\subseteq a$, their $\subseteq$-join exists by Property \ref{def:sigmaorder.conditionaljoins}.
        
        Assume now that $(a\cap b)\cup(a\cap c)$ exists, and let us prove that it is the $\subseteq$-meet of $\left\{a,b\cup c\right\}$. It is a lower bound, since
        \[(a\cap b)\cup(a\cap c)\subseteq a\cup a=a\quad\text{and}\quad (a\cap b)\cup(a\cap c)\subseteq b\cup c.\]
        To prove that it is the largest lower bound, suppose $p\subseteq a$ and $p\subseteq (b\cap c)$. In particular, $p\cap b=b$. As $a\cap b$ exists, item \ref{lem:technicalpropertiesofsigmaorder5} implies $(a\cap b)p^*p=p\cap b=p$. Similarly, $(a\cap c)p^*p=p$. By distributivity, Property \ref{def:sigmaorder.distributivity}, we have
        \[((a\cap b)\cup(a\cap c))p^*p=((a\cap b)p^*p)\cap ((a\cap c)p^*p)=p\cap p=p,\]
        that is, $p\leq(a\cap b)\cup(a\cap c)\leq b\cup c$. Since $p\subseteq b\cup c$, item \ref{lem:technicalpropertiesofsigmaorder1} gives us $p\subseteq (a\cap b)\cup(a\cap c)$, as we wanted.
        
        \item Suppose $a\subseteq b$. Let $e=b^*b\setminus a^*a$, i.e., $a^*a\cup e=b^*b$ and $a^*a\cap e=0$. Multiplying both equalities by $b$ on the left on both sides and using distributivity (item \ref{lem:technicalpropertiesofsigmaorder3} and Property \ref{def:sigmaorder.distributivity}) yields $a\cup(be)=b$ and $a\cap(be)=0$, so $be$ is the complement of $a$ relative to $b$.
        
        \item Since both $a$ and $b\cap c$ are $\subseteq$-bounded above by $a\cup b$, then $a\cup(b\cap c)$ is defined by Property \ref{def:sigmaorder.conditionaljoins}, and is a $\subseteq$-lower bound of $a\cup b$. Symmetrically, changing the roles of $b$ and $c$, we see that it is a $\subseteq$-lower bound of $\left\{a\cup b,a\cup c\right\}$. Again, let us prove that it is the largest one. Suppose $p\subseteq a\cup b,a\cup c$. Let us verify some cases:
        \begin{enumerate}[label=(\arabic*)]
            \item If $p\subseteq a$, then of course $p\subseteq a\cup(b\cap c)$.
            \item Suppose $p\cap a=0$. Since both $p$ and $a$ are $\subseteq a\cup b$, then this means that $0=ap^*p$, so $p=(a\cup b)p^*p=ap^*p\cup bp^*p=bp^*p$, and thus $p\leq bp^*p$. Both $p$ and $b$ are $\subseteq a\cup b$, so $p\subseteq b$. Similarly, $p\subseteq c$, so $p\subseteq b\cap c\subseteq a\cup(b\cap c)$.
            \item For a general lower bound $p$ of $\left\{a\cup b,a\cup c\right\}$, we apply the first case to $p\cap a$ (which exists by item \ref{lem:technicalpropertiesofsigmaorder3}, since $a,p\subseteq a\cup b$) and the second one to $p\setminus(p\cap a)$, to conclude that
            \[p=(p\cap a)\cup(p\setminus(p\cap a))\subseteq a\cup(b\cap c).\qedhere\]
        \end{enumerate}
        The last case proves that $a\cup(b\cap c)$ is the meet of $(a\cup b)\cap(a\cup c)$, as we wanted.
        \item If $a\subseteq b$ and $c\in S$, then it is easy enough, with the previous items, to verify that $c(b\setminus a)$ satisfies the properties of the complement of $ca$ in $cb$, so $c(b\setminus a)=(cb)\setminus(ca)$.
        \item Similarly to the previous item, it is only a matter of verifying that $(b\setminus a_1)\cap(b\setminus a_2)$ satisfies the defining properties of the complement $b\setminus (a_1\cup a_2)$, so these elements coincide, and similarly $b\setminus (a_1\cap a_2)=(b\setminus a_1)\cup(b\setminus a_2)$.\qedhere
    \end{enumerate}
\end{proof}

The interpolation property \ref{def:sigmaorder.interpolation} is very important and will be used heavily during the proof of the duality result we aim to produce.

\begin{lemma}
    Suppose $t\leq a$ in $S$. Then there exists a unique $p\in S$ such that $t\leq p\subseteq a$ and for all $x\in S$
    \[tx=0\iff px=0\qquad\text{and}\qquad xt=0\iff xp=0.\]
\end{lemma}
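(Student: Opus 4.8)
The plan is to bootstrap the one-sided interpolation axiom \ref{def:sigmaorder.interpolation} (which only controls products of the form $tx$) into the desired two-sided statement by exploiting the involution of $S$, and then to settle uniqueness through relative complements.

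For existence, first I would apply \ref{def:sigmaorder.interpolation} to the relation $t\le a$, obtaining $z$ with $t\le z\subseteq a$ and $tx=0\iff zx=0$ for all $x$. Since $t\le z$ gives $t^*\le z^*$ (Proposition \ref{prop:propertiesoperationsofinversesemigroupoid}\ref{prop:propertiesoperationsofinversesemigroupoid34}), I would then apply \ref{def:sigmaorder.interpolation} a \emph{second} time to $t^*\le z^*$, producing $w$ with $t^*\le w\subseteq z^*$ and $t^*y=0\iff wy=0$. I set $p\defeq w^*$. Taking inverses (recall both $\le$ and $\subseteq$ are preserved by the involution) yields $t\le p\subseteq z\subseteq a$, hence $t\le p\subseteq a$. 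The right-hand condition is then immediate from $w$, using $u=0\iff u^*=0$ in an inverse semigroup with zero: $xt=0\iff t^*x^*=0\iff wx^*=0\iff xp=0$. For the left-hand condition I would argue both directions: if $tx=0$ then $zx=0$, and because $p\subseteq z$ forces $px\subseteq zx=0$ (compatibility \ref{def:compatibleorder}\ref{def:compatibleorder2} together with minimality \ref{def:sigmaorder.0minimum}), we get $px=0$; conversely, writing $t=fp$ with $f\in E(S)$ (Proposition \ref{prop:propertiesoperationsofinversesemigroupoid}\ref{prop:propertiesoperationsofinversesemigroupoid33}, valid since $t\le p$), any $px=0$ gives $tx=f(px)=0$.

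For uniqueness, suppose $p_1,p_2$ both satisfy the conclusion. As $p_1,p_2\subseteq a$, the meet $p_1\cap p_2=p_1p_2^*p_2$ exists (Lemma \ref{lem:technicalpropertiesofsigmaorder}\ref{lem:technicalpropertiesofsigmaorder3}). Using $t\le p_1$ and $t\le p_2$ (so $t^*t\le p_i^*p_i$) one computes $(p_1\cap p_2)t^*t=p_1t^*t=t$, hence $t\le p_1\cap p_2$. I would then set $q\defeq p_1\setminus(p_1\cap p_2)$, which exists by Lemma \ref{lem:technicalpropertiesofsigmaorder}\ref{lem:technicalpropertiesofsigmaorder7} and satisfies $(p_1\cap p_2)\cup q=p_1$, $(p_1\cap p_2)\cap q=0$ and $q\subseteq p_1$, and put $g\defeq q^*q$. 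Then $p_1g=q$ (since $q\subseteq p_1$), whereas $(p_1\cap p_2)g=(p_1\cap p_2)\cap q=0$ gives, after commuting the idempotents $t^*t$ and $g$, $tg=(p_1\cap p_2)\,g\,t^*t=0$. The left-hand zero condition for $p_1$ then forces $p_1g=0$, i.e.\ $q=0$, so $p_1=p_1\cap p_2$; by symmetry $p_2=p_1\cap p_2$, whence $p_1=p_2$.

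The main obstacle is precisely the one-sidedness of axiom \ref{def:sigmaorder.interpolation}. The delicate point is that a \emph{sequential} application—interpolating once to get $z$, then interpolating inside $z^*$—is what makes $p\subseteq z$ hold automatically, which is what lets both zero-product conditions be verified on the \emph{single} witness $p$; a naive parallel application on both sides of $a$ would produce two unrelated witnesses that one would then have to reconcile by an extra meet, complicating the verification that $t\le p$ and both conditions hold simultaneously.
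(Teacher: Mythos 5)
Your proof is correct, and it follows the same basic strategy as the paper's: apply the interpolation axiom \ref{def:sigmaorder.interpolation} twice, transporting it through the involution to control right-hand zero-products, and then settle uniqueness with a relative-complement argument. The mechanics differ at both stages, though. For existence, the paper applies \ref{def:sigmaorder.interpolation} \emph{in parallel}, to $t\leq a$ and to $t^*\leq a^*$, obtaining witnesses $z$ and $w$ controlling left and right zero-products respectively, and reconciles them by the meet $p=z\cap w=zw^*w$; the verification is immediate because $t\leq w$ gives $tw^*w=t$, so $px=0\iff zw^*wx=0\iff tw^*wx=0\iff tx=0$, and symmetrically for $xp$. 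Your \emph{sequential} nesting (interpolate $t\leq a$ to get $z$, then $t^*\leq z^*$ to get $w$, and set $p=w^*$) avoids forming a meet, at the cost of an asymmetric verification: the right-hand condition is inherited directly from $w$, while the left-hand one needs the separate two-direction argument you give (via $p\subseteq z$ for one direction and $t=fp$, $f\in E(S)$, for the other). Both are equally valid. For uniqueness, the paper compares an arbitrary second witness $q$ with $p$ by cutting with the idempotent $a^*a\setminus p^*p$: since $p(a^*a\setminus p^*p)=0$, the zero-product conditions give $t(a^*a\setminus p^*p)=0$ and then $q(a^*a\setminus p^*p)=0$, whence $q^*q=q^*q\bigl((a^*a\setminus p^*p)\cup p^*p\bigr)=q^*qp^*p$, so $q^*q\leq p^*p$; symmetry and $p,q\leq a$ force $p=q$. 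You instead form the meet $p_1\cap p_2$, check $t\leq p_1\cap p_2$, and kill the relative complement $p_1\setminus(p_1\cap p_2)$ inside $S$ rather than inside $E(S)$ --- the same idea (test the zero-product hypothesis against a relative complement), executed one level up. The one inaccuracy is rhetorical, not mathematical: your closing paragraph asserts that the parallel application would produce ``two unrelated witnesses'' whose reconciliation by a meet would be complicated, but that is precisely what the paper does, and the resulting verification is, if anything, shorter than your sequential one.
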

\begin{proof}
    We have $t\leq a$ and $t^*\leq a^*$. By \ref{def:sigmaorder.interpolation}, consider $z,w\in S$ such that $t\leq z,w\subseteq a$ and
    \[tx=0\iff zx=0\qquad\text{and}\qquad xt=0\iff xw=0.\]
    Let $p=z\cap w$ (Lemma \ref{lem:technicalpropertiesofsigmaorder}\ref{lem:technicalpropertiesofsigmaorder3}). Then $t\leq p\subseteq a$, $p=zw^*w$ and for all $x\in S$,
    \[px=0\iff zw^*wx=0\iff tw^*wx=0\iff tx=0,\]
    where the second equivalence follows from the choice of $z$ and the third one from $t\leq w$. Similarly, as $p=w\cap z=zz^*w$, $xp=0\iff xt=0$.
    
    As for the uniqueness of $p$, suppose that $q$ is another element with the same property. Then $p^*p,q^*q\subseteq a^*a$, and we have $0=p(a^*a\setminus p^*p)=0$, hence $x(a^*a\setminus p^*p)=0$ and $q(a^*a\setminus p^*p)=0$ by the given properties of $p$ and $q$. Therefore
    \[q^*q=q^*q(a^*a)=q^*q((a^*a\setminus p^*p)\cup p^*p)=q^*qp^*p,\]
    so $q^*q\leq p^*p$. By symmetry, we conclude that $q^*q=p^*p$, and as both $p,q\leq a$ then $p=q$.
    \qedhere
\end{proof}

\begin{definition}
    The unique element $p$ of the Lemma above is called the \emph{interpolator} of $t$ and $a$, and is denoted by $a|t$.
\end{definition}

Interpolators may be described precisely in well-known cases.

\begin{example}
    Suppose $\mathcal{S}$ is an ample inverse semigroupoid. Then $(\mathbf{KB}(\mathcal{S}),\subseteq)$ is a $\Sigma$-ordered inverse semigroupoid. If $A\leq B$ in $\mathbf{KB}(\mathcal{S})$, then the interpolator $B|A$ is $\so^{-1}(\so(B))\cap A$.
\end{example}

\begin{example}\label{ex:weaklybooleanaresigma}
    Let $S$ be a weakly Boolean distributive inverse semigroup as in \cite{MR3077869}. Then the canonical order of $S$ makes it a $\Sigma$-ordered inverse semigroup. Interpolators are trivial, in the sense that if $a\leq b$ in $S$ then $b|a=a$.
\end{example}

\subsection{The general procedure}\label{subsec:generalprocedure}

Let us describe the general procedure to reconstruct an ample inverse semigroupoid $\mathcal{S}$ from the pair $(\mathbf{KB}(\mathcal{S}),\subseteq)$. First, we ``extend'' the canonical action $\mathcal{S}$ on $\mathcal{S}^{(0)}$ to an action of $\mathbf{KB}(\mathcal{S})$ on $\mathcal{S}^{(0)}$ in the intuitive way: a bisection of $\mathcal{S}$ is actually a collection of arrows between points of $\mathcal{S}^{(0)}$, and thus describes a partial function of $\mathcal{S}^{(0)}$. Then set inclusion on $\mathbf{KB}(\mathcal{S})$ induces a compatible order on the semidirect product $\mathbf{KB}(S)\ltimes\mathcal{S}^{(0)}$, and the quotient (semigroupoid of germs) is isomorphic to $\mathcal{S}$.

So the problem at hand now is to describe $\mathcal{S}^{(0)}$ in terms of $\mathbf{KB}(\mathcal{S})$, which we follow by constructing the relevant categories and the dual equivalence between them.

\subsection{Reconstruction of \texorpdfstring{$\mathcal{S}^{(0)}$}{S0} from \texorpdfstring{$\mathbf{KB}(\mathcal{S})$}{KB(S)}}\label{subsec:reconstructionofvertexspace}

Let $E$ be a semilattice. Recall that a \emph{filter} on $E$ is a nonempty subset $\mathfrak{F}\subseteq E$ which is \emph{downwards directed} and \emph{upwards closed} - i.e., it satisfies
\begin{enumerate}[label=(\roman*)]
    \item If $e,f\in\mathfrak{F}$, then $ef\in\mathfrak{F}$;
    \item If $e\in\mathfrak{F}$, $f\in E$, and $e\leq f$, then $f\in\mathfrak{F}$.
\end{enumerate}

A filter is \emph{proper} if $\mathfrak{F}\neq E$. If $E$ has a zero (minimum), this is equivalent to say that $0\not\in\mathfrak{F}$. An \emph{ultrafilter} is a maximal proper filter. Every nonzero element $e\in E$ belongs to the filter $e^\uparrow=\left\{f\in E:e\leq f\right\}$, and Zorn's Lemma implies that every filter is contained in some ultrafilter. Therefore every element of $E$ belongs to some ultrafilter.

The following alternative description of ultrafilters is well-known and useful.

\begin{lemma}\label{lem:description.of.ultrafilters.by.nonzero.products}
    Let $\mathfrak{F}$ be a proper filter in a semilattice with zero $E$. The following are equivalent:
    \begin{enumerate}[label=(\arabic*)]
        \item\label{lem:description.of.ultrafilters.by.nonzero.products.1} $\mathfrak{F}$ is an ultrafilter;
        \item\label{lem:description.of.ultrafilters.by.nonzero.products.2} For every $e\in E$, if $0\not\in e\mathfrak{F}$ then $e\in\mathfrak{F}$.
    \end{enumerate}
\end{lemma}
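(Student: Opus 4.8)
The plan is to prove the two implications separately, using the standard characterization of an ultrafilter as a maximal proper filter and the fact that in a semilattice the product is the meet, $ef = e\land f = \inf\{e,f\}$.

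For \ref{lem:description.of.ultrafilters.by.nonzero.products.1}$\Rightarrow$\ref{lem:description.of.ultrafilters.by.nonzero.products.2}, I would argue by extension. Assume $\mathfrak{F}$ is an ultrafilter and $e\in E$ satisfies $0\not\in e\mathfrak{F}$. The natural candidate is the filter generated by $\mathfrak{F}\cup\{e\}$, namely
\[\mathfrak{G}=\left\{g\in E:ef\leq g\text{ for some }f\in\mathfrak{F}\right\}.\]
First I would check that $\mathfrak{G}$ is a filter: it is clearly upwards closed, and downwards directedness follows from the computation $g_1g_2=g_1\land g_2\geq(ef_1)\land(ef_2)=e(f_1f_2)$, together with $f_1f_2\in\mathfrak{F}$. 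It contains $\mathfrak{F}$ (since $ef\leq f$ for $f\in\mathfrak{F}$) and it contains $e$ (since $ef\leq e$ for any $f\in\mathfrak{F}$, and $\mathfrak{F}\neq\varnothing$). The hypothesis $0\not\in e\mathfrak{F}$ is exactly what guarantees that $\mathfrak{G}$ is proper, because $0\in\mathfrak{G}$ would force $ef=0$ for some $f\in\mathfrak{F}$, i.e.\ $0\in e\mathfrak{F}$. By maximality of the ultrafilter $\mathfrak{F}$ we conclude $\mathfrak{G}=\mathfrak{F}$, and hence $e\in\mathfrak{F}$.

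For the converse \ref{lem:description.of.ultrafilters.by.nonzero.products.2}$\Rightarrow$\ref{lem:description.of.ultrafilters.by.nonzero.products.1}, I would take an arbitrary proper filter $\mathfrak{G}\supseteq\mathfrak{F}$ and show $\mathfrak{G}\subseteq\mathfrak{F}$. Given $e\in\mathfrak{G}$, the key observation is that $0\not\in e\mathfrak{F}$: otherwise $ef=0$ for some $f\in\mathfrak{F}\subseteq\mathfrak{G}$, and since $\mathfrak{G}$ is closed under products this would give $0=ef\in\mathfrak{G}$, contradicting properness of $\mathfrak{G}$. Applying hypothesis \ref{lem:description.of.ultrafilters.by.nonzero.products.2} then yields $e\in\mathfrak{F}$, so $\mathfrak{G}=\mathfrak{F}$, and $\mathfrak{F}$ is maximal among proper filters, i.e.\ an ultrafilter.

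The only genuine content is the verification that $\mathfrak{G}$ is downwards directed, which is precisely where the semilattice structure $ef=e\land f$ enters; everything else is a direct application of the filter axioms and the description of properness via $0$. I do not anticipate any real obstacle here — condition \ref{lem:description.of.ultrafilters.by.nonzero.products.2} is engineered exactly so that ``properness of filter extensions'' and ``membership in $\mathfrak{F}$'' become interchangeable, so each direction reduces to a one-line observation once the generated filter $\mathfrak{G}$ is in hand.
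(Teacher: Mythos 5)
Your proof is correct and follows essentially the same route as the paper: both directions hinge on the generated filter $(e\mathfrak{F})^{\uparrow}=\{g\in E: ef\leq g\text{ for some }f\in\mathfrak{F}\}$, with the hypothesis $0\notin e\mathfrak{F}$ serving exactly to make this extension proper, and the converse reducing to the observation that properness of any extension $\mathfrak{G}\supseteq\mathfrak{F}$ forces $0\notin e\mathfrak{F}$ for each $e\in\mathfrak{G}$. The only difference is that you spell out the verification that $(e\mathfrak{F})^{\uparrow}$ is a proper filter (upward closure, directedness via $g_1g_2\geq e(f_1f_2)$, properness), which the paper asserts without proof.
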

\begin{proof}
    If $0\not\in e\mathfrak{F}$, then the set $(e\mathfrak{F})^{\uparrow}=\left\{u\in E:ef\leq u\text{ for some }f\in\mathfrak{F}\right\}$ is a proper filter containing $\mathfrak{F}\cup\left\{e\right\}$. If $\mathfrak{F}$ is maximal, then $\mathfrak{F}=(e\mathfrak{F})^{\uparrow}$, which contains $e$. This proves \ref{lem:description.of.ultrafilters.by.nonzero.products.1}$\Rightarrow$\ref{lem:description.of.ultrafilters.by.nonzero.products.2}. 
    
    Conversely, if \ref{lem:description.of.ultrafilters.by.nonzero.products.2} is valid, consider a proper filter $\mathcal{G}$ containing $\mathfrak{F}$. As $0\not\in\mathfrak{G}$, then every $e\in G$ satisfies the condition of \ref{lem:description.of.ultrafilters.by.nonzero.products.2}, so $\mathfrak{G}\subseteq\mathfrak{F}$. Hence $\mathfrak{F}$ is valid, i.e., \ref{lem:description.of.ultrafilters.by.nonzero.products.1}.\qedhere
\end{proof}

\begin{definition}
    The \emph{spectrum} of $E$ is the topological space $\Omega(E)$ of all ultrafilters in $E$, with the topology generated by the sets
    \[X[e]=\left\{\mathfrak{F}\in\Omega(E):e\in\mathfrak{F}\right\},\qquad (e\in E).\]
\end{definition}

Note that $X[e]\cap X[f]=X[ef]$ for all $e,f\in E$, so $\left\{X[e]:e\in E\right\}$ is indeed a basis for the topology of $\Omega(E)$.

Suppose now that $(S,\subseteq)$ is a $\Sigma$-ordered inverse semigroup, and $E=E(S)$ is its idempotent semilattice (we use the \emph{canonical order} $\leq$ as the lattice structure of $E(S)$, and not $\subseteq$).

\begin{lemma}\label{lem:ultrafilters.are.prime}
    Every element $\mathfrak{F}$ of $\Omega(E(S))$ is a prime $\subseteq$-filter, in the sense that if $e_1\cup e_2\in\mathfrak{F}$ then $e_1\in\mathfrak{F}$ or $e_2\in\mathfrak{F}$.
    
    In other words, $X[e_1\cup e_2]=X[e_1]\cup X[e_2]$ whenever $e_1\cup e_2$ is defined in $E(S)$.
\end{lemma}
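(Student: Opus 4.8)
The plan is to prove primeness by contradiction, leaning on the alternative description of ultrafilters from Lemma \ref{lem:description.of.ultrafilters.by.nonzero.products}. Recall that membership and filtering here are with respect to the \emph{canonical} order $\leq$ (so $\mathfrak{F}$ is $\leq$-upward closed and closed under products of idempotents), whereas the join $e_1\cup e_2$ is taken in the $\subseteq$-order; keeping these two orders apart is the one point requiring care. So suppose, for contradiction, that $e_1\cup e_2\in\mathfrak{F}$ while $e_1\notin\mathfrak{F}$ and $e_2\notin\mathfrak{F}$. By the contrapositive of the implication \ref{lem:description.of.ultrafilters.by.nonzero.products.1}$\Rightarrow$\ref{lem:description.of.ultrafilters.by.nonzero.products.2} of Lemma \ref{lem:description.of.ultrafilters.by.nonzero.products}, $e_1\notin\mathfrak{F}$ yields some $f_1\in\mathfrak{F}$ with $0\in e_1\mathfrak{F}$, i.e.\ $e_1 f_1=0$, and likewise $e_2 f_2=0$ for some $f_2\in\mathfrak{F}$.

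First I would set $f=f_1 f_2$, which lies in $\mathfrak{F}$ since filters are closed under products, and use commutativity of $E(S)$ together with the fact that $0$ is absorbing to get $e_1 f=e_1 f_1 f_2=0$ and symmetrically $e_2 f=0$. The heart of the argument is then the single computation
\[
(e_1\cup e_2)f=(e_1 f)\cup(e_2 f)=0\cup 0=0,
\]
where the first equality is the (right-handed form of the) distributivity axiom \ref{def:sigmaorder.distributivity}, valid because the join $e_1\cup e_2$ is assumed to exist in $E(S)$. Now $e_1\cup e_2$ is an idempotent (the join is taken inside $E(S)$), $f$ is an idempotent, and both lie in $\mathfrak{F}$; their product is their $\leq$-meet, which therefore belongs to $\mathfrak{F}$. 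But this product is $0$, contradicting properness of the ultrafilter $\mathfrak{F}$. Hence $e_1\in\mathfrak{F}$ or $e_2\in\mathfrak{F}$, which is exactly the claimed primeness and gives the inclusion $X[e_1\cup e_2]\subseteq X[e_1]\cup X[e_2]$.

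For the reverse inclusion $X[e_1]\cup X[e_2]\subseteq X[e_1\cup e_2]$, I would argue directly: since $e_i\subseteq e_1\cup e_2$, compatibility of $\subseteq$ (Definition \ref{def:compatibleorder}\ref{def:compatibleorder1}) gives $e_i\leq e_1\cup e_2$, and upward closure of $\mathfrak{F}$ under $\leq$ then forces $e_1\cup e_2\in\mathfrak{F}$ whenever $e_1\in\mathfrak{F}$ or $e_2\in\mathfrak{F}$. Combining the two inclusions yields the identity $X[e_1\cup e_2]=X[e_1]\cup X[e_2]$. The only genuine subtlety, as flagged above, is ensuring that $e_1\cup e_2$ is handled as a bona fide idempotent so that the $\leq$-filter axioms apply to it, and that distributivity is invoked only where the relevant $\subseteq$-join is known to exist; everything else is routine manipulation with the two compatible orders.
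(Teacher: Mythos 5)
Your proof is correct and rests on exactly the same two ingredients as the paper's: the nonzero-product characterization of ultrafilters (Lemma \ref{lem:description.of.ultrafilters.by.nonzero.products}) and the distributivity axiom \ref{def:sigmaorder.distributivity}, with the only difference being organizational — you argue symmetrically by contradiction (killing both $e_1$ and $e_2$ with $f_1f_2$ and contradicting properness), whereas the paper argues directly and asymmetrically (killing $e_1$ with one $f$ and then invoking the ultrafilter criterion to conclude $e_2\in\mathfrak{F}$). Your handling of the two orders, of the idempotency of $e_1\cup e_2$, and of the easy reverse inclusion is all sound.
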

\begin{proof}
    If $e_1\in\mathfrak{F}$ we are done. If $e_1\not\in\mathfrak{F}$, then by Lemma \ref{lem:description.of.ultrafilters.by.nonzero.products} there exists $f\in\mathfrak{F}$ such that $e_1f=0$. For any other $f'\in\mathfrak{F}$, as $e_1\cup e_2\in\mathfrak{F}$, we have
    \[0\neq (f'f)(e_1\cup e_2)=(f'fe_1)\cup (f'fe_2)=0\cup (f'fe_2)=f'fe_2,\]
    and in particular $f'e_2\neq 0$ (as $E(S)$ is commutative). By Lemma \ref{lem:description.of.ultrafilters.by.nonzero.products}, $e_2\in\mathfrak{F}$.\qedhere
\end{proof}

\begin{proposition}
$\Omega(E(S))$ is a zero-dimensional, locally compact Hausdorff space.
\end{proposition}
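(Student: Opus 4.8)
The plan is to verify the three topological properties separately, with local compactness being the substantial one; throughout I write $E = E(S)$ and keep the basic sets $X[e]$.

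For the Hausdorff and zero-dimensional properties I would argue that each $X[e]$ is clopen and that distinct ultrafilters are separated by disjoint basic sets, both via the product characterisation of Lemma \ref{lem:description.of.ultrafilters.by.nonzero.products}. Indeed, if $\mathfrak{F}\notin X[e]$ then $e\notin\mathfrak{F}$, so by \ref{lem:description.of.ultrafilters.by.nonzero.products}\ref{lem:description.of.ultrafilters.by.nonzero.products.2} there is $f\in\mathfrak{F}$ with $ef=0$; then $\mathfrak{F}\in X[f]$ and $X[f]\cap X[e]=X[ef]=X[0]=\varnothing$, since no proper filter contains $0$. Thus the complement of $X[e]$ is open, so $X[e]$ is clopen and $\{X[e]\}$ is a clopen basis. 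The same computation applied to $\mathfrak{F}\neq\mathfrak{G}$, choosing $e\in\mathfrak{F}\setminus\mathfrak{G}$, produces disjoint $X[e],X[f]$ separating them, so $\Omega(E(S))$ is Hausdorff.

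For local compactness it suffices to show each $X[e]$ is compact, as every $\mathfrak{F}$ lies in some $X[e]$. The idea is to recognise $X[e]$ as the Stone space of a genuine Boolean algebra. First I would note that the $\subseteq$-downset $B_e=\{g\in E:g\subseteq e\}$ is a Boolean algebra: it has least element $0$ (by \ref{def:sigmaorder}\ref{def:sigmaorder.0minimum}) and greatest element $e$, bounded joins and meets exist by \ref{def:sigmaorder}\ref{def:sigmaorder.conditionaljoins} and Lemma \ref{lem:technicalpropertiesofsigmaorder}\ref{lem:technicalpropertiesofsigmaorder3}, distributivity is \ref{def:sigmaorder}\ref{def:sigmaorder.distributivity} (cf.\ Lemma \ref{lem:technicalpropertiesofsigmaorder}\ref{lem:technicalpropertiesofsigmaorder8}), and complements relative to $e$ come from \ref{def:sigmaorder}\ref{def:sigmaorder.ESrelativecomplements}. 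The bridge between $E$ and $B_e$ is the interpolator: for any $f\in E$ one has $ef\le e$, and combining Lemma \ref{lem:description.of.ultrafilters.by.nonzero.products} with the defining equivalence $\bigl((ef)x=0\iff(e|ef)x=0\bigr)$ of the interpolator $e|ef\subseteq e$ gives $X[f]\cap X[e]=X[ef]=X[e|ef]$. Hence the subspace topology on $X[e]$ has $\{X[g]:g\in B_e\}$ as a basis, and I would study $\rho\colon X[e]\to\Omega(B_e)$, $\rho(\mathfrak{F})=\mathfrak{F}\cap B_e$, the target being the Stone space of $B_e$. That each $\rho(\mathfrak{F})$ is a proper $\subseteq$-ultrafilter of $B_e$ is routine once one uses that $\mathfrak{F}$ is a prime $\subseteq$-filter (Lemma \ref{lem:ultrafilters.are.prime}) together with $g\cup(e\setminus g)=e\in\mathfrak{F}$.

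The heart of the argument — and the step I expect to be the main obstacle — is showing $\rho$ is a bijection, especially surjective, because $\le$ and $\subseteq$ genuinely differ on $E$, so the naive upward closure $\mathcal{U}^{\uparrow\le}$ of a Boolean ultrafilter $\mathcal{U}$ need not be maximal. The correct lift is $\mathfrak{F}_{\mathcal{U}}=\{f\in E:e|ef\in\mathcal{U}\}$, and verifying that it is an ultrafilter with $\mathfrak{F}_{\mathcal{U}}\cap B_e=\mathcal{U}$ requires two facts about interpolators that I would establish first: monotonicity ($s\le t\le e\Rightarrow e|s\subseteq e|t$) and meet-compatibility ($e|(st)=(e|s)(e|t)$ for idempotent $s,t\le e$), both proved by chasing the equivalences ``$sx=0\iff(e|s)x=0$'' through the relative complements $e\setminus r$. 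Injectivity of $\rho$ follows from $X[f]\cap X[e]=X[e|ef]$, and maximality of $\mathfrak{F}_{\mathcal{U}}$ from Lemma \ref{lem:description.of.ultrafilters.by.nonzero.products} applied inside $B_e$. Once $\rho$ is a continuous open bijection (openness since $\rho(X[g])$ is the basic open $\{\mathcal{U}:g\in\mathcal{U}\}$ of $\Omega(B_e)$ for $g\in B_e$), it is a homeomorphism, so $X[e]\cong\Omega(B_e)$ is compact by Stone's theorem. Together with the clopen basis, this shows $\Omega(E(S))$ is a zero-dimensional, locally compact Hausdorff space.
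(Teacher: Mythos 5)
Your proof is correct, but it takes a genuinely different route from the paper's on the substantial point, compactness of the basic sets $X[e]$. The paper argues directly: given a cover $X[e]=\bigcup_i X[r_i]$ with $r_i\leq e$, it replaces each $r_i$ by the interpolator $s_i=e|r_i$ (using Lemma \ref{lem:description.of.ultrafilters.by.nonzero.products} to see $X[r_i]=X[s_i]$, exactly your ``bridge'' identity), and then shows that if no finite join $\bigcup_{i\in F}s_i$ equals $e$, the relative complements $e\setminus\bigl(\bigcup_{i\in F}s_i\bigr)$ generate a proper filter whose ultrafilter extension lies in $X[e]$ but misses every $X[s_i]$ --- a contradiction; zero-dimensionality is then deduced from compactness of the basic sets. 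You instead prove clopenness of each $X[e]$ first (which gives Hausdorffness and zero-dimensionality without compactness), and obtain compactness structurally, by exhibiting a homeomorphism $X[e]\cong\Omega(B_e)$ with the Stone space of the Boolean algebra $B_e=\{g\in E:g\subseteq e\}$ and invoking Stone's theorem. Your plan is sound: $B_e$ really is a Boolean algebra by the $\Sigma$-axioms, the bridge $X[ef]=X[e|(ef)]$ is the same interpolator trick the paper uses, and the two auxiliary facts you flag --- monotonicity ($s\leq t\leq e\Rightarrow e|s\subseteq e|t$) and multiplicativity ($e|(st)=(e|s)(e|t)$ for idempotents $s,t\leq e$) --- are both true and provable with the paper's tools (for instance, multiplicativity follows from the uniqueness of interpolators once one checks $(st)x=0\iff (e|s)(e|t)x=0$ by commuting idempotents through the defining equivalences), and your lift $\mathfrak{F}_{\mathcal{U}}=\{f:e|(ef)\in\mathcal{U}\}$ is indeed the correct inverse of $\rho$, with maximality following from Lemma \ref{lem:description.of.ultrafilters.by.nonzero.products} via the complement $e\setminus e|(ef)$. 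What the paper's argument buys is brevity and self-containment --- one ultrafilter-extension argument, no extra lemmas; what yours buys is finer information, namely the identification of each basic set as a genuine Stone space $\Omega(B_e)$, at the cost of verifying the Boolean structure and the two interpolator lemmas, which you would need to write out in full. Both arguments rest on the same two pillars: the nonzero-product characterisation of ultrafilters and the interpolation axiom \ref{def:sigmaorder}\ref{def:sigmaorder.interpolation}, and both ultimately appeal to an ultrafilter existence principle (Zorn in the paper, Stone's theorem in yours).
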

\begin{proof}
    First we prove that $\Omega(E(S))$ is Hausdorff. Suppose that $F\neq G$ in $\Omega(E(S))$. As $F$ and $G$ are ultrafilters, Lemma \ref{lem:description.of.ultrafilters.by.nonzero.products} allows us to take $f\in F$ and $g\in G$ such that $fg=0$, so $X[f]$ and $X[g]$ are disjoint neighbourhoods of $\mathfrak{F}$ and $\mathfrak{G}$, respectively.
    
    To prove that $\Omega(E(S))$ is locally compact and zero-dimensional, it suffices to prove that each basic open set $X[e]$ is compact. Suppose that $X[e]=\bigcup_{i\in I}X[r_i]$. Since $X[er_i]=X[e]\cap X[r_i]$, we may assume that $r_i\leq e$. Using interpolation, let $s_i=e|r_i$, i.e.,
    \[r_i\leq s_i\subseteq e\qquad\text{and}\qquad s_ix=0\iff r_ix=0\text{ for all }x\in E(S).\]
    By Lemma \ref{lem:description.of.ultrafilters.by.nonzero.products}, we have $X[r_i]=X[s_i]$ for all $i$, so it suffices to prove that $e=\bigcup_{i\in F}s_i$ for some finite subset $F\subseteq I$ and apply Lemma \ref{lem:ultrafilters.are.prime}.
    
    Suppose otherwise, and let us arrive at a contradiction. For every finite subset $F\subseteq I$, let $s(F)=\bigcup_{i\in F}s_i$. Then $e\setminus s(F)\neq 0$ for all $F$. Using Lemma \ref{lem:technicalpropertiesofsigmaorder},
    \[(e\setminus s(F))(e\setminus s(G))=(e\setminus s(F))\cap(e\setminus s(G))=e\setminus(s(F)\cup s(G))=e\setminus s(F\cup G), \]
    so the family $B=\left\{x\in E(S):x\geq s(F)\text{ for some finite }F\subseteq I\right\}$ is a $\leq$-filter containing $e$. Let $\mathfrak{F}$ be any $\leq$-ultrafilter containing $B$ (which exists by Zorn's Lemma). Then for all $i$, $e\setminus s_i\in B\subseteq \mathfrak{F}$, so $s_i\not\in F$, i.e., $\mathfrak{F}\in X(e)\setminus\bigcup_{i\in I}X[s_i]$, a contradiction.\qedhere
\end{proof}

Suppose now that $\mathcal{S}$ is an ample inverse semigroupoid. For every $x\in\mathcal{S}^{(0)}$, the set \[\psi(x)\defeq\left\{U\in E(\mathbf{KB}(\mathcal{S})):x\in\so(U)\right\}\]
is clearly a proper filter in $E(\mathbf{KB}(\mathcal{S}))$.

\begin{lemma}\label{lem:homeomorphisms0spectrum}
    For every $x\in\mathcal{S}^{(0)}$, the set $\psi(x)$ is an ultrafilter in $E(\mathbf{KB}(\mathcal{S}))$. In fact, the map $\psi\colon \mathcal{S}^{(0)}\to\Omega(E(\mathbf{KB}(\mathcal{S})))$ is a homeomorphism. The inverse $\psi^{-1}\colon\Omega(E(\mathbf{KB}(\mathcal{S})))$ is the unique function satisfying $\left\{\psi^{-1}(\mathfrak{F}\right\}=\bigcap_{U\in\mathfrak{F}}\so(U)$ for all $\mathfrak{F}\in\Omega(E(\mathbf{KB}(\mathcal{S})))$.
\end{lemma}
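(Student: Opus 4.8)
The plan is to identify $E(\mathbf{KB}(\mathcal{S}))$ concretely and then verify the filter, bijection, and topological claims in turn. First I would observe that the idempotents of $\mathbf{KB}(\mathcal{S})$ are precisely the compact-open bisections $U$ contained in $E(\mathcal{S})$ (which is open by Corollary \ref{cor:propertiesopeninversesemigroupoid}\ref{cor:propertiesopeninversesemigroupoid1}): being self-inverse together with $UU=U$ forces $U\subseteq E(\mathcal{S})$, and conversely any compact-open bisection inside $E(\mathcal{S})$ is idempotent. The key computational facts, both coming from $\so=\ra$ on $E(\mathcal{S})$ and injectivity of $\so$ on bisections, are that $\so(UV)=\so(U)\cap\so(V)$ for $U,V\in E(\mathbf{KB}(\mathcal{S}))$, and hence that $U\leq W$ in the canonical order (i.e.\ $U=WU$) implies $\so(U)=\so(WU)=\so(U)\cap\so(W)\subseteq\so(W)$. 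I would also record the \emph{local lifting} property supplied by ampleness: for every $x\in\mathcal{S}^{(0)}$ and every open $O\ni x$ there is some $U\in E(\mathbf{KB}(\mathcal{S}))$ with $x\in\so(U)\subseteq O$, obtained by choosing an idempotent $e$ over $x$ and a small compact-open bisection around $e$ inside $\so^{-1}(O)$.

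With these in hand the filter axioms for $\psi(x)$ are immediate: it is nonempty by local lifting, closed under products because $x\in\so(U)\cap\so(V)=\so(UV)$, upward closed because $\so(U)\subseteq\so(W)$ whenever $U\leq W$, and proper because $\so(\varnothing)=\varnothing$. To upgrade to an ultrafilter I would invoke Lemma \ref{lem:description.of.ultrafilters.by.nonzero.products}: assuming $x\notin\so(U)$, I use zero-dimensionality and Hausdorffness to separate $x$ from the compact-open set $\so(U)$ by a compact-open neighbourhood, lift it to some $V\in\psi(x)$ with $\so(U)\cap\so(V)=\varnothing$, so that $UV=0$; contrapositively $0\notin U\psi(x)$ forces $U\in\psi(x)$, which is exactly the criterion of that lemma.

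Next I would establish that $\psi$ is a bijection and derive the inverse formula simultaneously. Injectivity again uses separation by a compact-open bisection image. For surjectivity, given an ultrafilter $\mathfrak{F}$, the sets $\{\so(U):U\in\mathfrak{F}\}$ are compact (closed in the Hausdorff space $\mathcal{S}^{(0)}$) and have the finite intersection property, since a finite product of members of $\mathfrak{F}$ lies in $\mathfrak{F}$ and is nonzero; compactness then yields a point $x_0\in\bigcap_{U\in\mathfrak{F}}\so(U)$, whence $\mathfrak{F}\subseteq\psi(x_0)$ and maximality of $\mathfrak{F}$ gives $\mathfrak{F}=\psi(x_0)$. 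Injectivity shows the intersection is the singleton $\{x_0\}$, which proves both the displayed formula for $\psi^{-1}$ and its uniqueness.

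Finally, $\psi$ is a homeomorphism. Continuity is formal, since $\psi^{-1}(X[U])=\so(U)$ is open. For openness I would use that the sets $\so(U)$ form a basis of $\mathcal{S}^{(0)}$ (by local lifting together with ampleness) and that $\psi(\so(U))=X[U]$ — the latter using surjectivity of $\psi$ — so that images of basic open sets are basic open. The main obstacle is the bookkeeping forced by the fact that $\so$ need not be injective on $E(\mathcal{S})$ (there may be several idempotents over a single vertex, as in $\mathcal{I}(\id_X)$); the whole argument must therefore be phrased in terms of the images $\so(U)$ rather than identifying $E(\mathcal{S})$ with $\mathcal{S}^{(0)}$, and it is precisely in the local lifting and separation steps that zero-dimensionality of $\mathcal{S}^{(0)}$ is essential.
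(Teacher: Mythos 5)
Your proof is correct, and it diverges from the paper's in a genuinely different way on the ultrafilter step. The paper never verifies the ultrafilter criterion directly: it first observes that Hausdorffness of $\mathcal{S}^{(0)}$ gives the antisymmetry property $\psi(x)\subseteq\psi(y)\Rightarrow x=y$, then uses the Cantor/finite-intersection argument (the same one you use for surjectivity) to show that \emph{every} proper filter of $E(\mathbf{KB}(\mathcal{S}))$ is contained in some $\psi(y)$, and finally sandwiches: by Zorn's lemma $\psi(x)$ sits inside an ultrafilter $\mathfrak{F}$, which in turn sits inside some $\psi(y)$; antisymmetry forces $x=y$ and hence $\psi(x)=\mathfrak{F}$ is an ultrafilter. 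In particular the paper needs no appeal to zero-dimensionality at this step, only Hausdorffness and compactness. You instead verify the criterion of Lemma \ref{lem:description.of.ultrafilters.by.nonzero.products} directly, separating $x$ from the compact set $\so(U)$ by a compact-open set and lifting it to an idempotent bisection $V\in\psi(x)$ with $UV=0$; this is exactly where zero-dimensionality enters your argument. What your route buys is explicitness: you identify $E(\mathbf{KB}(\mathcal{S}))$ concretely, isolate the local-lifting property, and, importantly, you actually prove the homeomorphism claims (continuity via $\psi^{-1}(X[U])=\so(U)$, openness via $\psi(\so(U))=X[U]$ together with the fact that the sets $\so(U)$ form a basis of $\mathcal{S}^{(0)}$), which the paper's proof leaves entirely implicit. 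What the paper's route buys is brevity: the sandwich argument recycles a single compactness lemma to get surjectivity and maximality simultaneously. One small point to tighten in your write-up: in the local-lifting step the compact-open bisection around the idempotent $e$ must be taken inside $E(\mathcal{S})\cap\so^{-1}(O)$, not merely inside $\so^{-1}(O)$, so that it is itself an idempotent of $\mathbf{KB}(\mathcal{S})$; this is harmless since $E(\mathcal{S})$ is open by Corollary \ref{cor:propertiesopeninversesemigroupoid}.
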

\begin{proof}
    First note that, as $\mathcal{S}^{(0)}$ is Hausdorff, then $\psi(x)\subseteq\psi(y)$ if and only if $x=y$. 
    
    We thus only need to prove that every proper filter $\mathfrak{F}$ of $E(\mathbf{KB}(\mathcal{S}))$ is contained in $\psi(x)$ for some $x$. As $\mathfrak{F}$ is downwards directed, then the family $\left\{\so(U):U\in\mathfrak{F}\right\}$ of compact subsets of the Hausdorff space $\mathcal{S}^{(0)}$ has the finite intersection property, so Cantor's Intersection Theorem implies that its intersection is nonempty. Any element $x\in\bigcap_{U\in\mathfrak{F}}\so(U)$ satisfies $\mathfrak{F}\subseteq\psi(x)$.
    
    This in fact implies that $\psi(x)$ is an ultrafilter for any $x\in\mathcal{S}^{(0)}$: By Zorn's lemma and the previous paragraph, there are an ultrafilter $\mathfrak{F}$ and $y\in\mathcal{S}^{(0)}$ such that $\psi(x)\subseteq\mathfrak{F}\subseteq\psi(y)$. By the first paragraph of the proof and maximality of $\mathcal{F}$, $\psi(x)=\psi(y)=\mathfrak{F}$ is an ultrafilter. The argument in the previous paragraph implies the statement about $\psi^{-1}$.\qedhere
\end{proof}

\subsection{The category \texorpdfstring{$\Sigma\cat{-Ord}$}{Σ-Ord}} A \emph{morphism} of $\Sigma$-ordered inverse semigroups is a semigroup homomorphism $\theta\colon S\to T$ which further satisifies:
\begin{enumerate}[label=(\roman*)]
    \item\label{def:morphismofsigmaordered.zero} $\theta(0)=0$;
    \item\label{def:morphismofsigmaordered.monotone} $\theta$ is $\subseteq$-monotone: if $a\subseteq b$ in $S$ then $\phi(a)\subseteq\phi(b)$ in $T$;
    \item\label{def:morphismofsigmaordered.cup} $\theta$ is a $\cup$-morphism: if $a\cup b$ exists in $S$, then $\theta(a)\cup\theta(b)$ exists in $T$, and $\theta(a\cup b)=\theta(a)\cup\theta(b)$;
    \item\label{def:morphismofsigmaordered.weaklymeet} For all $a,b\in S$ and $t\in T$, if $t\subseteq \theta(a),\theta(b)$, then there exists $c\subseteq a,b$ such that $t\subseteq\theta(c)$;
    \item\label{def:morphismofsigmaordered.proper} $\theta$ is \emph{proper}: For all $t\in T$, there exist $t_1,\ldots,t_n\in T$ and $s_1,\ldots,s_n\in S$ such that
    \[t=\bigcup_{i=1}^n t_i\qquad\text{and}\qquad t_i\subseteq\theta(s_i)\text{ for all }i=1,\ldots,n.\]
    \item\label{def:morphismofsigmaordered.interpolators} $\theta$ preserves interpolators: If $a\leq b$, then $\theta(b|a)=\theta(b)|\theta(a)$.
\end{enumerate}

It is easy to check that morphisms of $\Sigma$-ordered inverse semigroups are stable under composition. We thus define the category $\cat{$\Sigma$-Ord}$ of $\Sigma$-ordered inverse semigroups and their morphisms.

Property \ref{def:morphismofsigmaordered.weaklymeet} above was considered in \cite[p.\ 134]{MR3077869}, as part of the definition of \emph{callitic morphisms} for distributive inverse semigroups. It is a strengthening of the condition that $\theta$ preserves $\land$, and is necessary since we consider non-Hausdorff inverse semigroupoids.

To see that $\theta$ preserves $\land$, suppose that $a\cap b$ exists in $S$. Then $\theta(a\cap b)\subseteq\theta(a),\theta(b)$, as $\theta$ is $\subseteq$-monotone. If $u\subseteq \theta(a),\theta(b)$, then by \ref{def:morphismofsigmaordered.weaklymeet} there exists $c\subseteq a\cap b$ such that $\theta(u)\subseteq\theta(c)\subseteq\theta(a\cap b)$, so $\theta(a\cap b)$ is the largest $\subseteq$-lower bound of $\left\{a,b\right\}$, i.e., $\theta(a\cap b)=\theta(a)\cap\theta(b)$.

It is important to note that there are semigroup homomorphisms, between $\Sigma$-ordered inverse semigroups, which satisfy Properties \ref{def:morphismofsigmaordered.zero}-\ref{def:morphismofsigmaordered.proper} but not \ref{def:morphismofsigmaordered.interpolators}.

The next example shows that \ref{def:morphismofsigmaordered.weaklymeet} and \ref{def:morphismofsigmaordered.interpolators} are necessary.

\begin{example}
Let $L_3=\left\{0,1,2\right\}$ be the lattice with $0<1<2$, and the compatible order $x\subseteq y\iff x=0$ or $x=y$, and let $L_2=\left\{0,1\right\}$, as an ideal of $L_3$, and the restriction of $\subseteq$. Both $L_3$ and $L_2$ are $\Sigma$-ordered inverse semigroups. ($L_3$ is isomorphic to $\mathbf{KB}(L_2)$, where $L_2$ is seen simply as an inverse semigroup, and $L_2$ is isomorphic to $\mathbf{KB}(\left\{0\right\})$.)

The map $\theta\colon L_3\to L_2$, $\theta(0)=0$, $\theta(1)=\theta(2)=1$, satisfies all of \ref{def:morphismofsigmaordered.zero}-\ref{def:morphismofsigmaordered.interpolators} except \ref{def:morphismofsigmaordered.weaklymeet}.

Similarly, the map $\eta\colon L_3\to L_2$, $\eta(0)=\eta(1)=0$, $\eta(2)=1$, satisfies all of \ref{def:morphismofsigmaordered.zero}-\ref{def:morphismofsigmaordered.proper} but not \ref{def:morphismofsigmaordered.interpolators}
\end{example}

\subsection{The category
\texorpdfstring{$\cat{Amp}_\star$}{Amp*}}
A homomorphism $\phi\colon\mathcal{S}\to\mathcal{T}$ of inverse semigroupoids is \emph{star-injective} if for all $a,b\in\mathcal{S}$, $\so(a)=\so(b)$ and $\phi(a)=\phi(b)$ implies $a=b$ (i.e., $\theta$ is injective on all fibers), and \emph{star-surjective} if for all $t\in\mathcal{T}$ and all $a\in\mathcal{S}$, if $\so(t)=\so(\phi(a))$, then there exists $b\in\mathcal{S}$ with $\so(b)=\so(a)$ and $\phi(b)=t$.

If $\phi$ is both star-injective and star-surjective, we say that $\phi$ is a covering homomorphism. We define $\cat{Amp}_\star$ as the category of ample inverse semigroupoids and continuous, proper covering homomorphisms.

\subsection{The functor \texorpdfstring{$\mathbb{K}\colon\cat{Amp}_\star^{\mathrm{op}}\to\cat{$\Sigma$-Ord}$}{K:Amp*op→Σ-Ord}}

On objects, to each ample inverse semigroup $\mathcal{S}$, we set $\mathbb{K}(\mathcal{S})=(\mathbf{KB}(\mathcal{S}),\subseteq)$. To define $\mathbb{K}$ on morphisms, let us first state the necessary lemma.

\begin{lemma}\label{lem:functorkonmorphisms}
    Let $\phi\colon\mathcal{S}\to\mathcal{T}$ is a proper continuous covering homomorphism of ample semigroupoids. If $A\in\mathbf{KB}(\mathcal{T})$ then $\phi^{-1}(A)\in\mathbf{KB}(\mathcal{S})$. The map $A\mapsto\phi^{-1}(A)$ is a morphism of $\Sigma$-ordered inverse semigroups.
\end{lemma}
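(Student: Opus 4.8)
The plan is to verify first that $\phi^{-1}(A)$ is a compact-open bisection, and then to check the six defining properties of a morphism of $\Sigma$-ordered inverse semigroups for the assignment $\mathbb{K}(\phi)\colon A\mapsto\phi^{-1}(A)$. That $\phi^{-1}(A)$ is open is immediate from continuity, and that it is compact follows at once from properness, since $A$ is compact. For the bisection property, suppose $x,y\in\phi^{-1}(A)$ with $\so(x)=\so(y)$; then $\phi(x),\phi(y)\in A$ and $\so(\phi(x))=\phi^{(0)}(\so(x))=\phi^{(0)}(\so(y))=\so(\phi(y))$, so $\phi(x)=\phi(y)$ because $A$ is a bisection, and then $x=y$ by star-injectivity. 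Hence $\so$ is injective on $\phi^{-1}(A)$. Since $\phi$ preserves inverses (automatic for inverse semigroupoid homomorphisms), $\phi^{-1}(A)^*=\phi^{-1}(A^*)$ with $A^*\in\mathbf{KB}(\mathcal{T})$, and injectivity of $\ra$ on $\phi^{-1}(A)$ is the same as injectivity of $\so$ on $\phi^{-1}(A)^*$, so the previous argument applied to $A^*$ finishes the point. Thus $\phi^{-1}(A)\in\mathbf{KB}(\mathcal{S})$.

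The central point is that $\mathbb{K}(\phi)$ is a semigroup homomorphism, i.e.\ $\phi^{-1}(AB)=\phi^{-1}(A)\phi^{-1}(B)$. The inclusion $\supseteq$ is formal: if $c=c_1c_2$ with $c_1,c_2$ in $\phi^{-1}(A),\phi^{-1}(B)$ and $(c_1,c_2)\in\mathcal{S}^{(2)}$, then $\phi(c)=\phi(c_1)\phi(c_2)\in AB$. The reverse inclusion is the main obstacle, and here the groupoid intuition fails: given $c$ with $\phi(c)=ab$ (where $a\in A$, $b\in B$), the naive choice $c_1=c\,(\text{lift of }b)^*$ projects to $a(bb^*)$, which is strictly below $a$ and hence need not lie in the bisection $A$, precisely because $bb^*$ is not a unit in a semigroupoid. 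The fix is to lift both factors honestly: using star-surjectivity, choose $c_2$ with $\so(c_2)=\so(c)$ and $\phi(c_2)=b$ (legitimate since $\so(b)=\so(ab)=\so(\phi(c))$), and then, applying star-surjectivity to $a$ and $c_2^*$ (whose source is $\ra(c_2)$, and $\so(a)=\ra(b)=\so(\phi(c_2^*))$), choose $c_1$ with $\so(c_1)=\ra(c_2)$ and $\phi(c_1)=a$. Then $c_1c_2$ is defined, $c_1\in\phi^{-1}(A)$, $c_2\in\phi^{-1}(B)$, and $\phi(c_1c_2)=ab=\phi(c)$ while $\so(c_1c_2)=\so(c_2)=\so(c)$; since $c_1c_2$ and $c$ lie in the same source fibre and have equal image, star-injectivity gives $c_1c_2=c$. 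This proves $\supseteq$.

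It remains to check properties \ref{def:morphismofsigmaordered.zero}--\ref{def:morphismofsigmaordered.interpolators}. Properties \ref{def:morphismofsigmaordered.zero} ($\phi^{-1}(\varnothing)=\varnothing$) and \ref{def:morphismofsigmaordered.monotone} ($A\subseteq B\Rightarrow\phi^{-1}(A)\subseteq\phi^{-1}(B)$) are immediate, as $\subseteq$ is literal set inclusion and preimages preserve it. For \ref{def:morphismofsigmaordered.cup}, when $A\cup B$ exists in $\mathbf{KB}(\mathcal{T})$ it is the set union, and $\phi^{-1}(A\cup B)=\phi^{-1}(A)\cup\phi^{-1}(B)$; the latter is a bisection by the first paragraph applied to $A\cup B$, so it is the join in $\mathbf{KB}(\mathcal{S})$. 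Properties \ref{def:morphismofsigmaordered.weaklymeet} and \ref{def:morphismofsigmaordered.proper} are where ampleness enters. For \ref{def:morphismofsigmaordered.weaklymeet}, given $W\in\mathbf{KB}(\mathcal{S})$ with $W\subseteq\phi^{-1}(A),\phi^{-1}(B)$, one has $\phi(W)\subseteq A\cap B$, with $\so(\phi(W))$ compact inside the open set $\so(A\cap B)\subseteq\mathcal{T}^{(0)}$; local compactness and zero-dimensionality of $\mathcal{T}^{(0)}$ yield a compact-open $V$ with $\so(\phi(W))\subseteq V\subseteq\so(A\cap B)$, and $C\defeq A\cap\so^{-1}(V)\in\mathbf{KB}(\mathcal{T})$ then satisfies $C\subseteq A,B$ (using that $A$ is a bisection) and $W\subseteq\phi^{-1}(C)$. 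For \ref{def:morphismofsigmaordered.proper}, I would cover each $\phi(w)$ ($w\in W$) by a compact-open bisection $A_w$ of $\mathcal{T}$ (these form a basis, by étaleness and zero-dimensionality of $\mathcal{T}^{(0)}$); the open sets $W\cap\phi^{-1}(A_w)$ cover the compact $W$, and, transporting through the homeomorphism $\so|_W$ onto the compact-open $\so(W)\subseteq\mathcal{S}^{(0)}$, a finite refinement by compact-open bisections $W_1,\dots,W_n$ with $W_i\subseteq\phi^{-1}(A_{w_i})$ exists, giving the required decomposition.

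Finally, for \ref{def:morphismofsigmaordered.interpolators} I would use the explicit description of interpolators in $\mathbf{KB}$ recorded in the example following the definition of interpolators, namely that for $A\le B$ one has $B|A=\so^{-1}(\so(A))\cap B$. Then $\phi^{-1}(B|A)=\phi^{-1}(\so^{-1}(\so(A)))\cap\phi^{-1}(B)$, and a point $x\in\phi^{-1}(B)$ lies in $\phi^{-1}(\so^{-1}(\so(A)))$ exactly when $\phi^{(0)}(\so(x))\in\so(A)$; by star-surjectivity this holds if and only if there is $w\in\phi^{-1}(A)$ with $\so(w)=\so(x)$, i.e.\ iff $\so(x)\in\so(\phi^{-1}(A))$. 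Hence $\phi^{-1}(B|A)=\so^{-1}(\so(\phi^{-1}(A)))\cap\phi^{-1}(B)=\phi^{-1}(B)\,|\,\phi^{-1}(A)$, as required. The one genuinely delicate step is the multiplicativity argument of the second paragraph; the remaining verifications are routine once ampleness is invoked to manufacture compact-open sets and star-surjectivity to lift sources.
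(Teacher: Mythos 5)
Your proof is correct, and its core follows the same route as the paper's: compactness and openness of $\phi^{-1}(A)$ from properness and continuity, the bisection property from star-injectivity, multiplicativity of $A\mapsto\phi^{-1}(A)$ by lifting first $b$ and then $a$ via star-surjectivity and concluding with star-injectivity, and preservation of interpolators via the formula $B|A=\so^{-1}(\so(A))\cap B$ combined with star-surjectivity (note that you state this formula correctly, whereas the paper's example records it with the roles of the two bisections garbled). Two points where you diverge from the paper's write-up are worth recording. First, the paper only verifies multiplicativity, properness and interpolators, leaving the remaining axioms unchecked; your verification of the weakly-meet property is a genuine argument (compressing the compact set $\so(\phi(W))$ into a compact-open $V\subseteq\so(A\cap B)$ and taking $C=A\cap\so^{-1}(V)$), and it is correct. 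Second, for properness the paper sets $K_i=\phi^{-1}(A_i)\cap K$ and treats these as elements of $\mathbf{KB}(\mathcal{S})$; but an intersection of two compact-open bisections need not be compact when $\mathcal{S}$ itself is non-Hausdorff, which the ampleness hypothesis permits (only $\mathcal{S}^{(0)}$ is assumed Hausdorff, and the paper explicitly works with non-Hausdorff semigroupoids elsewhere). Your alternative — pulling the open cover $\left\{W\cap\phi^{-1}(A_w)\right\}$ back through the homeomorphism $\so|_W$ onto the compact-open set $\so(W)$ in the Hausdorff zero-dimensional space $\mathcal{S}^{(0)}$, refining there by compact-open sets, and transporting back — produces pieces $W_i$ that genuinely lie in $\mathbf{KB}(\mathcal{S})$, so your argument is the more robust one on this point. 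In short: same skeleton, but your version is more complete, and its properness step repairs a subtle compactness gap in the paper's own proof.
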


We thus define the functor $\mathbb{K}$ on a morphism $\phi\colon A\to B$ of $\cat{Amp}_\star$ as $\mathbb{K}(\phi)(A)=\phi^{-1}(A)$.

\begin{proof}[Proof of Lemma \ref{lem:functorkonmorphisms}]
Suppose $A\in\mathbf{KB}(\mathcal{T})$. As $\phi$ is continuous and proper, then $\phi^{-1}(A)$ is open and compact. Let us prove that it is a bisection of $\mathcal{S}$.

Suppose $a,b\in\phi^{-1}(A)$ and $\so(a)=\so(b)$. Then $\so(\phi(a))=\so(\phi(b))$, andas $\phi(a)$ and $\phi(b)$ belong to the bisection $A$ then $\phi(a)=\phi(b)$. As $\phi$ is star-injective then $a=b$. Thus the source map is injective on $\phi^{-1}(A)$, and similarly the range map is injective on $\phi^{-1}(A)$.

We now need to prove that $\mathbb{K}(\phi)\colon A\mapsto\phi^{-1}(A)$ is a semigroup homomorphism, i.e., that $\phi^{-1}(AB)=\phi^{-1}(A)\phi^{-1}(B)$ for all $A,B\in\mathbf{KB}(\mathcal{T})$. The inclusion $\phi^{-1}(A)\phi^{-1}(B)\subseteq\phi^{-1}(AB)$ is immediate as $\phi$ is a semigroupoid homomorphism. Conversely, suppose that $z\in\phi^{-1}(AB)$. Then there exist $(a,b)\in (A\times B)\cap\mathcal{T}^{(2)}$ such that $\phi(z)=ab$, and in particular $\so(\phi(z))=\so(b)$. As $\phi$ is star-surjective, there exists $p_b\in\mathcal{S}$ with $\so(p_b)=\so(z)$ and $\phi(p_b)=b$. Then $\so(\phi(pp_b^*))=\so(b^*)=\so(a)$, so again since $\phi$ is star-surjective then there exists $p_a\in\mathcal{S}$ with $\so(p_a)=\so(p_b^*)$ and $\phi(p_a)=a$.

Then $\so(p_ap_b)=\so(p_b)=\so(z)$, and $\phi(p_ap_b)=ab=\phi(x)$. As $\phi$ is star-injective then $x=p_ap_b\in\phi^{-1}(A)\cap\phi^{-1}(B)$.

The first non-trivial property that we need to prove for $\mathbb{K}(\phi)$, to conclude that it is a morphism of $\Sigma$-ordered inverse semigroups, is properness. Suppose $K\in\mathbf{KB}(\mathcal{S})$. Then $\phi(K)$ is compact in $\mathcal{T}$, so we may cover it by finitely many bisections $A_1,\ldots,A_n\in\mathbf{KB}(\mathcal{T})$. Setting $K_i=\phi^{-1}(A_i)\cap K$, we have $K=\bigcup_{i=1}^n K_i$ and $K_i\subseteq\mathbb{K}(\phi)(A_i)$. This proves that $\mathbb{K}(\phi)$ is proper.

The fact that $\mathbb{K}(\phi)$ preserves interpolators it equivalent to the equality \[\phi^{-1}(\so^{-1}(\so(B))\cap A)=\so^{-1}(\so(\phi^{-1}(B)))\cap\phi^{-1}(A)\]
for arbitrary $A,B\in\mathbf{KB}(\mathcal{T})$, and this is easily proven using star-surjectivity.\qedhere
\end{proof}

\subsection{The functor \texorpdfstring{$\mathbb{P}\colon\cat{$\Sigma$-Ord}^{\mathrm{op}}\to\cat{Amp}$}{P:Σ-Ordop→Amp*}}

As a motivation, suppose that the functor $\mathbb{P}$ is defined in a manner that $\mathbb{P}(\mathbb{KB}(\mathcal{S}))\cong\mathcal{S}$ for every ample semigroupoid $\mathcal{S}$. Given another ample semigroupoid $\mathcal{T}$, a morphism $\theta\colon\mathbf{KB}(\mathcal{S})\to\mathbf{KB}(\mathcal{T}))$ thus induces a morphism $\mathbb{P}(\theta)\colon\mathcal{T}\to\mathcal{S}$, and in particular a map between the underlying vertex spaces $\mathcal{T}^{(0)}$ and $\mathcal{S}^{(0)}$, or alternatively by the content of Subsection \ref{subsec:generalprocedure}, between $\Omega(E(\mathbf{KB}(\mathcal{T})))$ and $\Omega(E(\mathbf{KB}(\mathcal{S})))$. This is the map we deal with first.

Let us temporarily fix a morphism of $\Sigma$-inverse semigroups $\theta\colon(S,\subseteq)\to(T,\subseteq)$. Then $\theta$ restricts to a semigroup homomorphism $\theta|_{E(S)}\colon E(S)\to E(T)$. The construction of the maps $\Omega(E(T))\to\Omega(E(S))$ cannot be done just by taking pre-images, contrary to as in more classical cases (e.g. \cite{MR3077869}).

\begin{lemma}\label{lem:maponspectra}
    If $\mathfrak{F}\in\Omega(E(T))$, then $\theta^{-1}(\mathfrak{F})$ is nonempty, and the set \[\mathbb{P}(\theta)^{(0)}(\mathfrak{F})=\left\{e\in E(S):0\not\in e\theta|_{E(S)}^{-1}(\mathfrak{F})\right\}\]
    is an ultrafilter containing $\theta|_{E(S)}^{-1}(\mathfrak{F})$. Moreover, the map $\mathbb{P}(\theta)^{(0)}$ thus defined is continuous and proper.
\end{lemma}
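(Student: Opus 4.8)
The plan is to abbreviate $\varphi\defeq\theta|_{E(S)}\colon E(S)\to E(T)$ and $G\defeq\varphi^{-1}(\mathfrak{F})$, and to prove that the prescribed set $H\defeq\mathbb{P}(\theta)^{(0)}(\mathfrak{F})=\{e\in E(S):0\notin eG\}$ in fact coincides with $G$ and is an ultrafilter. First I would record the easy structural facts about $G$. Since $\varphi$ is a semilattice homomorphism with $\varphi(0)=0\notin\mathfrak{F}$, the set $G$ is a proper filter: it is upward closed and downward directed because $\mathfrak{F}$ is and because $\theta(ef)=\theta(e)\theta(f)$ is the meet of $\theta(e),\theta(f)$ in $E(T)$. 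It is moreover prime, since $\theta$ is a $\cup$-morphism (property \ref{def:morphismofsigmaordered.cup}) and $\mathfrak{F}$ is prime by Lemma \ref{lem:ultrafilters.are.prime}. To see that $G$ is nonempty I would use properness: choosing any $f\in\mathfrak{F}$ and writing $f=\bigcup_i t_i$ with $t_i\subseteq\theta(s_i)$ (property \ref{def:morphismofsigmaordered.proper}), primeness of $\mathfrak{F}$ yields some $t_i\in\mathfrak{F}$; as $t_i\le f$ is idempotent, $t_i\subseteq\theta(s_i)$ forces $t_i\le\theta(s_i^*s_i)$ (compatible orders pass to $a^*a$), so $\theta(s_i^*s_i)\in\mathfrak{F}$ and $s_i^*s_i\in G$.

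The inclusion $G\subseteq H$ is immediate, since for $e,g\in G$ we have $eg\in G$ and hence $eg\neq0$. The hard part, and the technical core of the lemma, is the reverse inclusion $H\subseteq G$: showing that ``$eg\neq0$ for all $g\in G$'' already forces $\theta(e)\in\mathfrak{F}$. I would argue by contradiction. If $\theta(e)\notin\mathfrak{F}$, then Lemma \ref{lem:description.of.ultrafilters.by.nonzero.products} gives $h\in\mathfrak{F}$ with $\theta(e)h=0$, and by properness and primeness (as above) I may replace $h$ by a component with $h\le\theta(f_0)$, $f_0\in G$, and still $\theta(e)h=0$, hence $\theta(ef_0)h=0$. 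Since $e\in H$ and $f_0\in G$ we have $ef_0\neq0$. The plan is then to split $f_0$ via interpolation: set $z=f_0|(ef_0)$, so $ef_0\le z\subseteq f_0$, and let $c=f_0\setminus z$ be the relative complement (property \ref{def:sigmaorder.ESrelativecomplements}). Applying $\theta$ and using that $\theta$ preserves interpolators (property \ref{def:morphismofsigmaordered.interpolators}) and is a $\cup$-morphism gives $\theta(ef_0)y=0\iff\theta(z)y=0$ together with $\theta(z)\cup\theta(c)=\theta(f_0)$ and $\theta(z)\theta(c)=0$. From $\theta(ef_0)h=0$ I obtain $\theta(z)h=0$, and then distributivity (property \ref{def:sigmaorder.distributivity}) forces $h\le\theta(c)$, so $\theta(c)\in\mathfrak{F}$ and $c\in G$. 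But $ec=ef_0c\le zc=0$ contradicts $e\in H$. Hence $\theta(e)\in\mathfrak{F}$, so $H\subseteq G$ and therefore $H=G$.

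With $H=G$ established, the remaining assertions are quick. That $H$ is an ultrafilter follows directly from Lemma \ref{lem:description.of.ultrafilters.by.nonzero.products}: if $0\notin eH$ then in particular $0\notin eG$, so $e\in H$ by definition of $H$; and $H\supseteq\varphi^{-1}(\mathfrak{F})=G$ trivially (indeed with equality). For continuity I would compute, for a basic open set $X[e]\subseteq\Omega(E(S))$,
\[(\mathbb{P}(\theta)^{(0)})^{-1}(X[e])=\{\mathfrak{F}:e\in G\}=\{\mathfrak{F}:\theta(e)\in\mathfrak{F}\}=X[\theta(e)],\]
which is open; hence $\mathbb{P}(\theta)^{(0)}$ is continuous. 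Properness then follows formally: each $X[e]$ is compact (as established for the spectrum) and its preimage $X[\theta(e)]$ is compact, so an arbitrary compact $K\subseteq\Omega(E(S))$, being covered by finitely many $X[e_i]$, has preimage contained in the compact set $\bigcup_i X[\theta(e_i)]$; since that preimage is closed (by continuity, as $K$ is closed in the Hausdorff spectrum), it is compact.

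I anticipate that essentially all the difficulty is concentrated in the inclusion $H\subseteq G$, where the interplay of interpolators, relative complements, and the $\cup$-morphism property is precisely what is required. This is the step for which axiom \ref{def:morphismofsigmaordered.interpolators} was introduced, and the place where the gap between the set-inclusion order $\subseteq$ and the canonical order $\le$ (i.e.\ the non-Hausdorff phenomena) is absorbed; everything else is bookkeeping with filters and the basis $\{X[e]\}$.
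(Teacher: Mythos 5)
Your proof is correct, but it is organized around a different, and in fact more economical, central claim than the paper's. You prove outright that $\mathbb{P}(\theta)^{(0)}(\mathfrak{F})$ \emph{equals} $\theta|_{E(S)}^{-1}(\mathfrak{F})$, and then read off everything else (filter, ultrafilter, containment, continuity, properness) from this single equality. The paper never isolates this equality: it first proves that $\mathbb{P}(\theta)^{(0)}(\mathfrak{F})$ is closed under products by a \emph{separate} interpolator argument (fixing $p\in\theta|_{E(S)}^{-1}(\mathfrak{F})$ and splitting it along $q_f=p|(fp)$), deduces ultrafilterness from Lemma \ref{lem:description.of.ultrafilters.by.nonzero.products}, and only afterwards, inside the proof of continuity, establishes the inclusion $\mathbb{P}(\theta)^{(0)}(\mathfrak{F})\subseteq\theta|_{E(S)}^{-1}(\mathfrak{F})$ (the identity $\left(\mathbb{P}(\theta)^{(0)}\right)^{-1}(X[e])=X[\theta(e)]$ is exactly this inclusion together with the trivial one). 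Your argument for that inclusion is essentially the paper's continuity argument: use properness and primeness (Lemma \ref{lem:ultrafilters.are.prime}) to replace the annihilator $h$ of $\theta(e)$ by one dominated by some $\theta(f_0)$ with $f_0$ in the preimage filter, then split via an interpolator and a relative complement, and use distributivity to reach a contradiction. The variations are minor: the paper invokes Property \ref{def:sigmaorder}\ref{def:sigmaorder.zerojoins} to form the join $\theta(e)\cup a$ and interpolates $g|(eg)$ in $S$, whereas you interpolate $z=f_0|(ef_0)$ inside $E(S)$ and never need \ref{def:sigmaorder}\ref{def:sigmaorder.zerojoins}; your endgame ($ec=0$ with $c$ in the preimage filter, contradicting $e\in\mathbb{P}(\theta)^{(0)}(\mathfrak{F})$) differs only cosmetically from the paper's (neither $\theta(p)$ nor $\theta(g\setminus p)$ in $\mathfrak{F}$ while their join is). What your organization buys: once the equality is known, closure under products is automatic, since the preimage of a filter under a semilattice homomorphism is a filter, so the paper's first interpolator argument becomes redundant and the whole lemma rests on one application of the interpolation machinery; you also spell out the passage of properness from basic compact-open sets to arbitrary compact sets, which the paper glosses over. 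Two trivial remarks: your observation that the preimage filter is prime is correct but never used, and when applying Lemma \ref{lem:description.of.ultrafilters.by.nonzero.products} to conclude ultrafilterness you are implicitly using that $\mathbb{P}(\theta)^{(0)}(\mathfrak{F})$ is a proper filter, which your equality supplies.
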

\begin{proof}
    Choose any $t\in\mathfrak{F}$, so as $\theta$ is proper, there exist $t_1,\ldots,t_n\in E(T)$ and $s_1,\ldots,s_n\in E(S)$ such that $t=\bigcup_{i=1}^n t_i$ and $t_i\subseteq\phi(s_i)$ for each $i$. As $\mathfrak{F}$ is $\subseteq$-prime (Lemma \ref{lem:ultrafilters.are.prime}), for some $i$ we have $t_i\in\mathfrak{F}$, so $s_i\in\theta|_{E(S)}^{-1}(\mathfrak{F})$.
    
    Of course, $\mathbb{P}(\theta)^{(0)}(\mathfrak{F})$ is upwards closed, does not contain $0$, and contains $\theta^{-1}(\mathfrak{F})$, so to prove that it is a filter we need to prove that it is closed under products. For this, fix an arbitrary $p\in\theta|_{E(S)}^{-1}(\mathfrak{F})$. Suppose that $f,g\in\mathbb{P}(\theta)^{(0)}(\mathfrak{F})$. Then $fp\leq p$, so we may consider the interpolator $q_f\defeq p|(fp)$.
    
    Since $\theta(p)=\theta(q_f)\cup\theta(p\setminus q_f)$, one of $q_f$ or $(p\setminus q_f)$ belongs to $\theta^{-1}(\mathfrak{F})$ (by Lemma \ref{lem:ultrafilters.are.prime}). We have $fp(p\setminus q_f)\leq q_f(p\setminus q_f)=q_f\cap(p\setminus q_f)=0$, and because $f\in\mathbb{P}(\theta)^{(0)}(\mathfrak{F})$ then $p(p\setminus q_f)\not\in\theta^{-1}(\mathfrak{F})$, so $(p\setminus q_f)\not\in\theta^{-1}(\mathfrak{F})$ as well.
    
    We therefore have $q_f\in\theta^{-1}(\mathfrak{F})$. Suppose now, in order to obtain a contradiction, that $fg\not\in\mathbb{P}(\theta)^{(0)}(\mathfrak{F})$, so that we may find $e\in\theta^{-1}(\mathfrak{F})$ such that $fge=0$. Then $fp(eg)=0$, implying $q_f(eg)=0$, contradicting the facts that $e,q_f\in\theta^{-1}(\mathfrak{F})$ and $g\in\mathbb{P}(\theta)^{(0)}(\mathfrak{F})$.
    
    From all of this, we conclude that $\mathbb{P}(\theta)^{(0)}(\mathfrak{F})$ is a proper filter of $E(S)$. In fact, its definition makes it clear, by Lemma \ref{lem:description.of.ultrafilters.by.nonzero.products}, that it is an ultrafilter in $E(S)$, i.e., an element of $\Omega(E(S))$.
    
    To prove that $\mathbb{P}(\theta)^{(0)}$ is continuous and proper, it is sufficient to prove that $\left(\mathbb{P}(\theta)^{(0)}\right)^{-1}(X[e])=X[\theta(e)]$, as it shows that preimages of basic compact open subsets of $\Omega(E(\mathbf{KB}(\mathcal{S})))$ are compact and open in $\Omega(E(\mathbf{KB}(\mathcal{T})))$. The inclusion $X[\theta(e)]\subseteq\left(\mathbb{P}(\theta)^{(0)}\right)^{-1}(X[e])$ is simple enough, however the converse is harder. Namely, we need to prove that if $e\in\mathbb{P}(\theta)^{(0)}(\mathfrak{F})$, then $\theta(e)\in\mathfrak{F}$.
    
    Suppose that this was not the case, so there exists $a\in\mathfrak{F}$ with $\theta(e)a=0$. By Property \ref{def:sigmaorder}\ref{def:sigmaorder.zerojoins}, the join $\theta(e)\cup a$ exists. As $\theta$ is proper, the same considerations as in the proof that $\theta^{-1}(\mathfrak{F})$ is nonempty allow us to assume that $\theta(e)\cup a\subseteq\theta(g)$, and in particular $\theta(e)=\theta(eg)$. Letting $p=g|(eg)$, we have $\theta(p)=\theta(g)|\theta(e)$. In particular, $\theta(e)a=0$ implies $\theta(p)a=0$, so $\theta(p)\not\in\mathfrak{F}$. On the other hand, $e(g\setminus p)=ge(g\setminus p)\leq p(g\setminus p)=0$, and since $e\in\mathbb{P}(\theta)^{(0)}(\mathfrak{F})$ then $g\setminus p\not\in\theta^{-1}(\mathfrak{F})$, i.e., $\theta(g\setminus p)\not\in\mathfrak{F}$.
    
    This is a contradiction, as neither $\theta(p)$ nor $\theta(g\setminus p)$ belong to the ultrafilter $\mathfrak{A}$ but $\theta(g)=\theta(p)\cup\theta(g\setminus p)$ does.
    
    We therefore conclude that $\left(\mathbb{P}(\theta)^{(0)}\right)^{-1}(X[e])=X[\theta(e)]$, as we wanted.\qedhere
\end{proof}

To finish the construction of the functor $\mathbb{P}$, let us work in a more general setting. Suppose that $\theta\colon S\curvearrowright E$ is an action of an inverse semigroup $S$ acts on a semilattice $E$ by order isomorphisms of ideals of $E$ (equivalently, this is action between semigroupoids). Then we may construct a ``dual'' action $\widehat{\theta}$ of $S$ on $\Omega(E)$. Namely, for $a\in S$, we set $\dom(\widehat{\theta}_a)\defeq\left\{\mathfrak{F}\in\Omega(E):\mathfrak{F}\cap\dom(\theta_a)\neq\varnothing\right\}$, and a bijection $\widehat{\theta}_a\colon\dom(\widehat{\theta}_a)\to\dom(\widehat{\theta}_{a^*})$, \[\widehat{\theta}_a(\mathfrak{F})=\theta_a(\mathfrak{F}\cap\dom(\theta_a))^{\uparrow}=\left\{u\in E:u\geq\theta_a(e)\text{ for some }e\in\mathfrak{F}\cap\dom(\theta_a)\right\}\]
Note that $\dom(\widehat{\theta}_a)=\bigcup_{e\in\dom(\theta_a)}X[e]$ is open in $\Omega(E)$, and that $\widehat{\theta}_a*(X[e])=X[\theta(e)]$, whenever $X[e]\subseteq$

In particular, the Munn representation $\mu\colon S\curvearrowright E(S)$ induces a continuous action $\widehat{\mu}\colon S\curvearrowright \Omega(E(S))$. Note that $\dom(\widehat{\mu}_a)=X[a^*a]$ for all $a\in S$. We then construct the semidirect product $S\ltimes_{\widehat{\mu}}\Omega(E(S))$.

As we are considering a $\Sigma$-ordered inverse semigroup $(S,\subseteq)$, the order $\subseteq$ induces a compatible order, also denoted $\subseteq$, on $S\ltimes_{\widehat{\mu}}\Omega(E(S))$, namely
\[(s,\mathfrak{F})\subseteq(t,\mathfrak{G})\iff e=f\text{ and }s\subseteq t.\]
Note that a basic open set of $S\ltimes_{\widehat{\mu}}\Omega(E(S))$ is of the form $\left\{s\right\}\times X[e]$ for some $e\in E(S)$, and
\[\left(\left\{s\right\}\times X[e]\right)^{\uparrow,\subseteq}=\bigcup\left\{\left\{t\right\}\times X[e]:t\in S,s\subseteq t\right\},\]
so upper closures (relative to $\subseteq$) of open sets are open, and similarly for lower closures. This means that $\subseteq$ is topologically compatible in $S\ltimes_{\widehat{\mu}}\Omega(E(S))$, hence the semigroupoid of germs
\[\mathbb{P}(S,\subseteq)\defeq \left(S\ltimes_{\widehat{\mu}}\Omega(E(S))\right)/\!\!\sim_{\subseteq}\]
is an étale inverse semigroupoid. The vertex space $\Omega(E(S))$ is Hausdorff, locally compact and zero-dimensional, and therefore $\mathbb{P}(S,\subseteq)$ is ample. This defined the functor $\mathbb{P}$ on objects.

To define $\mathbb{P}$ on morphisms, we need a lemma. Fix a morphism  of $\Sigma$-ordered inverse semigroups $\theta\colon(S,\subseteq)\to(T,\subseteq)$.

\begin{lemma}
    For every $[t,\mathfrak{F}]\in\mathbb{P}(T,\subseteq)$, there exists $s\in S$ such that $[\theta(s),\mathfrak{F}]=[t,\mathfrak{F}]$. If $[\theta(s_1),\mathfrak{F}]=[\theta(s_2),\mathfrak{F}]$, then $[s_1,\mathbb{P}^{(0)}(\theta)(\mathfrak{F}))]=[s_2,\mathbb{P}^{(0)}(\theta)(\mathfrak{F})$.
\end{lemma}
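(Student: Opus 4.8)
The plan is to reduce both assertions to the explicit description of the germ relation $\sim_{\subseteq}$ underlying $\mathbb{P}$. Recall that a pair $(w,\mathfrak{H})$ lies in $S\ltimes_{\widehat{\mu}}\Omega(E(S))$ exactly when $\mathfrak{H}\in\dom(\widehat{\mu}_w)=X[w^*w]$, i.e.\ $w^*w\in\mathfrak{H}$, and that $(w,\mathfrak{H})\subseteq(s,\mathfrak{F})$ means $\mathfrak{H}=\mathfrak{F}$ and $w\subseteq s$. Consequently, for a fixed ultrafilter, $[\theta(s),\mathfrak{F}]=[t,\mathfrak{F}]$ holds precisely when there is $w\in T$ with $w\subseteq\theta(s)$, $w\subseteq t$ and $w^*w\in\mathfrak{F}$, and an analogous criterion holds in $\mathbb{P}(S,\subseteq)$. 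Both parts of the lemma are then checked against this criterion; note that since $S$ and $T$ are genuine inverse semigroups here, all products are defined and there are no composability subtleties.

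For the existence statement I would start from a representative $[t,\mathfrak{F}]$, so $t^*t\in\mathfrak{F}$, and invoke properness (Property \ref{def:morphismofsigmaordered.proper}) to write $t=\bigcup_{i=1}^n t_i$ with $t_i\subseteq\theta(s_i)$ for suitable $s_i\in S$. Iterating Lemma \ref{lem:technicalpropertiesofsigmaorder}\ref{lem:technicalpropertiesofsigmaorder5} gives $t^*t=\bigcup_{i=1}^n t_i^*t_i$ (the partial joins existing by \ref{def:sigmaorder.conditionaljoins}, since each $t_i\subseteq t$), and since $\mathfrak{F}$ is $\subseteq$-prime by Lemma \ref{lem:ultrafilters.are.prime}, some $t_i^*t_i$ lies in $\mathfrak{F}$. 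For that index, taking $s=s_i$ and $w=t_i$ we have $w\subseteq\theta(s)$, $w\subseteq t$ and $w^*w\in\mathfrak{F}$, which is exactly the criterion yielding $[\theta(s),\mathfrak{F}]=[t,\mathfrak{F}]$.

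For the compatibility statement I would first note that the hypothesis forces $\theta(s_j^*s_j)=\theta(s_j)^*\theta(s_j)\in\mathfrak{F}$, so $s_j^*s_j\in\theta|_{E(S)}^{-1}(\mathfrak{F})\subseteq\mathbb{P}(\theta)^{(0)}(\mathfrak{F})$ by Lemma \ref{lem:maponspectra}; hence both classes $[s_j,\mathbb{P}(\theta)^{(0)}(\mathfrak{F})]$ are legitimate elements of $\mathbb{P}(S,\subseteq)$. Now $[\theta(s_1),\mathfrak{F}]=[\theta(s_2),\mathfrak{F}]$ provides $u\in T$ with $u\subseteq\theta(s_1),\theta(s_2)$ and $u^*u\in\mathfrak{F}$. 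The crucial step is to apply the ``weak meet'' Property \ref{def:morphismofsigmaordered.weaklymeet} with $a=s_1$, $b=s_2$, $t=u$, producing $c\in S$ with $c\subseteq s_1,s_2$ and $u\subseteq\theta(c)$. Since $\subseteq$ is preserved by products and inverses, $u^*u\subseteq\theta(c)^*\theta(c)=\theta(c^*c)$, and because $\subseteq$ refines the canonical order (Definition \ref{def:compatibleorder}\ref{def:compatibleorder1}) this gives $u^*u\leq\theta(c^*c)$; as $\mathfrak{F}$ is upward closed we obtain $\theta(c^*c)\in\mathfrak{F}$, i.e.\ $c^*c\in\theta|_{E(S)}^{-1}(\mathfrak{F})\subseteq\mathbb{P}(\theta)^{(0)}(\mathfrak{F})$. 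Taking $w=c$ then witnesses $[s_1,\mathbb{P}(\theta)^{(0)}(\mathfrak{F})]=[s_2,\mathbb{P}(\theta)^{(0)}(\mathfrak{F})]$.

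I expect the main obstacle to be the bookkeeping between the two orders in play: the compatible order $\subseteq$, which governs germs, and the canonical order $\leq$, with respect to which the ultrafilters $\mathfrak{F}$ and the spectral map $\mathbb{P}(\theta)^{(0)}$ are defined. The implication $\subseteq\,\Rightarrow\,\leq$ is precisely the bridge allowing membership of an idempotent in $\mathfrak{F}$ to be transferred along $\subseteq$-inequalities, while Property \ref{def:morphismofsigmaordered.weaklymeet} is exactly the hypothesis tailored to manufacture the common $\subseteq$-lower bound $c$ in the possibly non-Hausdorff setting, where a genuine $\subseteq$-meet of $s_1$ and $s_2$ need not exist.
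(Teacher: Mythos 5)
Your proof is correct and follows essentially the same route as the paper: properness plus $\subseteq$-primeness of ultrafilters for the existence part, and Property \ref{def:morphismofsigmaordered.weaklymeet} combined with the containment $\theta|_{E(S)}^{-1}(\mathfrak{F})\subseteq\mathbb{P}(\theta)^{(0)}(\mathfrak{F})$ for the compatibility part. You even make explicit two details the paper leaves implicit (the identity $t^*t=\bigcup_i t_i^*t_i$ and the passage from $\subseteq$ to $\leq$ to exploit upward closure of $\mathfrak{F}$).
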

\begin{proof}
    Since $\theta$ is proper, choose $t_1,\ldots,t_n\in T$ and $s_1,\ldots,s_n\in S$ such that $t=\bigcup_{i=1}^n t_i$ and $t_i\subseteq \theta(s_i)$ for each $i$. As $t^*t=\bigcup_{i=1}^n t_i^*t_i$ belongs to $\mathfrak{F}$, we have $t_i^*t_i\in\mathfrak{F}$ for some $i$, and of course
    \[[t,\mathfrak{F}]=[t_i,\mathfrak{F}]=[\theta(s_i),\mathfrak{F}]\]
    
    For the second part, first note that if $[\theta(s),\mathfrak{F}]$ is a well-defined element of $\mathbb{P}(T,\subseteq)$, i.e., if $\mathfrak{F}\in X[\theta(s)^*\theta(s)]$ then the definition of $\mathbb{P}(\theta)^{(0)}$ readily yields $\mathbb{P}(\theta)^{(0)}(\mathfrak{F})\in[s^*s]$, so $[s,\mathbb{P}(\theta)^{(0)}(\mathfrak{F})]$ is a well-defined element of $\mathbb{P}(S,\subseteq)$.
    
    Now assume that $[\theta(s_1),\mathfrak{F}]=[\theta(s_2),\mathfrak{F}]$, i.e., there exists $t\subseteq\theta(s_1),\theta(s_2)$ such that $t^*t\in\mathfrak{F}$. Property \ref{def:morphismofsigmaordered.weaklymeet} of morphisms of $\Sigma$-ordered inverse semigroups yields $u\subseteq s_1,s_2$ such that $t\subseteq\theta(u)$. In particular $\mathfrak{F}\in X[\theta(u)^*\theta(u)]$, so the previous paragraph gives us $\mathbb{P}(\theta)^{(0)}(\mathfrak{F})\in X[u^*u]$. As $u\subseteq s_1,s_2$, we conclude that $[s_1,\mathbb{P}(\theta)^{(0)}(\mathfrak{F})]=[s_2,\mathbb{P}(\theta)^{(0)}(\mathfrak{F})]$.\qedhere
\end{proof}

We then define $\mathbb{P}(\theta)\colon\mathbb{P}(T,\subseteq)\to\mathbb{P}(S,\subseteq)$ as $\mathbb{P}(\theta)[t,\mathfrak{F}]=[s,\mathbb{P}(\theta)^{(0)}(\mathfrak{F})]$, where $s\in S$ is chosen so that $[t,\mathfrak{F}]=[\theta(s),\mathfrak{F}]$. The verification that $\mathbb{P}(\theta)$ is indeed a star-injective, star-surjective, homomorphism of inverse semigroupoids, is somewhat long and uneventful, so we omit it. The vertex map of $\mathbb{P}(\theta)$ is indeed $\mathbb{P}^{(0)}$ which we have already proven to be proper and continuous, so $\mathbb{P}(\theta)$ is also proper and continuous.

\subsection{The natural equivalence}

First consider an ample inverse semigroupoid $\mathcal{S}$. Let $\psi\colon\mathcal{S}^{(0)}\to\Omega(E(\mathbf{KB}(\mathcal{S})))$, $\psi(x)=\left\{U\in E(\mathbf{KB}(\mathcal{S})):x\in \so(U)\right\}$ be the homeomorphism constructed in Lemma \ref{lem:homeomorphisms0spectrum}.

Thus we define $\zeta_{\mathcal{S}}\colon\mathcal{S}\to\mathbb{P}(\mathbb{K}(\mathcal{S})$ as $\zeta_{\mathcal{S}}(a)=[A,\psi(\so(a))]$, where $A\in\mathbf{KB}(\mathcal{S})$ is chosen so that $a\in A$. Then $\zeta_{\mathcal{S}}$ is an isomorphism of étale inverse semigroupoids, and the associated vertex map $\zeta_{\mathcal{S}}^{(0)}=\psi$ is a homeomorphism, so $\zeta_{\mathcal{S}}$ is an isomorphism as well. This defines a natural isomorphism $\zeta\colon\id_{\cat{Amp}_\star}\to\mathbb{P}\mathbb{K}$. (The proof of the naturality of $\zeta$ is straightforward, although uninspiring, by carefully following all the definitions, and thus is ommited.)

In the other direction, given $(S,\subseteq)$ a $\Sigma$-ordered inverse semigroup, we wish to construct an isomorphism $\kappa_{(S,\subseteq)}\colon S\to\mathbb{K}(\mathbb{P}(S,\subseteq))$. For this, let us introduce some notation: Given $s\in S$ and a subset $U\subseteq\Omega(E(S))$, let $[s,U]=\left\{[s,x]:x\in U\cap\dom(\mu_s)\right\}$. Then the sets $[s,U]$, where $s\in S$ and $U\subseteq\Omega(E(S))$ is open, form a basis for the topology of $\mathbb{P}(S,\subseteq)$. Moreover, if $U$ is compact and open in $\Omega(E(S))$ then $[s,U]$ is a compact-open bisection of $\mathbb{P}(S,\subseteq)$.

We may therefore define the semigroup homomorphism
\[\kappa_{(S,\subseteq)}\colon S\to \mathbb{K}(\mathbb{P}(S,\subseteq)),\qquad\kappa(s)=[s,X[s^*s]].\]
Let us prove that it is an isomorphism.

\begin{lemma}\label{lem:subsetesofbasicarebasic}
Let $s\in S$ and $U$ be a compact-open subset of $X[s^*s]$. Then there exists $z\subseteq s$ such that $X[z^*z]=U$. In particular $[s,U]=[z,U]$.
\end{lemma}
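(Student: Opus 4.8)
The plan is to realise the desired $z$ as a restriction $z = sg$ of $s$ by a suitable idempotent $g$, where $g$ must satisfy $g \subseteq s^*s$ and $X[g] = U$. Everything then reduces to manufacturing such a $g$ and checking that $sg$ behaves as wanted.

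First I would reduce $U$ to finitely many basic pieces. Since $X[s^*s]$ is open in $\Omega(E(S))$, the set $U$ is a compact-open subset of $\Omega(E(S))$, hence a union of basic sets $X[f]$; each such $X[f] \subseteq U \subseteq X[s^*s]$ satisfies $X[f] = X[f]\cap X[s^*s] = X[f\,s^*s]$, so writing $e = f\,s^*s$ we have $e \leq s^*s$ in the canonical order. By compactness I may take a finite subcover, giving $U = \bigcup_{i=1}^n X[e_i]$ with $e_i \leq s^*s$ for each $i$.

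The crux — and the main obstacle — is that the $e_i$ are only related to $s^*s$ via the \emph{canonical} order $\leq$, whereas forming joins and eventually the restriction of $s$ requires the \emph{$\Sigma$}-order $\subseteq$. This is exactly what interpolators repair. Using the interpolation property \ref{def:sigmaorder.interpolation}, set $g_i = (s^*s)\,|\,e_i$, so that $e_i \leq g_i \subseteq s^*s$ and $e_i x = 0 \iff g_i x = 0$ for all $x$. The latter, together with Lemma \ref{lem:description.of.ultrafilters.by.nonzero.products}, forces $X[e_i] = X[g_i]$ (one inclusion is $e_i \leq g_i$; for the other, if $g_i \in \mathfrak{F}$ and some $h \in \mathfrak{F}$ had $e_i h = 0$, then $g_i h = 0 \in \mathfrak{F}$, contradicting properness). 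Now all $g_i$ lie $\subseteq$-below the common bound $s^*s$, so by conditional joins (Property \ref{def:sigmaorder.conditionaljoins}) the finite join $g = g_1 \cup \dots \cup g_n$ exists and is $\subseteq s^*s$; it is idempotent because Lemma \ref{lem:technicalpropertiesofsigmaorder}\ref{lem:technicalpropertiesofsigmaorder5} gives $(g_i\cup g_j)^*(g_i\cup g_j) = g_i \cup g_j$, and any element of the form $y^*y$ is idempotent (induction handles the $n$-fold join). Finally, iterating Lemma \ref{lem:ultrafilters.are.prime} yields $X[g] = \bigcup_i X[g_i] = \bigcup_i X[e_i] = U$.

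It remains to set $z = sg$ and verify the claims. Since $\subseteq$ is compatible, $g \subseteq s^*s$ gives $z = sg \subseteq s(s^*s) = s$. As $g \subseteq s^*s$ implies $g \leq s^*s$, and idempotents commute, one has $(s^*s)g = g(s^*s) = g$, whence
\[
z^*z = (sg)^*(sg) = g\,s^*s\,g = g\cdot g = g,
\]
so $X[z^*z] = X[g] = U$, proving the first assertion. For the last sentence, note $U = X[g] \subseteq X[s^*s]$, so both $[s,U]$ and $[z,U]$ range their second coordinate over the same set $U$; for each $x \in U$ we have $(z,x) \subseteq (s,x)$ in the induced order on $S \ltimes_{\widehat{\mu}} \Omega(E(S))$ (same anchor, $z \subseteq s$), hence $[z,x] = [s,x]$ after passing to germs. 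Therefore $[s,U] = [z,U]$.
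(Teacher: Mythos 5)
Your argument is correct, but it takes a genuinely different route from the paper's. The paper first proves that any compact-open set is \emph{basic}: it writes $U=\bigcup_{i=1}^n X[e_i]$ and disjointifies, replacing the pieces by $X[e_1\setminus f_1]$, $X[e_1e_2]$, $X[e_2\setminus f_2]$ (where $f_i=e_i|(e_1e_2)$), so that pairwise products vanish and the join exists by the zero-join axiom \ref{def:sigmaorder.zerojoins}; this yields $U=X[e]$ with $e\leq s^*s$, and then a single interpolation at the level of $s$ itself, $z=s|(se)$, finishes the proof. You instead front-load the interpolation at the idempotent level: each $e_i\leq s^*s$ is lifted to $g_i=(s^*s)|e_i\subseteq s^*s$ with $X[g_i]=X[e_i]$, after which the join $g=\bigcup_i g_i$ exists by conditional joins \ref{def:sigmaorder.conditionaljoins} (all $g_i$ share the $\subseteq$-upper bound $s^*s$), and $z=sg$ is then a plain product, $\subseteq s$ by compatibility of the order, with $z^*z=g$ --- no final interpolation, and no use of relative complements \ref{def:sigmaorder.ESrelativecomplements} or zero joins \ref{def:sigmaorder.zerojoins} at all. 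The trade-off: your proof is more economical in axioms and avoids the disjointification bookkeeping, but the paper's intermediate step is more general and reusable, since it shows that \emph{every} finite union of basic sets in $\Omega(E(S))$ is again basic, with no reference to an ambient bound $s^*s$, whereas your conditional-join argument needs all pieces to sit below a common $\subseteq$-bound. One point you leave essentially implicit and should record: the $g_i$ are idempotent (so that Lemma \ref{lem:ultrafilters.are.prime} applies to their joins), which follows because $g_i\subseteq s^*s$ forces $g_i\leq s^*s$, and any element below an idempotent in the canonical order is itself idempotent; this is more direct than the route through Lemma \ref{lem:technicalpropertiesofsigmaorder}\ref{lem:technicalpropertiesofsigmaorder5} that you sketch, which anyway presupposes the idempotency of the $g_i$ it is meant to establish.
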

\begin{proof}
    First we prove that $U=X[e]$ for some $e\in E(S)$. As $U$ is compact-open, we may write it as a finite union of basic open subsets of $\Omega(E(S))$, $U=\bigcup_{i=1}^n X[e_i]$, where $e_i\in E(S)$. Proceeding inductively, it is enough to assume $n=2$.
    
    For $i=1,2$, let $f_i=e_i|(e_1e_2)$, so $X[f_i]=X[e_1e_2]=X[e_1]\cap X[e_2]$. We have $X[e_i]=X[f_i]\cup X[e_i\setminus f_i]$, and the latter sets are disjoint, so $X[e_i\setminus f_i]=X[e_i]\setminus X[f_i]$. We thus have a partition
    \[U=X[e_1\setminus f_1]\cup X[e_1e_2]\cup X[e_2\setminus f_2].\]
    The sets in the right-hand side being disjoint means that pairwise products among $e_1\setminus f_1$, $e_1e_2$ and $e_2\setminus f_2$, always yields $0$. As we consider $\Sigma$-orders, the join $e\defeq(e_1\setminus f_1)\cup (e_1e_2)\cup (e_2\setminus f_2)$ exists, and $U=X[e]$ by Lemma \ref{lem:ultrafilters.are.prime}.
    
    Therefore, as we can assume $U=X[e]$. Since $U\subseteq X[s^*s]$ then $U=X[e]\cap X[s^*s]=X[es^*s]$, i.e., we may assume $U=X[e]$ with $e\leq s^*s$. Let $z=s|(se)$, so \[X[z^*z]=X[(se)^*(se)]=X[es^*se]=X[e]=U.\qedhere\]
\end{proof}

\begin{proposition}
    $\kappa_{(S,\subseteq)}$ is surjective, i.e., every compact-open bisection $U$ of $\mathbb{P}(S,\subseteq)$ is of the form $[s,U]$ for some $s\in S$ and some $U\subseteq X[s^*s]$.
\end{proposition}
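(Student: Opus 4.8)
The plan is to take an arbitrary compact-open bisection $W$ of $\mathbb{P}(S,\subseteq)$ and exhibit it as $\kappa_{(S,\subseteq)}(s)$ for a suitable $s\in S$, by first writing it as a finite union of basic pieces of the form $\kappa(z_i)$ and then fusing these into a single $\kappa(s)$. Since the sets $[s,X[e]]$ (with $s\in S$, $e\in E(S)$) form a basis of compact-open bisections and $W$ is compact, I would first write $W=\bigcup_{i=1}^n[s_i,X[e_i]]$; replacing each $e_i$ by $e_is_i^*s_i$ I may assume $e_i\le s_i^*s_i$, and then Lemma~\ref{lem:subsetesofbasicarebasic} lets me rewrite each piece as $[s_i,X[e_i]]=[z_i,X[z_i^*z_i]]=\kappa(z_i)$ for some $z_i\subseteq s_i$. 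Thus $W=\bigcup_{i=1}^n\kappa(z_i)$, and the whole problem reduces to fusing these finitely many basic bisections into one.

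Next I would make the source idempotents pairwise orthogonal without altering the union. The key observation is that, since $W$ is a bisection, whenever $\mathfrak{F}$ lies in the source of two of the pieces the corresponding germs must coincide: for $\mathfrak{F}\in X[z_i^*z_i]\cap X[z_j^*z_j]$ one has $[z_i,\mathfrak{F}],[z_j,\mathfrak{F}]\in W$ with the same source, so source-injectivity forces $[z_i,\mathfrak{F}]=[z_j,\mathfrak{F}]$. Proceeding inductively, I set $e_1'=z_1^*z_1$ and, having produced pairwise orthogonal $e_1',\dots,e_{k-1}'$, let $g=e_1'\cup\dots\cup e_{k-1}'$ (which exists by iterated zero-joins, Definition~\ref{def:sigmaorder}\ref{def:sigmaorder.zerojoins}, applied to orthogonal idempotents) and put $e_k'=(z_k^*z_k)\setminus\big((z_k^*z_k)g\big)$, using relative complements (\ref{def:sigmaorder.ESrelativecomplements}). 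By Lemma~\ref{lem:subsetesofbasicarebasic} I choose $z_k'\subseteq z_k$ with $X[z_k'^*z_k']=X[e_k']$, so that $\kappa(z_k')=[z_k,X[e_k']]\subseteq\kappa(z_k)\subseteq W$. Because the points removed from the source of $z_k$ all lie over $X[(z_k^*z_k)g]=X[z_k^*z_k]\cap\bigcup_{i<k}X[e_i']$, where by the observation above the germ of $z_k$ already agrees with that of an earlier $z_i'$, this trimming does not change the union. Hence $W=\bigcup_{i=1}^n\kappa(z_i')$ with the $e_i'=z_i'^*z_i'$ pairwise orthogonal.

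Finally I would form the join. Orthogonality of the sources forces orthogonality of the ranges: if $\mathfrak{G}\in X[z_i'z_i'^*]\cap X[z_j'z_j'^*]$ for $i\ne j$, then $\mathfrak{G}$ is the range of a germ of $\kappa(z_i')\subseteq W$ and of a germ of $\kappa(z_j')\subseteq W$, and these germs are distinct, since their sources lie in the disjoint sets $X[e_i']$ and $X[e_j']$; this contradicts range-injectivity of the bisection $W$, so $z_i'z_i'^*\,z_j'z_j'^*=0$. A short computation using $z_i'^*=z_i'^*(z_i'z_i'^*)$ and $z_j'=(z_j'z_j'^*)z_j'$ then gives $z_i'^*z_j'=0$, and symmetrically $z_i'z_j'^*=0$, so by the zero-join axiom (\ref{def:sigmaorder.zerojoins}) the join $s=\bigcup_{i=1}^n z_i'$ exists in $S$. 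It remains to check $\kappa(s)=W$: by Lemma~\ref{lem:technicalpropertiesofsigmaorder}\ref{lem:technicalpropertiesofsigmaorder5} one has $s^*s=\bigcup_i e_i'$, hence $X[s^*s]=\bigcup_i X[e_i']$ by primeness of ultrafilters (Lemma~\ref{lem:ultrafilters.are.prime}); and for $\mathfrak{F}\in X[e_i']$, since $z_i'\subseteq s$ and $e_i'\in\mathfrak{F}$, one has $[s,\mathfrak{F}]=[z_i',\mathfrak{F}]$, whence $\kappa(s)=[s,X[s^*s]]=\bigcup_i\kappa(z_i')=W$. The main obstacle is precisely the existence of this join: orthogonalization only directly controls the sources, and the essential point is that the bisection hypothesis upgrades source-orthogonality to range-orthogonality, which is exactly the hypothesis the zero-join axiom requires.
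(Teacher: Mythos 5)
Your proof is correct and follows essentially the same route as the paper's: a finite cover by basic compact-open bisections, disjointification of the sources, the key observation that the bisection property upgrades source-disjointness to range-disjointness so that the zero-join axiom \ref{def:sigmaorder.zerojoins} applies, and a final verification that $\kappa(s)$ recovers the original bisection. The only cosmetic difference is that you orthogonalize algebraically via relative complements in $E(S)$, whereas the paper takes set-differences of clopen sets in $\Omega(E(S))$ and then invokes Lemma~\ref{lem:subsetesofbasicarebasic}; the two procedures amount to the same thing.
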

\begin{proof}
    First we write $U$ in terms of the basic open sets: $U=\bigcup_{i=1}^n[s_i,U_i']$, each $U_i'$ being compact-open and hence clopen in $\Omega(E(S))$.
    
    Let $U_1=U_1'$ and for $i\geq 2$, $U_i=U_i'\setminus\bigcup_{j=1}^{i-1}U_j$. Then we have disjoint compact-open bisection $[s_i,U_i]$ (as their sources are disjoint), and also $\bigcup_{i=1}^n[s_i,U_i]\subseteq U$. However, the source map is injective on $U$, and $\so(\bigcup_{i=1}^n[s_i,U_i])=\bigcup_{i=1}^n U_i=\bigcup_{i=1}^n U_i'=\so(U)$, and thus we must have $U=\bigcup_{i=1}^n[s_i,U_i]$. By Lemma \ref{lem:subsetesofbasicarebasic}, we may assume $U_i=\dom(\kappa_{s_i})$.
    
    As the sets $[a_i,U_i]$ are disjoint, and contained in a bisection, then their sources and ranges are also disjoint. It follows that $a_i^*a_j=a_ia_j^*=0$ whenever $i\neq j$, and thus the supremum $a=\bigcup_{i=1}^n a_i$ exists. It readily follows that $X[a^*a]=\bigcup_{i=1}^n U_i$, and thus that $[a,X[a^*a]]=\bigcup_{i=1}^n[a_i,U_i]=U$.\qedhere
\end{proof}

\begin{proposition}
$\kappa_{(S,\subseteq)}$ is a $\subseteq$-isomorphism: $s\subseteq t$ in $(S,\subseteq)$ if and only if $\kappa_{(S,\subseteq)}(s)\subseteq \kappa_{(S,\subseteq)}(t)$ (as sets).
\end{proposition}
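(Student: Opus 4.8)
The plan is to unfold both sides explicitly and reduce the nontrivial direction to a compactness/gluing argument. First I would record the two facts that drive everything: by definition $\kappa_{(S,\subseteq)}(s)=[s,X[s^*s]]=\{[s,\mathfrak F]:\mathfrak F\in\Omega(E(S)),\ s^*s\in\mathfrak F\}$, and the germ relation on the semidirect product reads $[s,\mathfrak F]=[t,\mathfrak G]$ if and only if $\mathfrak F=\mathfrak G$ and there exists $w\subseteq s,t$ with $w^*w\in\mathfrak F$. I will use repeatedly that $w\subseteq s$ forces $w\le s$ and $w^*w\subseteq s^*s$ (compatible orders are preserved by inverses, by the remark after Definition \ref{def:compatibleorder}, and by products, by Definition \ref{def:compatibleorder}\ref{def:compatibleorder2}), and that filters are $\le$-upward closed.

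For the forward implication, assuming $s\subseteq t$, I would take any $[s,\mathfrak F]\in\kappa_{(S,\subseteq)}(s)$, so $s^*s\in\mathfrak F$; then $s^*s\subseteq t^*t$ gives $s^*s\le t^*t$, hence $t^*t\in\mathfrak F$ and $[t,\mathfrak F]\in\kappa_{(S,\subseteq)}(t)$. Choosing $w=s$ (valid since $\subseteq$ is reflexive, $s\subseteq t$ by hypothesis, and $s^*s\in\mathfrak F$) shows $[s,\mathfrak F]=[t,\mathfrak F]$, so $\kappa_{(S,\subseteq)}(s)\subseteq\kappa_{(S,\subseteq)}(t)$. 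This direction is routine.

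The converse is the heart of the matter. I would first dispose of $s=0$ by axiom \ref{def:sigmaorder.0minimum}. Assuming $s\ne0$, so $X[s^*s]\ne\varnothing$, for each $\mathfrak F\in X[s^*s]$ the element $[s,\mathfrak F]$ lies in $\kappa_{(S,\subseteq)}(s)\subseteq\kappa_{(S,\subseteq)}(t)$, so the germ relation yields some $w_{\mathfrak F}\subseteq s,t$ with $\mathfrak F\in X[w_{\mathfrak F}^*w_{\mathfrak F}]$. Thus the sets $X[w^*w]$, over all $w\subseteq s,t$, cover the compact set $X[s^*s]$, and I would extract a finite subcover indexed by $w_1,\dots,w_n\subseteq s,t$. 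Since all $w_i\subseteq s$, conditional joins (axiom \ref{def:sigmaorder.conditionaljoins}) give $w=w_1\cup\dots\cup w_n\subseteq s$; because $t$ is a common upper bound, this same join satisfies $w\subseteq t$.

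It remains to identify $w$ with $s$. By Lemma \ref{lem:technicalpropertiesofsigmaorder}\ref{lem:technicalpropertiesofsigmaorder5} and Lemma \ref{lem:ultrafilters.are.prime}, $w^*w=w_1^*w_1\cup\dots\cup w_n^*w_n$ and $X[w^*w]=X[w_1^*w_1]\cup\dots\cup X[w_n^*w_n]\supseteq X[s^*s]$, while $w^*w\subseteq s^*s$ gives $X[w^*w]\subseteq X[s^*s]$; hence $X[w^*w]=X[s^*s]$. Letting $c=s^*s\setminus w^*w$ be the relative complement from axiom \ref{def:sigmaorder.ESrelativecomplements} (so $w^*w\cup c=s^*s$ and $(w^*w)c=0$), if $c\ne0$ I would pick an ultrafilter $\mathfrak F\ni c$; then $c\le s^*s$ forces $s^*s\in\mathfrak F$, so $\mathfrak F\in X[s^*s]=X[w^*w]$, giving $w^*w\in\mathfrak F$ and thus $0=(w^*w)c\in\mathfrak F$, contradicting properness. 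Hence $c=0$, $w^*w=s^*s$, and finally $w\le s$ with $w^*w=s^*s$ yields $w=s\,w^*w=s\,s^*s=s$ by Proposition \ref{prop:propertiesoperationsofinversesemigroupoid}; therefore $s=w\subseteq t$. The main obstacle is exactly this converse gluing: the witnesses $w_{\mathfrak F}$ are only fiberwise, and assembling them into a single $w$ equal to $s$ rests on compactness of $X[s^*s]$, the join axioms, and the injectivity of $e\mapsto X[e]$ on $E(S)$, which I realize through the relative-complement/ultrafilter argument above.
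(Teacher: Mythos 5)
Your proof is correct and takes essentially the same approach as the paper's: the routine forward direction, then for the converse a compactness argument on $X[s^*s]$ extracting finitely many common $\subseteq$-lower bounds of $s$ and $t$, whose conditional join $w\subseteq s,t$ satisfies $X[w^*w]=X[s^*s]$, finished off by a relative-complement/ultrafilter argument forcing $w=s$. The only cosmetic difference is that you form the complement in $E(S)$ (namely $s^*s\setminus w^*w$, concluding $w^*w=s^*s$ and then $w=s$), whereas the paper complements in $S$ itself (noting $s\setminus w\neq 0$ would give a nonempty subset of $X[s^*s]$ disjoint from $X[w^*w]$); these are interchangeable.
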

\begin{proof}
    Of course $s\subseteq t$ implies $\kappa_{(S,\subseteq)}(s)\subseteq\kappa_{(S,\subseteq)}(t)$, so we deal with the converse implication.
    
    Assume that $\kappa_{(S,\subseteq)}(s)\subseteq\kappa_{(S,\subseteq)}(t)$, i.e., $[s,X[s^*s]]\subseteq[t,X[t^*t]]$. For every $\mathfrak{F}\in X[s^*s]$, we have $[s,\mathfrak{F}]=[t,\mathfrak{F}]$, so we may find $a\subseteq s,t$ with $\mathfrak{F}\in X[a^*a]$. This means that $\left\{X[a^*a]:a\subseteq s,t\right\}$ is an open cover of $X[s^*s]$, so we may find a finite subcover, i.e., $a_1,\ldots,a_n\subseteq s,t$ such that $X[s^*s]=\bigcup_{i=1}^nX[a_i^*a_i]$. Letting $a=\bigcup_{i=1}^n a_i$, we have $a\subseteq s,t$, and $X[a^*a]=X[s^*s]$. Thus we just need to prove that $a=s$.
    
    Indeed, if $a\neq s$ then $s\setminus a\neq 0$, so $X[(s\setminus a)^*(s\setminus a)]$ is a nonempty subset of $X[s^*s]$ which does not intersect $X[a^*a]=X[s^*s]$, a contradiction.\qedhere
\end{proof}

Therefore $\kappa_{(S,\subseteq)}\colon(S,\subseteq)\to(\mathbb{K}(\mathbb{P}(S)),\subseteq)$ is an isomorphism of $\Sigma$-ordered inverse semigroups. Just as for $\zeta$, the proof of the naturality of $\kappa\colon\id_{\Sigma\cat{-Ord}}\to\mathbb{K}\mathbb{P}$ is straightforward by carefully applying the definitions of the functors $\mathbb{P}$ and $\mathbb{K}$, so we also omit it.

We thus conclude:

\begin{theorem}[Stone duality for ample inverse semigroupoids]
    The category $\cat{Amp}_\star$ of ample inverse semigroupoids and their proper continuous covering homomorphisms is dually equivalent to the category $\Sigma\cat{-Ord}$ of $\Sigma$-ordered inverse semigroups and their morphisms.
\end{theorem}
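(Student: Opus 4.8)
The plan is to assemble the pieces already constructed in the preceding subsections, since the functors and the relevant natural isomorphisms are exactly the data required to witness a dual equivalence. Recall that a contravariant functor $\mathbb{K}\colon\cat{Amp}_\star\to\Sigma\cat{-Ord}$ (i.e.\ $\mathbb{K}\colon\cat{Amp}_\star^{\mathrm{op}}\to\Sigma\cat{-Ord}$) together with a contravariant functor $\mathbb{P}\colon\Sigma\cat{-Ord}\to\cat{Amp}_\star$ constitute a dual equivalence precisely when there exist natural isomorphisms $\id_{\cat{Amp}_\star}\cong\mathbb{P}\mathbb{K}$ and $\id_{\Sigma\cat{-Ord}}\cong\mathbb{K}\mathbb{P}$, both composites being covariant endofunctors (the composition of two contravariant functors). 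Thus I would simply invoke the natural isomorphisms $\zeta\colon\id_{\cat{Amp}_\star}\to\mathbb{P}\mathbb{K}$ and $\kappa\colon\id_{\Sigma\cat{-Ord}}\to\mathbb{K}\mathbb{P}$ built above, after recording that $\mathbb{K}$ and $\mathbb{P}$ are genuinely functorial.

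First I would confirm that $\mathbb{K}$ and $\mathbb{P}$ are well-defined on morphisms and respect composition and identities: that $\mathbb{K}$ sends a proper continuous covering homomorphism to a morphism of $\Sigma$-ordered inverse semigroups (Lemma \ref{lem:functorkonmorphisms}), and that $\mathbb{P}(\theta)$ is a proper continuous covering homomorphism with vertex map $\mathbb{P}(\theta)^{(0)}$ (Lemma \ref{lem:maponspectra}). Next I would record the object-level content already established: for each ample inverse semigroupoid $\mathcal{S}$ the component $\zeta_{\mathcal{S}}\colon\mathcal{S}\to\mathbb{P}(\mathbb{K}(\mathcal{S}))$, $a\mapsto[A,\psi(\so(a))]$, is an isomorphism of étale inverse semigroupoids, its vertex map being the homeomorphism $\psi$ of Lemma \ref{lem:homeomorphisms0spectrum}; and for each $\Sigma$-ordered inverse semigroup $(S,\subseteq)$ the component $\kappa_{(S,\subseteq)}\colon s\mapsto[s,X[s^*s]]$ is an isomorphism in $\Sigma\cat{-Ord}$, its surjectivity and the fact that it is a $\subseteq$-isomorphism being exactly the two Propositions immediately above.

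The only genuine remaining work is the naturality of $\zeta$ and $\kappa$, which the text declares straightforward but omits. I would verify each naturality square by chasing the definitions of $\mathbb{K}$ and $\mathbb{P}$ on morphisms: for $\zeta$, starting from a covering homomorphism $\phi\colon\mathcal{S}\to\mathcal{T}$ and an element $a$, compare $\zeta_{\mathcal{T}}(\phi(a))$ with $(\mathbb{P}\mathbb{K}(\phi))(\zeta_{\mathcal{S}}(a))$, using star-surjectivity of $\phi$ to produce a representative bisection containing $\phi(a)$ from one containing $a$, and star-injectivity to see the two germs agree; for $\kappa$, unwind $\mathbb{K}(\mathbb{P}(\theta))$ against $\kappa$ through the basic open sets $[s,X[s^*s]]$. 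Once naturality is in hand, $\zeta$ and $\kappa$ exhibit $(\mathbb{K},\mathbb{P})$ as a dual equivalence, completing the proof. The main obstacle here is bookkeeping rather than computation: keeping the contravariance conventions consistent so that $\mathbb{P}\mathbb{K}$ and $\mathbb{K}\mathbb{P}$ are the correct covariant composites on which $\zeta$ and $\kappa$ act, and checking that each component isomorphism is compatible on arrow spaces and vertex spaces simultaneously, so that the naturality squares commute at both levels.
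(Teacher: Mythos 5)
Your proposal matches the paper's own proof: the theorem is established there exactly by assembling the functors $\mathbb{K}$ and $\mathbb{P}$ with the component isomorphisms $\zeta_{\mathcal{S}}$ and $\kappa_{(S,\subseteq)}$ proved in the preceding subsections, with the naturality of $\zeta$ and $\kappa$ declared routine and omitted, just as you propose to handle it (your sketch of the naturality check via star-injectivity/star-surjectivity and the basic bisections $[s,X[s^*s]]$ is consistent with what the paper leaves to the reader). No gaps; this is essentially the same argument.
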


\bibliographystyle{amsplain}
\bibliography{library}

\end{document}